\documentclass{amsart}
\usepackage[english]{babel}
\usepackage[utf8]{inputenc}
\usepackage[T1]{fontenc}    
\usepackage{hyperref}
\usepackage{amsthm}
\usepackage{amsmath}
\usepackage{amssymb}
\usepackage{mathrsfs}
\usepackage{graphicx}
\usepackage{subcaption}
\usepackage{stmaryrd}
\usepackage{dsfont}
\usepackage{yfonts}
\usepackage[margin = 2.5cm]{geometry}
\usepackage{caption}
\usepackage{ulem}
\usepackage{caption}
\usepackage{blkarray}
\usepackage{amsmath,amsfonts}
\usepackage[all]{xy}
\usepackage{tikz}
\usepackage{enumerate}
\usepackage[Algorithme]{algorithm}
\usepackage{tikz-cd}
\usepackage{varioref}
\usepackage{nicematrix}
\usepackage{bm}
\usepackage[toc,page]{appendix}
\usepackage{calligra}

\newcommand{\Tr}{\mathrm{Tr\ }}

\numberwithin{equation}{subsection}

\newtheorem{theorem}{Theorem}[section]
\newtheorem{definition}{Definition}[section]
\newtheorem{prop}{Proposition}[section]
\newtheorem{corollaire}{Corollary}[section]
\newtheorem{lemma}{Lemma}[section]

\newtheorem*{remark}{Remark}
\newtheorem*{assumption}{Assumption}

\newtheorem{mainthm}{Theorem}

\newtheorem{mainconj}{Conjecture}

\labelformat{mainthm}{Theorem~#1}
\labelformat{maincor}{Corollary~#1}
\labelformat{mainconj}{Conjecture~#1}

\labelformat{lemma}{Lemma~#1}
\labelformat{theorem}{Theorem~#1}
\labelformat{prop}{Proposition~#1}
\labelformat{corollaire}{Corollary~#1}
\labelformat{definition}{Definition~#1}

\newcommand{\N}{\mathbb{N}}
\newcommand{\Z}{\mathbb{Z}}
\newcommand{\R}{\mathbb{R}}
\newcommand{\Q}{\mathbb{Q}}

\newcommand{\C}{\mathbb{C}}
\newcommand{\A}{\mathbb{A}}
\def\FF{\mathbb{F}}
\def\TT{\mathrm{\mathbf{T}}}

\def\W{\mathcal{W}}
\def\O{\mathcal{O}}

\def\L{\mathcal{L}}
\def\H{\mathcal{H}}
\def\P{\mathcal{P}}
\def\K{\mathcal{K}}

\newcommand{\toeq}{\buildrel\sim\over\rightarrow}
\newcommand{\bs}{\backslash}
\newcommand{\surj}{\twoheadrightarrow}
\newcommand{\inj}{\hookrightarrow}

\renewcommand{\a}{\alpha}

\renewcommand{\d}{\delta}
\newcommand{\g}{\mathfrak{g}}
\newcommand{\e}{\varepsilon}
\newcommand{\w}{\omega}
\newcommand{\ph}{\varphi}
\newcommand{\s}{\sigma}
\newcommand{\vs}{\varsigma}
\newcommand{\ga}{\gamma}

\def\Om{\Omega}
\def\Si{\Sigma}
\def\t{\tau}
\def\l{\lambda}

\def\i{\iota}

\def\G{\Gamma}

\def\k{\mathfrak{k}}
\def\p{\mathfrak{p}}

\def\n{\mathfrak{n}}
\def\m{\mathfrak{m}}

\def\inf{\infty}
\def\x{\times}

\def\Ker{\ensuremath{\mathrm{Ker }}}

\def\Tr{\ensuremath{\mathrm{Tr}}}
\def\vol{\ensuremath{\mathrm{vol}}}
\def\diag{\ensuremath{\mathrm{diag}}}

\def\Ad{\ensuremath{\mathrm{Ad}}}
\def\Hom{\ensuremath{\mathrm{Hom}}}
\def\End{\ensuremath{\mathrm{End}}}

\def\GL{\ensuremath{\mathrm{GL}_3}}

\def\GLn{\ensuremath{\mathrm{GL}_n}}

\def\Gal{\ensuremath{\mathrm{Gal}}}
\def\Frob{\ensuremath{\mathrm{Frob}}}

\def\glr{\ensuremath{\mathfrak{gl}_3(\R)}}

\def\SO{\ensuremath{\mathrm{SO}(3)}}

\def\Ld{\ensuremath{\mathrm{L}}}

\def\As{\mathrm{As}}
\newcommand{\U}{\mathrm{U}}

\title{Stable base change from unitary groups in three variables and integral relation of automorphic periods}

\author{Tristan Ricoul}
\date{}

\begin{document}

\maketitle

\begin{abstract}
Let $E$ be a real quadratic field and let $U_E$ be the quasi-split unitary group in three variables associated with $E$. We prove a  $p$-adic divisibility between the automorphic periods of a cuspidal automorphic representation $\pi_U$ of $U_E$ and the periods of its Rogawski stable base change to $\mathrm{GL}_3(E)$. This generalizes earlier works of Tilouine--Urban and Hida in the case of $\mathrm{GL}_2$. The divisibility we prove involves a new kind of automorphic periods for self-conjugate cuspidal automorphic representations of $\mathrm{GL}_3(E)$, defined within the middle degree of the cuspidal cohomology rather than the top or bottom degrees. Moreover, we prove an \textit{à la Hida} adjoint $L$-value formula for $\mathrm{GL}_3(E)$, relating these newly defined middle-degree periods to the usual top and bottom automorphic periods. Finally, we also prove a similar formula for $U_E$, which is the first instance of such a result for quasi-split unitary groups.
\end{abstract}

\tableofcontents

\section{Introduction}

A natural problem in the arithmetic theory of the Langlands program is to understand how periods of automorphic forms behave under instances of Langlands functoriality. For a given Langlands transfer, an integral relation between the periods of an automorphic representation and the periods of its transfer can be deduced from the Bloch--Kato conjectures for various motives attached to these representations. Such a period relation was first established by Tilouine--Urban \cite{TU22} for the quadratic base change of a classical modular form. However, beyond rank two, no analogous results were previously known.

The goal of this paper\footnote{This paper, together with~\cite{TR_BC}, originates from a preprint~\cite{R24} posted on arXiv in November~2024 and later split into two papers. This preprint was withdrawn and subsequently expanded into the author’s PhD thesis~\cite{thesis}.} is to study such period relations in the setting of the stable base change from unitary groups in three variables.
In doing so, we introduce a new type of automorphic periods that  arise in this context and prove a divisibility in the
expected period relation, which constitutes the first result of this kind in higher rank.

\subsection{A conjectural integral period relation for the stable base change} We begin by formulating a conjectural integral period relation for the stable base change. Let $E$ be a real quadratic field and let $U_E$ be the quasi-split unitary group in $3$ variables associated to $E$, i.e. the $\Q$-algebraic group whose $A$-points are given, for any $\Q$-algebra $A$, by:
$$
U_E(A) = \{ g \in \mathrm{GL}_3(E\otimes_\Q A), \,\, {}^t\s(g) J g= J \}
$$
where $J = \mathrm{antidiag}(1,-1,1) \in \GL(E)$, and $\s$ denotes the Galois conjugation of $E/\Q$ acting on the first coordinate of $E\otimes_\Q A$. Let $\pi$ be a cohomological cuspidal automorphic representation of $U_E$, which is self-dual, and assume that $\pi$ is non-endoscopic and globally generic. Let $p$ be an odd prime number and let $\O$ be the integer ring of some sufficiently large $p$-adic field $\K$. Then $\pi_U$ is associated with two $p$-integral automorphic periods $\Om_2(\pi_U)$ and $\Om_3(\pi_U)$, respectively called the bottom and top degree periods of $\pi_U$. These two periods are non-zero complex numbers, which compare the $\O$-integral structure of the cohomology of $U_E$ given by algebraic topology to the $\O$-integral structure on the space of automorphic forms given by Whittaker models.

Let $\Pi = \mathrm{SBC}(\pi)$ be the strong stable base change of $\pi$ to $\GL(E)$, whose existence, predicted by the Langlands principle of functoriality, has been established by Rogawski \cite{ARU3}. Then $\Pi$ is a cohomological automorphic representation of $\GL(E)$, which is cuspidal, as $\pi$ is non-endoscopic. The cuspidal cohomology of $\GL(E)$ is concentrated in degrees $q=4,5,6$. Thus, $\Pi$ is also associated with two $p$-integral automorphic periods $\Om_4(\Pi)$ (the bottom degree period) and $\Om_6(\Pi)$ (the top degree period), defined within the bottom $b=4$ and top $t=6$ degrees of the cuspidal cohomology of $\GL(E)$.

In general, as for any cuspidal automorphic representation of $\GLn$, these are the only two periods one can attach to $\Pi$. The reason for that is that $\Pi$ appears with multiplicity $1$ in extremal (top $t=6$ and bottom $b=4$) degree cuspidal cohomology groups, but not in middle degree $m=5$. The first contribution of the present paper is the introduction of a new kind of automorphic periods, $\Om_5(\Pi,\s,+)$ and $\Om_5(\Pi,\s,-)$, associated with any cohomological cuspidal representation $\Pi$ of $\GL(E)$ which is self-conjugate, i.e. such that $\Pi^\s = \Pi$ (note that the stable base change $\Pi = \mathrm{SBC}(\pi_U)$ of a self-dual representation $\pi_U$ is self-conjugate). These two periods are called the $\s$-periods of $\Pi$  and are defined within the middle degree of the cuspidal cohomology module of $\GL(E)$. The reason we introduce these middle-degree periods is that we conjecture that they are related to the periods of $\pi_U$, whereas, for dimensional reasons which we will explain later, it is not clear that the extremal periods are. \\

According to various famous conjectures (see Deligne \cite{De79}, Beilinson \cite{beilinson}, and Bloch-Kato \cite{BK07}), motivic periods are expected to be closely related to special values of $L$-functions. Thus, by the automorphic/motivic analogy, automorphic periods should also be related to special $L$-values, and in turn decompositions of $L$-functions should correspond to relations of automorphic periods. This leads us to formulate the following conjectural integral relation between the periods of $\pi_U$ and the newly defined middle-degree periods of $\Pi$ (see \cite{thesis} for a precise motivation) :

\begin{mainconj}
\label{main_conj}
Let $\n \subset \O_E$ be the mirahoric level of $\Pi$. Assume that $p$ is prime to $6N_{E/\Q}(\mathfrak{n}) h_E(\mathfrak{n})D_E$, then:
$$
\Om_5(\Pi,\s,-) \sim \Om_2(\pi_U) \cdot \Om_3(\pi_U) \cdot \nu_{\pi_U}
$$
where $ \nu_{\pi_U} := \eta_{\pi_U} \cdot \eta_{\pi_U}(H^5_\TT)[+]^{-1} \in \O$ and we write $z_1 \sim z_2$, for $z_1,z_2 \in \C^\x$ if there exists $a \in \O^\x$ such that $z_1 = a \cdot z_2$.
\end{mainconj}

In the above formula, the term $\nu_{\pi_U} \in \O$ measures the defect between the Hecke congruence number  of ${\pi_U}$ and the cohomological congruence number of ${\pi_U}$ on the degree-5 cuspidal cohomology of $\GL(E)$ (see \S\ref{congruence_modules} and \S\ref{congruence_modules_with_an_involution} for their precise definition). The congruence number $\eta_{\pi_U}$ has a deep number-theoretic meaning, as it can sometimes be related to the adjoint Selmer group of (the Galois representation associated with) $\pi_U$, and thus be can though as an automorphic analog of the latter (see \cite{thesis}[\S1.1.5]). Moreover, in some precise sense, this conjecture is equivalent to (an automorphic analog of) the Bloch-Kato Tamagawa number conjecture for the adjoint motive of $\pi_U$ twisted by the quadratic character $\chi_E$ (see the discussion below \ref{thmB}). For $\mathrm{GL}_2$, Tilouine-Urban \cite{TU22} have proven a similar period relation for the base change of a classical modular form $f$ to a quadratic field, building on some previous work by Hida \cite{Hi99} (note that for $\mathrm{GL}_2$ there is no difference between the stable base change and the classical base change). Their method ultimately relies on some numerical coincidence between the top degree of the cuspidal cohomology of $\mathrm{GL}_2(E)$ (for $E$ real or imaginary quadratic) and the real dimension of some locally symmetric manifold (in their case, the modular curve). However this numerical coincidence no longer holds in general for the stable base change to $\GLn(E)$ when $n\geq 3$, and the situation is more delicate,  as the periods expected to appear in such a period relation are therefore no longer the extremal periods, but rather some hypothetical intermediate degree periods which are not even defined. This is the reason why, in order to formulate \ref{main_conj}  for $n=3$, we needed to introduce the new middle-degree automorphic periods $\Om_5(\Pi,\s,\pm)$ attached to self-conjugate automorphic representations of $\GL(E)$. As already said, these periods are defined within the degree-5 cuspidal cohomology group, and $5$ is precisely the real dimension of the $\GL(\Q)$-locally symmetric manifold inside the $10$-dimensional $\GL(E)$-locally symmetric manifold.  \\

In this paper, we prove one divisibility of \ref{main_conj}. This result relies on three main ingredients. First, we establish one divisibility of (an automorphic version of) the Bloch-Kato conjecture for the adjoint motive of $\pi_U$, twisted by the even quadratic character $\chi_E$. Second, we prove an \textit{à la Hida} adjoint $L$-value formulas for $\GL(E)$, which relates the adjoint $L$-value $L(\Pi,\Ad,1)$ to the newly defined middle-degree periods $\Om_5(\Pi,\s,\pm)$ of $\Pi$. Finally, we prove  a similar \textit{à la Hida} adjoint $L$-value formulas for $U_E$ ; this provides the first instance of such a formula for automorphic representations of unitary groups.

\subsection{Main result} Let us now present in more details the main result of this paper. 
Let $E$ is a real quadratic field. Let $\pi_U$ and $\Pi= \mathrm{SBC}(\pi_U)$ be as above.  Assume moreover that $\pi_U$ is only ramified at primes which are split in $E$. As mentioned in the previous pararagraph, as $\Pi \simeq \Pi^\s$ is self-conjugate, it is associated with a new kind of middle-degree automorphic periods $\Om_5(\Pi,\s,+)$ and $\Om_5(\Pi,\s,-)$, called the $\s$-periods of $\Pi$ (see \S\ref{s_periods} for a precise definition). Finally, we assume that the Galois representation $\rho_\Pi$ attached to $\Pi$ is residually irreducible. Our main result is the following theorem:

\begin{mainthm}[\ref{divisibilite_unitaire}]
\label{thmA}
Let $\pi_U$ and $\Pi = \mathrm{SBC}(\pi_U)$ be as above. Assume that the cohomological weight $\mu_U$ of $\pi_U$ is $p$-small and that $p \nmid 6N_{E/\Q}(\mathfrak{n}) h_E(\mathfrak{n})D_E$ and that $p\notin S_\partial$. Then, under the assumption $(H = \TT_U)$, we have the following divisibility:
$$
\nu_{\pi_U} \cdot \Om_2(\pi_U) \cdot \Om_3(\pi_U) \;  \mid \; \Om_5(\Pi,\s,-)
$$
where $\nu_{\pi_U} := \eta_{\pi_U} \cdot \eta_{\pi_U}(H^5_{\TT})[+]^{-1} \in \O$, and we write $z_1 \;  \mid \; z_2$  for $z_1,z_2 \in \C^\x$ if there exists $a \in \O$ such that $z_2 = a \cdot z_1$.
\end{mainthm}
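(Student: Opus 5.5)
The plan is to combine three ingredients described in the introduction: an adjoint $L$-value formula for $\GL(E)$ in terms of the $\s$-periods, a similar formula for $U_E$, and a divisibility between the congruence number $\eta_{\pi_U}$ and an $L$-value (one direction of the automorphic Bloch--Kato statement for $\Ad(\pi_U)\otimes\chi_E$).

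\textbf{Step 1: the $\GL(E)$ formula.} First I would prove an \emph{à la Hida} formula of the shape
$$
\frac{L(1,\Pi,\Ad)}{\Om_5(\Pi,\s,+)\,\Om_5(\Pi,\s,-)} \sim \eta_\Pi(H^5)[\pm]
$$
up to explicit archimedean and Gauss-sum factors, which are units under $p\nmid 6N_{E/\Q}(\n)h_E(\n)D_E$. To do this I would use Poincaré duality on the $10$-dimensional $\GL(E)$-locally symmetric space restricted to the $\s$-eigenspaces of $H^5_{cusp}$. The cup product pairs the $+$ part with the $-$ part. It can be computed via Rankin--Selberg unfolding with Whittaker models, which produces $L(1,\Pi\times\Pi^\vee)$. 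The $\s$-decomposition is available because $\Pi\simeq\Pi^\s$. Integrality of the duality pairing on $H^5_\TT[\pi]$ needs $p\notin S_\partial$, residual irreducibility and the $p$-small weight hypothesis, so that boundary cohomology does not interfere and the lattices are self-dual.

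\textbf{Step 2: the $U_E$ formula.} Next I would prove the analogous formula for $U_E$:
$$
\frac{L(1,\pi_U,\Ad)}{\Om_2(\pi_U)\,\Om_3(\pi_U)} \sim \eta_{\pi_U},
$$
pairing degree $2$ with degree $3$ on the $5$-dimensional unitary Shimura-type manifold. Here the hypothesis $(H=\TT_U)$ is used to identify the Hecke congruence number with the cohomological congruence number.

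\textbf{Step 3: factor and compare.} The $L$-function factors as
$$
L(s,\Pi,\Ad) = L(s,\pi_U,\Ad)\,L(s,\pi_U,\Ad\otimes\chi_E),
$$
and the first factor cancels. What remains is to relate $\Om_5(\Pi,\s,+)$ to $L(1,\pi_U,\Ad\otimes\chi_E)$ and to bound congruence numbers: $\eta_\Pi(H^5)[+]$ should divide $\eta_{\pi_U}(H^5_\TT)[+]$, and more precisely $\eta_{\pi_U}(H^5_\TT)[+]$ should divide $\eta_{\pi_U}\cdot L^{alg}(1,\pi_U,\Ad\otimes\chi_E)$. I would argue that congruences between $\pi_U$ and other forms on $U_E$ transfer by base change to congruences of $\Pi$ inside $H^5[+]$. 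Conversely, the $\s$-fixed part of the cohomology restricted to $\GL(\Q)$, via integration over the $5$-dimensional $\GL(\Q)$-cycle, detects $\chi_E$-twisted adjoint values.

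\textbf{Main obstacle.} Combining the three relations and rearranging would give $\nu_{\pi_U}\,\Om_2\Om_3 \mid \Om_5(\Pi,\s,-)$. I expect the main obstacle to be this final comparison of congruence modules on the $+$-eigenspace, namely producing the divisibility in the correct direction with $(H=\TT_U)$. My first attempt would be to compare $\TT_U$ with the $\s$-invariant Hecke algebra acting on $H^5[+]$ through the base change map of Hecke algebras, using surjectivity onto $\TT_U$.
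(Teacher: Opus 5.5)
There is a genuine gap, and it sits in Step~3. Your Steps~1 and~2 are the right ingredients: they are \ref{adjoint_L_value} and \ref{unitary_adjoint_L_value}, with $(H=\TT_U)$ used exactly as you say. One omission in Step~2: pairing degree~$2$ with degree~$3$ on $U_E$ requires the Lapid--Mao formula (Beuzart-Plessis--Chaudouard, Morimoto) to express the Petersson product through $L(\pi_U,\Ad,1)$, since the unitary group has no Rankin--Selberg unfolding.

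Step~3 carries all the content specific to the stable base change, and the congruence-number comparisons you write there are wrong.
\begin{itemize}
\item Your chain $\eta_\Pi(H^5)[+]\mid\eta_{\pi_U}(H^5_\TT)[+]\mid\eta_{\pi_U}\cdot L^{alg}(1,\pi_U,\Ad\otimes\chi_E)$ has its first link backwards. By \ref{cn_decomposition}, $\eta_\Pi(M)[+]=\eta_\Pi(M^*)[+]\sim\eta_{\pi_U}(M^*_{\TT})[+]\cdot\eta^\#_{\Pi}(M^*)[+]$. So it is $\eta_{\pi_U}(H^5_\TT)[+]$ that divides $\eta_\Pi(H^5)[+]$, as your own remark about base-change congruences suggests. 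The quotient is the relative congruence number $\eta^\#_\Pi(M^*)[+]$, which records congruences between $\Pi$ and non-base-change classes and has no reason to be a unit.
\item The second link carries no information: $\eta_{\pi_U}(H^5_\TT)[+]\mid\eta_{\pi_U}$ always holds, so it only asserts integrality of $L^{alg}$.
\item Even granting the chain, your bookkeeping would only give $\Om_2\Om_3\mid\Om_5(\Pi,\s,-)$. The factor $\nu_{\pi_U}$ arises precisely from writing $\eta_{\pi_U}(H^5_\TT)[+]=\nu_{\pi_U}^{-1}\eta_{\pi_U}$ in the factorisation above, then replacing $\eta_{\pi_U}$ via Step~2.
\end{itemize}

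What is actually needed is the upper bound $\eta^\#_{\Pi}(M^*)[+]\mid\Lambda^{imp}(\pi_U,\Ad\otimes\chi_E,1)/\Om_5(\Pi,\s,+)$, which is \ref{SBC_divisibility}. No comparison of Hecke algebras through $\theta:\TT_E\to\TT_U$ can produce an $L$-value on its own. The paper obtains the bound from an explicit linear form $\Ld$ on $H^5_{cusp}$, defined as restriction to the $5$-dimensional $\GL(\Q)$-cycle followed by the contraction $L_{\mu}\otimes L_{\mu}^\vee\to\O$ (integral because $\mu$ is $p$-small). It has three properties:
\begin{itemize}
\item it is $\s$-invariant;
\item it vanishes on the $\Pi'$-isotypic part for every $\Pi'$ that is not a stable base change, by the Flicker--Rallis conjecture (Rogawski, Mok), so it is killed by $\Ker\theta$;
\item $\Ld(\d^+_\s(\phi^\circ_\Pi))\sim\Lambda^{imp}(\pi_U,\Ad\otimes\chi_E,1)$. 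This comes from Flicker's residue formula at $s=1$ for $L(\Pi,\As^+,s)=\zeta(s)L(\pi_U,\Ad\otimes\chi_E,s)$, together with the local computations at the essential vector and Chen's archimedean computation.
\end{itemize}
The Tilouine--Urban type \ref{lf_lemma} then turns ``$\s$-invariant integral linear form killed by $\Ker\theta$'' into ``its value on the $\O$-generator $\d^+_\s(\phi^\circ_\Pi)/\Om_5(\Pi,\s,+)$ of $M[\l_\Pi][+]$ lies in $\eta^\#_\Pi(M^*)[+]$''. You mention the $\GL(\Q)$-cycle, but without the relative congruence module, the vanishing on non-base-change classes, and this lemma, your Step~3 has no mechanism that produces the divisibility.
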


As already explained, $\nu_{\pi_U} \in \O$ measures the defect between the Hecke congruence number $\eta_{\pi_U}$ of ${\pi_U}$ and the congruence number $\eta_{\pi_U}(H^5_{\TT})[+]$ on some submodule $H^5_{\TT}$ of the degree-5 cuspidal cohomology of $\GL(E)$ (see \S\ref{congruence_modules} and \S\ref{congruence_modules_with_an_involution} for the definition of these congruence numbers). Note that its presence in the above divisibility makes the divisibility stronger. $D_E$ is the discriminant of $E$, $\n \subset \O_E$ is the (mirahoric) level of $\Pi$ and $h_E(\mathfrak{n})$ is the cardinal of the ray class group of level $\mathfrak{n}$. The condition of $p$-smallness excludes a finite number of small primes with respect to $\mu$. The set $S_\partial$ is a finite set of primes introduced in \ref{thmD} below. Under some conditions on $\pi_U$, this set is expected to be empty. In any case, the conditions on $p$ exclude only finitely many prime numbers overall. Finally, assumption  $(H = \TT_U)$ is a condition concerning the freeness of the (localized) top degree cuspidal cohomology of $U_E$ over the localized Hecke algebra. By adapting Calegari-Geraghty theory to quasi-split unitary groups, one might be able to give some sufficient conditions on $\pi$  (such as minimality, Fontaine-Laffaille at $p$ and residual enormous image) so that $(H = \TT_U)$ holds.

We also prove a similar divisibility of periods  when the representation $\pi_U$ is no longer assumed to be self-dual. However, in this situation, rather restrictive conditions must be imposed on the ramification of the representation $\pi_U$ in order to obtain the same result. Moreover, the periods appearing in the formula are no longer the $\s$-periods of $\Pi$ (as $\Pi$ is no longer self-conjugate in this case), but rather another type of middle-degree periods, called the $\e$-periods, which are defined for conjugate self-dual representations (as is $\Pi$, being a stable base change from $U_E$). For the sake of clarity, we do not include this result in the  introduction, and we refer the reader to \S4.1 for its presentation. \\

As mentioned above, the integral period relation in \ref{main_conj} is  related to the Bloch-Kato Tamagawa number conjectures for the three motives $\Ad(\pi_U)$, $\Ad (\pi_U) \otimes \chi_E$ and $\Ad(\Pi)$. Let us now briefly explain why, in order to show how \ref{thmA} follows from the other main results in this article. The automorphic stable base change corresponds, at the level of $L$-functions, to the following decomposition of adjoint $L$-functions:

\begin{equation}
\label{L_func_dec}
L(\Pi,\Ad,1) = L(\pi_U,\Ad,1) \cdot L(\pi_U,\Ad \otimes \chi_E,1) 
\end{equation}

Generalizing a formula first established by Hida \cite{Hi81a,Hi81b} in the case of modular forms, Balasubramanyam-Raghuram \cite{BR17} and Chen \cite{Che22} have established an adjoint $L$-value formula for $\Pi$, which relates the adjoint $L$-value $L(\Pi,\Ad,1)$ to the cohomological congruence number of $\Pi$, and the product $\Om_4(\Pi) \cdot \Om_6(\Pi)$ of the periods of $\Pi$. In this paper (see \ref{thmD} below) we prove a similar formula for the representation $\pi_U$ of the quasi-split unitary group $U_E$, which relates the adjoint $L$-value $L(\pi_U,\Ad,1)$ to the cohomological congruence number of $\pi_U$, and the product $\Om_2(\pi_U) \cdot \Om_3(\pi_U)$ of the periods of $\pi_U$. Under some reasonable conditions on $\pi_U$ and $\Pi$, one may interpret these results (see \cite{thesis}[\S 1.1] for details, in particular Proposition 1.2) as an automorphic analog of the Bloch-Kato Tamagawa number conjecture for the adjoint motives of $\pi_U$ and $\Pi$, involving the product of the two extremal periods of $\pi_U$ and $\Pi$ instead the motivic periods appearing the original  formulation of the Bloch-Kato conjecture \cite{BK07}[Conjecture 5.15]. Consequently, given the decomposition (\ref{L_func_dec}), one needs to study the Bloch-Kato conjecture for the twisted adjoint motive of $\pi_U$ in order to relate the periods of $\pi_U$ to the periods of $\Pi$. \\

The main result of this paper is that we prove one divisibility of (an automorphic version of) the Bloch-Kato conjecture for the twisted motive $\Ad(\pi_U) \otimes \chi_E$ (see \ref{thmB} below). As our formula involves the middle-degree period $\Om_5(\Pi,\s,+)$ of $\Pi$, we need to establish an adjoint $L$-value formula involving the middle-degree periods of $\Pi$ as well, instead of the top and bottom periods of $\Pi$ appearing in Balasubramanyam-Raghuram and Chen formula. This is \ref{thmC} below.  Modulo some congruence module algebra, the periods divisibility in \ref{thmA} then easily follows from these results, which we now present in more details. \\

\subsection{The Bloch-Kato conjecture for the twisted adjoint motive of $\pi_U$} Let $\Pi = \mathrm{SBC}(\pi_U)$ be as before, except that in this paragraph we no longer need to assume that $\pi_U$ is globally generic. Let $\mu_U \in X^+(T_3)$ be the cohomological weight of $\pi_U$, so that $\mu_E = (\mu_U,\mu_U^\vee) = (\mu_U,\mu_U)$ is the cohomological weight of $\Pi$. Let $\mathfrak{n}$ be the mirahoric level of $\Pi$ and let $K_f := K_1(\n)$ be the mirahoric subgroup of level $\mathfrak{n}$. We denote by $\TT_E$ be the spherical Hecke algebra of level $K_f$, localized at the maximal ideal $\m_\Pi$ corresponding to $\Pi$. Let $M$ denote the degree $5$ cuspidal cohomology group of $\GL(E)$ localized at $\m_\Pi$:
$$
M = H_{cusp}^5(Y_E(K_f), \L_{\mu_E}(\O))_{\m_\Pi}
$$
where $Y_E(K_f)$ is the adelic variety of level $K_f$ for $\GL(E)$, and $\L_{\mu_E}(\O)$ is a locally constant sheaf associated to the irreducible algebraic representation $L_{\mu_E}$ of $\GL(E)$ of highest weight $\mu_E$. Then, $M$ is a $\TT_E$-module endowed with a semi-linear action of $\s$, the Galois involution of $E$. Let $\eta_{\Pi}^\#(M^*)$ be the relative (for the stable base change) congruence number of $\Pi$ on the dual module $M^* = \Hom_\O(M,\O)$. See \S\ref{sss_relative} for its definition. We prove the following result:

\begin{mainthm}[\ref{SBC_divisibility}]
\label{thmB}
Let $\pi_U$ and $\Pi = SBC(\pi_U)$ as above. Assume that the Galois representation associated to $\Pi$ is residually absolutely irreducible. Suppose that the cohomological weight $\mu_E$ of $\Pi$ is $p$-small and that $p~\nmid~6N_{E/\Q}(\mathfrak{n}) h_E(\mathfrak{n})D_E$. Then:
$$
\eta_\Pi^\#(M^*)[+] \quad | \quad \frac{\Lambda^{imp}(\pi_U,\Ad \otimes \chi_{E},1)}{\Om_5(\Pi,\s,+)}
$$
where $\eta_\Pi^\#(M^*)[+]$ is the $+$-part of $\eta_\Pi^\#(M^*)$ for the action of $\s$.
\end{mainthm}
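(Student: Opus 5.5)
Following Hida and Tilouine--Urban, I would bound $\eta_\Pi^\#(M^*)[+]$ by the integrality of a twisted period integral: pair the degree-$5$ class of $\Pi$ against the $5$-dimensional $\GL(\Q)$-locally symmetric cycle inside $Y_E(K_f)$. The starting point is the classical fact (Flicker, Jacquet) that the Asai $L$-function $L(s,\Pi,\As^{\pm})$ has a pole at $s=1$ for exactly one sign when $\Pi\simeq\Pi^\s$ comes from $U_E$. Its residue is computed by the $\GL_3(\mathbb{A}_\Q)$-period $\int_{\GL_3(\Q)\bs\GL_3(\mathbb{A})^1}\varphi(g)\,dg$ of a cusp form $\varphi\in\Pi$. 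For the stable base change, the relevant Asai factor has residue proportional to $L(1,\pi_U,\Ad\otimes\chi_E)$. This is the value appearing in \ref{thmB}, since $\Lambda(s,\Pi,\As^{\pm})=\zeta(s)\,\Lambda(s,\pi_U,\Ad\otimes\chi_E)$ up to the appropriate sign.

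\textbf{Step 1 (topological side).} Consider the cycle $\iota:Y_\Q(K_f\cap\GL_3(\mathbb{A}_f))\to Y_E(K_f)$ together with the branching $L_{\mu_E}=L_{\mu_U}\otimes L_{\mu_U}\to \O$. This branching is integral because $\mu_E=(\mu_U,\mu_U)$ and $\mu_U$ is $p$-small, so the self-pairing on $L_{\mu_U}$ is perfect over $\O$. These define an $\O$-linear functional $\ell:M\to\O$, given by pulling back along $\iota$ and pairing with the fundamental class. Since $\s$ preserves the cycle, $\ell$ factors through $M[+]$ (or $M[-]$, according to the orientation sign, which I would track carefully). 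Moreover $\ell$ factors through the cuspidal quotient, so it defines an element of $M^*[+]$.

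\textbf{Step 2 (analytic side).} Let $\delta_\Pi^\pm$ denote the normalized $\s$-eigenclass generating the $\Pi$-part of $M[\pm]$, namely the class defining $\Om_5(\Pi,\s,\pm)$ via the Whittaker comparison. Unfold $\ell(\delta_\Pi^+)$ through the residue of the Asai Rankin--Selberg integral of Flicker. This gives
$$\ell(\delta_\Pi^+)=\frac{\Lambda^{imp}(\pi_U,\Ad\otimes\chi_E,1)}{\Om_5(\Pi,\s,+)}\cdot u,$$
with $u$ a product of local archimedean and ramified zeta integrals. At the split primes dividing $\n$ and at the unramified places, the new/essential vectors make these local integrals $p$-units when $p\nmid 6N_{E/\Q}(\n)h_E(\n)D_E$. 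The factors $h_E(\n)$ and $D_E$ enter through the volume of the cycle, the connected components, and the Eisenstein residue normalization.

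\textbf{Step 3 (congruence module algebra).} Restrict $\ell$ to $M^*$ localized at $\m_\Pi$. By the definition of the relative congruence number in \S\ref{sss_relative}, $\eta^\#_\Pi(M^*)[+]$ divides $\phi(\delta_\Pi^+)$ for every $\phi\in M^*[+]$ that kills the part of $M$ not coming from stable base change (after extending scalars). The functional $\ell$ kills the non-stable base change part, because non-$\s$-distinguished cuspidal representations have vanishing $\GL_3(\Q)$-period. Hence $\eta_\Pi^\#(M^*)[+]\mid \ell(\delta^+_\Pi)$, and this yields \ref{thmB}. Residual irreducibility is used so that localizing at $\m_\Pi$ removes Eisenstein and boundary contributions, so that $\ell$ is indeed defined on the localized cuspidal module.

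The main obstacle I expect is Step 2, specifically the archimedean computation. One must show that the archimedean zeta integral attached to the chosen degree-$5$ $(\g,K)$-cohomology class, paired with the branching vector, is nonzero. One must also check that it is absorbed into the normalization of $\Om_5(\Pi,\s,+)$, up to a $p$-adic unit. My first attempt would be to reduce to the $\GL_3(\R)$ case via the Sun-type nonvanishing results for Rankin--Selberg archimedean integrals, adapted to the Flicker integral on $\GL_3(\R\times\R)$ restricted to the diagonal. The definition of $\Om_5(\Pi,\s,\pm)$ would then be chosen precisely so that this constant is built in.
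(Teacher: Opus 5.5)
Your architecture is the paper's. You use the $\sigma$-invariant form $\Ld$ obtained by restricting to the $5$-dimensional cycle $Y_\Q(K_f)$ and contracting coefficients with the $p$-small pairing. You evaluate it on $\d^+_\sigma$ via Flicker's residue formula, show it vanishes on non-base-change components by the Flicker--Rallis theorem, and apply \ref{lf_lemma} to the $\mathcal{O}$-basis $\d^+_\sigma(\phi_\Pi^\circ)/\Omega_5(\Pi,\sigma,+)$ of $M[\lambda_\Pi][+]$.

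\textbf{The gap: the archimedean place in Step~2.} The period $\Omega_5(\Pi,\sigma,+)$ is not at your disposal. It is fixed by Chen's generators $[\Pi_\tau]_2,[\Pi_\tau]_3$ through $\d^+_\sigma=\d^5_{\{\tau\}}+\sigma\circ\d^5_{\{\tau\}}$. The same normalization is used in \ref{adjoint_L_value}, and that is what lets \ref{thmA} be deduced by combining the results.
\begin{itemize}
\item Absorbing the archimedean integral into the period proves a different statement. At best you get $L^{imp}$ times an undetermined constant, not $\Lambda^{imp}$ up to a $p$-unit.
\item A Sun-type nonvanishing result only says the archimedean integral is nonzero. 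That is useless for a $p$-adic divisibility; you need its exact value up to a $p$-unit.
\end{itemize}
The missing observation is that the real place of $\Q$ splits in $E$ and $\Pi_{\sigma\tau}\simeq\Pi_\tau^\vee$. Consequently, on $\W(\Pi_\tau)\otimes\W(\Pi_{\sigma\tau})$:
\begin{itemize}
\item the archimedean Flicker--Rallis functional $\P^+_\infty$ is the Whittaker pairing (\ref{whittaker_pairing});
\item the form $\ell$ restricted to $\bigwedge^2\p^*_{\tau,\C}\otimes\bigwedge^3\p^*_{\sigma\tau,\C}$ is the wedge pairing $s$;
\item $\mathfrak{L}_{\mu_E}$ is the coefficient pairing.
\end{itemize}
Hence, by $\sigma$-invariance of $\mathfrak{L}$,
$$\mathfrak{L}([\Pi_\infty]^+_\sigma)=2\,\mathfrak{L}([\Pi_\tau]_2\otimes[\Pi_{\sigma\tau}]_3)=2\,B([\Pi_\tau]_2,[\Pi_\tau^\vee]_3).$$
This is exactly the archimedean pairing of the Rankin--Selberg adjoint formula. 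Chen computed it explicitly (\cite{Che22}, Lemmas 5.13--5.14) as $-2^{\ell+6}\pi\,L(\Pi_\infty,\As^+,1)$. The power of $2$ is a unit since $p$ is odd, and the factor $\pi$ cancels against $c_{3,\infty}=(4\pi)^{-1}$. This is what produces the $\Gamma$-factor, and hence $\Lambda^{imp}$, with the paper's periods (\S\ref{archimedean_SBC}, \ref{explicit_linear}).

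\textbf{Ramified local factors.} At primes dividing $\n$ the local integrals are not $p$-units in general. With the essential vector, $\P^+_v(W^\circ_{\Pi_v})=L^{imp}(\Pi_v,\As,1)$; this comes from the explicit essential-vector formulas, via \cite[Lemma 3.2]{TR_BC} at split $v$ and \cite{AM17}, \cite{Jo22} otherwise. These factors are precisely what turn the partial Asai residue into $L^{imp}$. Only the leftover explicit constant is a unit under the hypothesis on $p$; it is built from $h_\Q(N)$, $N_\pi$, $D_E$, $3$ and powers of $2$.

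\textbf{Role of residual irreducibility.} It is not what makes $\Ld$ well defined. $\Ld$ is defined directly on $H^5_{cusp}$ through $H^5_{cusp}\hookrightarrow H^5_c$ and restriction to $Y_\Q(K_f)$. Residual irreducibility is used in \ref{SBC_surjectivity}, which gives $\TT_{E,s}=\TT_E$, and to see that $\TT_U$ is generated by split-prime operators. Together these make $\theta_{SBC}:\TT_E\to\TT_U$ a well-defined surjection.

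Only then does vanishing on non-base-change isotypic parts become the hypothesis of \ref{lf_lemma}, namely that $\Ld$ is killed by $\Ker(\theta_{SBC})$. That vanishing itself needs the direction ``distinguished $\Rightarrow$ stable base change'' of \ref{th_mok}, due to Rogawski and Mok, not merely the existence of the Asai pole.
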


In the above theorem, $\Lambda^{imp}(\pi_U,\Ad \otimes \chi_{E},s)$ is the imprimitive completed twisted adjoint $L$-function of $\pi_U$, defined as an Euler product including the $\G$-factor at the archimedean place, and imprimitive local factors at places of ramification for $\pi_U$. Note that this theorem implies in particular that the normalized twisted adjoint $L$-value appearing on the right-hand side belongs to $\O$. The strategy of the proof of \ref{thmB} is explained in greater detail in paragraph \S1.5 of this introduction. We also prove a similar divisibility when the representation $\pi_U$ is no longer assumed to be self-dual. However, in this case, rather restrictive conditions must be imposed on the ramification of the representation $\pi_U$ in order to obtain the same result, and the periods appearing at the denominator of the $L$-value is different. For the sake of clarity, we do not include this result in the  introduction, and we refer the reader to \S4.1 for its presentation. \\

\ref{thmB} can be viewed as one half of an automorphic analogue of the Bloch-Kato Tamagawa number conjecture for the adjoint motive of $\pi_U$, twisted by the even character $\chi_E$. More precisely, under certain Calegari-Geraghty  assumptions \cite{CG18} on $\Pi$, the divisibility in \ref{thmB} can be shown to be equivalent to a divisibility between the order of the ($p$-divisible) Selmer group of this motive and the ratio of the $L$-value $L(\pi_U,\Ad  \otimes \chi_E,1)$ by the automorphic period $\Om_5(\Pi,\e,+)$ (instead of the motivic period appearing in the original formulation of the Bloch-Kato conjecture \cite{BK07}[Conjecture 5.15]. We refer to \cite{TR_BC} (see the discussion following Theorem B in the introduction) for a more detailed presentation of this interpretation in the case of the real quatratic base change for $\GL$.\\

As already explained, in order to deduce \ref{thmA} from \ref{thmB}, we must  relate the two adjoint $L$-values $L(\Pi,\Ad,1)$ and $L(\pi_U,\Ad,1)$ appearing in the decomposition (\ref{L_func_dec}) to the automorphic periods of $\Pi$ and $\pi_U$. This is the content of our next two results, which we present in the following two paragraphs. While these results are essential for the proof of \ref{thmA}, they are also of independent mathematical interest.

\subsection{A middle-degree adjoint $L$-value formula for $\Pi$} For $\Pi$, we have mentionned that there already exists such a formula, first established by Balasubramanyam and Raghuram \cite{BR17} up to some uncomputed archimedan factor, which has latter been computed by Chen \cite{Che22}. However, this formula relates the adjoint $L$-value $L(\Pi,\Ad,1)$ to the top and bottom periods of $\Pi$. As \ref{thmB} involves the new middle-degree periods $\Om(\Pi,\s,\pm)$, we thus need to prove a similar formula involving these periods. We keep the notation from \ref{thmA}. Let $\eta_{\Pi}(M)$ be the congruence number of $\Pi$ on the degree-5 inner cohomology group $M$ (see \S\ref{congruence_modules} for its definition). We then prove the following theorem:

\begin{mainthm}[\ref{adjoint_L_value}]
\label{thmC}
Let $\Pi$ be a cohomological automorphic cuspidal representation of $\GL(\A_E)$ which is self-conjugate. Under the same hypothesis on $\Pi$ and $p$ as in \ref{thmA}, one has:
$$
\eta_{\Pi}(M)[\pm] \quad \sim \quad \frac{\Lambda^{imp}(\Pi,\Ad,1)}{\Om_5(\Pi,\s,\pm) \cdot \Om_5(\Pi^\vee,\s,\mp)}
$$
where $\eta_{\Pi}(M)[\pm]$ is the $\pm$-part for the action of $\s$ on $\eta_{\Pi}(M)$ (in the sense of \S\ref{congruence_modules_with_an_involution}).
\end{mainthm}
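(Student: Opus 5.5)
The plan follows Hida's method, transplanted to the middle degree $q=5$ and twisted by $\s$. The congruence number $\eta_\Pi(M)[\pm]$ is, by the formalism of \S\ref{congruence_modules_with_an_involution}, the determinant of a perfect pairing. This pairing is the Poincaré duality pairing between the $\pm$-eigenspace of $M[\m_\Pi]$, or more precisely of the $\Pi$-isotypic $\O$-lattice $M[\Pi]$, and the $\mp$-eigenspace of the $\Pi^\vee$-isotypic lattice in degree $10-5=5$. We must check that $\s$ and the duality interact so that $+$ pairs with $\mp$; this is why $\Om_5(\Pi^\vee,\s,\mp)$ appears. Concretely, consider the pairing
$$
\langle\,,\,\rangle : H^5_{cusp}(Y_E(K_f),\L_{\mu_E}(\O))\times H^5_{cusp}(Y_E(K_f),\L_{\mu_E^\vee}(\O)) \to \O .
$$
It is perfect on the torsion-free quotients of interior cohomology after localization at $\m_\Pi$. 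This uses that $p$ is prime to $6N_{E/\Q}(\n)h_E(\n)D_E$ (no torsion in stabilizers, so the level structure is neat enough) and that $\mu_E$ is $p$-small, so that $\L_{\mu_E}(\O)$ is self-dual up to $\L_{\mu_E^\vee}(\O)$. The twisting by $\s$ is built into $M$ through its semi-linear action. The first step is therefore to show that the $\s$-eigenspaces $M[\Pi][\pm]$ are free of rank one over $\O$ and are paired perfectly modulo the congruence module with $M^\vee[\Pi^\vee][\mp]$. The general lemma is that for a perfect pairing on a Hecke module $M$, $\eta_\Pi(M)$ equals $\langle \phi,\phi^\vee\rangle$ for generators $\phi$, $\phi^\vee$ of the rank-one isotypic lattices. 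This reduces the statement to computing $\langle \phi_\pm,\phi^\vee_\mp\rangle$.

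The second step is to unwind the definition of $\Om_5(\Pi,\s,\pm)$ from \S\ref{s_periods}. Define $\delta_5^\pm$ to be the Eichler--Shimura type map sending the normalized new Whittaker vector (with an archimedean cohomology class in degree $5$ built from the relative Lie algebra cohomology, projected to the $\s$-eigenspace) to a class. Then $\phi_\pm = \Om_5(\Pi,\s,\pm)^{-1}\delta_5^\pm(w_\Pi)$, and similarly for $\Pi^\vee$. Consequently
$$
\langle \phi_\pm,\phi^\vee_\mp\rangle = \frac{\langle \delta_5^\pm(w_\Pi),\delta_5^\mp(w_{\Pi^\vee})\rangle}{\Om_5(\Pi,\s,\pm)\,\Om_5(\Pi^\vee,\s,\mp)} .
$$
The numerator is an adelic integral over $\GL(E)\bs \GL(\A_E)/\A_E^\x$ of a product of two cusp forms contracted against a wedge of archimedean harmonic forms. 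By Rankin--Selberg unfolding (Jacquet--Piatetski-Shapiro--Shalika, as in \cite{BR17}) it equals $L^{S}(\Pi\times\Pi^\vee,s)$ residue data. It is therefore a nonzero multiple of $L^{imp}(\Pi,\Ad,1)$ times explicit local zeta integrals. At the finite places these are computed on new vectors for the mirahoric level, giving the imprimitive factors and units under the hypotheses on $p$. Also, $\mathrm{Res}_{s=1}\zeta_E$ together with the class number and volume factors are $p$-units by $p\nmid h_E(\n)D_E$.

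The main obstacle is the archimedean factor. In the extremal case Chen \cite{Che22} computed it using the top/bottom Lie algebra cohomology classes. Here we need the wedge of two degree-$5$ classes, namely $\s$-eigencombinations of the $\binom{2}{1}$ Künneth pieces $H^2\otimes H^3$ and $H^3\otimes H^2$ at the two real places of $E$. My approach would be to write the degree-$5$ generator at $\infty=\infty_1\times\infty_2$ as $\omega_b^{(1)}\otimes\omega_t^{(2)}\pm \omega_t^{(1)}\otimes\omega_b^{(2)}$, where $\s$ swaps the two places. The pairing then factors into products of the $\GL_3(\R)$ local pairings $\langle \omega_b,\omega_t^\vee\rangle$, which is exactly the local archimedean computation of Chen for each real place. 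One must check that the cross terms either vanish for degree reasons or match, and that the sign pairs $\pm$ with $\mp$. The result is the archimedean $\G$-factor of $\Lambda(\Pi,\Ad,1)$ up to a $p$-unit, which gives the formula up to $\O^\x$.
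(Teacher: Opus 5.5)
Your route is the one the paper takes. The paper defers the details to \cite{TR_BC}, but its proof of the unitary analogue is the same template: Poincar\'e duality on $Y_E(K_f)$ in degree $5$ against degree $5$; the congruence-module lemma expressing $\eta_{\l_\Pi}(M)[\pm]$ as the pairing of $\O$-generators of eigenlattices; Rankin--Selberg unfolding of the resulting Petersson product at the essential vectors; and Chen's archimedean computation. \textbf{The gap is the perfectness of the pairing on $M$.} Your two inputs, $p\nmid 6N_{E/\Q}(\n)h_E(\n)D_E$ and $p$-smallness, only give perfect duality between $H^5_c(Y_E(K_f),\L_{\mu_E}(\O))$ and $H^5(Y_E(K_f),\L_{\mu_E^\vee}(\O))$ modulo torsion. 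On interior or cuspidal classes the pairing is in general only non-degenerate, because $H^5_!$ need not be saturated in $H^5$; the obstruction is torsion in the boundary cohomology. Without perfectness you cannot identify $M^*[\l_\Pi]$ with the dual eigenlattice, so you get a one-sided divisibility rather than $\sim$. This is exactly why the paper must impose $p\notin S_\partial$ in the unitary case. For $\GL(E)$ the missing input is the hypothesis your argument never uses: residual absolute irreducibility of $\rho_\Pi$. It makes $\m_\Pi$ non-Eisenstein, so boundary cohomology vanishes after localizing at $\m_\Pi$. Then the torsion-free parts of $H^5_c$, $H^5_!$ and $H^5$ localized at $\m_\Pi$ all coincide with $M$ (residual-spectrum classes are Eisenstein too), and Poincar\'e duality is perfect on $M$.

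Three smaller corrections. First, $\mathrm{Res}_{s=1}\zeta_E(s)$ is not a $p$-unit, since it contains the regulator of $E$. In the unfolding it cancels against $\vol(E^\x\bs\A_E^1)$ from the residue of the Eisenstein series. Only $\widehat\Phi(0)$, the newform local factors and the ray-class/discriminant volumes are left for the hypothesis on $p$ to control. Second, the sign check you leave open has a clean reason. The involution $\s$ swaps the two $5$-dimensional factors of the symmetric space of $G_E(\R)$, so it reverses the orientation of $Y_E(K_f)$ and $[\s x,\s y]=-[x,y]$. Since $H^5_{\{\tau\}}$ pairs trivially with itself for degree reasons, for the essential vectors $\phi$ of $\Pi$ and $\phi'$ of $\Pi^\vee$ one gets $[\d^\pm_\s(\phi),\d^\pm_\s(\phi')]=0$ and $[\d^\pm_\s(\phi),\d^\mp_\s(\phi')]=\mp 2\,[\d^5_{\{\tau\}}(\phi),\s\,\d^5_{\{\tau\}}(\phi')]$, where $2$ is a unit. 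Third, the rank-one lemma you quote does not apply as stated, because $M$ has $\l_\Pi$-rank $2$. By the second point, the matrix of $M[\l_\Pi]\to\Hom_\O(M^*[\l_\Pi],\O)$ in $\pm$-bases is antidiagonal, so $\eta_{\l_\Pi}(M)[\pm]=[m_\pm,n_\mp]$, and this replaces the lemma. The archimedean factor that remains is Chen's pairing of $[\Pi_\tau]_2$ with $[\Pi^\vee_\tau]_3$ at $\tau$, times its degree-$(3,2)$ counterpart at $\s\tau$.
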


In the above theorem, $\Lambda^{imp}(\Pi,\Ad,s)$ is the imprimitive completed twisted adjoint $L$-function of $\Pi$, defined as an Euler product including the $\G$-factor at the archimedean places, and imprimitive local factors at places of ramification for $\Pi$. In particular, \ref{adjoint_L_value} implies that the normalized adjoint $L$-values appearing on the right-hand sides is in $\O$. Since $\eta_{\l_\Pi}(H^5)[\pm]$ divides the Hecke congruence number $\eta_{\l_\Pi}$ of $\Pi$, the above equality implies that if the $\wp$-adic valuation of the right hand side is non-zero (where $\wp$ is the prime ideal of $\O$), then there exists a cuspidal automorphic representation $\Pi' \in \mathrm{Coh}(G_E,\mu_E,K_f)$ distinct from $\Pi$ which is congruent to $\Pi$ modulo $\wp$. However, we stress that $\eta_{\l_\Pi}(H^5)[\pm]$ may not be equal to $\eta_{\l_\Pi}$, since $H^5$ is not free as a $\TT$-module. \\

In some separate paper, we prove an analogous theorem (see \cite[Theorem 3.2]{TR_BC}) for conjugate self-dual automorphic representations of $\GL(E)$, involving the so-called $\e$-periods of $\Pi$ instead its $\s$-periods (which are not defined for conjugate self-dual representation). Since the proofs of these two theorems are essentially identical, we do not detail the proof of \ref{thmC} in this paper, and we refer to \cite[Section 3]{TR_BC}. Otherwise, we refer to Section 4 of the author's thesis \cite{thesis} where both results are proven in details. \\

Finally, we mention that \ref{thmC}, combined with Balasubramanyam-Raghuram's formula \cite[Theorem A]{BR17}, implies an integral relation between the middle-degree periods and the top-bottom periods of $\Pi$, under Calegari-Geraghty setting (see \ref{relation_middle_top_bottom}). \\

\subsection{An adjoint $L$-value formula for $\pi_U$} The Balasubramanyam-Raghuram formula is only valid for the split group $\GLn$. Our last result is an analog of there formula for the quasi-split group $U_E$. More precisely, we prove an adjoint $L$-value formula relating the (cohomological) congruence number of the representation $\pi_U$ to the quotient of the special value $L(\pi_U,\Ad,1)$ by the product of the automorphic periods of $\pi_U$. In fact, our result is more general. Let $E/F$ be a quadratic extension of totally real number fields and assume that the different $\mathfrak{d}_F$ of $F$ splits in $E$. Let $U_{E/F}$ be the quasi-split group in $3$-variables associated with $E/F$. 

Let $\pi$ be a cohomological cuspidal automorphic representation of $U_{E/F}(\A_F)$ and assume that $\pi$ is non-endoscopic and globally generic. Assume that $\pi$ is spherical at places of $F$ which are non-split in $E$. Then, $\pi$ can be associated with two automorphic periods $\Om_b(\pi)$ and $\Om_t(\pi)$  in $\C^\x /\O^\x$, with $b= 2 \dim(F)$ and $t = 3 \dim(F)$ being the bottom and top degrees of the cuspidal cohomology of $U_{E/F}$ (see \S\ref{unitary_3}). We then prove the following result:

\begin{mainthm}[\ref{unitary_adjoint_L_value}]
\label{thmD}
Let $\pi$ be as above. Assume that the cohomological weight $\mu$ of $\pi$ is $p$-small, that $p \notin S_\partial$ and that $p \nmid  2N_{F/\Q}(\mathfrak{c})h_U(\mathfrak{c})D_F$. Then:
$$
\eta_\pi(H^t) \sim u_{E/F} \cdot \frac{\Lambda^{imp}(\pi, \Ad,1)}{\Om_b(\pi) \cdot \Om_t(\pi^{\vee})}
$$
where $u_{E/F} \in \Q(\pi_f)^\x$ is a constant conjectured to be $1$. For $F=\Q$, one has that $u_{E/\Q} =1$.
\end{mainthm}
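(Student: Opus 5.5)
We follow the strategy of Hida and Balasubramanyam--Raghuram, transposed to the quasi-split group $U_{E/F}$. The backbone is the perfect Poincaré duality pairing between the top- and bottom-degree cuspidal cohomology.

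\textbf{The pairing.} Since $b+t = 5\dim(F) = \dim Y_U(K_f)$, Poincaré duality, composed with the duality $L_\mu \times L_{\mu^\vee}\to \O$, gives a pairing
$$
\langle\,,\rangle : H^b_{!}(Y_U(K_f),\L_\mu(\O))_{\m_\pi} \times H^t_{!}(Y_U(K_f),\L_{\mu^\vee}(\O))_{\m_{\pi^\vee}} \to \O .
$$
We first show that this pairing is perfect after localization and modulo torsion. The $p$-smallness of $\mu$ and the hypothesis $p\notin S_\partial$ are used exactly here: they guarantee that the boundary and Eisenstein contributions vanish after localizing at $\m_\pi$, so that inner cohomology equals cuspidal cohomology. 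The pairing is also Hecke-equivariant, up to the usual twisting by the Atkin--Lehner/transpose involution.

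\textbf{Congruence number from the pairing.} By the general congruence-module formalism (\S\ref{congruence_modules}), $\eta_\pi(H^t)$ equals $\langle \delta_b,\delta_t\rangle$, where $\delta_b$ and $\delta_t$ are $\O$-generators of the rank-one $\pi$- and $\pi^\vee$-isotypic lattices in the two degrees. By definition of the periods, $\delta_b = \Om_b(\pi)^{-1}[\phi_b]$ and $\delta_t = \Om_t(\pi^\vee)^{-1}[\phi_t]$. Here $\phi_b$ and $\phi_t$ are the cohomology classes attached to the normalized new Whittaker vectors (this uses global genericity and multiplicity one). Hence
$$
\eta_\pi(H^t) \sim \Om_b(\pi)^{-1}\Om_t(\pi^\vee)^{-1}\,\langle[\phi_b],[\phi_t]\rangle .
$$

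\textbf{Computing the transcendental pairing.} It remains to show that $\langle[\phi_b],[\phi_t]\rangle$ equals $u_{E/F}\,\Lambda^{imp}(\pi,\Ad,1)$ up to a unit. The pairing is a Petersson inner product $\int \phi_b\wedge\phi_t$ over $U(F)\bs U(\A_F)$. Since $\pi$ is non-endoscopic and globally generic, we use the Rankin--Selberg/Whittaker unfolding for the Petersson norm on $U(3)$, equivalently via the (unstable or stable) base change to $\mathrm{GL}_3(E)$ and the Asai $L$-function. This expresses the norm as a residue:
$$
(\phi,\phi') = c\cdot \mathrm{Res}_{s=1}L(s,\pi_E\times\pi_E^\vee)\,/\,L(1,\chi_{E/F})\times \prod_v Z_v .
$$
Here $L(s,\Pi\times\Pi^\vee) = \zeta_E(s)\,L(s,\pi,\Ad)\,L(s,\pi,\Ad\otimes\chi_{E/F})$, and the factor of $L(s,\Pi\times\Pi^\vee)$ that survives after the Asai decomposition is $L(s,\pi,\Ad)$. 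The proof then reduces to evaluating local zeta integrals at each place:
\begin{itemize}
\item At unramified places, the zeta integral is the unramified local $L$-factor.
\item At split ramified places, $U$ is locally $\mathrm{GL}_3$, and the mirahoric newvector theory gives the imprimitive factor.
\item At inert and ramified places, $\pi$ is spherical by hypothesis.
\end{itemize}
The volume factors, the class number $h_U(\mathfrak{c})$, $N(\mathfrak{c})$ and $D_F$ are all $p$-units by assumption; the condition that $\mathfrak{d}_F$ splits in $E$ is used to normalize the additive character compatibly.

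\textbf{Main obstacle: the archimedean factor.} The hardest step is the archimedean computation. We must compute the pairing of the explicit $(\mathfrak{g},K)$-cohomology classes, built from archimedean Whittaker functions of the discrete-series-like cohomological representations of $U(2,1)$ (each real place of $F$ gives $U(2,1)$, the quasi-split form), in bottom and top degree. We must then match the result with $\Gamma$-factors up to a rational constant. This is the constant $u_{E/F}$. For $F=\Q$ I would compute it explicitly, as Chen does for $\mathrm{GL}_n$, via Stade-type formulas for $U(2,1)$ Whittaker functions, or by comparing through base change with Chen's computation for $\mathrm{GL}_3(\C)$ versus $\mathrm{GL}_3(\R)^2$. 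This should show $u_{E/\Q}=1$; for general $F$, only rationality would be established.
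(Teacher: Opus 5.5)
Your cohomological skeleton matches the paper's: a Hecke-equivariant Poincaré pairing between $H^b_!$ and $H^t_!$, then \ref{pairing_lemma}, then the definition of the periods. One correction there: $p\notin S_\partial$ makes that pairing perfect, while $p$-smallness makes the coefficient pairing $L_\mu(\O)\times L_{\mu^\vee}(\O)\to\O$ perfect; neither is used to identify inner with cuspidal cohomology.

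The real gap is the analytic identity for $[\delta^b(\phi_\pi^\circ),\delta^t(\phi_{\pi^\vee}^\circ)]$. On $U_{E/F}$ there is no Rankin--Selberg unfolding of the Petersson product in the style of Jacquet--Shalika. The statement that $\langle\phi,\phi'\rangle/(W_\phi(e)W_{\phi'}(e))$ equals $L(\chi_{E/F},1)\,L^S(\pi,\Ad,1)$ times normalized local Whittaker functionals is exactly the Lapid--Mao conjecture. For unitary groups this is a deep theorem \cite{BP-C23,M24}, and it is the global input the paper uses (\ref{Lapid_Mao_conjecture}, made explicit in \ref{full_lapid_mao}).

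Your substitute formula is also wrong as written. By your own factorization,
$\mathrm{Res}_{s=1}L(s,\Pi\times\Pi^\vee)/L(1,\chi_{E/F})=\mathrm{Res}_{s=1}\zeta_F(s)\cdot L(1,\pi,\Ad)\cdot L(1,\pi,\Ad\otimes\chi_{E/F})$.
The global value $L(1,\pi,\Ad\otimes\chi_{E/F})$ cannot be cancelled by finitely many local zeta integrals, so nothing makes only $L(s,\pi,\Ad)$ ``survive''. Removing it would mean dividing by the Asai residue. That is a period relation between $U$ and $\mathrm{GL}_3(E)$ of the same depth as Lapid--Mao, not something an unfolding gives you.

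Your archimedean and ramified-place analysis is also misdirected. Since $E$ and $F$ are totally real, every archimedean place of $F$ splits in $E$, so $U(F_v)\simeq\mathrm{GL}_3(\R)$, not $U(2,1)$. This is why the cuspidal range is $[2d,3d]$ and $\dim Y_U(K_f)=5d$, as your own count $b+t=5\dim F$ shows. For $U(2,1)$ the symmetric space has dimension $4$ and cohomological discrete series occur in a single degree. So the archimedean term is Chen's $\mathrm{GL}_3(\R)$ computation $B([\pi_\infty]_b,[\pi_\infty^\vee]_t)=(-1)^{a_1}2^{a_2}\Gamma(\pi\times\pi^\vee,1)$, with no unknown constant left over.

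The constant $u_{E/F}$ comes instead from the places $v\in S_{E/F}$ ramified in $E$. There $U(F_v)$ is a ramified unitary group, and $K_v=U(F_v)\cap\mathrm{GL}_3(\O_{E,v})$ is special but not hyperspecial. The unramified evaluation of $I_v^\natural$ in \cite[Proposition 2.14]{LM15} therefore does not apply. Your step ``$\pi_v$ is spherical, hence the factor is unramified'' is exactly what cannot be taken for granted.

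The paper packages this local discrepancy as $u_{E/F}$. It gets $u_{E/\Q}=1$ from Chen's local computation for ramified $U(3)$ over $\Q_p$ (\ref{triviality_uE}). It proves $u_{E/F}\in\Q(\pi_f)^\times$ by $\mathrm{Aut}(\C)$-equivariance of the local Whittaker functional at those places (\ref{rationality_uE}). An archimedean $U(2,1)$ Whittaker computation would give neither statement.
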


The set $S_\partial$ is some finite set of primes which consists of primes supporting the torsion of the boundary cohomology of the adelic variety of $U_{E/F}$. This set is expected to be empty under some conditions on $\pi$. The ideal $\mathfrak{c} \subset \O_F$ is the mirahoric level of $\pi$ and $h_U(\mathfrak{c})$ is some class number. The ideal $\eta_{\pi}(H^t) \subset \O$ is the congruence number of $\pi_U$ on the inner cohomology group $H^t$ of (top) degree $t$. Finally, the constant $u_{E/F} \in \C^\x$ is some uncomputed product of local factors at places of $F$ which ramifies in $E$. When $F = \Q$, this factor is equal to $1$. In general, we also expect $u_{E/F}$ to be trivial but we can only prove that it belongs to the rationality field $\Q(\pi_{f})$ of the finite part $\pi_{f}$ of $\pi$. We refer to Section \ref{part_adjoint} for precise definitions, proof and comments.\\

The proof of \ref{thmD} proceeds along the same lines as that of the adjoint $L$-value formulas for $\mathrm{GL}_n$ (\cite{Hi81a,BR17}) One first needs to relate the adjoint $L$-value of $\pi$ to the Petersson product $\langle \phi,\phi' \rangle$ of some forms $\phi \in \pi$ and $\phi \in \pi^\vee$. For $\GLn$ such a relation is given by the Jacquet-Shalika formula. In the case of quasi-split groups, such a relation can be obtained by using the conjectural Lapid-Mao formula, which has recently been proved by Morimoto for quasi-split unitary groups. 
One then has to compute some local factors at bad places. Since $E$ and $F$ are totally real fields and we assume that $\pi$ is only ramified at split places of $F$, the computation of these local factors at archimedean and ramification places reduces to the case of $\GLn$. It then remains the local factors at places of $F$ which are ramified in $E$. This yields the uncomputed factor $u_{E/F}$. From the obtained formula, \ref{thmD} is proved by cohomologically interpreting the Petersson product  as a Poincaré pairing between the top and bottom degree inner cohomological groups of $U_{E/F}$. These groups are Hecke modules of rank $1$, from which automorphic periods can be defined, as in the case of $\mathrm{GL}_n$. One needs to exclude the primes $p \in S_\partial$ so that the Poincaré pairing is perfect on $\O$. The formula then follows from the theory of congruence modules developed by Hida in \cite{Hi81a}. \\

\subsection{Proof of \ref{thmB} and definition of the $\s$-periods} Since this is a general method for establishing period relation, we now outline the proof of \ref{thmB}. In doing so, we explain how the $\s$-periods are defined. The key point is to construct an $\O$-linear form on the cuspidal cohomology of $Y_E(K_f)$ with coefficients in $\O$, which has the following two properties :
\begin{itemize}
\item it vanishes on classes which correspond to automorphic representations which are not stable base-changes from $U_E$
\item it takes the expected twisted adjoint $L$-value on (the cohomological class associated to) some newform $\phi_f^\circ$ in $\Pi_f$. 
\end{itemize}

The idea is to consider the following linear form, called the Flicker-Rallis integral period:
$$
\P_{\GL}: \phi \in \Pi \mapsto \int_{Z_{3}(\mathbb{A}) \GL(\Q) \backslash \GL(\mathbb{A})} \phi(g)  dg
$$
and to factorizes it as a linear form on the cohomology of $\GL(E)$, through some Eichler-Shimura maps. The cuspidal cohomology of $\GL(E)$ is concentrated in degree $q=4,5,6$. As the modular submanifold associated to $\GL(\Q)$ is $5$-dimensional, the integral period $\P_{\GL}(\phi)$ should be cohomologically interpreted as an integral involving the differential $5$-form $\d(\phi)$ associated to $\phi$ through some Eichler-Shimura map $\d$. However, here we face two problems. The first problem concerns the normalisation of the Eichler-Shimura map $\d$. In fact, this map is only defined up to a complex scalar since its definition depends on the choice of a generator of certain $(\g,K_\inf)$-cohomology groups. Since we want to compare the $p$-adic integral stucture given by the Whittaker model on $\Pi_f$ to the $p$-adic integral structure on the cuspidal cohomology, we need to make a canonical choice for this generator. For $\mathrm{GL}_2$, this has been done by Hida (see \cite[Section 4]{Hi94}), and the maps are normalized so that they preserve the completed $L$-functions on both sides. To the best of our knowledge, this has not been done for $\GLn$ yet. However, in the special case of $\GL$ over a totally real field, Chen \cite{Che22} has explicited some canonical choices for these generators, and shown that they give the correct adjoint $\G$-factors. The second problem is that the Eichler-Shimura map $\d$ lands in:
$$
H^5_{cusp}(Y_E(K_f), \L_{\mu_E}(\C))[\Pi_f] 
$$
which is a $2$-dimensional $\C$-vector space. Hence it may not be possible to normalize $\d$ by some complex scalar (a period) so that it lands in the canonical $\O$-structure of the above $\C$-vector space. This is why the Betti-Whittaker periods of $\Pi$ are only defined using the top degree $t=6$ and the bottom degree $b=4$ cuspidal cohomology groups, whose $\Pi_f$-part is $1$-dimensional. However, following an idea of Hida, we remark that when $\Pi$ is self-conjugate and if $K_f$ is Galois invariant, the $\Pi$-part of the cuspidal cohomology is endowed with a non-trivial action of the Galois involution $\s \in \Gal(E/\Q)$. Moreover, this action preserves the integral $\O$-structure. Thus, we can define two canonically normalized Eichler-Shimura maps:
$$
\d_\s^\pm: \Pi_f^{K_f} \to H^5_{cusp}(Y_E(K_f), \L_{\mu_E}(\C))[\Pi,\s = \pm]
$$
landing into the $\pm$-eigenspace for $\s$. The right-hand side is a complex $1$-dimensional vector space admitting a canonical integral $\O$-structure, so we can attach two periods $\Om(\Pi,\s,\pm)$ to $\Pi$ provided it is self-conjugate (see paragraph~\S\ref{s_periods} for details).

We can then define a linear form $\Ld$ on the middle-degree cuspidal cohomology group $H^5_{cusp}(Y_E(K_f), \L_{\mu_E}(\C))$, so that the Flicker-Rallis period factorizes into $\Ld$ through the Eichler-Shimura maps. The first vanishing property is thus ensured by the Flicker-Rallis conjecture, which has been proven by Mok \cite{Mok}. It states that the Flicker-Rallis period is a non-zero linear form if and only if $\Pi$ is the stable base change lift of some cuspidal automorphic representation of $U_E$. To check the second property we need to compute the explicit value of this linear form on the $5$-differential form $\d^{+}_\s(\phi_\Pi^\circ)$ associated the essential vector $\phi_\Pi^\circ$ of $\Pi$, which is some normalized $K_f$-newform in $K_f$ used to define the periods $\Om_5(\Pi,\s,\pm)$ of $\Pi$. This is done using a formula of Flicker \cite{Flicker88}, which relates the Flicker-Rallis period to the residue at $s=1$ of the Asai $L$-function $L(\Pi,\As,s)$ of $\Pi$. Flicker's formula involves uncomputed ramified and archimedean local factors, which we thus have to compute. The archimedean factor is computed by adapting some computations of Chen \cite{Che22}, and give the (twisted) adjoint $\G$-factors. The ramified local factors are computed by using some explicit expressions for the essential vector given separately by Miyauchi and Matringe (see paragraph~\ref{mirahoric_theory}), and are equal to the imprimitive local $L$-factors. Finally, the divisibility of \ref{thmB} is proven by using the key \ref{lf_lemma}, which is an adaptation of \cite[Lemma 2.9]{TU22} to the context of Hecke-modules with a semi-linear involution. \\

\subsection{Comments} We conclude this introduction with a few comments. A natural question is how to prove the reciprocal divisibility in \ref{thmA} (or equivalently in \ref{thmB}). In the few case where such a divisibility is established, the proof generally involves some non-vanishing modulo $p$ result. For instance, in the case of the Yoshida lift of two cuspidal modular forms, Liu and Hsieh (in some work in preparation) have been able to establish such a divisibility by using the non-vanishing modulo $p$ of the Yoshida lift's Fourier coefficients. In the case of the quadratic base change for $\mathrm{GL}_2$, the reciprocal divisibility is proven in \cite{TU22} by using a theorem of Cornut-Vatsal \cite{Cornut02,Vat02} on the non-vanishing modulo $p$ of twisted standard $L$-values for weight-$2$ modular forms, generalized by Chida-Hsieh \cite{CH18} to $p$-small weights. However, to the knowledge of the author, no such result is known for quasi-split unitary groups.  \\

We now discuss to what extent the results of the present paper could be generalized to larger $n$ and general extensions $E/F$ of number fields. The problem as $n$ grows and $E$ gets bigger is that the size $\ell_E$ of the cuspidal range of $\GLn(E)$ becomes greater than $2$. In this situation, this is not clear how to define intermediate degree periods as the dimension of the intermediate cohomology groups can be strictly bigger than $2$, whereas the number of involutions available for cutting out $1$-dimensional subspaces remains the same. Hence in general, some numerical coincidence is needed between the dimension of the adelic varieties associated with $\mathrm{GL}_{n}(F)$ and the extremal degrees of the cuspidal cohomology of $\mathrm{GL}_{n}(E)$. Such a numerical coincidence happens for any $n$ when $E/F$ is CM extension, in the case of the stable base change. This situation has been studied rationally by \cite{GHL16}, and integrally by Balasubramanyam and Tilouine \cite{BT}. Nevertheless, the methods presented in this paper may apply to other cases where $\ell_E=2$, which include:
\begin{itemize}
\item  Case 1: $n=2$ and $E$ is an imaginary bi-quadratic number field;
\item  Case 2: $n=4$ and $E$ is real quadratic;
\item  Case 3: $n=3$ and $E$ is imaginary quadratic.
\end{itemize}
The first two cases are similar to the situation of this paper in the sense that the archimedean places of $F$ are split in $E$. Case 1 has been studied by Hida \cite[Section 7]{Hi99}. Our results for $\GL$ should easily be extended to Case 2, provided one is able to extend Chen’s archimedean computations to $\mathrm{GL}_4$. Case 3 is investigated by Balasubramanyam and Tilouine \cite{BT}. \\

Of course, one can also consider the classical base change from $\GLn(F)$ to $\GLn(E)$. In a separate paper \cite{TR_BC}, we study this situation for $n=3$ and when $E$ is real quadratic. We prove a periods divisibility result for this automorphic transfer using methods which are similar to the methods developed in the present paper. In some sense, the situation for the base change is symmetric to that of the stable base change considered here, and we refer to the introduction of the author’s thesis \cite{thesis} for a unified presentation of these two settings.\\

Finally, the results of this paper could be used to construct $p$-adic $L$-functions interpolating the values of the twisted adjoint $L$-function $L(\pi_U,\Ad \otimes \chi_E,1)$ as $\pi_U$ varies in a Hida family of self-dual cuspidal automorphic representations of $U_E$. We refer the interested reader to the last paragraph of the introduction of \cite{TR_BC}, where the construction of such a $p$-adic $L$-function is sketched for $\GL(\Q)$. \\

\subsection{Outline} This paper is organized as follows. Section~\ref{section_periods} introduces the main objects and defines the middle-degree $\s$-periods associated with self-conjugate automorphic representations of $\GL(E)$.  The section concludes with a precise statement of \ref{thmC}. In Section~\ref{part_adjoint} we recall some basic definitions and properties of quasi-split unitary groups and their automorphic representations, and we prove \ref{thmD}. Section~\ref{part_SBC} recalls basic definitions and results related to the stable base change, proves \ref{thmB}, and establishes the main period divisibility of \ref{thmA}. The final paragraph of this section is devoted to the case of non-self-dual representations.

\subsection{Acknowledgements} The author is deeply indebted to his advisor, J. Tilouine, for his constant help and support during the preparation of this paper. The author also would like to thank B. Balasubramanyam, S.-Y. Chen, E. Ghate, G.Grossi, H. Hida, M.-L.Hsieh, E. Lapid, Z. Liu, N. Matringe, M. Moakher, D. Prasad, K. Prasanna and E. Urban for stimulating conversations, comments and suggestions regarding this work. This paper was partly completed during two visits at IISER Pune on an invitation by B. Balasubramanyam and a visit at IIT Bombay on an invitation by D. Prasad, as well as during a visit at the NCTS on an invitation by M.-L. Hsieh. The author would like to thank them heartily for their hospitality. I would also like to thank Eknath Ghate and Sandeep Varma, organizers of the \textit{p-adic Methods in Number Theory conference} at TIFR in Mumbai, for giving me the opportunity to present my work there.

\subsection{Notation}

Throughout this paper, we consider an odd prime number $p$. We fix an embedding $j: \overline{\Q} \inj \C$ and an isomorphism $ j_p: \overline{\Q}_p \simeq \C$. Let $\K \subset \overline{\Q}_p$ be some sufficiently large finite extension of $\Q_p$. We denote by $\O$ its integers ring, and $\wp$ its prime ideal.

Using $j_p$, we can see complex numbers as elements of $\overline{\Q}_p$. Thus, if $z_1$ and $z_2$ are two non-zero complex numbers, we write $z_1 \sim z_2$ (resp. $z_1 \mid z_2$) if the quotient $z_2/z_1$ lies in $\O^\x$ (resp. in $\O$).

Unless otherwise specified, $E$ will always denote a real quadratic field, and $\s \in \mathrm{Gal}(E/\Q)$ is the non-trivial element in $\mathrm{Gal}(E/\Q)$. We fix an embedding $\tau: E \inj \overline{\Q}$, so that via $j$ we can identify the set $\{\tau,\s\tau\}$ with the set of archimedean places of $E$. Moreover, we denote by $\chi_E$ the quadratic Dirichlet character associated with $E$.

\subsubsection{Number fields, groups and additive characters}
\label{notations}
Let $F$ be some number field. We denote by $\O_F$ its ring of integer, $\mathfrak{d}_F$ its different and $D_F$ its discriminant. If $v$ is any place of $F$, we denote by $F_v$ the completion at $v$. When $v$ is finite, we denote by $\O_{F_v}$ (or simply $\O_v$ when the context is clear) its ring of integers, $\wp_v$ its prime ideal and $\varpi_v$ an uniformizer of $F_v$. Let $ S_\inf(F)$ be the set of archimedean places of $F$, $\A_F$ be the adeles of $F$ and $\A_{F,f}$ its finite part. When $F=\Q$, we simply write $\A$ and $\A_f$. \\

Let $n\geq 1$ be an integer. Let $\GLn$ denote the general linear group of dimension $n$, $B_n$ its standard Borel, $N_n$ and $T_n$ respectively the maximal unipotent and the maximal torus of $B_n$. Let $Z_n$ denote the center of $\GLn$. We also denote by $G_F$ the $\Q$-algebraic group $\mathrm{Res}_{F/\Q}(\mathrm{GL}_{n/F})$, whose $A$-points are given, for any $\Q$-algebra $A$, by:
$$
G_{F}(A) = \mathrm{GL}_n(F\otimes_\Q A).
$$
Let $K_n$ be the identity component of the maximal compact subgroup modulo center of $\GLn(\R)$:
$$
K_n= \mathrm{SO}(n) \R_+^\x
$$
where $\R^\x$ is seen as the center of $\GLn(\R)$. Let $\g_{n}$ and $\k_{n}$ be the Lie algebras of $\GLn(\R)$ and $K_n$, and let $\p_{n} := \g_{n}/\k_{n}$. If $F$ is a totally real field, let $K_{\inf} = \prod_{\tau \in  S_\inf(F)} K_n
$ be the identity component of the maximal compact subgroup modulo center of $G_F(\R)$ and let $\g = \oplus_\tau \g_{n}$ and $\k =\oplus_\tau \k_{n}$ be the Lie algebras of $G_F(\R)$ and $K_\inf$, and let $\p := \g /\k$. Let $\g_\C$, $\k_\C$ and $\p_\C = \g_\C /\k_\C$ denote their complexification, and $\p_\C^*$ the dual space of $\p_\C$. \\

We denote by $\psi_\Q = \bigotimes_v \psi_{\Q_v} : \Q\bs\A \to \C^\x$ the additive unramified character defined by:
$$
\begin{aligned}
\psi_{\Q_p}(x) &= \exp(-2\sqrt{-1}\pi[x]_p), \quad x \in \Q_p \\
\psi_\R(x) &= \exp(2\sqrt{-1}\pi x), \quad x \in \R \\
\end{aligned}
$$
Here $[x]_p := \sum_{k=-m}^{-1} c_k p^k$ is the $p$-fraction part of $x = \sum_{k=-m}^\inf c_kp^k$. Let $\psi_F : F \bs \A_F\to \C^\x$ be the standard non-trivial character  on $\A_F$, defined by $\psi_F = \psi_\Q \circ \Tr_{F/\Q}$. If $\psi : F \bs \A_F\to \C^\x$ is any additive character, it defines a character of $N_n(F)\bs N_n(\A_F)$, also denoted by the same symbol $\psi$, and defined by:
$$
\psi(n) = \psi(n_{1,2} + \dots + n_{n-1,n}), \quad n \in N_n(\A_F)
$$ 

\subsubsection{Haar measures}
\label{general_haar_measures}
Write $\psi_F = \bigotimes_v \psi_v$ for the standard additive character on $\A_F$. The additive Haar measures on $\A_F$ are fixed as follows. Let $v$ be a place of $F$. If $v$ is non-archimedean, the Haar measure $dx_v$ is the self-dual measure on $F_v$ with respect to $\psi_v$. The volume of $\O_v$ with respect to $dx_v$ is:
$$
\vol(\O_v,dx_v) = N(\mathfrak{d}_v)^{-1/2}
$$
where $\mathfrak{d}_v$ is the different of $F_v$. If $v$ is a real archimedean place, the Haar measure $dx_v$ on $F_v$ is the Lesbesgue measure. The global Haar measure on $\A_F$ is then $dx = \prod_v dx_v$. Note that:
$$
\vol(\A_F/F,dx) = 1.
$$

The local Haar measures on the idèles $\A_F^\x$ are fixed as follows. Let $v$ be a place of $F$. If $v$ is non-archimedean, corresponding to some prime ideal $\wp$, we set the Haar measure $d^\x x_v$ on $F_v^\x$ to be:
$$
d^\x x_v = (1-N\wp^{-1})^{-1} \frac{dx_v}{|x_v|}
$$
The volume of $\O_v^\x$ with respect to $d^\x x_v$ is  $\vol(\O_v^\x,d^\x x_v) = N(\mathfrak{d}_v)^{-1/2}$. If $v$ is a real archimedean place, we normalize the Haar measure $d^\x x_v$ on $F_v^\x = \R^\x$ by $d^\x x_v = |x_v|^{-1} dx_v$. The global Haar measure on $\A_F^\x$ is then $d^\x x = \prod_v d^\x x_v$. Note that this is not the Tamagawa measure on $\A_F^\x$. In particular, one has:  \\
$$
\vol(\A_F^1/F^\x,d^\x x) = \mathrm{Res}_{s=1} \zeta_E(s).
$$
where $\A_F^1$ denotes the subgroup of idèles of norm $1$ and $\zeta_E$ is the Dedekind zeta function of $E$. \\

The Haar mesure $dg_\inf$ on $\GLn(\R)$ is normalized using the Iwasawa decomposition $g = nak$, with $n \in N_n(\R)$, $a \in (\R^\x)^n$ and $k \in \mathrm{O}(n)$ by:
$$
dg = \d^{-1}_{B_n(\R)}(a)du d^\x a dk
$$
where $du$ is the Haar measure on $N_n(\R)$, $d^\x a =  \prod_{i=1}^n d^\x a_i$ and $dk$ is the probability measure on $\mathrm{O}(n)$. The modulus character $\d_{B_n(\R)}$ is defined by $\d_{B_n(\R)}(a) = \prod_{i=1}^n |a_i |^{n-2i+1}$. \\

\section{Eichler-Shimura maps and middle-degree automorphic periods}
\label{section_periods}

In this section, we first recall the new vector theory for the mirahoric subgroups of $\GLn$. We then introduce the cohomological framework and the automorphic objects needed throughout the paper. Finally, we construct the middle-degree Eichler–Shimura maps for $\GL(E)$, and define the middle-degree $\s$-periods attached to a self-conjugate cohomological cupsidal representation of $\GL(E)$.

\subsection{Mirahoric new vector theory for $\GLn$}
\label{mirahoric_theory}

\subsubsection{Local theory}

Let $L$ be a non-archimedean local field of characteristic zero, with valuation ring $\O$ and prime ideal $\wp$. Let $q = \#(\O/\wp)$ and let $\varpi$ be an uniformizer of $L$. For an integer $c \geq 0$, let $K_1(\wp^c)$ be the mirahoric subgroup of level $\wp^c$, which is the open compact subgroup of $\GLn(L)$ formed by matrices in $\GLn(\O)$ whose last row is congruent to:
$$
e_n:= (0, \dots,0,1)
$$
modulo $\wp^c$. Let $\pi$ be an irreducible admissible representation of $\GLn(L)$ which is generic. Let $\psi$ be a non-trivial unramified additive character of $L$, i.e. such that $\psi(\O_L) = 1$ and $\psi(\varpi^{-1}) \neq 1$. We denote by $\W(\pi,\psi)$ the Whittaker model of $\pi$ with respect to $\psi$, and by $\phi \mapsto W_\phi$ the isomorphism $\pi \toeq \W(\pi,\psi)$. For any non-negative integer $c$, we denote by $V(c)$ the space of $K_1(\wp^c)$-fixed vector in $\pi$. The following theorem is due to Jacquet, Piatetski-Shapiro, and Shalika (see~\cite[Section 5]{J-PS-S81}):

\begin{theorem}
\label{essential_vector}
Let $\pi$ be an irreducible admissible representation of $\GLn(L)$ which is generic. Then, there exists a non-negative integer $c$ such that $V(c) \neq 0$. Moreover, if $c(\pi) \geq 0$ is the minimal integer with this property, then: 
$$
\dim V(c(\pi)) = 1
$$
and $c(\pi)$ coincides with the analytic conductor of $\pi$, i.e the power of $q^{-s}$ in the $\e$-factor of $\pi$ with respect to an unramified additive character $\psi$ of $L$.
\end{theorem}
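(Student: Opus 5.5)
The plan is to follow the approach of Jacquet, Piatetski-Shapiro and Shalika: work in the Kirillov model and use the Rankin--Selberg zeta integrals $\Psi(s,W,W')$ for $\GLn \times \mathrm{GL}_{n-1}$ together with their local functional equation. Two facts about these integrals will drive the argument. The $L$-factor $L(s,\pi\times\pi')$ generates the fractional ideal $\{\Psi(s,W,W')\}$ of $\C[q^{s},q^{-s}]$. The functional equation reads
$$\Psi(1-s,\tilde\pi(w_{n,n-1})\tilde W,\tilde W')/L(1-s,\tilde\pi\times\tilde\pi') = \omega_{\pi'}(-1)^{n-1}\e(s,\pi\times\pi',\psi)\,\Psi(s,W,W')/L(s,\pi\times\pi'),$$
where the $\e$-factor is a monomial $\e(0,\pi\times \pi',\psi)q^{-f(\pi\times\pi')s}$. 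We take $\pi'$ unramified and also allow $\pi'$ to vary. Write $c=f(\pi)$ for the exponent in $\e(s,\pi,\psi)$.

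\textbf{Existence.} First we construct a nonzero vector in $V(c)$. Restricted to the mirahoric $P_n$, the Kirillov model contains all compactly supported smooth functions on $N_{n-1}\bs \mathrm{GL}_{n-1}$ that transform by $\psi$. We therefore pick $W$ with $W|_{\mathrm{GL}_{n-1}}$ equal to the $\psi$-Whittaker function supported on $N_{n-1}\mathrm{GL}_{n-1}(\O)$, normalized so that $W(1)=1$; the point is that $\Psi(s,W,W')=1$ for every unramified $W'$ of level one. We then average $W$ over the level-$c$ mirahoric. Equivalently, we use the functional equation: its right-hand side, combined with $L(1-s,\tilde\pi\times\tilde\pi')$ and the $\e$-factor $q^{-cs}$ (for $\pi'$ unramified, $f(\pi\times\pi')=(n-1)c$ up to the unramified twists), identifies $\tilde\pi(w)\tilde W$ with an explicit function. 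From this we read off invariance under the lower unipotent part $\bar U$ whose last row lies in $\wp^{c}$. Since $K_1(\wp^c)$ is generated by $P_n\cap\GLn(\O)$ and that lower unipotent part, this proves $W\in V(c)$.

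\textbf{Minimality and uniqueness.} Now take $W\in V(m)$. Right invariance under $K_1(\wp^m)$, together with the functional equation for all unramified $\pi'$, gives
$$\Psi(s,W,W'_{\pi'})/L(s,\pi\times\pi') = C\, q^{(m-c)(\dots)s}\cdot(\text{polynomial}).$$
This comes from computing the left-hand side of the functional equation: invariance by the element $\mathrm{diag}(1,\dots,1,\varpi^{m})$-conjugated unipotents bounds the support of $\tilde\pi(w)\tilde W$ restricted to $\mathrm{GL}_{n-1}$. Comparing degrees in $q^{-s}$ forces $m\ge c$.

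For $m=c$ we get more. The function $g\mapsto W(\mathrm{diag}(g,1))$ on $\mathrm{GL}_{n-1}$ is determined by the values $\Psi(s,W,W'_{\pi'})$ for all unramified $\pi'$, via Hecke theory on $\mathrm{GL}_{n-1}$: it is right $\mathrm{GL}_{n-1}(\O)$-invariant, so it is determined by its Satake/Shintani expansion. These values are forced to equal $W(1)\,L(s,\pi\times\pi')$. It follows that $W|_{\mathrm{GL}_{n-1}}$ is a multiple of the function constructed above, and by injectivity of the Kirillov restriction $W$ itself is determined up to scalar. Hence $\dim V(c)=1$, and $c(\pi)=c$ is the conductor.

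\textbf{Main obstacle.} The hardest step is the Hecke-theoretic one: showing that $\mathrm{GL}_{n-1}(\O)$-invariant functions are determined by their zeta integrals against all unramified $\pi'$, via a Shintani formula and density of unramified characters. The bookkeeping of the $\e$-factor exponent for $\pi\times\pi'$ when $\pi'$ is unramified also needs care.
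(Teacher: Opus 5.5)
\textbf{Verdict: there is a genuine gap, in the existence step.} The paper does not prove this theorem; it quotes \cite[Section 5]{J-PS-S81}, and your outline follows that source.

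\textbf{What works.} Your minimality and uniqueness step uses the right mechanism. For $W\in V(m)$, the ratio $\Psi(s,W,W'_{\pi'})/L(s,\pi\times\pi')$ is a polynomial in $q^{-s}$. The support bound $\lambda_{n-1}\ge -m$ for $\tilde\pi(w_{n,n-1})\tilde W$ on $\mathrm{GL}_{n-1}$, combined with the functional equation, bounds its degree by $(n-1)(m-c)$. At $m=c$ the ratio is the constant $W(1)$. The Hecke-theoretic step you single out as the main obstacle is in fact routine: it follows from Shintani's formula and the linear independence of Schur polynomials.

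\textbf{The gap.} The vector $W$ whose restriction to $\mathrm{GL}_{n-1}$ is supported on $N_{n-1}\mathrm{GL}_{n-1}(\mathcal{O})$ is \emph{not} in $V(c)$ in general.
\begin{itemize}
\item By your own uniqueness argument, every $W\in V(c)$ satisfies $\Psi(s,W,W'_{\pi'})=W(1)L(s,\pi\times\pi')$. So a vector with $\Psi\equiv 1$ can lie in $V(c)$ only if $L(s,\pi)=1$.
\item Concretely, take an unramified generic principal series of $\mathrm{GL}_2$, so $c=0$. The Kirillov function $\mathbf{1}_{\mathcal{O}^\times}$ is fixed by $K_1(\wp^2)$ but not by $\mathrm{GL}_2(\mathcal{O})$.
\item The functional equation shows what goes wrong. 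The dual side is $\pm\varepsilon(0,\pi\times\pi',\psi)\,q^{-(n-1)cs}\,L(1-s,\tilde\pi\times\tilde\pi')/L(s,\pi\times\pi')$. The polynomial factor $1/L(s,\pi\times\pi')$ spreads the support of $\tilde\pi(w)\tilde W$ on $\mathrm{GL}_{n-1}$ below $\lambda_{n-1}=-c$, so $\bar{U}_c$-invariance cannot be read off.
\item This $W$ does give the first assertion of the theorem. It is $P_n(\mathcal{O})$-fixed and smooth, hence fixed by $\bar{U}_m$ for $m\gg 0$, so $V(m)\neq 0$ for large $m$.
\item Averaging is not ``equivalent''. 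The $K_1(\wp^c)$-average is invariant for free, and the whole content would then be proving that it is nonzero. You do not address this.
\end{itemize}

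\textbf{What is missing.} You need a single $W^\circ\in\mathcal{W}(\pi,\psi)$ with $\Psi(s,W^\circ,W'_{\pi'})=L(s,\pi\times\pi')$ for \emph{all} unramified $\pi'$ simultaneously. The g.c.d.\ property only gives, for each $\pi'$ separately, some finite combination of zeta integrals equal to $L$. The restriction of $W^\circ$ to $\mathrm{GL}_{n-1}$ is the Schur-polynomial function of \ref{explicit_essential_values}. That function is not compactly supported modulo $N_{n-1}$ when $L(s,\pi)\neq 1$, so proving it lies in the Kirillov model is the real difficulty. This is done in \cite{J-PS-S81}, and constructively in \cite{Matringe13}.

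\textbf{How the argument finishes once you have $W^\circ$.} Your functional-equation argument then goes through as you intended. The dual side becomes $\pm\varepsilon(0,\pi\times\pi',\psi)\,q^{-(n-1)cs}\,L(1-s,\tilde\pi\times\tilde\pi')$, a power series in $q^{s}$ beginning at $q^{-(n-1)cs}$. This forces $\lambda_{n-1}\ge -c$ on the support of $\tilde\pi(w)\tilde W^\circ$, hence $\bar{U}_c$-invariance and $W^\circ\in V(c)$.

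\textbf{Why the step cannot be skipped.} Your uniqueness argument controls $V(m)$ only at $m=c$; for $m>c$ it allows, and in fact there are, more than one dimension. So without $V(c)\neq 0$ you obtain neither $c(\pi)=c$ nor $\dim V(c(\pi))=1$.
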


The integer $c(\pi)$ of the above theorem is called the \textit{mirahoric conductor} of $\pi$. We will say that $\wp^c$ is the \textit{mirahoric level} of $\pi$. A non-zero form $\phi$ in $V(c(\pi))$ will sometimes be called a newvector or a newform. The values of the Whittaker function $W_\phi$ associated with a newform $\phi$ on (a part of) the diagonal torus have been explicitly computed independently by Miyauchi~\cite[Theorem 4.1]{Miyauchi2012} and Matringe~\cite[Formula (1)]{Matringe13}. The proof of Miyauchi is a generalisation of Shintani's method for unramified representation and assume \ref{essential_vector}, while the proof of Matringe deduce these formulas from a new constructive proof of the results of Jacquet, Piatetski-Shapiro, and Shalika.

In order to state their formula, we need to introduce a few notation. First, for any $f = (f_1,\dots,f_{n-1}) \in \Z^{n-1}$, we note $\varpi^f = \mathrm{diag}(\varpi^{f_1},\dots, \varpi^{f_{n-1}}) \in \mathrm{GL}_{n-1}(L)$. A tuple $f \in \Z^{n-1}$ can also be seen as the element $(f_1,\dots,f_{n-1},0)$ in $\Z^n$. When $f$ is dominant as a weight for $\GLn$, i.e if $f_1 \geq \dots \geq f_{n-1} \geq 0$, we denote by $s_{f}(X_1,\dots,X_n)$ the Schur polynomial associated to $f$. Next, we recall that the standard $L$-function of $\pi$ is of degree $r \leq n$ with $r<n$ if and only if $\pi$ is ramified, i.e. if $c(\pi) >0$ (see \cite[Section 3]{Jacquet79}). Thus, we can write it as:
$$
L(\pi,s) = \prod_{i=1}^r (1-\a_i q^{-s})^{-1}
$$
with $\a_i \in \C^\x$. These notations having been presented, we have the following theorem:
\begin{theorem}
\label{explicit_essential_values}
Let $\pi$ be an irreducible admissible generic representation of $\GLn(L)$, and $\phi$ be a newvector in $\Pi$, and $W_\phi \in \W(\pi,\psi)$ its Whittaker function. Then, for all $f \in \Z^{n-1}$, we have:
$$
W_\phi \left(\begin{array}{ll}
\varpi^f & \\
& 1
\end{array}\right) = \left\{
    \begin{array}{ll}
        \d^{1/2}_{B_n}(\varpi^f)s_f(\a)W_\phi(1) & \mathrm{if} \,\,  f_1 \geq \dots \geq f_{n-1} \geq 0 \\
        0 & \mathrm{otherwise.}
    \end{array}
\right.
$$
where $s_f(\a) = s_f(\a_1, \dots,\a_r,0,\dots,0)$ and $\d_{B_n}$ is the modulus character of ${B_n}(L)$ whose value on $\varpi^f$ is given by $\d_B(\varpi^f) = q^{-\sum_{i=1}^{n-1}(n+1-2i)f_i}$.
\end{theorem}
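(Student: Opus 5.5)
The plan follows Shintani's method, as in Miyauchi, combined with the uniqueness of the newvector from \ref{essential_vector}.

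\textbf{Step 1: support of $W_\phi$.} Take $f \in \Z^{n-1}$ and put $t = \mathrm{diag}(\varpi^f,1)$. For $u \in N_n(\O)$ we have $t u t^{-1} \in N_n(L)$ and $u \in K_1(\wp^c)$, where $c = c(\pi)$. Hence
$$
\psi(tut^{-1})\, W_\phi(t) = W_\phi(t u) = W_\phi(t).
$$
The superdiagonal entries of $tut^{-1}$ are $\varpi^{f_i - f_{i+1}} u_{i,i+1}$ for $i \leq n-2$, and $\varpi^{f_{n-1}} u_{n-1,n}$ for the last one (using the convention $f_n = 0$). If some $f_i < f_{i+1}$ or $f_{n-1} < 0$, we can choose $u$ with $\psi(tut^{-1}) \neq 1$, which forces $W_\phi(t) = 0$.

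\textbf{Step 2: Hecke operators.} We work with the Hecke operators attached to the double cosets $K_1(\wp^c)\, \mathrm{diag}(\varpi I_k, I_{n-k})\, K_1(\wp^c)$ for $k = 1, \dots, n-1$. These only change the upper-left $(n-1)\times(n-1)$ block, so the last-row condition defining $K_1(\wp^c)$ is preserved. The operators preserve $V(c)$, which is one-dimensional, so $\phi$ is an eigenvector for each of them.

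\textbf{Step 3: the recursion.} Compute $T_k W_\phi(t)$ by choosing explicit coset representatives of the form $\mathrm{diag}(\varpi^{e}, 1)\, u$ with $u$ unipotent. Equating with the eigenvalue gives a recursion of Pieri type:
$$
\lambda_k\, W(\varpi^f) = q^{\ast} \sum W(\varpi^{f+e}),
$$
where the sum runs over $0/1$-vectors $e$ with $|e| = k$ supported in the first $n-1$ coordinates. The eigenvalues $\lambda_k$ must then be identified with $q^{k(n-k)/2}$ times the elementary symmetric functions $e_k(\alpha_1,\dots,\alpha_r,0,\dots,0)$.

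For this identification I would use the Jacquet–Piatetski-Shapiro–Shalika integral
$$
\int_{L^\times} W_\phi(\mathrm{diag}(a, 1, \dots, 1))\, |a|^{s-(n-1)/2}\, d^\times a,
$$
which for the newvector equals $L(\pi,s)\,W_\phi(1)$. This is where the theory behind \ref{essential_vector} (the $\e$-factor characterization) enters.

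\textbf{Step 4: conclusion.} The values $s_f(\alpha)$ satisfy the same Pieri recursion with the same initial value at $f = 0$. By induction on $|f|$, the two sides agree.

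\textbf{Main obstacle.} The hardest part will be Step 3. One has to show that the eigenvalues involve only the $r$ Satake-type parameters $\alpha_i$, padded by zeros. I would first check the case $k = 1$ directly through the Rankin–Selberg integral $L(\pi,s)$ above, and then get the higher $k$ from exterior powers, or appeal to Matringe's formula as an alternative route.
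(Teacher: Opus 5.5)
Your route is the Shintani-type argument of Miyauchi, one of the two sources the paper cites (the paper gives no proof of its own). Steps 1 and 2 are fine, and so is the shape of your recursion when $c(\pi)\geq 1$. With $t_k=\mathrm{diag}(\varpi I_k,I_{n-k})$, the double coset $K_1(\wp^c)\,t_k\,K_1(\wp^c)$ is the disjoint union of the cosets $u\varpi^{\nu}K_1(\wp^c)$ with $\nu\in\{0,1\}^n$, $|\nu|=k$, $\nu_n=0$ and $u\in N_n(\O)/(N_n(\O)\cap\varpi^{\nu}N_n(\O)\varpi^{-\nu})$. Both sides consist of $q^k\binom{n-1}{k}_q$ cosets. (For $c(\pi)=0$ the terms with $\nu_n=1$ come back, but that case is Shintani's formula.) After dividing by $\delta_{B_n}^{1/2}$, the $\nu$-dependent powers of $q$ all become $q^{k(n-k)/2}$. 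The recursion then says that $s_f\mapsto A(f):=\delta_{B_n}^{-1/2}(\varpi^f)\,W_\phi(\mathrm{diag}(\varpi^f,1))$ is a linear form on symmetric polynomials in $n-1$ variables, on which multiplication by $e_k$ acts through the scalar $\mu_k=q^{-k(n-k)/2}\lambda_k$. Since $e_1,\dots,e_{n-1}$ generate that ring, $A(f)=W_\phi(1)\,s_f(\beta)$, where $\beta=(\beta_1,\dots,\beta_{n-1})$ is defined by $e_k(\beta)=\mu_k$. That is what your ``induction on $|f|$'' has to mean.

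\textbf{The gap.} What is left is the identification $\beta=(\alpha_1,\dots,\alpha_r,0,\dots,0)$, and this carries the whole content of the theorem. You obtain it by asserting that $\int_{L^\times}W_\phi(\mathrm{diag}(a,1,\dots,1))\,|a|^{s-(n-1)/2}\,d^\times a=L(\pi,s)\,W_\phi(1)$ for the newvector. Given the recursion, this identity says exactly $\prod_j(1-\beta_jX)^{-1}=\prod_i(1-\alpha_iX)^{-1}$ with $X=q^{-s}$, so it is equivalent to the statement you are proving.

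This identity is not part of \ref{essential_vector}, which only gives $\dim V(c(\pi))=1$ and the $\e$-factor description of $c(\pi)$. Nor does it follow from these in any evident way:
\begin{itemize}
\item The general theory only says the integral lies in $L(\pi,s)\,\C[q^{s},q^{-s}]$. Combined with the recursion, this shows merely that the nonzero $\beta_j$ form a sub-multiset of the $\alpha_i$.
\item Using the functional equation would require controlling the dual $\mathrm{GL}_n\times\mathrm{GL}_1$ integral of $\tilde\pi(w)\tilde W_\phi$. For $n\geq 3$ that integral carries an extra unipotent integration you have no handle on.
\end{itemize}
Your fallbacks do not close this. Once the identity is available, exterior powers are unnecessary, because the generating function $\sum_m h_m(\beta)X^m$ alone determines every $e_k(\beta)$; without it, they have nothing to start from. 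Appealing to Matringe's formula is circular, since that formula is the statement.

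\textbf{What is missing.} You need the stronger part of the Jacquet--Piatetski-Shapiro--Shalika theory \cite{J-PS-S81}. It says that the newvector with $W(1)=1$ satisfies
\[
\Psi(s,W,W_\sigma):=\int_{N_{n-1}\backslash\mathrm{GL}_{n-1}(L)}W(\mathrm{diag}(g,1))\,W_\sigma(g)\,|\det g|^{s-1/2}\,dg=L(\pi\times\sigma,s)
\]
for every unramified generic $\sigma$ of $\mathrm{GL}_{n-1}(L)$. With that in hand, the Hecke operators become superfluous:
\begin{itemize}
\item By the Iwasawa decomposition and Shintani's formula for $W_\sigma$, the left side is $\sum_f A(f)\,s_f(\gamma)\,q^{-s|f|}$.
\item By Cauchy's identity, the right side is $\prod_{i\leq r,\,j\leq n-1}(1-\alpha_i\gamma_jq^{-s})^{-1}=\sum_f s_f(\alpha)\,s_f(\gamma)\,q^{-s|f|}$.
\item Linear independence of Schur polynomials in the Satake parameters $\gamma$ of $\sigma$ then gives $A(f)=s_f(\alpha)\,W(1)$ directly.
\end{itemize}
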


The above result implies that $W_\phi(I_n) \neq 0$ for any non-zero newform $\phi$ in $\Pi$ (see \cite[Corollary 4.4]{Miyauchi2012}). The only newform $\phi$ such that  $W_\phi(I_n) = 1$ will be called the \textit{essential vector} of $\pi$ with respect to $\psi$, and is denoted $\phi_\pi^\circ$. Its Whittaker function is denoted $W_\pi^\circ$. When $c(\pi) = 0$, i.e. when $\pi$ is unramified, $\phi_\pi^\circ$ is rather called the \textit{spherical vector} (with respect to $\psi$). \\

\subsubsection{Global theory}
\label{global_mirahoric_theory}

Let $E$ be a number fields and let $\psi_f := \psi_{E,f}$ be the finite part of the additive character of $\A_E$ defined in \S \ref{notations}. Let $\mathfrak{n} = \prod_{\wp \mid \mathfrak{n}} \wp^{c_\wp}$ be an ideal of $\O_E$. We define the mirahoric subgroup $K_1(\mathfrak{n})$ of level $\mathfrak{n}$ as the following open compact subgroup of $\GLn(\A_{E,f})$:
$$
K_1(\mathfrak{n}) := \prod_{\wp \nmid \mathfrak{n}} \mathrm{GL}_{n}(\O_\wp) \x \prod_{\wp \mid \mathfrak{n}} K_1(\wp^{c_\wp})
$$

Let $\Pi$ be a cuspidal automorphic representation of $\GLn(\A_E)$ and let $\Pi_f$ denote its finite part. Since $\Pi$ is cuspidal, $\Pi$ and (hence) $\Pi_f$ are generic. We denote by $\W(\Pi_f,\psi_f)$ the Whittaker model of $\Pi_f$ with respect to $\psi_f$ of $\A_{E,f}$, and by $\phi_f \mapsto W_{\phi_f}$ the isomorphism $\Pi_f \toeq \W(\Pi_f,\psi_f)$. We can decompose $\Pi_f$ as a tensor product of local representations $\Pi_f = \otimes_{w} \Pi_w$ over the set ${S_f(E)}$ of finite places of $E$ (see \cite[Theorem 3]{Flath}). For any ${w \in S_f(E)}$, the representation $\Pi_w$ is an irreducible admissible generic representation of $\GLn(E_w)$. Let $c_w = c(\Pi_w)$ be its mirahoric conductor. Consider $\mathfrak{n}(\Pi_f) = \prod_{w \in S_f(E)} \wp_w^{c_w}$, where $\wp_w$ is the prime ideal of $\O_E$ corresponding to $w$. Since all but finitely many $\Pi_w$ are unramified, $\mathfrak{n}(\Pi)$ is a well defined ideal of $\O_E$, called the \textit{mirahoric level} of $\Pi$. From the local theory, we know that the space of $K_1(\mathfrak{n}(\Pi))$-fixed vectors in $\Pi_f$ is one dimensional. \\

We now explain how to choose a particular form in the 1-dimensional space of $K_1(\mathfrak{n}(\Pi))$-fixed vectors. Write $\psi_f = \otimes_w \psi_w$. Let $\mathfrak{d}$ be the different of $E$. At each place $w$ dividing $\mathfrak{d}$, the conductor of $\psi_w$ is $\mathfrak{d}_w^{-1}$. Hence, since $\psi_w$ is ramified, one cannot normalize the choice of a newform $\phi_w \in \pi_w$ by the condition $W(I_n) =1$ on $\W(\pi_w,\psi_w)$ since we don't know that this value is non-zero. To avoid this complication, following \cite[Section 8]{JH24}, we consider a representative $d_w \in E_w$ of $\mathfrak{d}_{w}^{-1}$ for each place $w$ dividing $\mathfrak{d}$. Then $\psi_w = d_w \cdot \psi^\circ_w$ where $\psi^\circ_w$ is an unramified additive character of $E_w$ and $d_w \cdot \psi^\circ_w(x) := \psi^\circ_w(d_wx) $. We then consider the form $\phi^{\circ}_\Pi := \otimes_{w} \phi_w$ in $\Pi_f$ with:

\begin{itemize}
\item if $w$ does not divide $\mathfrak{d}$, then $\phi_w$ is the essential vector $\phi_{\Pi_w}^\circ$ of $\Pi_w$ with respect to $\psi_w$,
\item if $w$ divides $\mathfrak{d}$, then $\phi_w$ is $t_{d_w}(\phi_{\Pi_w}^\circ)$ where $\phi_{\Pi_w}^\circ$ is the essential vector of $\Pi_w$ with respect to $\psi_w^\circ$. In other words, $\phi_w$ is the form whose Whittaker function $W_{\phi,w} \in \W(\Pi_w,\psi_w)$ with respect to $\psi_w$ is given by:
$$
W_{\phi,w}(g) = W_{\Pi_w}^\circ(\mathrm{diag}(d_w^{n-1},\dots,d_w,1)g)
$$
where $W_{\Pi_w}^\circ$ is the essential vector in $\W(\Pi_w,\psi_w^\circ)$.
\end{itemize}
Then $\phi^{\circ}_\Pi$ is a $K_1(\mathfrak{n}(\Pi))$-fixed vector in $\Pi_f$ which is called the \textit{essential vector} of $\Pi$. \\

\subsection{Cohomology groups and Hecke algebras}

\subsubsection{Irreducible algebraic representations of $\SO$}
\label{alg_irrep_SO3}
The irreducible algebraic representations of $\mathrm{SO}(3)$ are indexed by $\ell \in \Z_{\geq 0}$. For such a $\ell$, we denote by $(\tau_\ell,V_\ell)$ the corresponding irreducible representation of $\mathrm{SO}(3)$. It is a representation of dimension $2\ell + 1$. More precisely, following \cite[2.4.1]{Che22}, we fix a model as follows. $V_\ell$ is the quotient of the space of homogeneous polynomials over $\C$ of degree $\ell$ in variable $X_1,X_2,X_3$, by the subspace generated by $X_1^2+X_2^2+X_3^2$. The action of $\SO$ on this space is given by:
$\tau_\ell(g) \cdot P(\overline{X_1},\overline{X_2},\overline{X_3}) = P((\overline{X_1},\overline{X_2},\overline{X_3})g)$, for $g \in \SO$ and $P \in V_\ell$. A basis of $V_{\ell}$ is given by $\{ \mathbf{v}_i, \, -\ell \leq i \leq \ell \}$, where:
$$
\mathbf{v}_i = (\mathrm{sgn}(i)\overline{X_1} + \sqrt{-1}\cdot\overline{X_2})^{|i|}\overline{X_3}^{\ell-|i|}
$$
For each triplet $(j_1,j_2,j_3) \in \Z_{\geq0}^3$ such that $j_1+j_2+j_3 = \ell$, we also define:
$\mathbf{v}_{\left(\ell ;\left(j_{1}, j_{2}, j_{3}\right)\right)} = \overline{X_1}^{j_1}\overline{X_2}^{j_2}\overline{X_3}^{j_3} \in V_{\ell}
$. \\

\subsubsection{Irreducible algebraic representations of $\GL(E)$}
\label{alg_irrep}
\label{coefficients}

 Let $\K$ be a field and let $X^+(T_3)$ be the set of triplets $X^+(T_3)= \{ \mu = (m^+,m^-,v) \in \Z^3,\quad m^+\geq 0, \,m^-\geq 0\}$. For $\mu~=~(m^+,m^-,v) \in X^+(T_3)$, let $\mathcal{P}_{\mu}(\K) = \K[X,Y,Z ; A,B,C]_{m^+,m^-}$ be the space of 6-variables polynomials homogeneous of degree $m^+$ in $X,Y,Z$ and $m^-$ in $A,B,C$ with coefficients in $\K$, on which $g \in \GL(\K)$ acts by:
$$
\rho_{\mu}(g)(P(X,Y,Z ; A,B,C)) = (\det g)^{v} P((X,Y,Z)g ; (A,B,C){}^\top g^{-1}).
$$ 
We now consider the following differential operator:
$$
i_{m^+,m^-} = \frac{\partial^2}{\partial X \partial A} + \frac{\partial^2}{\partial Y \partial B} + \frac{\partial^2}{\partial X \partial A}:  \mathcal{P}_{\mu}(\K) \to \mathcal{P}_{\mu-(1,1,0)}(\K)
$$
It is equivariant with respect to $\rho_{\mu}$ and $\rho_{\mu-(1,1,0)}$. Let $L_{\mu}(\K)$ be the kernel of $i_{m^+,m^-}$. We thus obtain a representation $(\rho_{\mu},L_{\mu}(\K))$ of $\GL(\K)$. All the irreducible algebraic representations of $\GL(\K)$ are of the form $(\rho_{\mu},L_{\mu})$ where $\mu$ runs over $X^+(T_3)$ (see \cite[Theorem 13.1 and Claim 13.4]{RT:FC}). Note that the highest weight of $(\rho_{\mu},L_{\mu})$ is $\mu = \left( v+m^+, v, v-m^-\right)$, and that a vector of highest weight is given by $P_{\mu}^+ = X^{m^+}C^{m^-}$. Since $X^+(T_3)$ is in bijection with the set of dominant weights for $\GL$, by abuse we will sometimes refer to $\mu$ as the highest weight of $(\rho_{\mu},L_{\mu})$. \\

The dual weight of $\mu \in X^+(T_3)$ is defined to be $\mu^\vee := (m^-,m^+,-v) \in X^+(T_3)$. We then consider the pairing $\langle \cdot, \cdot \rangle_{\mu}: L_{\mu}(\K) \x L_{\mu^\vee}(\K) \to \K$ by:

\begin{equation}
\label{pairing_coefficients}
\langle \sum_{\underline{i}^+,\underline{i}^-} a_{\underline{i}^+,\underline{i}^-} X^{\underline{i}^+} Y^{\underline{i}^-} , \sum_{\underline{j}^+,\underline{j}^-} b_{\underline{j}^+,\underline{j}^-} X^{\underline{j}^+} Y^{\underline{j}^-}\rangle_{\mu} = \sum_{\underline{i}^+,\underline{i}^-} \binom{m^+}{i_1^+,i_2^+,i_3^+}^{-1} \binom{m^-}{i_1^-,i_2^-,i_3^-}^{-1}a_{\underline{i}^+,\underline{i}^-}  b_{\underline{i}^-,\underline{i}^+} 
\end{equation}
where $X^{\underline{i}^+}:= X_1^{i_1^+}X_2^{i_2^+}X_3^{i_3^+}$ and $Y^{\underline{i}^-}:=Y_1^{i_1^-}Y_2^{i_2^-}Y_3^{i_3^-}$. This paring is perfect and equivariant for the action of $\GL(\K)$:
$$
\langle \rho_{\mu}(g)(P),\rho_{\mu^\vee}(g)(Q)\rangle_{\mu} =\langle P,Q\rangle_{\mu}
$$
for $P,Q \in L_{\mu}(\K)$ and $g \in \GL(\K)$. \\

Now, let $E$ be a totally real field and $G_E = \mbox{Res}_{E/\Q} \mathrm{GL}_{3/E}$. Let $S_\inf = S_\inf(E)$ denote the set of archimedean places of $E$, i.e. the set of embeddings of $E$ into $\overline{\Q}$. Let $X^+(T_E) = X^+(T_3)^{ S_\inf}$, i.e. the set of tuples $\mu = (\mu_\t)_{\t \in S_\inf}$, with $\mu_\t =(m^+_\t,m^-_\t,v_\t) \in X^+(T_3)$ for all $\t \in S_\inf$. Let $E^g$ be the Galois closure of $E$ in $\overline{\Q}$ and suppose that $\K$ is a field extension of $E^g$. 

We define $L_{\mu}(\K):= \bigotimes_{\tau \in  S_\inf} L_{\mu_\tau}(\K)$. For each $\tau: E \to \overline{\Q}$, we can define a map $\tau: E \otimes_\Q \K \to \K$ by $ x \otimes k \mapsto \tau(x)k$ which induces a morphism $\tau: G_E(\K) \to \GL(\K)$. Then, we get an action of $G_E(\K)$ on $L_{\mu}(\K)$, given for $g \in G_E(\K)$ and for a pure tensor $P= \otimes_\tau P_\t \in L_{\mu}(\K)$, by:
$$
\rho_{\mu}(g)(P) = \bigotimes_{\t \in  S_\inf}  \rho_{\mu_\t}(\t(g))(P_\t).
$$
Thus, we get a representation $(\rho_{\mu},L_{\mu})$ of $\GL(\K)$. All the irreducible algebraic representations of $\GL(\K)$ are of this form, for $\mu \in X^+(T_E)$. By abuse, $\mu$ will sometimes be refered to as the highest weight of $L_{\mu}$. Moreover we have a perfect $\GL(\K)$-equivariant pairing:
$$
\langle \cdot, \cdot \rangle_{\mu}: L_{\mu}(\K) \x L_{\mu^\vee}(\K) \to \K
$$
obtained as a tensor product over $ S_\inf$ from the pairing (\ref{pairing_coefficients}). \\

\subsubsection{Integral structure and $p$-smallness.} We now suppose that $\K$ is some finite extension of $\Q_p$ and that $\O$ is its valuation ring. All the above definitions make sense over $\O$, and we get a representation $\rho_{\mu}$ of the group scheme $G_E = \mathrm{Res}_{\O/\Z_p} \mathrm{GL}_{3/\O}$ on the module $L_{\mu}(\O)$ consisting of polynomials in $L_{\mu}(\K)$ with coefficients in $\O$. We see from the expression of $\langle \cdot, \cdot \rangle_{\mu}$ that the $\O$-dual of $L_{\mu}(\O)$ in $L_{\mu^\vee}(\K)$ may not be $L_{\mu^\vee}(\O)$. To avoid this problem, we will say that $\mu = (\mu_\tau)_{\t \in S_\inf}$ is $p$\textit{-small} if for each $\tau \in  S_\inf$: 
$$
p > \mathrm{max}(m^+_\tau, m^-_\t)
$$
where $\mu_\tau = (m^+_\t,m^-_\t,v_\t)$. For a given weight $\mu$, this condition excludes a finite number of small primes. Then, as long as $\mu$ is $p$-small $\langle \cdot, \cdot \rangle_{\mu}$ restricts to a pairing defined on $\O$:
$$
\langle \cdot, \cdot \rangle_{\mu}: L_{\mu}(\O) \x L_{\mu^\vee}(\O) \to \O.
$$
It follows from (\ref{pairing_coefficients}), and from the perfectness of the pairing on $\K$, that this pairing is perfect on $\O$. See Section 1 of Polo-Tilouine in \cite{CSV} for a more theoretical explanation of these facts (be careful though that $p$-small there means $p \geq m^+_\tau + m^-_\t + 2$, for $\t \in S_\inf$).

Let $\K$ be a $p$-adic field $\K$, and $\O$ be its valuation ring. In this subsection, we consider the algebraic group $G_E = \mathrm{Res}_{E/\Q}(\mathrm{GL}_{n/E})$ for any integer $n \geq 1$ and any number field $E$. When $n=3$ and $E$ is totally real, which is our main case of concern in this thesis,  we have introduced in the precedent subsection a precise model $L_{\mu}$ for the algebraic representation of $G_E$ of highest weight $\mu$ (as explained $\mu \in X^+(T_E)$ is not strictly speaking a dominant weight but some convenient avatar of it, but since $X^+(T_E)$ is in bijection with the set of dominant weights for $G_E$, we call it the highest weight of $L_{\mu}$ by abuse). For a general $n \neq 3$ and any number field $E$, one can similarly define, for each dominant weight $\mu$ of $G_E$, a representation $L_{\mu}(\O)$ of $G_E(\O)$ which extends over $\K$ to $L_{\mu}(\K)$, the irreducible algebraic representation of $G_E(\K)$ of highest weight $\mu$ (see for example \cite{Hi98}). \\

\subsubsection{Adelic variety and sheaves}
\label{variety_sheaves}

Recall that $G_E = \mathrm{Res}_{E/\Q}(\mathrm{GL}_{3/E})$. For any open compact subgroup $K_f$ of $G_E(\A_{f})$, the adelic variety of level $K_f$ for $G_E$ is defined as:
$$
Y(K_f):= G_E(\Q) \bs G_E(\A) / K_f K_\inf
$$
where $K_\inf = \SO \R_+^\x$. Let $\mu \in X^+(T_E)$ be a dominant weight and $A$ denote $\O$, $\K$ or $\C$. Let $\L_\mu(A)$ be the locally constant sheaf of $A$-modules on $Y(K_f)$ attached to the representation $L_\mu(A)$. We have an inclusion of sheaves $\L_\mu(\O) \subset \L_\mu(\K) \subset \L_\mu(\C)$. \\

\subsubsection{Cohomology groups}
\label{cohomology_groups}

Let $A$ denote the fields $\K$ or $\C$. We consider the Betti cohomology groups $H^q(Y(K_f), \L_{\mu}(A))$ and the Betti cohomology with compact support $H^5_c(Y(K_f), \L_{\mu}(A))$. There is a natural map:
\begin{equation}
\label{std_to_compact}
i_A: H^\bullet_c(Y(K_f), \L_{\mu}(A)) \to H^\bullet(Y(K_f), \L_{\mu}(A))
\end{equation}
The interior cohomology $H^5_!(Y(K_f), \L_{\mu}(A))$ is defined to be the image of $i_A$. If $A$ is the ring $\O$, and if $?$ denotes $\emptyset, c \mbox{ or }!$ then $H_?^\bullet(Y(K_f), \L_{\mu}(\O))$ denotes the $\O$-torsion free part of the corresponding cohomology groups with coefficients in $\O$. \\

The cuspidal cohomology $H^\bullet_{cusp}(Y(K_f), \L_{\mu}(\C))$ is defined to be the following $(\g,K_\inf)$-cohomology group (or relative Lie algebra cohomology, see \cite[Chapter I]{BW00}):
$$
H^\bullet_{cusp}(Y(K_f), \L_{\mu}(\C)) = H^\bullet(\g,K_\inf ;\mathcal{A}_{cusp}(G_E(\Q)\bs G_E(\A)/K_f)\otimes L_{\mu}(\C))
$$
where $\mathcal{A}_{cusp}(G_E(\Q)\bs G_E(\A)/K_f, \w)$ is the space of $K_f$-fixed cusp forms on $G_E(\A)$. A priori, the cuspidal cohomology is just contained in $H^\bullet(Y(K_f), \L_{\mu}(\C))$. In fact, there is an injection:
$$
H^\bullet_{cusp}(Y(K_f), \L_{\mu}(\C)) \inj H^\bullet_!(Y(K_f),\L_{\mu}(\C))
$$
More precisely, there exists a canonical map (see for exemple \cite[2.1]{Hi99}):
$$
s: H^q_{cusp}(Y(K_f), \L_{\mu}(\C)) \inj H^q_c(Y(K_f),\L_{\mu}(\C))
$$
which a section of $i_\C: H^q_c(Y(K_f),\L_{\mu}(\C)) \to H^q(Y(K_f),\L_{\mu}(\C))$. Hence, $H^q_{cusp}(Y(K_f), \L_{\mu}(\C))$ can be viewed as a subgroup of $H^q_{c}(Y(K_f), \L_{\mu}(\C))$. 
We define the cuspidal cohomology groups with coefficients in $\K$ to be:
$$
H^\bullet_{cusp}(Y(K_f), \L_{\mu}(\K)) = H^\bullet_{cusp}(Y(K_f), \L_{\mu}(\C)) \cap H^\bullet_c(Y(K_f), \L_{\mu}(\K))
$$
It follows from \cite[Théorème 3.19]{Clozel90} that:
$$
H^\bullet_{cusp}(Y(K_f), \L_{\mu}(\K)) \otimes_\K \C = H^\bullet_{cusp}(Y(K_f), \L_{\mu}(\C))
$$
We then define the cuspidal cohomology groups with coefficients in $\O$ to be:
$$
H^\bullet_{cusp}(Y(K_f), \L_{\mu}(\O)) = H^\bullet_{cusp}(Y(K_f), \L_{\mu}(\C)) \cap H^\bullet_c(Y(K_f), \L_{\mu}(\O))
$$
In particular $H^\bullet_{cusp}(Y(K_f), \L_{\mu}(\O))$ is torsion free. \\

\subsubsection{Hecke algebras}
\label{hecke_corr}

Let $A$ denote $\O$, $\K$ or $\C$. Let $K_f = \prod_w K_w$ be some open compact subgroup of $G_E(\A_f)$ and let:
$$
S_{K_f}~=~\{ w \mbox{ finite place of }E\mbox{, s.t. } K_{w} \neq \GLn(\O_w)\}.
$$
This is a finite set. Let $S_p = \{ w \mbox{ finite place of E, s.t. } w~\mid~p \}$. We suppose that $p$ is outside the level of $K_f$, i.e that $S_p \cap S_{K_f} =\varnothing$, and we put $S :=  S_{K_f} \sqcup S_p$. 

Let $w \notin S$, and let $\varpi_w$ be an uniformizer of $E_w$. We define the Hecke operators $T_{w,i}$ at $w$ (for $i = 1,2,3$) as the characteristic functions of the double coset  $\GL(\O_w)\mathrm{diag}(\varpi_w I_i, I_{n-i})\GL(\O_w) \subset \GL(E_w)$. We will write $S_w$ for $T_{w,3}$. The spherical abstract Hecke algebra outside of $S$ with coefficients in $A$ is defined as the following tensor product:
$$
\H(K_f;A) = \bigotimes_{w \notin S} A[T_{w,1},T_{w,2},{S_{w}}^\pm]
$$

There is a well known action of $\H(K_f;A)$ (see for instance \cite[\S3.2.3]{thesis}) on the various cohomology groups $H^\bullet_?(Y(K_f), \L_\mu(A))$ (for $? = \emptyset, c, !$ or $cusp$) described in \S\ref{cohomology_groups}. Moreover, all the comparison maps described in \S\ref{cohomology_groups} are equivariant for this action. Finally, since we don't consider Hecke operators at $p$, the maps $H^\bullet_?(Y(K_f), \L_{\mu}(\O)) \to H^\bullet_?(Y(K_f), \L_{\mu}(\K))$ are $\H(K_f;\O)$-equivariant. \\

We denote by $h(K_f;\O)$ the Hecke algebra acting faithfully on the cohomology:
$$
h(K_f;\O) := \mathrm{Im}(\H(K_f;\O) \to \End_\O H^\bullet(Y(K_f),M_\mu(\O)))
$$
It is a finite commutative $\O$-algebra. Thus, $h(K_f;\O)$ is semi-local and we have a decomposition $h(K_f;\O) \simeq \prod_{\m} h(K_f;\O)_\m$, where $\m$ ranges through maximal ideals of $h(K_f;\O)$ (there are finitely many of them), and $h(K_f;\O)_\m$ be the completion of $h(K_f;\O)$ at $\m$. In this paper, whenever such a maximal ideal $\m \subset h(K_f;\O)$ is fixed, we denote by ${\TT} = h(K_f;\O)_\m/(\O-{tors})$ the torsion-free quotient of $h(K_f;\O)_\m$.

\subsection{Cohomological cuspidal automorphic representations}

\label{GL3_notations}

From now, we assume that $E$ is a totally real number field.

\subsubsection{Cohomological automorphic representations} Let $K_3 := \R_{>0} \cdot \SO \subset \GL(\R)$, and $K_\inf= \prod_{v\mid \inf} K_3 \subset \U_3(\R) \simeq \GL(F_\inf)$. Let $\g_{3}$ be the Lie algebra of $\GL(\R)$, and let $\g = \oplus_{v\mid \inf}  \g_{3}$ be the Lie algebra of $\GL(F_\inf)$. We say that a cuspidal automorphic representation $\Pi$ of $G_E$ is cohomological of cohomological weight $\mu \in X^+(T_E)$ if the following relative Lie algebra cohomology group (see \cite[Chapter I]{BW00}):
$$
H^q(\g,K_\inf ; \Pi_\inf \otimes M_\mu(\C))
$$
is non trivial for some $q \geq 0$. In that case, it is non-zero if and only if $b\leq q \leq t$, where $b=2t$ and $t= 3d$ are called the bottom and top degree of the cuspidal range (see \cite[Lemme 3.14]{Clozel90}). We denote by $\mathrm{Coh}(G_E,\mu)$ the set of such representations. For each open compact subgroup $K_f$ of $G_E(\A)$, we also define $\mathrm{Coh}(G_E,\mu,K_f)$ to be the subset of $\mathrm{Coh}(G_E,\mu)$ consisting of cohomological representations $\Pi$ such that $\Pi_f$ has non-zero $K_f$-fixed vectors. By definition, the property of being cohomological for a representation $\Pi$ only depends on its archimedean part $\Pi_\infty$. The cohomological weight of a cohomological representation $\Pi$ is unique \cite[Section 3.5]{Clozel90} and can be expressed from the Langlands parameter of $\Pi_\infty$ (see for example \cite[Proposition 4.1]{HN20}).

The cohomological weight $\mu$ of a cohomological representation $\Pi$ is necessarily \textit{pure} \cite[Lemme de pureté 4.9]{Clozel90}, in the sens of the following definition:
\begin{definition}[Pure weights]
Let $\mu \in X^+(T_E)$ be a highest weight for $G_E$. Write $\mu= (\mu_\tau)_{\tau \in  S_\inf}$, with $\mu_\tau = (m^+_\tau,m^-_\tau,v_\tau) \in X^+(T_3)$ for each $\tau \in S_\inf$. We say that $\mu$ is \textit{pure} if:
\begin{itemize}
\item $v_\tau$ does not depend on $\tau \in  S_\inf$ ;
\item $m^+_\tau=m^-_\tau$ for each $\tau \in  S_\inf$.
\end{itemize}
\end{definition}

Let $\Pi \in \mathrm{Coh}(G_E,\mu)$ be a cohomological cuspidal automorphic representation. Write $\Pi_\inf = \bigotimes_{\tau \in  S_\inf} \Pi_\tau$ for the archimedean part of $\Pi$. For each archimedean place $\tau \in  S_\inf$, on has that:
$$
\Pi_\tau = \mathrm{Ind}_{P_{2,1}(\R)}^{\GL(\R)} (D_{\ell_\tau} \otimes \e_\tau) \otimes | \cdot |^{v_\tau}
$$
for some quadratic character $\e_\tau$ of $\R^\x$. Here $\ell_\tau= 2m_\tau + 3$ is the minimal $\mathrm{SO}(3)$-type of $\Pi_\tau$, and $D_{\ell_\tau}$ is the discrete series representation of $\mathrm{GL}_2(\R)$ of weight $\ell_\tau$ (see \cite[Theorem 2.1(3)]{Che22} or \cite[\S 4.1]{HN20}). Note that the central character $\w_{\Pi_\tau}$ of $\Pi_\tau$ satisfies:
$$
\w_{\Pi_\tau}|_{\{\pm1\}} = \e_\tau \cdot \mathrm{sgn}
$$
where $\mathrm{sgn} : \{ \pm1\} \to \{ \pm1\}$ is the non-trivial character. Moreover, for each archimedean place $\tau \in  S_\inf$, the minimal $\mathrm{SO}(3)$-type of $\Pi_\tau$ is $\ell_\tau = 2m_\tau + 3$ (see \S\ref{alg_irrep_SO3}).

\subsubsection{Hecke eigensystems}
 Let $\Pi = \otimes_w \Pi_w$ be a cuspidal automorphic representation of $\GL(\A_E)$ of level $K_f$ (i.e. with non-trivial $K_f$-fixed vectors), and let $\H(K_f;\C)$ be the abstract Hecke algebra defined in \S\ref{hecke_corr}. Then, the Hecke eigensystem $\l_\Pi$ associated with $\Pi$ is the morphism of $\C$-algebras $\l_\Pi: \H(K_f;\C) \to \C$ such that for all $w \notin S$ and $i\in \{1,2,3 \}$, $\l_\Pi(T_{w,i})$ is defined to be the complex scalar through which $T_{w,i}$ acts on the $1$-dimensional $\C$-vector space of $K_w$-fixed vectors in $\Pi_w$.

Let $h(K_f;\C)$ be the Hecke algebra acting faithfully on the cohomology with coefficients in $M_\mu(\C)$. Then, if $\Pi \in \mathrm{Coh}(G_E,\mu,K_f)$ is cohomological, the Hecke eigensystem $\l_\Pi : \H(K_f;\C) \to \C$ attached to $\Pi$ factorizes into a $\C$-algebra morphism $\l_\Pi : h(K_f;\C) \to \C$. Finally, let $\K$ be a $p$-adic field containing $E$ and the rationality field $\Q(\Pi)$ of $\Pi$ and let $\O$ be its valuation ring. Let $h(K_f;\O)$ be the cohomological Hecke algebra with coefficients in $\O$, defined in \ref{hecke_corr}. Then, the image of $h(K_f;\O)$ through $\l_\Pi$ is included in $\O$, yielding an integral Hecke eigensystem $\l_\Pi : h(K_f;\O) \to \O$. \\

\subsubsection{Galois representations}
\label{galois_reps}
Let $\Pi \in \mathrm{Coh}(G,\mu,K_f)$ be a cohomological cuspidal automorphic representation of $\GLn(\A_E)$. Let $\K$ be some sufficiently large $p$-adic field and $\O$ be its valuation ring.  Let $S$ be the finite set of places of $E$ containing the places where $\Pi$ is ramified and the places dividing $p$. The Galois representation associated with $\Pi$ according to the Langlands philosophy has been constructed by \cite[Theorem A]{HLTT16} (see also \cite[Corollary V.4.2]{Scholze15}). This is the following theorem:
\begin{theorem}
There exists a unique continuous semisimple Galois representation $\rho_{\Pi} : \Gal(\overline{E}/E) \to \GLn(\K)$ which is unramified outside of $S$, and such that for all $w \notin S$, the characteristic polynomial of $\rho_\Pi(\Frob_{w})$ is:
$$
\sum_{i=0}^n (-1)^i q_w^{i(i-1)/2} \l_\Pi(T_{w,i}) X^{n-i}
$$
where $\l_\Pi : \H(K_f;\C) \to \C$ is the Hecke eigensystem associated with $\Pi$.
\end{theorem}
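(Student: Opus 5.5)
The plan is not to reprove this from scratch. I would deduce it from the general construction of Harris--Lan--Taylor--Thorne \cite[Theorem A]{HLTT16}, or equivalently from Scholze's \cite[Corollary V.4.2]{Scholze15}. The real work is to check that $\Pi$ satisfies their hypotheses and that their normalization of Frobenius matches the Hecke polynomial written above. Uniqueness is a separate and formal step.

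\textbf{Existence.} First, I would note that $E$ is totally real, so it is in particular a CM-or-totally-real field as required in \cite{HLTT16}. I would also use that $\Pi\in\mathrm{Coh}(G,\mu,K_f)$ is cuspidal and regular algebraic: its infinity type is determined by the pure weight $\mu$, via the description of $\Pi_\tau$ as $\mathrm{Ind}_{P_{2,1}(\R)}^{\GL(\R)}(D_{\ell_\tau}\otimes\e_\tau)\otimes|\cdot|^{v_\tau}$. HLTT then produce, after fixing $j_p:\overline{\Q}_p\simeq\C$, a continuous semisimple representation $r_{p,\iota}(\Pi):\Gal(\overline{E}/E)\to\GLn(\overline{\Q}_p)$. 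It is unramified outside $S$, and at $w\notin S$ it satisfies local--global compatibility up to semisimplification with $\mathrm{rec}(\Pi_w\otimes|\det|^{(1-n)/2})$. The main bookkeeping step is to translate the Satake parameters $\{\a_{w,i}\}$ of $\Pi_w$ into the eigenvalues $\l_\Pi(T_{w,i})$. The double coset $\GL(\O_w)\mathrm{diag}(\varpi_w I_i,I_{n-i})\GL(\O_w)$ acts on the spherical vector by $q_w^{i(n-i)/2}e_i(\a_w)$. After the twist by $|\det|^{(1-n)/2}$ and the choice of arithmetic versus geometric Frobenius, this produces the coefficients $(-1)^i q_w^{i(i-1)/2}\l_\Pi(T_{w,i})$. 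I expect this normalization check to be the only delicate point. If it is off by a twist, I would replace $\rho_\Pi$ by its dual or by a cyclotomic twist $\rho\otimes\e_p^{k}$ so that the formula holds exactly.

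\textbf{Descent to $\K$.} The representation obtained lives over $\overline{\Q}_p$. The traces of Frobenii lie in $\Q(\Pi)\subset\K$, and $\K$ is taken sufficiently large. So, enlarging $\K$ if necessary (which is allowed by the standing convention), I would conjugate the image into $\GLn(\K)$ by the standard argument: compactness of $\Gal(\overline{E}/E)$, together with the image being defined over a finite extension.

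\textbf{Uniqueness.} Suppose $\rho$ and $\rho'$ both satisfy the conclusion. Then they have the same characteristic polynomial at $\Frob_w$ for all $w\notin S$. By Chebotarev these Frobenii are dense, so the characteristic polynomials agree on all of $\Gal(\overline{E}/E)$. Since $\K$ has characteristic zero and both representations are semisimple, Brauer--Nesbitt gives $\rho\simeq\rho'$.
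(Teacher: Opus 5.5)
Your proposal is correct and matches the paper, which gives no proof beyond citing \cite[Theorem A]{HLTT16} (see also \cite[Corollary V.4.2]{Scholze15}). Your normalization check closes without any fallback twist, because with $\Frob_w$ a geometric Frobenius one has $q_w^{i(n-1)/2}=q_w^{i(n-i)/2}\,q_w^{i(i-1)/2}$; your Chebotarev--Brauer--Nesbitt uniqueness argument is the standard complement that the paper leaves implicit.
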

One can find a $\Gal(\overline{E}/E)$-invariant $\O$-lattice in the representation space of $\rho_{\Pi}$. We then obtain a Galois representation $\rho_{\Pi} : \Gal(\overline{E}/E) \to \GLn(\O)$. If the residual representation $\overline{\rho_{\Pi}}$ is irreducible then there is a unique homothety classe of such an $\O$-lattice, and $\rho_{\Pi} : \Gal(\overline{E}/E) \to \GLn(\O)$ is unique. \\

\subsubsection{Structure of the cuspidal cohomology}

We now give the structure of the cuspidal cohomology. Let $q \in \Z_{\geq 0}$ and $\mu$ be some dominant weight for $G_E$. The cuspidal cohomology admits the following direct sum decomposition:
\begin{equation}
\label{deco_coho_cusp}
H^\bullet_{cusp}(Y(K_f), \L_{\mu}(\C)) = \bigoplus_{\Pi \in \mathrm{Coh}(G_E,\mu,K_f)} H^\bullet(\g, K_\inf  ; \Pi_{\inf} \otimes L_{\mu}(\C)) \otimes \Pi_f^{K_f}.
\end{equation}

In particular, it follows from \cite[Section VII]{BW00} and \cite[Section 4.5]{Clozel90} that the cuspidal cohomology group $H^q_{cusp}(Y(K_f), \L_{\mu}(\C))$ doesn't vanish only when $\mu$ is pure and $q$ belongs to the so-called Borel-Wallach interval $[b_E,t_E]$, for $b_E = 2d$ and $t_E = 3d$. In particular, as already said, the set $\mathrm{Coh}(G_E,\mu)$ is empty unless the weight $\mu$ is pure, and $\mathrm{Coh}(G_E,\mu,K_f)$ is a finite set. This decomposition justifies the nomenclature: a cuspidal automorphic representation is cohomological of cohomological weight $\mu$ if and only if it appears as a subspace of the cuspidal cohomology with coefficients in $\L_{\mu}(\C)$. A cohomological cuspidal automorphic representation $\Pi$ occurs in the cuspidal cohomology of degree $q$ for all degree $q \in [b_E,t_E]$, with multiplicity equal to the dimension of $H^q(\g, K_\inf  ; \Pi_{\inf} \otimes L_{\mu}(\C))$. Finally, recall that the cuspidal cohomology of level $K_f$ is endowed with an action of $\H(K_f,\C)$. Then, the above decomposition is a decomposition of $\H(K_f,\C)$-modules. In particular, $H^\bullet_{cusp}(Y(K_f), \L_{\mu}(\C))$ is a semi-simple $\H(K_f,\C)$-module. 

\subsection{Eichler-Shimura maps and middle-degree automorphic periods}

\subsubsection{Generators of the $(\g,K_\inf)$-cohomology}
\label{chen_generators}
 In this paragraph, we construct some explicit elements in the $(\g,K_\inf)$-cohomology groups appearing on the right hand side of (\ref{deco_coho_cusp}). Let $\Pi \in \mathrm{Coh}(G,\mu,K_f)$ and let $\Pi_\inf$ be its archimedean part. Let $q \in \N$ be a non-negative integer. From the Kunneth formula, we have the following direct sum decomposition into tensor products of local cohomology groups:
$$
\begin{aligned}
H^q(\g, K_\inf ; \Pi_\inf \otimes L_{\mu}(\C)) & = \bigoplus_{\sum_\tau  q_\tau = q } \left( \bigotimes_{\tau \in S_\inf} H^{q_\tau}(\g_\tau, K_\tau^\circ ; \Pi_{\tau} \otimes L(\mu_\tau ;\C)) \right)
\end{aligned}
$$
where the direct sum is indexed by tuples $(q_\tau)_{\tau \in  S_\inf}$ of non-negative integers such that $\sum_{\tau \in  S_\inf}  q_\tau~=~q$. Let $\tau$ be an archimedean place of $E$. From a general result of Clozel (see \cite[Lemme  3.14]{Clozel90}), we know that:
$$
H^{q_\tau}(\g_3, K_3; \Pi_{\tau} \otimes L_{\mu_\tau}(\C))=\left\{
    \begin{array}{ll}
        \C & \mbox{if } q_\tau = 2,3 \\
        0 & \mbox{otherwise}
    \end{array}
\right.
$$
where $\g_3 = \glr$ and $K_3 = \mathrm{SO}(3)\R_+^\x$. In particular, together with the Kunneth formula, this gives that:
$$
\dim_\C H^q(\g, K_\inf ; \Pi_\inf \otimes L_{\mu}(\C)) = \binom{t-b}{q-b}
$$
{} \\

For the sake of brevity, the local cohomology groups $H^{q_\tau}(\g_3, K_3; \Pi_{\tau} \otimes L_{\mu_\tau}(\C))$ are denoted by $H^{q_\tau}_\tau$ in the following. Let $J \subset  S_\inf$ be a subset of the archimedean places. We define:
$$
H^q(\g, K_\inf  ; \Pi_\inf \otimes L_{\mu}(\C))_{J}:= \bigotimes_{\tau \in J} H^2_\tau  \otimes \bigotimes_{\tau \notin J} H^3_\tau
$$
It is a $1$-dimensional $\C$-vector space. A generator of this space can be constructed by choosing generators $[\Pi_v]_i$ of $H^i_\tau$ for $i = 2, 3$ at each archimedean place $\tau$. Chen \cite[Lemma 2.2]{Che22} has constructed explicit non-zero elements (thus generators)  $[\Pi_\tau]_i$ in $H^i_\tau$ and proven that they yields the correct archimedean local factors (the so-called $\G$-factors) for the adjoint $L$-function. Using these generators, we then consider and fix the following generator of $H^q(\g, K_\inf ; \Pi_{\inf} \otimes L_{\mu}(\C))_J$:
$$
[\Pi_\inf]_J = \bigotimes_{\tau \in J} [\Pi_\tau]_2 \otimes \bigotimes_{v \notin J} [\Pi_\tau]_3.
$$
for $J \subset  S_\inf$. In particular, when $E$ is a real quadratic field, we obtain the following two elements:
$$
[\Pi_\inf]_{\{ \tau\}} = [\Pi_\tau]_2 \otimes [\Pi_{\s\tau}]_3, \quad \mbox{ and } \quad [\Pi_\inf]_{\{ \s\tau\}} = [\Pi_\tau]_3 \otimes [\Pi_{\s\tau}]_2,
$$
which are respective generators of $H^5_{\{ \tau\}}$ and $H^5_{\{ \s\tau\}}$. \\

\subsubsection{Eichler-Shimura maps}
\label{eichler-shimura_maps}

For each $J\subset  S_\inf$, we also denote by $H^q_{cusp}(Y_E(K_f),\L_{\mu}(\C))_J$ the subspace of $H^q_{{cusp}}(Y_E(K_f),\L_{\mu}(\C))$ defined as the direct sum:
$$
\bigoplus_{\Pi \in \mathrm{Coh}(G,\mu,K_f)}H^q(\g, K_\inf ; \Pi_\inf \otimes L_{\mu}(\C))_{J} \otimes \Pi_f^{K_f}.
$$

Let $\mu \in X^+(T_E)$ be a pure weight. Let $K_f$ be some open compact subgroup of $G_E(\A_{f})$. The space $S_{\mu}(K_f)$ of automorphic cusp forms of weight $\mu$ and level $K_f$ is defined by:
$$
S_{\mu}(K_f) := \bigoplus_{\Pi} \Pi_f^{K_f},
$$
where $\Pi$ runs through $\mathrm{Coh}(G_E,\mu,K_f)$, i.e. through the cuspidal automorphic representations of $\GL(\A_E)$ of level $K_f$ whose archimedean part is given by $\Pi_\inf = \bigotimes_{\tau \in  S_\inf} \Pi_\tau$, with:
$$
\Pi_\tau = \mathrm{Ind}_{P_{2,1}(\R)}^{\GL(\R)} (D_{\ell_\tau} \otimes \e_\tau) \otimes | \cdot |^{v_\tau}
$$
for some quadratic character $\e_\tau$ of $\R^\x$. Let $J \subset  S_\inf$ be a subset of the archimedean places. Then, the Eichler-Shimura map of type $J$: 
$$
\d_J^q :S_{\mu}(K_f) \inj H^{b + | J |}_{{cusp}}(Y_E(K_f),\L_{\mu}(\C))
$$
is defined by sending an element $\phi_f \in \Pi_f^{K_f}$ to $\phi_f \otimes [\Pi_\inf]_J \in H^{b + | J |}_{{cusp}}(Y_E(K_f),\L_{\mu}(\C))_J$, where $[\Pi_\inf]_J$ is the generator of $H^q(\g, C_\inf ; \Pi_{\inf} \otimes L_{\mu}(\C))_J$ specified in the preceding paragraph. Thus, $\d_J^q$ is an injective $\C$-linear map whose image is $H^q_{{cusp}}(Y_E(K_f),\L_{\mu}(\C))_J$. It is equivariant for the action of $\H(K_f,\C)$ on both side. Moreover, for each $\Pi \in \mathrm{Coh}(G,\mu,K_f)$, it induces an isomorphism:
$$
\d_J^q : \Pi_f^{K_f} \toeq H^q_{{cusp}}(Y_E(K_f),\L_{\mu}(\C))_{J}[\Pi_f].
$$

\subsubsection{Eichler-Shimura maps for real quadratic fields} We here specify the above presentation to the special case where $E$ is a real quadratic field ($d=2$). We recall that in this case, we have fixed an embedding $\tau: E \inj \C$ and that $\s$ is the generator of the Galois group $\Gal(E/\Q)$, so that $ S_\inf$ can be identified with $\{\tau, \s\tau\}$.  In this situation, the cohomology is non-zero in degrees $q=4,5,6$. In middle degree $q=5$, Kunneth's formula is written:
$$
H^5(\g, K_\inf  ; \Pi_\inf \otimes L_{\mu}(\C)) = H^5(\g, K_\inf  ; \Pi_\inf \otimes L_{\mu}(\C))_{\{\tau\}} \oplus H^5(\g, K_\inf  ; \Pi_\inf \otimes L_{\mu}(\C))_{\{\s\tau\}}
$$
where $H^5(\g, K_\inf  ; \Pi_\inf \otimes L_{\mu}(\C))_{\{\tau\}} = H^{2}_{\tau} \otimes H^{3}_{\s\tau}$ and $H^5(\g, K_\inf  ; \Pi_\inf \otimes L_{\mu}(\C))_{\{\s\tau\}} = H^{3}_{\tau} \otimes H^{2}_{\s\tau}$. Consequently, there are two Eichler-Shimura maps:
$$
\d^5_J: S_{\mu}(K_f) \toeq H^5_{cusp}(Y_E(K_f),\L_{\mu}(\C))_J \subset H^5_{cusp}(Y_E(K_f),\L_{\mu}(\C))
$$
for $J = \{\tau \}, \{\s\tau \}$. \\

\subsubsection{The Galois involution $\s$}
\label{s_involution}
We now assume that $\Pi$ is a cohomological cuspidal automorphic representation which is self-conjugate, i.e. $\Pi^\s \simeq \Pi$. Let $\mathfrak{n}(\Pi)$ be the mirahoric level of $\Pi$ and let $K_f$ be the mirahoric subgroup $K_1(\mathfrak{n}(\Pi))$ of level $\mathfrak{n}(\Pi)$, and choose a $K_f$-newform $\phi_f \in \Pi_f$ to be the essential vector $\phi_{\Pi}$. \\

Let $\s: G_E(\A) \to G_E(\A)$ denote the conjugation-duality involution, defined by $\s:= g \mapsto \s(g)$.  We may sometimes write $g^\s:= \s(g)$. Since $G_E(\Q)$, $K_\inf$ and $K_f$ are invariant by $\s$, the involution $\s$ induces an involution of the adelic variety $Y(K_f)$, which we also denote $\s$. 

Let $\mu \in X^+(T_E)$ and let $\mu^\s:= (\mu_{\s\tau} \mu_{\tau}) \in X^+(T_E)$. Assume that $\mu^\s = \mu$. Let $A$ denote $\O$, $\K$ or $\C$. We let $\s$ acts on $L_{\mu}(A)$ by $\s:  P_\tau \otimes P_{\s\tau} \mapsto P_{\s\tau} \otimes P_{\tau}$.

The involutive action of $\s$ on $Y(K_f)$ and on $L_{\mu}(A)$ induces a $A$-linear involutive map on the cohomology groups, which we also denote $\s$:
$$
\s : H^\bullet(Y(K_f), \L_{\mu}(A)) \to H^\bullet(Y(K_f), \L_{\mu}(A))
$$

As in paragraph~\ref{hecke_corr}, we can similarly construct involutions $\s$ on the compactly supported cohomology. One can then check that the arrow $\i_A$ from the compactly supported cohomology to the cohomology is equivariant for the action of $\s$ on both sides. Therefore, we obtain an $A$-linear map on the interior cohomology: 
$$
\s : H^\bullet_!(Y(K_f), \L_{\mu}(A)) \to H^\bullet_!(Y(K_f), \L_{\mu}(A)).
$$
Finally, the cuspidal cohomology $H_{cusp}^\bullet(Y(K_f), \L_{\mu}(A))$ is invariant by the action of $\s$. \\

Let $S$ be the set of finite places associated with $K_f$ in \S\ref{hecke_corr} (i.e. containing the finite places of $E$ above $p$ and the places where $K_f$ is not hyperspecial). If $w$ is a finite place corresponding to some prime ideal $\mathfrak{q}$ of $E$, we denote by $w^\s$ the finite place of $w$ corresponding to the prime ideal $\s(\mathfrak{q})$, where $\s$ is the non-trivial Galois involution of $E$. For our choice of $K_f$, one has $\s(S) = S$. We get an involution of $\O$-algebra $\s : T \mapsto T^\s$ on $\H({K_f};\O)$, by sending a double coset $K_w x K_w$ ($w \notin S$, $x \in \GL(E_w)$) to $K_{w^\s} \s(x) K_{w^\s}$. One checks that, we simply have $T_{w,i}^\s = T_{w^\s,i}$, for $i=1,2,3$. One can also check that the action of $\s$ is semi-linear with respect to the Hecke action, i.e. that for all $T \in \H({K_f};\O)$, we have:
$$
T \circ \s = \s \circ T^\s
$$
in $\End_\O(H^\bullet(Y(K_f), \L_{\mu}(\O)))$. In particular, if $\Si \in \mathrm{Coh}(\mathrm{GL}_{n/E},\mu,K_f)$ is a cohomological cuspidal automorphic representation,  then $\s$ sends the $[\Si]$-isotypic part of $H_{cusp}^\bullet(Y(K_f), \L_{\mu}(\C))$ to the $[\Si^\s]$-isotypic part.

\subsubsection{The $\s$-periods}
\label{s_periods}
Let us write $H^5(A) = H^5_{cusp}(Y_E(K_f),\L_{\mu}(A))[\Pi]$ for the $\Pi$-isotypic component of $H^5_{cusp}(Y_E(K_f),\L_{\mu}(A))$. It follows from (\ref{deco_coho_cusp}) and the choice of $K_f$ that $H^5(\C)$ is a 2-dimensional $\C$-vector space, and is a direct sum of the two $1$-dimensional subspaces $H^5_{\{\t\}}$ and $H^5_{\{\s\t\}}$. Since $\Pi$ is self-conjugate, $H^5(\C)$ is invariant by $\s$. Moreover, $\s$ exchanges $H_{\{\tau\}}^5$ and $H_{\{\s\tau\}}^5$, and thus is not trivial on $H^5(\C)$. Hence, since the $\O$-lattice $H^5(\O) \subset H^5(\C)$ is invariant by $\s$, the two eigenspaces corresponding to the $\pm1$-eigenvalues for $\s$:
$$
H^5(\C)[\pm] = H^5_{cusp}(Y_E(K_f),\L_{\mu}(\C))[\Pi; \pm]
$$
are $1$-dimensional vector spaces over $\C$ with integral structure over $\O$. We then define two maps (one for $+$, one for $-$):
$$
\d^\pm_\s : \Pi_f^{K_f} \to H^5(\C)[\pm]
$$
by $\d^\pm_\s : = \d_{\{\tau\}}^5 \pm \s \circ \d_{\{\tau\}}^5$. Let $\xi_\s^\pm$ be an $\O$-base of $H^5(\O)[\pm]$. Then the two $\s$-periods $\Om_5(\Pi, \s, +)$ and $\Om_5(\Pi, \s,-)$ of $\Pi$ are defined to be the complex numbers $\Om_5(\Pi, \s, \pm) \in \C^\x$ such that:
$$
\d^\pm_\s(\phi_f) = \Om_5(\Pi, \s, \pm) \cdot \xi_\s^\pm
$$
Since the definition of $\Om_5(\Pi, \s, \pm)$ depends on the choice of an $\O$-base $\xi_\s^\pm$, the periods are only defined up to multiplication by an element of $\O^\x$. These two automorphic periods are called the $\s$-periods of $\Pi$. \\

\subsection{An adjoint $L$-value formula involving the $\s$-periods}
\label{adjoint_L_formula}
We now state an adjoint $L$-value formula relating the adjoint $L$-value of a self-conjugate cohomological cuspidal representation of $\GL(E)$ to its $\s$-periods. Let $p$ be an odd prime number. Let $\K$ be some sufficiently large $p$-adic field, $\O$ its valuation ring, and $\wp$ its prime ideal. Let $\Pi$ be a self-conjugate cohomological automorphic cuspidal representation of $\GL(\A_E)$ of mirahoric level $\n$ and cohomological weight $\mu_E = (\mu,\mu) \in X^+(T_E)$. Let $K_f = K_1(\n)$ be the mirahoric subgroup of level $\n$. Let $h(K_f;\O)$ be the spherical Hecke algebra of level $K_f$ acting faithfully on the cohomology, and let $\m_\Pi$ denote the maximal ideal of $h(K_f;\O)$ corresponding to $\Pi$ and let $\TT:= h(K_f;\O)_{\m_{\Pi}}/(\O-\mathrm{tors})$. Let $\l_\Pi: \TT \to \O$ be the Hecke eigensystem associated with $\Pi$, and let $\eta_{\l_\Pi}(M)$ be the congruence number of $\l_\Pi$ on the $\TT$-module:
$$
M = H_{cusp}^5(Y_E(K_f), \L_{\mu_E}( \O))_{\m_\Pi},
$$
defined in \S\ref{congruence_modules}. Since $\Pi$ is self-conjugate, $M$ is endowed with a semi-linear action of the Galois involution $\s$. We prove the following theorem:

\begin{theorem}
\label{adjoint_L_value}
Assume that the Galois representation associated with $\Pi$ is residually absolutely irreducible. Assume that the cohomological weight $\mu_E \in X^+(T_E)$ of $\Pi$ is $p$-small, and that $p \nmid 6N_{E/\Q}(\mathfrak{n}) h_E(\mathfrak{n})D_E$. Then:
$$
\eta_{\l_\Pi}(M)[\pm] \quad \sim \quad \frac{\Lambda^{imp}(\Pi,\Ad,1)}{\Om_5(\Pi,\s,\pm)\cdot\Om_5(\Pi^\vee,\s,\mp)}
$$
where $\eta_{\l_\Pi}(M)[\pm]$ is the $\pm$-part for the action of $\s$ (in the sense of \S\ref{congruence_modules_with_an_involution}).
\end{theorem}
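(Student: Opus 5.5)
The argument follows Hida's strategy, adapted to the middle degree $q=5$ and the $\s$-eigenspaces. The backbone is Poincaré duality: $Y_E(K_f)$ is $10$-dimensional, so cup product and the coefficient pairing $\langle\cdot,\cdot\rangle_{\mu_E}$ give a pairing
$$
H^5_!(Y_E(K_f),\L_{\mu_E}(\O)) \x H^5_!(Y_E(K_f),\L_{\mu_E^\vee}(\O)) \to \O .
$$
\begin{enumerate}[(1)]
\item \emph{Perfectness.} I first show that this pairing is perfect after localizing at $\m_\Pi$ and restricting to cuspidal parts. Since $\mu_E$ is $p$-small, the coefficient pairing is perfect over $\O$. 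Since $\overline{\rho_\Pi}$ is absolutely irreducible, $\m_\Pi$ is non-Eisenstein, so the boundary cohomology vanishes after localization. Hence $H^5_c$, $H^5_!$, $H^5$ and $H^5_{cusp}$ coincide after localizing at $\m_\Pi$. The condition $p\nmid 6h_E(\n)$ removes torsion coming from finite stabilizers and from components. I then check that the pairing is equivariant for $\s$: the involution on $Y_E(K_f)$ preserves orientation up to an explicit sign, and $\s$ permutes the two tensor factors of $L_{\mu_E}$ compatibly with $\langle\cdot,\cdot\rangle_{\mu_E}$. It follows that $M[\pm]$ is dual to $M^\vee[\pm']$, where $M^\vee$ is the corresponding module for $\Pi^\vee$ and $\pm'\in\{\pm,\mp\}$ is fixed by the orientation sign. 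The formula in the statement suggests $\pm'=\mp$.
\item \emph{Congruence modules with an involution.} Next I invoke the general lemma of \S\ref{congruence_modules_with_an_involution}. Given a perfect, Hecke-equivariant duality $M[\pm]\x M^\vee[\mp]\to\O$ with one-dimensional $\Pi$-parts generated by $\xi_\s^\pm$ and $\xi_\s^{\vee,\mp}$, the lemma should give
$$
\eta_{\l_\Pi}(M)[\pm] \sim \langle \xi^\pm_\s , \xi^{\vee,\mp}_\s\rangle .
$$
This is Hida's argument: the $\Pi$-part and its orthogonal complement of the lattice, compared through the pairing.
\item \emph{Unfolding the cup product.} Writing $\xi^\pm_\s=\Om_5(\Pi,\s,\pm)^{-1}\d^\pm_\s(\phi_\Pi^\circ)$, and similarly for $\Pi^\vee$, it remains to show
$$
\langle \d^\pm_\s(\phi_\Pi^\circ),\d^\mp_\s(\phi^\circ_{\Pi^\vee})\rangle \sim \Lambda^{imp}(\Pi,\Ad,1).
$$
Expanding $\d^\pm_\s=\d_{\{\t\}}\pm\s\d_{\{\t\}}$ gives four terms. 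Cup products of $H_{\{\t\}}$ with $H_{\{\t\}}$ vanish for degree reasons, because they would need degree $4$ at one place and $6$ at the other. Only the cross terms $H_{\{\t\}}\x H_{\{\s\t\}}$ survive, and the two of them agree up to a sign and a factor $2$, which is a unit since $p$ is odd. Each surviving term is the Petersson product of $\phi^\circ_\Pi$ against $\phi^\circ_{\Pi^\vee}$, multiplied by the archimedean pairing of Chen's generators $[\Pi_\t]_2\wedge[\Pi^\vee_\t]_3$ at each place. This is exactly the quantity already computed in the top–bottom formula of Balasubramanyam–Raghuram and Chen. There, the Rankin–Selberg / Jacquet–Shalika unfolding combined with Chen's archimedean computation gives the $\Gamma$-factor of $\Ad$. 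The ramified factors come from the Miyauchi–Matringe values in \ref{explicit_essential_values}, which produce imprimitive Euler factors. The volume and different contributions are units because $p\nmid N_{E/\Q}(\n)D_E$.
\end{enumerate}

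I expect the main obstacle to be the sign bookkeeping for $\s$: its orientation character on $Y_E$, its action on Chen's local generators, and whether $\s$ really pairs the $+$-part with the $-$-part. Equally delicate is making sure that the integral perfect pairing restricts to a perfect pairing between the $\s$-eigen-lattices and not merely a finite-index one. For the latter, the $\pm$-decomposition of an $\O$-module with an involution is exact because $2$ is invertible, which is where $p$ odd is used.
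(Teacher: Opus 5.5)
Your proposal is correct and follows the same route as the paper. The paper defers this proof to \cite[Theorem 3.2]{TR_BC}, but its proof of \ref{unitary_adjoint_L_value} mirrors it: a perfect Hecke-equivariant Poincar\'e pairing, the congruence-module pairing lemma, and the Rankin--Selberg unfolding of the Petersson product, with imprimitive ramified factors and Chen's archimedean computation. The sign you left open is indeed $-1$ (since $\s$ swaps two $5$-dimensional factors of the symmetric space, it reverses orientation, so $\langle \s x,\s y\rangle=-\langle x,y\rangle$). The pairing lemma must be applied to $M$ and its dual rather than to the eigenlattices $M[\pm]$, which are not $\TT$-stable because $\s$ is only semi-linear; only afterwards do you split the anti-diagonal rank-two pairing matrix on $M[\l_\Pi]$ into its $\pm$-parts.
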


In the above theorem, $\Lambda^{imp}(\Pi,\Ad,s)$ is the completed imprimitive adjoint $L$-function of $\Pi$, defined as an Euler product including the $\G$-factor at the archimedean places, and imprimitive local factors at places of ramification for $\Pi$. In fact, an analogous theorem is proven in \cite{TR_BC} for conjugate self-dual automorphic representations of $\GL(\A_E)$, involving the so-called $\e$-periods of $\Pi$ instead its $\s$-periods (which are not defined for conjugate self-dual representation). Since the proofs of these two theorems are essentially identical, we refer to \cite[Theorem 3.2]{TR_BC} for precise definition of $\Lambda^{imp}(\Pi,\Ad,s)$ and for a proof of \ref{adjoint_L_value}. Otherwise, we refer to Section 4 of the author's thesis \cite{thesis} where both results are proven in details. \\

\subsubsection{Relation with top and bottom degree periods} As explained in the introduction, a cohomological cuspidal automorphic representation $\Pi$ of $\GLn(\A_E)$ (for $E$ a general number field) is associated with some top and bottom degree periods. In particular, when $n=3$ an $E$ is a real quadratic field, one can define two $p$-integrally normalized periods attached to $\Pi$, the top degree period $\Om_6(\Pi) \in \C^\x / \O^\x$ and the bottom degree period $\Om_4(\Pi) \in \C^\x / \O^\x$, defined respectively within the top and bottom degrees of the cuspidal cohomology of $\GL(E)$. We refer to \cite[\S3.2.5]{BR17} for a precise definition. Then, combining \ref{adjoint_L_value} with Balasubramanyam-Raghuram's formula \cite[Theorem A]{BR17}, we can deduce an integral relation between the middle-degree periods and the extremal-degree periods of $\Pi$, under Calegari-Geraghty setting. \\

More precisely, let $\tilde{\TT}_E$ denote the \textit{full} Hecke algebra of $\GL(E)$ acting on the cohomology, localized at the maximal ideal $\m_\Pi$. By \textit{full}, we mean here that $\tilde{\TT}_E$ is defined by acting on the full cohomology $\tilde{H}_E$ of $Y_E(K_f)$ with coefficients in $\O$, not only on its $\O$-torsion-free part, as in \S\ref{hecke_corr}. In particular $\tilde{\TT}_E$ may contain $\O$-torsion, and $\TT_E = \tilde{\TT}_E /(\O-tors)$ is simply the torsion-free quotient of $\tilde{\TT}_E$. We now assume that the following three conjectures hold :
\begin{itemize}
\item $\mathrm{(Gal_{\m_\Pi})}$ The Galois representation $\rho_{\m_\Pi}$ with coefficients in $\tilde{\TT}_E$ attached to $\Pi$ exists
\item $\mathrm{(LGC_{\m_\Pi})}$ The Galois representation $\rho_{\m_\Pi}$ satisfies local-global compatibilities at minimal, Fontaine-Laffaille, and Taylor-Wiles places 
\item $\mathrm{(Van_{\m_\Pi})}$ The residual cohomology groups $H^q(Y_E(K_f),\L_{\mu}(\FF))_{\m_\Pi}$ vanishes unless $q \in [4,6]$
\end{itemize}
Moreover, we will say that $\rho_{\m_\Pi}$ satisfy $\mathrm{(CG)}$ if it satsifies the following three conditions:
\begin{itemize}
\item $\rho_{\m_\Pi}$ is $\n$-minimal;
\item $p-3 > 2m$, where $\mu = (m,m,0) \in X^+(T_3)$ (this ensures that $\rho_{\m_\Pi}$ is Fontaine-Laffaille above $p$);
\item the residual representation $\overline{\rho}_{\m_\Pi}$ has enormous image. \\
\end{itemize}

Under these conditions, and conditionally on the three conjectures $\mathrm{(Gal_{\m_\Pi})}$, $\mathrm{(LGC_{\m_\Pi})}$ and $\mathrm{(Van_{\m_\Pi})}$, the Calegari-Geraghty theory \cite{CG18} implies that the full cohomology is free over the full Hecke algebra $\tilde{\TT}_E$. We refer to the author's thesis \cite[\S3.2.4-5]{thesis} for a more detailed presentation of Calegari-Geraghty theory.  Be careful that $H^\bullet$ and $\TT$ denote the full cohomology and Hecke algebra there, while their torsion-free parts are denoted by $\bar{H}^\bullet$ and $\bar{\TT}$. Then, under Calegari-Geraghty setting, we obtain the following corollary:

\begin{corollaire} 
\label{relation_middle_top_bottom}
Assume $p >2$. Let $\Pi$ be a cohomological cuspidal automorphic representation of $\GL(\A_E)$ which is conjugate self-dual, and only ramified above split primes. Assume that conjectures $\mathrm{(Gal_{\m_\Pi})}$, $\mathrm{(LGC_{\m_\Pi})}$ and $\mathrm{(Van_{\m_\Pi})}$ hold, and that $\rho_{\m_\Pi}$ satisfies $\mathrm{(CG)}$. Then:
$$
\Om_5(\Pi,\s,\pm) \cdot \Om_5(\Pi^\vee,\s,\mp) \sim \Om_4(\Pi) \cdot \Om_6(\Pi^\vee) \cdot  \nu_{\Pi}^\pm
$$
where $\nu_{\Pi}^\pm := \eta_{\Pi} \cdot \eta_{\Pi}(M)[\pm]^{-1} \in \O$.
\end{corollaire}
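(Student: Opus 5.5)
The plan is to compare two adjoint $L$-value formulas for the same value $\Lambda^{imp}(\Pi,\Ad,1)$ and then, under the Calegari--Geraghty freeness, relate the two congruence numbers that appear.

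\textbf{Step 1: the two formulas.} On one side, \ref{adjoint_L_value} gives
$$\eta_{\l_\Pi}(M)[\pm]\sim \frac{\Lambda^{imp}(\Pi,\Ad,1)}{\Om_5(\Pi,\s,\pm)\,\Om_5(\Pi^\vee,\s,\mp)}.$$
On the other side, the Balasubramanyam--Raghuram formula \cite[Theorem A]{BR17}, with Chen's archimedean normalization \cite{Che22} of the generators $[\Pi_\tau]_2,[\Pi_\tau]_3$, gives
$$\eta_{\l_\Pi}(H^6)\sim \frac{\Lambda^{imp}(\Pi,\Ad,1)}{\Om_4(\Pi)\,\Om_6(\Pi^\vee)}.$$
Here $\eta_{\l_\Pi}(H^6)$ is the congruence number on the top-degree inner cohomology localized at $\m_\Pi$. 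This identity comes from the Poincaré duality pairing between degrees $4$ and $6$, which is perfect over $\O$ under the $p$-smallness and torsion hypotheses. I must check that both formulas use the same imprimitive $\Lambda^{imp}$, the same essential vectors $\phi_\Pi^\circ$, and the same archimedean generators. Chen's generators are used in both cases, so the $\G$-factors should agree exactly.

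\textbf{Step 2: dividing.} Taking the quotient of the two formulas gives
$$\Om_5(\Pi,\s,\pm)\,\Om_5(\Pi^\vee,\s,\mp)\sim \Om_4(\Pi)\,\Om_6(\Pi^\vee)\cdot \frac{\eta_{\l_\Pi}(H^6)}{\eta_{\l_\Pi}(M)[\pm]}.$$
It therefore remains to prove $\eta_{\l_\Pi}(H^6)\sim\eta_\Pi$, the Hecke congruence number of $\l_\Pi:\TT_E\to\O$. Once this holds, the ratio above is $\nu_\Pi^\pm$. That $\nu_\Pi^\pm$ lies in $\O$ follows from the general inequality $\eta_\Pi(M)[\pm]\mid\eta_\Pi$, since $M[\pm]$ is a faithful $\TT_E$-module of generic rank one (see \S\ref{congruence_modules_with_an_involution}).

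\textbf{Step 3: freeness (main obstacle).} Assume $\mathrm{(Gal_{\m_\Pi})}$, $\mathrm{(LGC_{\m_\Pi})}$, $\mathrm{(Van_{\m_\Pi})}$ and $\mathrm{(CG)}$. The Calegari--Geraghty patching argument of \cite{CG18} makes the patched complex a minimal resolution. This gives $\tilde R=\tilde\TT_E$ as a complete intersection and makes the top-degree full cohomology $\tilde H^6_{\m_\Pi}$ free of rank one over $\tilde\TT_E$.

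The delicate point is passing to torsion-free quotients. I would argue that the top degree cohomology, being the last nonzero term, has its torsion-free quotient free of rank one over $\TT_E=\tilde\TT_E/(\O\text{-tors})$. Dualizing, $H^4$ is then also free, using the perfect Poincaré pairing. The congruence module of a free rank-one module equals the Hecke congruence module $\TT_E/(\ker\l_\Pi+\mathrm{Ann}(\ker\l_\Pi))$, so $\eta_{\l_\Pi}(H^6)\sim\eta_\Pi$.

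I expect this torsion-free reduction to be the hardest step. If it resists the direct argument, I would first try using that $\tilde\TT_E$ is a complete intersection, hence Gorenstein, and apply Hida's criterion: for $\TT_E$ Gorenstein and $H^6$ free, the congruence numbers on $H^6$ and $\TT_E$ coincide. Substituting into Step 2 then yields the corollary.
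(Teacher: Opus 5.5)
Your route is the paper's: divide the middle-degree formula of \ref{adjoint_L_value} by the Balasubramanyam--Raghuram/Chen top--bottom formula, then use Calegari--Geraghty freeness in top degree to replace $\eta_{\l_\Pi}(H^6)$ by $\eta_\Pi$. The algebra in Step~2 is right. There are, however, two problems.

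\textbf{Hypotheses on $p$.} In Step~1 you use both formulas in their $\sim$ form. But \ref{adjoint_L_value} is stated only for $p\nmid 6N_{E/\Q}(\n)h_E(\n)D_E$, and the top--bottom formula also holds only away from a finite set of primes. The corollary assumes just $p>2$ and $\mathrm{(CG)}$, which does not exclude, say, $p\mid h_E(\n)$. As written, you therefore prove the corollary only after discarding finitely many primes.

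The fix, which is what the paper's remark after the statement refers to, is to divide the \emph{exact} identities before passing to $\sim$. Both identities come from evaluating a Poincar\'e pairing on Eichler--Shimura images of the same essential vectors $\phi^\circ_\Pi,\phi^\circ_{\Pi^\vee}$ and unfolding the same Petersson product. Hence $h_E(\n)$, $\vol(K_f)$, the powers of $D_E$ and the imprimitive local factors are literally identical on both sides and cancel. Two things survive:
(a) the factor coming from $\d^\pm_\s=\d_{\{\tau\}}\pm\s\circ\d_{\{\tau\}}$: in $[\d^\pm_\s(\phi^\circ_\Pi),\d^\mp_\s(\phi^\circ_{\Pi^\vee})]$ only the cross terms between the $\{\tau\}$- and $\{\s\tau\}$-components pair nontrivially, giving $\pm2$ times a single one;
(b) at $\s\tau$, Chen's archimedean pairing is evaluated on the $2$- and $3$-forms in the opposite order from the top--bottom case.

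Both are, up to sign, powers of $2$, hence units precisely because $p>2$. So the $\G$-factors agree only up to such units, not ``exactly.'' Likewise, perfectness of the two Poincar\'e pairings must come from the setting (localization at the non-Eisenstein $\m_\Pi$ under the assumed conjectures). The corollary imposes no boundary-torsion hypothesis for you to invoke.

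\textbf{Step~3.} Several claims here are false, although the conclusion you need is easy.

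In this Calegari--Geraghty setting the defect is $\ell_0=t-b=2$, and $\tilde{\TT}_E$ is \emph{not} a complete intersection in general. It need not even be $\O$-flat, which is exactly why the paper distinguishes it from $\TT_E$. Indeed, a complete intersection finite over $\O$ and admitting $\l_\Pi$ would be Cohen--Macaulay of dimension one, hence $\O$-torsion-free. So the Gorenstein fallback is unavailable. Your claim that $H^4$ is then also free is also unjustified: $H^4\cong\Hom_\O(H^6,\O)\cong\Hom_\O(\TT_E,\O)$ is free only if $\TT_E$ is Gorenstein.

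None of this is needed. Calegari--Geraghty gives $\tilde{H}^6_{\m_\Pi}\cong\tilde{\TT}_E$ as $\tilde{\TT}_E$-modules. Any such isomorphism carries $\O$-torsion onto $\O$-torsion, so the torsion-free quotient satisfies $H^6\cong\TT_E$. Hence $C_{\l_\Pi}(H^6)\cong C_{\l_\Pi}$ and $\eta_{\l_\Pi}(H^6)=\eta_\Pi$. If you want $H^4$, \ref{pairing_lemma} gives $C_{\l_\Pi}(H^4)\cong C_{\l_\Pi}(H^6)$ directly. Your ``main obstacle'' is therefore immediate.

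Finally, a small point in Step~2: $M[\pm]$ is not a $\TT_E$-submodule, because $\s$ is only semi-linear ($T\circ\s=\s\circ T^\s$). The divisibility $\eta_\Pi(M)[\pm]\mid\eta_\Pi$ holds instead because $\eta_\Pi=\l_\Pi(\mathrm{Ann}_{\TT_E}(\ker\l_\Pi))$ annihilates $C_{\l_\Pi}(M)$, hence each of its $\s$-eigen-summands.
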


It is worth noting that there is no need to exclude specific primes (except in $\mathrm{(CG)}$ for primes which are small with respect to $\mu$), as the constants in the two adjoint $L$-value formulas cancel each other out. \\

\section{An adjoint $L$-value formula for quasi-split unitary groups}
\label{part_adjoint}

In this section, we prove \ref{thmD} (see \ref{unitary_adjoint_L_value} below).
This section is organized as follows. In \ref{unitary_facts}, we fix the setting. In  \ref{unitary_1}, we present the new vector theory for quasi-split unitary groups. In \ref{unitary_2}, we state the Lapid-Mao-Morimoto formula for quasi-split unitary groups and deduce from it a precise relation between the Petersson product and some special value of the adjoint $L$-function. In \ref{unitary_3}, we recall some well-known facts about cohomological representations and define automorphic periods. Finally, in \ref{unitary_4}, we provide a cohomological interpretation of the Lapid-Mao-Morimoto formula and prove the adjoint $L$-value formula of \ref{unitary_adjoint_L_value}.

\subsection{Basic definitions and notation}
\label{unitary_facts}

Let $F$ be a number field and $E$ be a quadratic extension of $F$. We fix the standard non-trivial additive character $\psi_F := \psi_\Q \circ \mathrm{Tr}_{F/\Q}$ of $F \bs \A_F$ and we denote by $\psi_E$ the additive character of $E \bs \A_E$ defined by:
$$
\psi_E(x) = \psi_F\left(\frac{x - \s(x)}{2}\right), \quad x \in \A_E
$$
We write $\psi_E = \bigotimes_w \psi_{E,w}$ for its factorization as a tensor product of local additive characters.

\subsubsection{Quasi-split unitary groups} Let $U = U_{E/F}$ denote the quasi-split unitary group associated with $E/F$, i.e. the $F$-algebraic group whose $A$-points are given, for any $F$-algebra $A$, by:
$$
U(A) = \{ g \in \mathrm{GL}_n(E\otimes_F A), \,\, {}^t\s(g) J_n g= J_n \}
$$
where $\s$ is the non-trivial involution in $\Gal(E/F)$ and $J_n = \mathrm{antidiag}(1, \dots,1)$. Let $v$ be a place of $F$ which is split in $E$, and fix an identification $E \otimes_F F_v = F_v \x F_v$. In that case we have that:
$$
U(F_v) = \{ (g_1,g_2) \in \mathrm{GL}_n(F_v) \x \mathrm{GL}_n(F_v), \,\, g_2 = J_n {}^tg_1^{-1} J_n^{-1} \}
$$
and we identify $U(F_v)$ with $\GLn(F_v)$ via the first projection. \\

 At each finite place $v$ of $F$ we consider the maximal open compact subgroup $K_v$ of $U_{E/F}(F_v)$ defined by:
 $$
 K_v = U(F_v) \cap \GL(\O_{E,v})
 $$ 
 where $\O_{E,v}$ is the valuation ring of $E_v := E \otimes_F F_v$. When $v$ is unramified in $E$, $K_v$ is hyperspecial. Moreover, when $v$ is split, one has one course that $K_v = \GLn(\O_{F,v})$ \\

Let $N_U$ be the subgroup of upper triangular matrices in $U$. We define a non-degenerate additive character $\psi$ of $N_U(\A_F)$ by the formula:
$$
\psi(u) = \psi_E(u_{1,2} + \dots + u_{n-2,n-1} + u_{n-1,n})
$$
In particular, for $n=3$, one has that:
$$
N_U(F) =\left\{ n(x,y) := \left(\begin{array}{ccc}1 & x & y - \frac{1}{2} x x^\s \\ 0 & 1 & -x^{\s} \\ 0 & 0 & 1\end{array}\right) \in \GL(E), \quad x,y \in E, \quad \Tr_{E/F}(y) = 0 \right\}
$$
and $\psi(n(x,y)) = \psi_E(x)^2$. \\

Let $Z_U$ be the center of $U$. The group of $F$-points of $Z_U$ is isomorphic to $E^{(1)} := \Ker(N_{E/F}) \subset E^\x$ via:
$$
\l \in E^{(1)} \mapsto \left(\begin{array}{ccc} \l &  &  \\  & \l &  \\  &  & \l \end{array}\right) \in Z_U(F)
$$
We will also denote by $\A_E^{(1)}$  the kernel of $N_{E/F} : \A_E^\x \to \A_F^\x$, which is isomorphic to $Z_U(\A_E)$.

\subsubsection{Measures.} On $U(\A_F)$, let $dg$ the global Haar measure defined by:
$$
dg = \left( \prod_{i=1}^n L(\chi_{E/F}^i,i) \right) \cdot dg^{Tam}
$$
where $dg^{Tam}$ is the Tamagawa measure on $U(\A_F)$. For each place $v$ of $F$, we take the local measure $dn_v$ on  $N_U(F_v)$ to be $dn_v = (dx_v)^{\dim N_U}$, where $dx_v$ is the additive Haar measure on $F_v$ defined in \S\ref{general_haar_measures}. In particular, $\vol(N_U(\O_v),dn_v) = 1$ when $v$ is unramified in $E$. Then $dn = \prod_v dn_v$ is the Tamagawa measure on $N_U(\A_F)$ and:
$$
\vol(N_U(F)\bs N_U(\A_F),dn) = 1
$$

\subsection{New vector theory for quasi-split unitary groups}
\label{unitary_1}

Everything stated in this subsection applies to any quasi-split unitary group $U_{E/F}$ associated with a quadratic extension $E/F$ of number fields.

\subsubsection{Local theory}

We temporarily adopt local notation. Let $K$ be a non-archimedean local field of characteristic zero, and let $L$ be a quadratic extension of $K$. Let $\O_K$ (resp. $\O_L$) be the valuation ring of $K$ (resp. $L$), $\wp_K$ (resp. $\wp_L$) its prime ideal, and $q_K = \#(\O_K/\wp_K)$ (resp. $q_L = \#(\O_L/\wp_L)$). Let $U$ be the quasi-split unitary group in $n$-variables associated with $L/K$. Let $\psi_L$ be a non-trivial additive character of $L$. In particular, such a character can be obtained from a non-trivial additive character $\psi_K$ of $K$ by the formula:
$$
\psi_L(x) = \psi_K\left(\frac{x - \s(x)}{2}\right), \quad x \in L
$$
where $\s$ is the Galois involution of $L/K$. From a character $\psi_L$ of $L$, we define a character $\psi$ of $N(K) = N_U(K)$ by the formula $\psi(u) = \psi_L(u_{1,2} + \dots + u_{n-1,n})$ for $n \in N(K)$. \\

\textit{Genericity.} Let $(\pi,V)$ be an irreducible admissible representation of $U(K)$. Let $\Hom_{N}(\pi, \psi)$ be the space of functionals $W : V \to \C$ such that:
$$
W( \pi(u) \phi) = \psi(u) W(\phi), \quad \phi \in V, \quad u \in N(K)
$$
We say that $\pi$ is generic if $\Hom_{N}(\pi, \psi) \neq 0$. In that case, $\dim_\C \Hom_{N}(\pi, \psi) = 1$. Moreover, by Frobenius reciprocity, one has $\Hom_{N}(\pi, \psi) = \Hom_{U}(\pi, \mathrm{Ind}_{N}^{U}\psi)$ and thus, up to scalar, there exists a unique embeddind of $\pi$ into the space:
$$
\mathrm{Ind}_{N}^{U}\psi := \{  f : U(K) \to \C, \; f(ug) = \psi(u) f(g), \; u \in N(K), g \in U(K) \}
$$
Concretely, if $W \in \Hom_{N}(\pi, \psi)$ is some non-zero functionnal, then the corresponding embedding is given by $\phi \mapsto W_\phi$, with:
$$
W_\phi(g) = W(\pi(g)\phi), \quad g \in U(K)
$$
The image of $\pi$ inside $\mathrm{Ind}_{N}^{U}\psi$ is denoted $\W(\pi,\psi)$ and called the Whittaker model of $\pi$ with respect to $\psi$. We will sometimes also write $\W(\pi,\psi_L)$, or $\W(\pi,\psi_K)$ when $\psi_L$ is constructed from a character $\psi_K$ of $K$ as specified above.  \\

Assume $n=2m+1$ is odd. Let $a \in K^\x$ and $\psi_L^a$ the additive character of $L$ defined by $\psi_L^a(x) = \psi_L(ax)$, for $x \in L$. Let $\psi^a$ the corresponding character on $N(K)$. Then one check that:
$$
\psi^a(u) = \psi(\diag(a^m,\dots,1,\dots, a^{-m})\cdot u \cdot \diag(a^{-m},\dots,1,\dots, a^{m}))
$$
Thus, the map $t_a : W \mapsto W^a$ defined by:
$$
W^a(g) = W(\diag(a^{-m},\dots,1,\dots, a^{m}) g)
$$
is a $U(K)$-equivariant isomorphism $\W(\pi,\psi) \toeq \W(\pi,\psi^a)$. \\

\textit{Spherical vector at ramified primes.} Assume that $n=3$. Let $\psi_K$ be an unramified additive character of $K$ and let $\psi$ be the corresponding additive character of $N(K)$. Let $C$ be the maximal open compact subgroup of $U(K)$ defined by $U(K) \cap \GL(\O_L)$.  Let $\pi$ be an irreducible generic unitary representation of $U(K)$ which is $C$-spherical, i.e. has a non-zero $C$-fixed vector. Since the local Hecke algebra $\H(U,C)$ is commutative (as $C$ is a special subgroup), the space of $C$-fixed vectors in $\pi$ is one dimensional. Let $\W(\pi,\psi)$ be the Whittaker model of $\pi$ with respect to $\psi$. The following lemma is implicitly proven in \cite[\S 4.3.2]{Che25}:

\begin{lemma}
Let $W \in \W(\pi,\psi)$ be non-zero $C$-invariant Whittaker function. Then:
$$
W(I_3) \neq 0
$$
\end{lemma}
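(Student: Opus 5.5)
The plan is to show that $W(I_3)=0$ forces $W$ to vanish on the whole torus, and hence everywhere. We do this by combining the Iwasawa decomposition with the support properties of $C$-invariant Whittaker functions.

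First, since $C$ is a special maximal compact subgroup of the quasi-split group $U(K)$, we have the Iwasawa decomposition $U(K)=N(K)T_U(K)C$. Here $T_U$ is the diagonal torus of $U$, namely $\diag(t,\lambda,\s(t)^{-1})$ with $t\in L^\x$ and $\lambda\in L^{(1)}$. Since $W$ is $C$-invariant and $\psi$-equivariant on the left, $W$ is determined by its values on $T_U(K)$. The compact part $T_U(K)\cap C$ acts on $W$ by translation. Let $t_0\in T_U(\O)$ and $u\in N(K)$. Then $W(t t_0)=W(t)$, and $W(ut)=\psi(u)W(t)$. Since $t_0$ conjugates $N$ and changes $\psi$ by a unit, the standard computation shows that $W(t)=0$ unless $t^{-1}N(\O)t\subset\ker\psi$ on the relevant root. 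Because $\psi_K$ is unramified, this means the simple root value $\alpha(t)=t/\lambda$ lies in $\O_L$. So $W$ is supported on $t=\diag(\varpi_L^{k},\lambda,\dots)$ with $k\ge0$, modulo units.

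The main step is to show that $W(\diag(\varpi^k,1,\varpi^{-k}))$ is determined by $W(I_3)$ for every $k\ge0$. We would proceed as in Shintani/Casselman–Shalika: let the spherical Hecke algebra $\H(U,C)$ act, and use the generator $T$ attached to the double coset $C\,\diag(\varpi,1,\varpi^{-1})\,C$. Since $W$ is a $C$-fixed vector in an irreducible representation, $\pi(T)W=c_\pi W$ for a scalar $c_\pi$. Evaluating both sides at $\diag(\varpi^k,1,\varpi^{-k})$ and decomposing the double coset into left $C$-cosets (with representatives in $N(K)T_U(K)$) gives a recursion. In it, $c_\pi W(a_k)$ equals a positive multiple of $W(a_{k+1})$ plus terms involving $W(a_j)$ with $j\le k$, where the lower-order coefficients are character sums of $\psi$. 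These character sums vanish or become explicit because of the support condition. Starting from $W(a_0)=W(I_3)=0$ and the support condition $W(a_{-1})=0$, induction gives $W(a_k)=0$ for all $k$, and therefore $W\equiv0$, which is a contradiction.

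The main obstacle is carrying out the coset decomposition of $C\,\diag(\varpi,1,\varpi^{-1})\,C$ when $L/K$ is ramified. In that case $C$ is special but not hyperspecial, so the Satake and Casselman–Shalika machinery must be replaced by an explicit computation. There we would first check that the leading coefficient (the coset of the dominant translate) is nonzero, since this is what drives the induction.

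An alternative route, which is presumably Chen's in \cite[\S 4.3.2]{Che25}, is to realize $\pi$ as a subquotient of an unramified principal series $\mathrm{Ind}_{B_U}^{U}\chi$ and use the Jacquet integral to compute $W(I_3)$ on the $C$-spherical vector. This computation yields a nonzero product of $L$-factor quotients, which settles the irreducible case directly.
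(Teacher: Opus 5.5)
Your main argument is correct, but it is not the route the paper takes. The paper uses exactly your ``alternative'': it embeds $\pi$ in an unramified principal series $\mathrm{Ind}_{B(K)}^{U(K)}\chi$ and quotes Chen's Jacquet-integral computation $W_{f^\circ}(I_3)=1-\chi(\varpi_L)q^{-1}$ for the normalized spherical section.

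Your Hecke recursion proves something structurally stronger: a $C$-fixed Whittaker function is determined by its value at $I_3$. The recursion is $c_\pi W(a_k)=N_1W(a_{k+1})+(\ast)W(a_k)+(\ast)W(a_{k-1})$, with $N_1=[N\cap C:a_1(N\cap C)a_1^{-1}]\geq 1$, started from $W(a_{-1})=0$.
\begin{itemize}
\item It is uniform in the inert and ramified cases and needs no classification of $C$-spherical representations.
\item It needs only the Iwasawa decomposition and the Macdonald-type coset fact $CaC\cap NaC=(N\cap C)aC$ for dominant $a$, for the special but non-hyperspecial $C$. You rightly single this out as the real work.
\item It closes a small gap in the paper's deduction. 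The factor $1-\chi(\varpi_L)q^{-1}$ vanishes when $\chi(\varpi_L)=q$. So Chen's formula gives $W(I_3)\neq0$ only once one knows that $W_{f^\circ}\not\equiv 0$ forces $W_{f^\circ}(I_3)\neq 0$, and that is precisely your recursion.
\end{itemize}
What the paper's route buys is the explicit value of $W(I_3)$. That value is the input Chen uses to compute the ramified local Whittaker functional, which gives the triviality of $u_{E/\mathbb{Q}}$; the lemma itself only needs non-vanishing.

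The support step should be pinned down rather than asserted, since it is where the hypothesis on $\psi$ enters. You need the character $x\mapsto\psi(n(x,y))$ of $L$ to be trivial on $\mathcal{O}_L$ and nontrivial on $\varpi_L^{-1}\mathcal{O}_L$. For ramified $L/K$ this does not follow from ``$\psi_K$ unramified'' for every recipe. For example, $\psi_K\circ\mathrm{Tr}_{L/K}$ has conductor $\mathfrak{d}_{L/K}^{-1}\supsetneq\mathcal{O}_L$. In that case:
\begin{itemize}
\item the support of $W$ starts strictly below $I_3$;
\item your recursion expresses $W$ through its value at the first support point;
\item $W(I_3)$ becomes a polynomial in $c_\pi$ times that value, which may vanish.
\end{itemize}
So you must fix the normalization in which the relevant character of $L$ has conductor exactly $\mathcal{O}_L$. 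That is the normalization in which Chen's value has the shape $1-\chi(\varpi_L)q^{-1}$.

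Two minor slips:
\begin{itemize}
\item The support condition is $t(N\cap C)t^{-1}\subset\ker\psi$, not $t^{-1}(N\cap C)t$.
\item For ramified $L/K$ no uniformizer is $\sigma$-fixed, so $\diag(\varpi_L^k,1,\varpi_L^{-k})\notin U(K)$ for odd $k$. Take $a_k=\diag(\varpi_L^k,1,\sigma(\varpi_L)^{-k})$ instead.
\end{itemize}
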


\begin{proof}
As explained in \cite[\S 4.3.2]{Che25}, every $C$-spherical representation $\pi$ is isomorphic to an irreducible composition factor of the induced representation $\mathrm{Ind}_{B(K)}^{U(K)} \chi$, for some unramified  character $\chi$ of $L^\x$. The character $\chi$ is seen as a character of the Borel subgroup $B(K)$ of $U(K)$ consisting of upper triangular matrices via $\chi(a,b,\s(a)^{-1}) = \chi(a)$. In the space of $C$-fixed vectors in $\mathrm{Ind}_{B(K)}^{U(K)} \chi$, there is a canonical spherical vector $f^\circ \in \pi$ normalized so that $f^\circ(1) =1$. Chen then proves that (see the proof of \cite[Proposition 4.6]{Che25}):
$$
W_{f^\circ}(I_3) = 1 -\chi(\varpi_L)q^{-1}
$$
where $q = q_K = q_L$, $\varpi_L$ is an uniformizer of $\O_L$, and $W_{f^\circ}$ is some explicit function associated with $f^\circ$ in $\W(\pi,\psi)$. Consequently, $W(I_3) \neq 0$ for all non-zero $C$-fixed Whittaker function $W \in \W(\pi,\psi)$. 
\end{proof}

Fix $\phi \mapsto W_\phi$ an isomorphism $\pi \toeq \W(\pi,\psi)$. The only $C$-invariant form $\phi$ such that $W_\phi(I_3) = 1$ will be called the \textit{spherical vector} of $\pi$ with respect to $\psi$, and is denoted $\phi_\pi^\circ$. \\

\textit{Local newform theory at inert primes.} Suppose that $n = 2m+1$ is odd. We recall the newform theory for odd unramified quasi-split unitary group over local fields, du to \cite{Miyauchi2013,AOY22,Cheng23}. We assume that the extension $L/K$ is unramified. For any integer $c \geq 0$, let $K_1(\wp^c)$ be the open compact subgroup of $U(K)$ defined by:
$$
K_1(\wp^c) := \left(\begin{array}{ccc} \mathfrak{M}_m(\O_L) & \O_L & \mathfrak{M}_m(\wp_L^{-c}) \\ \wp_L^{c} & 1 + \wp_L^{c} & \O_L \\  \mathfrak{M}_m(\wp_L^{c}) & \wp_L^{c} &  \mathfrak{M}_m(\O_L) \end{array}\right) \cap U(K)
$$
where $\mathfrak{M}_m(A)$ is the set of $m \x m$ matrices with coefficients in $A \subset L$. Then, there is the following newform theory:
\begin{theorem}[\cite{Cheng23}, Theorem 1.2]
Let $(\pi,V)$ be an irreducible generic representation of $U(K)$. Then, there exists a non-negative integer $c(\pi)$ such that, for all $c \geq 0$, one has:
$$
\dim V^{K_1(\wp^c)} = \left\{
    \begin{array}{ll}
        0 & \mbox{if } c \leq c(\pi) \\
        1& \mbox{if } c = c(\pi)
    \end{array}
\right.
$$
Moreover, $c(\pi)$ coincides with the analytic conductor of $\pi$, i.e. the power of $q^{-s}$ in the $\e$-factor of $\pi$ with respect to an unramified additive character $\psi$ of $L$ which is trivial on $K$.
\end{theorem}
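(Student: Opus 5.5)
The statement is quoted from \cite{Cheng23}, and in the paper it is only used as a black box. If I had to reprove it, I would adapt the Jacquet--Piatetski-Shapiro--Shalika argument behind \ref{essential_vector} to $U(K)$ with $n=2m+1$. The $\GLn$ Rankin--Selberg integrals are replaced by the local zeta integrals of $U(K)\times \mathrm{GL}_m(L)$ constructed by Ben-Artzi--Soudry; for $n=3$ these are the Gelbart--Piatetski-Shapiro integrals. (The condition ``$c\leq c(\pi)$'' in the statement should be read as $c<c(\pi)$.)

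\emph{Zeta integrals.} For $W\in\W(\pi,\psi)$ and $\tau$ an unramified generic representation of $\mathrm{GL}_m(L)$ with spherical Whittaker function $W_\tau^\circ$, consider the zeta integral $\Psi(s,W,W_\tau^\circ)$. These integrals span a fractional ideal of $\C[q^{\pm s}]$ generated by $L(s,\pi\times\tau)$. They satisfy a local functional equation whose $\gamma$-factor is $\e(s,\pi\times\tau,\psi)\,L(1-s,\pi^\vee\times\tau^\vee)/L(s,\pi\times\tau)$. For unramified $\tau$ one has $\e(s,\pi\times\tau,\psi)=\omega_\tau(\varpi)^{c(\pi)}q^{-c(\pi)(2s-1)}\e(1/2,\pi,\psi)$ up to the normalization.

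\emph{Vanishing for $c<c(\pi)$.} Let $W$ be $K_1(\wp^c)$-fixed. Reduce $\Psi(s,W,W^\circ_\tau)$ to a finite sum over dominant torus elements. Then apply the functional equation, whose dual side involves the Weyl-type element normalizing $K_1(\wp^c)$. Matching powers of $q^{-s}$ on both sides forces a relation $q^{-c(2s-1)}\cdot(\cdots)=q^{-c(\pi)(2s-1)}\cdot(\cdots)$, and one shows this relation is impossible unless $W$ vanishes on the torus. Then a Dixmier--Malliavin type argument (in the form $W|_{\mathrm{diag}}=0\Rightarrow W=0$ for $K_1$-invariant $W$) gives $W=0$.

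\emph{Existence and multiplicity one at $c=c(\pi)$.} Construct a vector $W^\circ$ whose restriction to the relevant mirahoric-type subgroup $\mathrm{GL}_m(L)$ embedded in $U(K)$ is the characteristic function of $\mathrm{GL}_m(\O_L)N$ times $\psi$. Then $\Psi(s,W^\circ,W^\circ_\tau)=L(s,\pi\times\tau)$ for all unramified $\tau$. Use the functional equation and the $\e$-factor formula to show that $W^\circ$ is invariant under the extra generators of $K_1(\wp^{c(\pi)})$, namely the entries in $\wp_L^{-c}$ and $\wp_L^{c}$. Uniqueness then follows from the injectivity just used: a fixed vector is determined by its restriction to the torus, and the zeta integrals pin that restriction down to a scalar.

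The main obstacle is the invariance step: proving that the test vector built from the zeta integrals is actually fixed by the full group $K_1(\wp^{c(\pi)})$. On $\GLn$ this uses the mirahoric's simple structure, whereas here the unipotent part involves hermitian blocks. I would first work out the case $n=3$ directly, as in Miyauchi \cite{Miyauchi2013}, using the explicit $N_U$ coordinates $n(x,y)$. An alternative would be to deduce the conductor identity via local base change to $\GLn(L)$ and \ref{essential_vector}.
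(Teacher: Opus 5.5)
The paper does not prove this statement. It is quoted from \cite[Theorem 1.2]{Cheng23} (building on \cite{Miyauchi2013,AOY22}) and recorded only for completeness, since all main results assume $\pi$ spherical at non-split places. You are right that ``$c\leq c(\pi)$'' should read $c<c(\pi)$. As a proof, however, your sketch has genuine gaps, and the existence step is wrong as written.

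Suppose $W^\circ$ restricts on the Levi $\mathrm{GL}_m(L)$ to $\psi\cdot\mathbf{1}_{N\,\mathrm{GL}_m(\O_L)}$. Once the zeta integral reduces to an integral of $W^\circ$ against $W_\tau^\circ$ over $N\backslash\mathrm{GL}_m(L)$ (which your construction presupposes), it collapses to $W_\tau^\circ(1)$ times a volume. That is a constant, not $L(s,\pi\times\tau)$. This is exactly what happens on $\GLn$: that vector gives $\Psi=1$, and the essential vector is a different vector. Its restriction to $\mathrm{GL}_{n-1}$ is the Schur-polynomial function recalled in \S\ref{mirahoric_theory}, which is in general not compactly supported modulo $N$. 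Moreover, on $\GLn$ even the existence of such a $W^\circ$ rests on the Bernstein--Zelevinsky description of the Kirillov model, and you give no analogue for $U(2m+1)$.

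Above all, the step you call the main obstacle, the $K_1(\wp^{c(\pi)})$-invariance of the constructed vector, is the entire content of the existence assertion. Even for $\GLn$ it is the delicate point of \cite{J-PS-S81}: the original argument needed a later correction by Jacquet and was reproved constructively in \cite{Matringe13}. Deferring this step leaves the theorem unproved.

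The vanishing and uniqueness argument also rests on unjustified inputs.

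(a) For $K_1(\wp^c)$-fixed $W$, the Ben-Artzi--Soudry integral must become a Schur-type transform of the torus values of $W$. But $K_1(\wp^c)\cap U(m,m)$ has lower-left block in $\mathfrak{M}_m(\wp_L^{c})$ and upper-right block in $\mathfrak{M}_m(\wp_L^{-c})$. So it does not contain the hyperspecial subgroup $U(m,m)\cap\mathrm{GL}_{2m}(\O_L)$, and the spherical section $f_{\tau,s}$ does not unfold against $W$ to torus values. You would need a section adapted to this paramodular-type group, together with an explicit computation of its transform under the intertwining operator in the functional equation. That is where a factor like $\omega_\tau(\varpi)^{c}$ on the dual side has to come from. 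The degree comparison must then be done on the polynomial $\Psi/L(s,\pi\times\tau)$ in the Satake variables of $\tau$; the series over dominant weights is not a finite sum. It also needs support conditions on both $W$ and its dual, which again come from the structure of $K_1(\wp^c)$.

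(b) You need that a $K_1(\wp^c)$-fixed $W$ vanishing on the torus is zero. Dixmier--Malliavin is not the relevant input. On $\GLn$ this follows from injectivity of restriction to the mirabolic $P_n$ (the Kirillov model) together with right $\mathrm{GL}_{n-1}(\O)$-invariance, and no such restriction theorem is available for $U(2m+1)$.

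There are further issues:
\begin{itemize}
\item You take for granted that the gcd of these integrals is the Langlands factor $L(s,\pi\times\tau)$.
\item For $\tau$ on $\mathrm{GL}_m(L)$ the $\varepsilon$-factor is $\omega_\tau(\varpi_L)^{c(\pi)}\varepsilon(1/2,\pi,\psi)^m q_L^{-mc(\pi)(s-1/2)}$, not its $m=1$ version.
\item The base-change alternative is not formal. Nothing relates $K_1(\wp^c)$-invariants of $\pi$ to mirahoric invariants of its base change to $\mathrm{GL}_{2m+1}(L)$ without a twisted matching statement for $\mathbf{1}_{K_1(\wp^c)}$, which would itself be a major input.
\end{itemize}
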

Any non-zero $K_1(\wp^{c(\pi)})$-fixed vector in $\pi$ is called a newform. \\

Assume that $n=3$. Let $\psi_L$ be a non-trivial additive character of $L$ which is unramified and let $\psi$ be the corresponding character of $N(K)$. Let $\W(\pi,\psi)$ be the Whittaker model of $\pi$ with respect to $\psi$, and fix $\phi \mapsto W_\phi$ an isomorphism $\pi \toeq \W(\pi,\psi)$. Then, for any non-zero newform $\phi$ in $\pi$, one has that $W_\phi(I_3) \neq 0$ \cite[Proposition 5.1]{Miyauchi2011}. The only newform $\phi$ such that  $W_\phi(I_3) = 1$ will be called the \textit{essential vector} of $\pi$ with respect to $\psi_L$, and is denoted $\phi_\pi^\circ$.  When $\pi$ is unramified (i.e. when $c(\pi) = 0$) $\phi_\pi^\circ$ is rather called the \textit{spherical vector} of $\pi$.

\subsubsection{Global theory}
\label{global_unitary_newforms}

We switch to global notation. Let $E/F$ be a quadratic extension of number fields, and $U$ be the quasi-split unitary group of degree $n$ associated with $E/F$. Let $\pi$ be a cuspidal automorphic representation of $U(\A_F)$, realized as a sub-representation of the space of automorphic forms on $U(\A_F)$. Let $\psi_E$ be the non-degenerate additive character of $E \bs \A_E$ we have fixed in \S\ref{unitary_facts} and let $\psi$ be the corresponding additive character of $N_U(F) \bs N_U(\A_F)$. If $\phi \in \pi$, the Whittaker function $W_\phi$ of $\phi$ with respect to $\psi$, is the function on $U(\A_F)$ defined by:
$$
W_\phi(g) = \int_{N_{n}(F) \bs N_{n}(\A_F) } \phi(ng) \psi^{-1}(n)dn, \quad \quad g \in U(\A_F)
$$
If the space $\W(\pi,\psi) := \{ W_\phi, \phi \in \pi \}$ is non-zero, we say that $\pi$ is \textit{globally generic} and $\W(\pi,\psi)$ is called the Whittaker model of $\pi$. Write $\pi = \otimes_v \pi_v$ and $\psi_E = \otimes_v \psi_v$ as tensor products over the places $v$ of $F$. If $\pi$ is generic, then $\pi_v$ is generic for all place $v$ of $F$. Moreover, the isomorphism $\phi \mapsto W_\phi$ fixes local isomorphisms $\pi_v \toeq \W(\pi_v,\psi_v)$ for all $v$. \\

We now present the newform theory for a globally generic cuspidal automorphic representation $\pi$ of $U(\A_F)$. When $\pi$ is only ramified at split places (which is the case we will be mainly interested in), the theory simply reduces to the newform theory for $\GLn$, presented in \S\ref{mirahoric_theory}. However, for completeness, we also allow $\pi$ to be ramified at inert places when $n$ is odd, since a local newform theory exists for odd unramified quasi-split unitary groups. \\

Let $\n = \prod_{\wp \mid \n} \wp^{c_\wp}$ be an ideal of $\O_F$ and assume that $\n$ has no ramified prime factors. We define the mirahoric subgroup $K_1(\n)$ of level $\n$ to be the following open compact subgroup of $U(\A_{F,f})$:
$$
K_1(\mathfrak{n}) := \prod_{\wp \nmid \mathfrak{n}} K_\wp \x \prod_{\wp \mid \mathfrak{n}} K_1(\wp^{c_\wp})
$$
where $K_\wp = U(F_\wp) \cap \GLn(\O_{E,\wp})$ is the particular maximal open compact subgroup of $U(F_\wp)$ we have fixed, $K_1(\wp^{c})$ is the local mirahoric subgroup of $ \GLn(\O_\wp) = U(\O_\wp)$ when $\wp$ is split (see \S \ref{mirahoric_theory}), and $K_1(\wp^{c})$ is the open compact subgroup of $U(F_\wp)$ defined in the previous paragraph when $\wp$ is inert. \\

Let $\pi$ be a globally generic cuspidal automorphic representation of $U(\A_F)$, and let $\pi_f$ be its finite part. The following global result can easily be deduced from the local theory, as detailed in \S\ref{mirahoric_theory}:

\begin{prop}
\label{unitary_mirahoric}
Assume that $\pi_f$ is spherical at ramified prime ideals. If $n$ is even assume moreover that $\pi_f$ is spherical at inert prime ideals. Then, there exists an unique ideal $\n$ of $\O_F$ such that:
$$
\dim \pi_f^{K_1(\n)} = 1
$$
The ideal $\n$ is called the mirahoric level of $\pi$, and will sometimes be denoted by $\n(\pi)$.
\end{prop}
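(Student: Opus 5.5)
The plan is to reduce everything to the local theory place by place, using the factorization $\pi_f = \bigotimes_v \pi_v$ over finite places $v$ of $F$, the factorization $K_1(\n) = \prod_v K_{1,v}$, and the identity $\pi_f^{K_1(\n)} = \bigotimes_v \pi_v^{K_{1,v}}$. The last identity holds because $K_1(\n)$ is a product of local compact open subgroups, and for almost all $v$ the local group is the hyperspecial $K_v$ and $\pi_v$ is unramified. Consequently $\dim \pi_f^{K_1(\n)} = \prod_v \dim \pi_v^{K_{1,v}}$, and it suffices to exhibit at each finite place a unique local exponent $c_v \geq 0$ (equal to $0$ for almost all $v$) such that the local dimension is $1$. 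Then $\n := \prod_{v} \wp_v^{c_v}$ works. Since $\pi$ is globally generic, each $\pi_v$ is generic, so the local newform theories apply.

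The places split into three types.
\begin{itemize}
\item \textbf{Split places.} Here $U(F_v) \simeq \GLn(F_v)$ via the first projection, with $K_v = \GLn(\O_{F,v})$ and $K_1(\wp^c)$ the usual mirahoric subgroup. \ref{essential_vector} gives $c_v = c(\pi_v)$ with $\dim V(c_v) = 1$. Note that the mirahoric groups form a decreasing chain, so for $c \geq c(\pi_v)$ the fixed space is nonzero.
\item \textbf{Inert places.} If $n$ is odd, the newform theorem of Cheng (Theorem 1.2 of \cite{Cheng23}) gives $c_v = c(\pi_v)$. If $n$ is even, the hypothesis makes $\pi_v$ spherical for $K_v$. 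Since $K_v$ is hyperspecial, the spherical Hecke algebra is commutative, so $\dim \pi_v^{K_v} = 1$ and $c_v = 0$.
\item \textbf{Ramified places.} By hypothesis $\pi_v$ is spherical for the special maximal compact $K_v = U(F_v) \cap \GLn(\O_{E,v})$. Commutativity of $\H(U,K_v)$ again gives $\dim \pi_v^{K_v} = 1$, and we take $c_v = 0$ (recall that $\n$ has no ramified prime factors by definition).
\end{itemize}
For almost all $v$, $\pi_v$ is unramified, so $c_v = 0$.

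For uniqueness, the chosen $c_v$ must be the only exponent giving dimension $1$, so that $\n$ is determined. At split and odd-inert places, uniqueness of the exponent realizing dimension exactly $1$ is slightly subtle. Larger levels give larger dimension, and the local theory (dimension formulas of Reeder type for $\GLn$, and their analogues in \cite{AOY22,Cheng23}) gives $\dim \pi_v^{K_1(\wp^c)} = \binom{c-c(\pi_v)+r}{r}$ or a similar quantity exceeding $1$ for $c > c(\pi_v)$ when $n \ge 2$. Smaller levels give dimension $0$ by minimality. I would handle this by interpreting ``unique'' as ``the minimal ideal'': $\n$ is the unique ideal, among those with no ramified prime factors, whose local exponents are the minimal ones with nonzero fixed vectors. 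This is what the uniqueness statement really means, and at ramified places the ideal is forced to have exponent $0$ anyway.

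The main obstacle is exactly this uniqueness bookkeeping, which requires recalling the local dimension growth beyond the conductor. Everything else is the formal tensor-product argument together with commutativity of spherical Hecke algebras for special maximal compacts.
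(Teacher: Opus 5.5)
Your reduction to local fixed-vector spaces is what the paper intends; it gives nothing beyond ``deduced from the local theory''. Existence is fine with $\n=\prod_v\wp_v^{c(\pi_v)}$. The gap is in your uniqueness paragraph at inert places ($n$ odd).

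There the groups $K_1(\wp^c)$ are not nested, since the $(1,3)$-entry lies in $\wp_L^{-c}$. So ``larger level, larger dimension'' is not automatic. The growth you attribute to the unitary newform papers is also not there: Cheng's theorem says nothing above $c(\pi_v)$, and in fact the dimension stays $1$ at $c(\pi_v)+1$.

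Concretely, take $n=3$ and $\pi_v=I(\chi)$ an irreducible unramified principal series. This is the situation at almost every inert place, because a generic unramified representation is a full irreducible principal series. Let $K'$ be the stabilizer in $U(K)$ of $\O_L\oplus\O_L\oplus\wp_L$, i.e.\ the matrices of the same shape as $K_1(\wp)$ but with $(2,2)$-entry only required to lie in $\O_L$.
\begin{itemize}
\item The map $g\mapsto g_{22}\bmod\wp_L$ is a homomorphism on $K'$, onto the norm-one elements of $k_L^\times$, with kernel exactly $K_1(\wp)$.
\item The elements $\mathrm{diag}(1,b,1)$ with $b\in L^1$ already realize all its values, so $K'=(B\cap K')K_1(\wp)$.
\item Since $K'$ is special, $U(K)=BK'$. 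Hence $B\backslash U(K)/K_1(\wp)$ is a single point.
\item Therefore $\dim I(\chi)^{K_1(\wp)}=1=\dim I(\chi)^{K_v}$. This is consistent with Miyauchi's formula $\lfloor (c-c(\pi))/2\rfloor+1$ for unramified $U(2,1)$.
\end{itemize}

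So raising the exponent by one at any inert place where $\pi$ is unramified gives another ideal with a one-dimensional fixed space. Read literally, the uniqueness in the statement is false. Your ``minimal ideal'' reinterpretation is therefore forced, not optional. At split places literal uniqueness does hold, by Reeder's formula $\binom{c-c(\pi)+n-1}{n-1}$, but that does not rescue the global statement.

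State the corrected uniqueness intrinsically, without any nesting or growth. Define $\n$ as the unique ideal without ramified prime factors such that $\pi_f^{K_1(\n)}\neq 0$, but $\pi_f^{K_1(\n')}=0$ for every proper divisor $\n'$ of $\n$. The proof uses only vanishing below the local conductors. Any such ideal has all local exponents $\geq c(\pi_v)$, so $\prod_v\wp_v^{c(\pi_v)}$ divides it, and minimality forces equality. With this formulation your argument is complete.
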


We now explain how to choose a particular newform $\phi_f \in \pi_f$ in the situations we consider in the sequel. Assume that the different $\mathfrak{d}_F$ of $F$ is split in $E$. Assume that $\pi_f$ is spherical at ramified prime ideals. Let $\psi_F$ be the standard non-trivial additive character of $\A_F/F$ and let decompose its finite part $\psi_f = \otimes_v \psi_v$ as a tensor product over the finite places $v$ of $F$. At each place $v$ dividing $\mathfrak{d}_F$, $\psi_v$ is ramified, of conductor $\mathfrak{d}_v^{-1}$. For such a place, we choose $d_v \in F_v^\x$ such that $\psi_v^\circ = d_v^{-1} \cdot \psi_v$ is an unramified additive character of $F_v$. We then consider the form $\phi^\circ_\pi := \otimes_{v} \phi_v$ in $\pi_f$ with:

\begin{itemize}
\item if $v$ does not divide $\mathfrak{d}_F$, then $\phi_v$ is the essential (or spherical, depending whether $\pi_v$ is ramified or not) vector $\phi_{\pi_v}^\circ$ of $\pi_v$ with respect to $\psi_v$,
\item if $v$ divides $\mathfrak{d}_F$ (and thus $v$ is split by assumption), then $\phi_v$ is the form whose Whittaker function $W_{\phi,v} \in \W(\pi_v,\psi_v)$ with respect to $\psi_v$ is given by:
$$
W_{\phi,v}(g) = W_{\pi_v}^\circ(\mathrm{diag}(d_v^{n-1},\dots,d_v,1)g), \quad \forall g \in U(F_v) = \GLn(F_v)
$$
where $W_{\pi_v}^\circ$ is the essential (or spherical) vector in $\W(\pi_v,\psi_w^\circ)$.
\end{itemize}
Then $\phi^{\circ}_\pi$ is a $K_1(\mathfrak{n}(\pi))$-fixed vector in $\pi_f$ which is called the \textit{essential vector} of $\pi$. \\

\subsection{The Petersson product as an adjoint $L$-value}
\label{unitary_2}

\subsubsection{Lapid-Mao formula}

Let $\pi = \otimes_v \pi_v$ be a globally generic cuspidal automorphic representation of $U(\A_F)$, and let $\pi^\vee$ denote its dual representation. The Petersson product of $\ph \in \pi$ and $\ph' \in \pi^\vee$ is defined to be:
$$
\langle \ph,\ph' \rangle := \int_{U(F)\bs U(\A_F)}\ph(g)\ph'(g)dg
$$
For each place $v$ of $F$, we choose a $U(F_v)$-equivariant pairing $\langle \cdot, \cdot \rangle_v : \pi_v \x \pi_v^\vee \to \C$ such that for any non-zero decomposable vectors $\ph = \otimes_v \ph_v \in \pi$ and $\ph' = \otimes_v \ph_v' \in  \pi^\vee$, one has:
$$
\langle \ph,\ph' \rangle = \prod_{v} \langle \ph_v,\ph_v' \rangle_v
$$
For each $v$, we define the local Whittaker functional $I_v(\cdot,\cdot)$ by the stable integral (see \cite[\S 2]{LM15}):
$$
I_v(\ph_v,\ph_v') = \int_{N_v}^{st} \langle \pi_v(n_v)\ph_v,\ph_v' \rangle_v \psi_{v}(n_v)^{-1} dn_v
$$
where $N_v = N_U(F_v)$ and $dn_v$ is the Haar measure on $N_v$ specified in \S \ref{unitary_facts}. Note that $I_v$ is $(N_v,\psi_{v})$-equivariant in the first variable and $(N_v,\psi_{v}^{-1})$-equivariant in the second variable. We then define the normalized local Whittaker functional by:
$$
I_v^\natural(\ph_v,\ph_v')= \frac{I_v(\ph_v,\ph_v')}{\langle\ph_v,\ph_v'\rangle_v}
$$
Note that $I_v^\natural$ does not depend on the choice of $\langle \cdot, \cdot \rangle_v$ up to scalar.
Assume that $v$ does not divide the different of $F$ and is unramified in $E$, and assume that $\pi$ is unramified at $v$. Then, if $\ph_v$ and $\ph_v'$ are spherical vectors, we have (see \cite[Proposition 2.14]{LM15}):
$$
I_v^\natural(\ph_v,\ph_v') = \frac{\prod_{i=2}^{n} L_v(\chi_{E/F}^i,i) }{L_v(\pi,\Ad,1)}
$$
where $L_v(\chi_{E/F}^i,i)$ is the local $L$-factor at $v$ of the Dirichlet $L$-function of $\chi_{E/F}^i$ and $L_v(\pi,\Ad,1)$ is the local $L$-factor at $v$ of the adjoint $L$-function of $\pi$ (see \S\ref{primitive_L_functions}). Note that for our choice of the Haar measure on $N_v$, we have $\mathrm{vol}(N_v \cap U(\O_v)) = 1$. The following theorem is the Lapid-Mao conjecture for $U$, established by Beuzart-Plessis \& Chaudouard \cite{BP-C23} and Morimoto \cite{M24}:

\begin{theorem}
\label{Lapid_Mao_conjecture}
Let $\pi$ be a globally generic cuspidal automorphic representation of $U(\A_F)$. Let $\ph = \otimes_v \ph_v \in \pi$ and $\ph' = \otimes_v \ph_v' \in \pi^\vee$ be non-zero decomposable vectors. Then, there exists a finite set $S$ of places of $F$ such that we have:
$$
\frac{\langle \ph,\ph' \rangle}{W_\ph(e) \cdot W_{\ph'}(e)} = 2^k \cdot L^S(\chi_{E/F},1) \cdot L^S(\pi,\Ad,1) \cdot \prod_{v\in S} \frac{\prod_{i=1}^{n}L_v(\chi_{E/F}^i,i)}{I^{\natural}_v(\ph_v,\ph_v')}
$$
\end{theorem}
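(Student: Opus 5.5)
The plan is not to reprove the global identity, but to deduce the displayed formula from the Lapid--Mao conjecture for $U$ in the form proved by Beuzart-Plessis--Chaudouard \cite{BP-C23} and Morimoto \cite{M24}. The work is then a careful bookkeeping of normalizations. In their formulation, for the Tamagawa measure $dg^{Tam}$ on $U(\A_F)$ and the Tamagawa measure on $N_U(\A_F)$, one has
\[
\frac{\langle \ph,\ph'\rangle_{Tam}}{W_\ph(e)W_{\ph'}(e)} = |S_\pi|^{-1}\,\Delta_U^{S,*}\,\frac{L^S(\pi,\Ad,1)}{\prod_{v\in S}I_v^\natural(\ph_v,\ph'_v)},
\]
up to the precise placement of the partial $L$-values. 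Here $|S_\pi|$ is the order of the global component group of the Arthur parameter of $\pi$; it is a power of $2$, say $2^{k}$ after inversion. The factor $\Delta_U^S=\prod_{i=1}^n L^S(\chi_{E/F}^i,i)$ is the partial $L$-function attached to the dual $M^\vee$ of the motive of $U$, and the star indicates the leading coefficient, with the only possible pole coming from $i=n$ even. The first step is to write this down precisely with $S$ a finite set of places containing the archimedean places, the places ramified in $E$, those dividing $\mathfrak{d}_F$, and those where $\ph_v$ or $\ph'_v$ is not spherical.

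The second step is to change measures. Since $dg=\prod_{i=1}^n L(\chi_{E/F}^i,i)\,dg^{Tam}$, the Petersson product $\langle\ph,\ph'\rangle$ used in the paper equals $\prod_{i=1}^nL(\chi^i_{E/F},i)$ times the Tamagawa one. I will split each complete $L$-value as $L^S(\chi_{E/F}^i,i)\prod_{v\in S}L_v(\chi_{E/F}^i,i)$. The partial factors for $i\geq 2$ cancel against $\Delta_U^S$ up to the $i=1$ term, which leaves the global factor $L^S(\chi_{E/F},1)$ in the statement. For odd $n$ (the case of interest) no pole occurs, so $\Delta^{S,*}_U=\Delta^S_U$. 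The local pieces $\prod_{v\in S}\prod_{i=1}^nL_v(\chi^i_{E/F},i)$ are absorbed into the numerators of the local terms at $v\in S$. This produces exactly the displayed shape.

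The third step is to check consistency at places $v\notin S$. There the normalized functional $I_v^\natural$ of the spherical vectors equals $\prod_{i=2}^nL_v(\chi_{E/F}^i,i)/L_v(\pi,\Ad,1)$ by \cite[Proposition 2.14]{LM15}, for our measure with $\vol(N_v\cap U(\O_v))=1$. This is what allows the infinite product of local Whittaker-Petersson ratios in \cite{BP-C23,M24} to be rewritten as the partial $L$-value $L^S(\pi,\Ad,1)$ times the partial Dirichlet values. The statement itself is insensitive to the choice of the local pairings $\langle\cdot,\cdot\rangle_v$, because $I_v^\natural$ is.

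The main obstacle I expect is reconciling the conventions. This concerns the measure on $N_U$ (Tamagawa versus our $\prod dx_v$, which agree since $\vol(N_U(F)\bs N_U(\A_F))=1$), the additive character $\psi_E(x)=\psi_F((x-\s x)/2)$ versus the one used by Morimoto, and especially the exact power of $2$. To handle the power of $2$, I will first use that $\pi$ is globally generic, so its parameter is a multiplicity-free sum of conjugate-selfdual cuspidal pieces. This gives $|S_\pi|=2^{k}$, with $k$ the number of such pieces up to a fixed shift. I would then simply record $k$ without computing it further, since only $p$-adic units matter later for $p$ odd.
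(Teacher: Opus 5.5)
Your route is the paper's: the statement is just the Beuzart-Plessis--Chaudouard/Morimoto theorem rewritten for the measure $dg=\prod_{i=1}^nL(\chi_{E/F}^i,i)\,dg^{Tam}$. The paper itself offers nothing beyond that citation and a remark on the change of measure. However, the one piece of real bookkeeping, the origin of the factor $L^S(\chi_{E/F},1)$, does not work as you describe it.

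Take $\Delta_U^S=\prod_{i=1}^nL^S(\chi_{E/F}^i,i)$, as you define it. The Lapid--Mao identity reads $W_\ph(e)W_{\ph'}(e)/\langle\ph,\ph'\rangle_{Tam}=2^{-k}\,\Delta_U^S\,L^S(\pi,\mathrm{Ad},1)^{-1}\prod_{v\in S}I_v^\natural(\ph_v,\ph'_v)$. After inverting it, $\Delta_U^S$ sits in the denominator. The measure factor $\prod_{i=1}^nL(\chi^i_{E/F},i)$ then cancels all of $\Delta_U^S$, the $i=1$ term included, leaving only $\prod_{v\in S}\prod_{i=1}^nL_v(\chi^i_{E/F},i)$. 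With $\Delta_U^S$ in the numerator, as in your display, nothing cancels at all. In neither reading is an $i=1$ term left over, so your second step does not produce the displayed formula.

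The missing observation is which adjoint $L$-function occurs. In Lapid--Mao and in \cite{BP-C23,M24}, $L(s,\pi,\mathrm{Ad})$ is attached to ${}^LU$ acting on $\mathrm{Lie}(\widehat U)=\mathfrak{gl}_n(\C)$; this matches $\Delta_U$ containing the degree-one term, as for $\mathrm{GL}_n$ where $L(s,\pi\times\pi^\vee)$ appears. The paper's $\mathrm{Ad}$ instead acts on $\mathfrak{sl}_n(\C)$. On the central line, $\s$ acts by $X\mapsto -J_n{}^tXJ_n^{-1}$, i.e. by $-1$. Hence $L^S(\pi,\mathrm{Ad}_{\mathfrak{gl}_n},s)=L^S(\chi_{E/F},s)\,L^S(\pi,\mathrm{Ad},s)$, and this is the actual source of $L^S(\chi_{E/F},1)$. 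Then
\[
\prod_{i=1}^nL(\chi_{E/F}^i,i)\cdot\frac{L^S(\chi_{E/F},1)\,L^S(\pi,\mathrm{Ad},1)}{\Delta_U^S}=L^S(\chi_{E/F},1)\,L^S(\pi,\mathrm{Ad},1)\prod_{v\in S}\prod_{i=1}^nL_v(\chi_{E/F}^i,i)
\]
gives exactly the statement.

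The same point reconciles your third step with your first. The unramified identity of \cite[Proposition 2.14]{LM15} is $I_v^\natural=\prod_{i=1}^nL_v(\chi^i_{E/F},i)/L_v(\pi,\mathrm{Ad}_{\mathfrak{gl}_n},1)$, and the quoted $\prod_{i=2}^n$ form comes from cancelling $L_v(\chi_{E/F},1)$. As written, your first and third steps silently use two different adjoint $L$-functions.

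A minor correction: no pole ever arises. For $i\ge 2$ the values $L(\chi^i_{E/F},i)$ converge absolutely, and $\chi_{E/F}\neq 1$, so $\Delta_U^S(1)$ is finite and non-zero for every $n$. There is no reason to restrict to odd $n$.
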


Note that our choice of Haar measure on $U(\A)$ is different from Morimoto, who uses the Tamagawa measure, which is why we get $L_S$ at the numerator  instead of $L^S$ at the denominator of the right-hand side in the above formula.

\subsubsection{Local factors at bad places}

We denote by $S_\pi$ the set of prime ideals where $\pi$ is ramified, by $S_{E/F}$ be the set of prime ideals of $F$ which ramifies in $E$, by $S_{F}$ be the set of prime ideals dividing the (absolute) different $\mathfrak{d}_F$ of $F$, and by $S_\inf$ the set of archimedean places of $F$. The following proposition precises the above theorem for a special choice of forms in $\pi$ and $\pi^\vee$:

\begin{prop}
\label{full_lapid_mao}
Assume that the different $\mathfrak{d}_F$ splits in $E$. Assume that $\pi$ is only ramified at split primes. Let $\phi= \phi_\pi^\circ \otimes \phi_\inf \in \pi$ and $\phi'= \phi_{\pi^\vee}^\circ \otimes \phi_\inf' \in \pi^\vee$, where $\phi_\pi^\circ \in \pi_f$ and $\phi_{\pi^\vee}^\circ \in \pi_f^\vee$ are the respective essential vectors of $\pi$ and $\pi^\vee$, and $\phi_\inf \in \pi_\inf$ and $\phi_\inf' \in \pi_\inf^\vee$ are some archimedean forms. Then:

$$
\langle \phi,\phi' \rangle = C \cdot {L(\chi_{E/F},1) \cdot L^{imp}(\pi,\Ad,1)} \cdot \prod_{v \in S_\inf} [ W_{\phi_v}, W_{\phi_v'}]_v
$$

where $C = 2^k \cdot D_F^{n(n-1)/4} \cdot u_{E/F} \cdot \prod_{\wp \in S_\pi} \left( 1 - N\wp^{-n} \right)^{-1}$ and:
$$
u_{E/F} = \prod_{v \in S_{E/F}} \frac{\prod_{i=2}^{n} L_v(\chi_{E/F}^i,i)}{L_v(\pi_v,\Ad,1)} \cdot I^{\natural}_v(\phi_v,\phi_v')^{-1}
$$
\end{prop}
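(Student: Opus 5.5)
We apply \ref{Lapid_Mao_conjecture} to the decomposable vectors $\phi=\phi_\pi^\circ\otimes\phi_\inf$ and $\phi'=\phi_{\pi^\vee}^\circ\otimes\phi'_\inf$, taking $S=S_\pi\cup S_{E/F}\cup S_F\cup S_\inf$. The first step is to compute $W_\phi(e)\,W_{\phi'}(e)$. At finite places not dividing $\mathfrak{d}_F$, the local Whittaker value at $I_n$ equals $1$ by definition of the essential (or spherical) vector. At $v\mid\mathfrak{d}_F$, which is split by hypothesis, it equals $W^\circ_{\pi_v}(\mathrm{diag}(d_v^{n-1},\dots,d_v,1))$. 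By \ref{explicit_essential_values} this is $\d_{B_n}^{1/2}$ of that torus element (with $f=(n-1)\,\mathrm{val}(d_v),\dots$) times a Schur polynomial. Since $d_v$ has negative valuation, we instead move the twist to the other side of the pairing. Concretely, we rewrite $I_v$ for $\psi_v$ in terms of $I_v$ for $\psi_v^\circ$ by conjugating $N_v$ by $\mathrm{diag}(d_v^{n-1},\dots,1)$. The Jacobian of this conjugation on $dn_v$ produces $|d_v|^{-\sum_{i<j}(j-i)}$, and the translate keeps $\langle\phi_v,\phi'_v\rangle_v$ invariant. Multiplying over $v\mid\mathfrak{d}_F$ gives the factor $D_F^{n(n-1)/4}$ up to the normalization $\vol(\O_v)=N(\mathfrak{d}_v)^{-1/2}$, and this is the first constant to track.

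The second step is the unramified computation. At $v\notin S$ we have $I_v^\natural=\prod_{i\ge2}L_v(\chi^i,i)/L_v(\pi,\Ad,1)$ by \cite[Proposition 2.14]{LM15}, and these places are already absorbed into $L^S$. At $v\in S_\pi$, which is split, we reduce to $\GLn(F_v)$ with $\chi_{E/F,v}=1$ and use the standard identity for mirahoric newforms. Namely, $\langle W^\circ,W'^\circ\rangle_v$ is computed by the Rankin--Selberg integral over $N_{n-1}\bs\mathrm{GL}_{n-1}$, via the \cite{LM15} interpretation $I_v=W_\phi(e)W_{\phi'}(e)$ up to that normalization. Expanding with \ref{explicit_essential_values} and the Cauchy identity for Schur polynomials $\sum_f s_f(\a)s_f(\beta)q^{-|f|}$ gives $L_v(\pi\times\pi^\vee,1)$ in its imprimitive degree-$r^2$ form. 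The mirahoric volume contributes the factor $(1-N\wp^{-n})^{-1}$, and dividing out $\zeta_v(1)$ leaves $L_v^{imp}(\pi,\Ad,1)$ together with the factors $\prod_i\zeta_v(i)$ that are needed to complete $L^S(\chi_{E/F},\cdot)$ into full $L$-values. At $v\in S_{E/F}$ we leave everything uncomputed, which produces exactly the factor $u_{E/F}$ as defined. At archimedean places (split, since $E$ is totally real) we simply set $[W_{\phi_v},W_{\phi'_v}]_v$ to be the remaining local factor $\prod_i L_v(\chi^i,i)\,W_{\phi_v}(e)W_{\phi'_v}(e)/I^\natural_v$, i.e.\ the normalized archimedean Whittaker pairing.

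Finally we assemble the pieces. The numerator factors $\prod_{i=1}^nL_v(\chi^i,i)$ at $S$, together with $L^S(\chi_{E/F},1)$, combine into $L(\chi_{E/F},1)$ and the finite Euler factors, and the remaining terms cancel against the measure normalization $\prod_i L(\chi^i,i)\,dg^{Tam}$. The main obstacle is the ramified split computation at $S_\pi$: one must verify that the newform pairing gives precisely the imprimitive adjoint factor times $(1-N\wp^{-n})^{-1}$. For this I would use the Jacquet--Shalika / Miyauchi unfolding restricted to the mirahoric, with $\vol(K_1(\wp^c))$ computed explicitly, in the same way as in the $\GLn$ case of \cite{BR17}.
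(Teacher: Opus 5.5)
Your architecture matches the paper's: Theorem~\ref{Lapid_Mao_conjecture} with $S=S_\pi\cup S_{E/F}\cup S_F\cup S_\inf$, and the Whittaker pairing (\ref{whittaker_pairing}) as local pairing at the split places of $S$, so that $I_v=W_{\phi_v}(e)W_{\phi'_v}(e)$ by \cite[Lemma 4.4]{LM15}. The essential-vector pairing is then an imprimitive Rankin--Selberg value (your Cauchy-identity computation is what \cite[Lemma 3.2]{TR_BC} supplies), and $S_{E/F}$ is left as $u_{E/F}$, which you handle correctly. But beyond Theorem~\ref{Lapid_Mao_conjecture}, the content of the proposition is the constant $C$, and your accounting of it fails at $S_\pi$.

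The factor $(1-N\wp^{-n})^{-1}$ does not come from a mirahoric volume. No volume of $K_1(\wp^c)$ enters the Petersson product, nor the Whittaker pairing, whose integrand is right $\mathrm{GL}_{n-1}(\O_v)$-invariant. What you are missing is that \cite[Lemma 4.4]{LM15} holds for the pairing normalized by $\vol(P_n(\O_v))=\prod_{i=1}^{n-1}\zeta_{F_v}(i)^{-1}$. Hence $\langle\phi_v,\phi'_v\rangle_v=L^{imp}(\pi_v\times\pi_v^\vee,1)/\prod_{i=1}^{n-1}\zeta_{F_v}(i)$, and after cancelling $W_{\phi_v}(e)W_{\phi'_v}(e)$ the local factor of Theorem~\ref{Lapid_Mao_conjecture} at a split $v\in S_\pi$ is
\[
\prod_{i=1}^{n}L_v(\chi_{E/F}^i,i)\cdot\langle\phi_v,\phi'_v\rangle_v=\zeta_{F_v}(n)\cdot\zeta_{F_v}(1)\cdot L^{imp}(\pi_v,\Ad,1).
\]
Here $\zeta_{F_v}(1)=L_v(\chi_{E/F},1)$ completes $L^S(\chi_{E/F},1)$, and the surviving $\zeta_{F_v}(n)=(1-N\wp^{-n})^{-1}$ is exactly the factor in $C$. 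The mirahoric volume only appears later, in \S\ref{Lapid_Mao_coho}, where its $\zeta_{F,\mathfrak{n}}(3)$ cancels this factor.

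Your final assembly also double counts. Theorem~\ref{Lapid_Mao_conjecture} is already stated for $dg=\prod_i L(\chi_{E/F}^i,i)\,dg^{Tam}$, so nothing is left to cancel against that normalization. Likewise, the $\zeta_{F_v}(i)$ with $i\ge 2$ are not needed to complete any $L$-function.

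Three smaller points:
\begin{itemize}
\item At $v\mid\mathfrak{d}_F$, the Jacobian of conjugation by $\diag(d_v^{n-1},\dots,d_v,1)$ on $N_n$ has exponent $\sum_{i<j}(j-i)=n(n^2-1)/6$, which is $4$ for $n=3$, so it cannot yield $N(\mathfrak{d}_v)^{n(n-1)/4}$. The paper instead identifies $[W_{\phi,v},W_{\phi',v}]_v$ with $[W^\circ_{\pi_v},W^\circ_{\pi^\vee_v}]_v$. It gets that power from $\vol(N_U(\O_v))=N(\mathfrak{d}_v)^{-n(n-1)/4}$ for the self-dual measure, compared with the volume-one normalization.
\item Your worry about the value of $W_{\phi,v}(e)$ is unnecessary. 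With the Whittaker pairing at every split $v\in S$, the global $W_\phi(e)W_{\phi'}(e)$ cancels place by place.
\item At the archimedean places you cannot define $[W_{\phi_v},W_{\phi'_v}]_v$ to be the leftover factor, since it is the integral (\ref{whittaker_pairing}). You need the same choice of local pairing there, which is legitimate by uniqueness of invariant pairings \cite{AG09}, together with \cite[Lemma 4.4]{LM15}, as the paper does at every split place of $S$.
\end{itemize}
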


The imprimitive adjoint $L$-function $L^{imp}(\pi,\Ad,1)$ of $\pi$ is defined in \S\ref{imp_unitary_adjoint_L_funs}.

\begin{proof}
For our choice of $\phi$ and $\phi'$, the set $S$ in \ref{Lapid_Mao_conjecture} can be chosen to be:
$$
S = S_\pi \cup S_{E/F} \cup S_{F} \cup S_\inf
$$
By assumptions, all the places in $S_\pi$, $S_F$ and $S_\inf$ are split in $E$. Let $v$ a place of $F$ which splits in $E$. In that case, $U(F_v) \simeq \GLn(F_v)$. We recall that $(\GLn(F_v) \x\GLn(F_v), \GLn(F_v))$ is a Gelfand pair i.e. $\Hom_{\GLn(F_v)}(\pi_v \otimes \pi_v^\vee, \C)$ is a $1$-dimensional vector space (see \cite[Theorem A]{Bernstein84} for non-archimedean places \cite[Theorem 8.2.5]{AG09} for archimedean places). Thus, in the above theorem, at each place $v \in S$ which is split in $E$, one can choose $\langle \cdot,\cdot \rangle_v$ to be $(\phi,\phi') \mapsto [W_\phi,W_{\phi'}]_v$ where $[\cdot,\cdot]_v$ is the pairing defined by:
\begin{equation}
\label{whittaker_pairing}
[ W,W' ]_v = \int_{N_{n}(F_v)\bs P_{n}(F_v)}W(p) {W'}(p)dp
\end{equation}
for $W \in \W(\pi_v,\psi_{v})$ and $W \in \W(\pi_v^\vee,\psi_{v}^{-1})$ (for the equivariance of this pairing, see \cite[Proof of Proposition 3.1]{Zh14}). Here $P_n(F_v)$ is the mirabolic subgroup, consisting of matrices in $\GLn(F_v)$ whose last row is equal to $(0,\dots,0,1)$. For this choice of $\langle \cdot,\cdot \rangle_v$, one has (see \cite[Lemma 4.4]{LM15}),:
$$
I_v(\phi_v,\phi_v') = W_{\phi_v}(e) \cdot W_{\phi_v'}(e)
$$
Assume moreover that $v$ is finite. In \cite[Lemma 4.4]{LM15}, the Haar measure on $P_n(F_v)$ in (\ref{whittaker_pairing}) is normalized so that $\vol(P_n(\O_v))= \prod_{i=1}^{n-1} \zeta_{F_v}(i)^{-1}$. Thus \cite[Lemma 3.2]{TR_BC}, where the Haar measure is normalized so that $\vol(P_n(\O_v))= 1$, gives:
$$
\langle \phi_v,\phi_v' \rangle_v = \frac{L^{imp}(\pi_v \x \pi_v^\vee,1)}{\prod_{i=1}^{n-1}\zeta_{F_v}(i)} = \frac{L_{v}(\chi_{E/F},1) \cdot L^{imp}(\pi_v, \Ad,1)}{ \prod_{i=1}^{n-1}L_{v}(\chi_{E/F}^i,i)}
$$
and the formula follows from \ref{Lapid_Mao_conjecture}. \\

Now assume that $v \in S_F$ (and thus $v$ is split by assumption). One still have that:
$$
I^{\natural}_v(\phi_v,\phi_v') = \frac{W_{\phi,v}(e) \cdot W_{\phi',v}(e)}{[W_{\phi,v},W_{\phi',v}]_v}
$$
with $W_{\phi,v} \in \W(\pi_v,\psi_v)$ and $W_{\phi',v} \in \W(\pi_v^\vee,\psi_v^{-1})$. By definition of $\phi$, we have:
$$
W_{\phi,v}(g) = W_{\pi_v}^\circ(\mathrm{diag}(d_v^{n-1},\dots,d_v,1)g)
$$
where $W_{\pi_v}^\circ$ is the essential vector in $\W(\pi_v,\psi_v^\circ)$ and $\psi^\circ_v$ is an unramified additive character of $F_v$ such that $\psi_v = d_v \cdot \psi^\circ_v$, with $d_v \in F_v^\x$ a representative of $\mathfrak{d}_{F,v}^{-1}$. Thus:
$$
[W_{\phi,v},W_{\phi',v}]_v = [W_{\pi_v}^\circ,W_{\pi_v^\vee}^\circ]_v = N(\mathfrak{d}_v)^{n(n-1)/4} \cdot  \frac{L_{v}(\chi_{E/F},1) \cdot L^{imp}(\pi_v, \Ad,1)}{\prod_{i=1}^{n-1}L_{v}(\chi_{E/F}^i,i)}
$$
where the factor $N(\mathfrak{d}_v)^{n(n-1)/4}$ comes from the different choice of Haar measure on $N_U(\O_v)$ in \cite[Lemma 4.4]{LM15}, where it is normalized so that $\vol(N_U(\O_v)) = N(\mathfrak{d}_v)^{-n(n-1)/4}$, and in \cite[Lemma 3.2]{TR_BC}, where it is normalized so that $\vol(N_U(\O_v)) = 1$.

\end{proof}

\begin{remark}
Let us no longer assume that $\mathfrak{d}_F$ is split in $E$, and explain how to generalize \ref{full_lapid_mao} by choosing appropriate local forms. Assume that $n=3$. Let $v \mid \mathfrak{d}_F$ which is non split in $E$. For such a place, we choose $d_v \in F_v^\x$ such that $\psi_v^\circ = d_v^{-1} \cdot \psi_v$ is an unramified additive character of $F_v$. Then, for $\phi_v \in \pi_v$ and $\phi_v' \in \pi_v^\vee$ one has:
$$
I_v(\phi_v,\phi_v')^{\psi_v} = | \mathfrak{d}_v |_{v}^{r_v} \cdot   I_v(\pi(t_v)\phi_v,\pi(t_v)\phi_v')^{\psi_v^\circ}
$$
where $t_v = \diag(d_v^{-1},1,d_v)$ and $r_v$ is some explicit integer (depending whether $v$ is inert or ramified in $E$). The exponents indicate with respect to which additive character the integral $I_v$ is computed.
Let $\phi_v = \pi(t_v^{-1}) \big( \phi_{\pi_v}^\circ\big)$ where $\phi_{\pi_v}^\circ$ is the spherical vector of $\pi_v$ (with respect to $\psi_v^\circ$). Then $\phi_v$ is an element of the 1-dimensional space of $t_v^{-1}K_v t_v$-fixed vectors in $\pi_v$, where $K_v = U(F_v) \cap \GLn(\O_{E,v})$. We define $\phi_v' \in \pi_v^\vee$ in a similar way. Then, \cite[Proposition 2.14]{LM15} implies:
$$
I_v^\natural(\phi_v,\phi_v')^{\psi_v} = | \mathfrak{d}_v |_{v}^{r_v'} \cdot  \frac{\prod_{i=2}^{n} L_v(i,\chi_{E/F}^i) }{L_v(\pi,\Ad,1)}
$$
for  some explicit integer $r_v'$. By choosing the local factors of $\phi$ and $\phi'$ at $v \mid \mathfrak{d}_F$ ($v$ being inert or ramified in $E$) to be $\phi_v$ and $\phi_v'$ in \ref{full_lapid_mao}, we thus get a similar formula for $\langle \phi,\phi' \rangle$.
\end{remark}

\subsubsection{Triviality and algebraicity of $u_{E/F}$} One may expect the factor $u_{E/F}$ in \ref{full_lapid_mao} to be $1$. This result would be a generalization of \cite[Proposition 2.14]{LM15}  to ramified quasi-split groups, and involves the computation of the normalized local Whittaker functional for spherical representations of such groups. It has been shown by Chen for ramified unitary groups on $\Q_p$:

\begin{prop}[\cite{Che25}, Proposition 4.6]
\label{triviality_uE}
Assume that $n=3$ and $F=\Q$. Then $u_{E/\Q} =1$.
\end{prop}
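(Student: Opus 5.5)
Since $F=\Q$ and $n=3$, the set $S_{E/\Q}$ consists of the primes $v$ ramified in $E$, and
$$
u_{E/\Q} = \prod_{v \in S_{E/\Q}} \frac{\prod_{i=2}^{3} L_v(\chi_{E/\Q}^i,i)}{L_v(\pi_v,\Ad,1)} \cdot I^{\natural}_v(\phi_v,\phi_v')^{-1}.
$$
It therefore suffices to prove, for each ramified prime $v$, the local identity
$$
I^\natural_v(\phi_v,\phi_v') = \frac{L_v(\chi^2_{E/\Q},2)\,L_v(\chi^3_{E/\Q},3)}{L_v(\pi_v,\Ad,1)}.
$$
This is the ramified analogue of \cite[Proposition 2.14]{LM15}. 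Here $\phi_v$ and $\phi_v'$ are the spherical vectors for $C=U(\Q_v)\cap\GL(\O_{E_v})$, normalized by $W(I_3)=1$ via the lemma above. Because $\mathfrak{d}_\Q=1$, no twisting by $d_v$ intervenes and the additive character is unramified.

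The plan is to follow the unramified computation. First realize $\pi_v$ as the spherical subquotient of $\mathrm{Ind}_{B}^{U}\chi$ for an unramified character $\chi$ of $E_v^\x$, and take $f^\circ$ with $f^\circ(1)=1$. Then compute the Whittaker function $W_{f^\circ}$ on the torus by a Casselman--Shalika type formula for the ramified quasi-split $U_3$. Its value at $I_3$ is $1-\chi(\varpi_L)q^{-1}$, as recalled in the proof of the lemma above. Next compute the pairing $\langle f^\circ,f^{\circ\vee}\rangle_v$ through the Macdonald formula for the special maximal compact $C$. Finally compute the stable integral $I_v$. By \cite[\S 2]{LM15} it can be written as an unfolded integral $\int_{N}^{st}\int_{C}\ldots$. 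For spherical vectors it reduces to $\mathrm{vol}$-normalized expressions in terms of $W_{f^\circ}(1)\,W_{f^{\circ\vee}}(1)$ and the Harish-Chandra $c$-function.

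The key steps, in order, are as follows.
\begin{enumerate}
\item Express $I_v^\natural(\phi_v,\phi_v')$ as $W(1)W^\vee(1)$ divided by the ratio of the Petersson pairing of $f^\circ$ to $\mathrm{vol}(C)$. This step uses the Gindikin--Karpelevich formula for the rank-one relative root system of $U_3$, which is of type $BC_1$ with a non-reduced root.
\item Compute each piece explicitly in terms of $\a=\chi(\varpi_L)$ and $q$.
\item Compare the result with $L_v(\pi_v,\Ad,1)$. At a ramified place, $L_v(\pi_v,\Ad,s)$ is read off from the $L$-parameter of $\pi_v$ and involves $\a$ only through the degree-one unramified part. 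Also use $L_v(\chi_{E/\Q}^2,2)=\zeta_v(2)$ and $L_v(\chi_{E/\Q}^3,3)=1$.
\end{enumerate}
The factor $1-\a q^{-1}$ appearing in $W(1)$ should cancel against a $c$-function factor in the pairing. What remains should be exactly $\zeta_v(2)/L_v(\pi_v,\Ad,1)$.

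The hard part is the middle step, namely the explicit Macdonald/Gindikin--Karpelevich factors for the ramified $U_3$ and the treatment of the stable integral over $N_v$. There the measure normalization $\mathrm{vol}(N_U(\O_v))=1$ must be checked against the self-dual measure relative to $\psi_E$, which involves $\mathfrak{d}_{E_v}$. I would first try to keep $\a$ generic, so that $\mathrm{Ind}\,\chi$ is irreducible, and establish the identity as an equality of rational functions in $\a$. I would then specialize by continuity to the spherical subquotient. Everything else is bookkeeping relative to the split case already handled in \ref{full_lapid_mao}.
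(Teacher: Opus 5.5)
Your route is the one the paper relies on. The paper gives no argument of its own: it imports the local identity from \cite[Proposition 4.6]{Che25}, whose ingredients (the spherical section $f^\circ$ of $\mathrm{Ind}_B^U\chi$ and the value $W_{f^\circ}(I_3)=1-\chi(\varpi_L)q^{-1}$) are exactly those recalled in the proof of the lemma on $C$-spherical vectors. Your reduction to the local identity is fine, as are $L_v(\chi_{E/\Q}^2,2)=\zeta_v(2)$ and $L_v(\chi_{E/\Q}^3,3)=1$.

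However, your account of how the pieces combine is wrong in two places, and step 3 would not close as you describe it.

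\emph{The pairing carries no $\chi$-dependence.} The product $f^\circ_\chi f^\circ_{\chi^{-1}}$ is the $C$-spherical section of the unnormalized induction of $\delta_B$, so $\langle f^\circ,f^{\circ\vee}\rangle$ does not depend on $\chi$. In the compact picture it is $\mathrm{vol}(C)$. In the open-cell normalization with $\mathrm{vol}(N_U(\O_v))=1$ it is the Gindikin--Karpelevich integral at the trivial character, namely $1+q^{-1}=\zeta_v(1)/\zeta_v(2)$. So nothing cancels $1-\alpha q^{-1}$.

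\emph{$L_v(\pi_v,\Ad,s)$ is not of degree one in $\alpha$.} Inertia acts on $\mathfrak{sl}_3$ through $X\mapsto -J{}^tXJ^{-1}$, whose fixed space is $\mathfrak{so}(J)\simeq\mathfrak{so}_3$, and Frobenius acts on it with eigenvalues $1,\alpha,\alpha^{-1}$. Equivalently, use $L(\Pi_v,\As^-,s)=L_v(\chi_{E/\Q},s)L_v(\pi_v,\Ad,s)$: at a ramified place the inertia invariants of $\As^-$ are $\wedge^2$ of the Satake parameters $\{\alpha,1,\alpha^{-1}\}$ of $\Pi_w$, and $L_v(\chi_{E/\Q},s)=1$. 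Hence
$$L_v(\pi_v,\Ad,1)^{-1}=(1-q^{-1})(1-\alpha q^{-1})(1-\alpha^{-1}q^{-1}).$$
In any case $I_v^\natural$ depends only on $\pi_v$, so it must be invariant under $\alpha\mapsto\alpha^{-1}$; a single surviving $\alpha$-factor could never be right.

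The correct bookkeeping is that both Whittaker values survive. The identity $I_v(f^\circ,f^{\circ\vee})=W_{f^\circ}(1)\,W_{f^{\circ\vee}}(1)$ comes from unfolding the stable integral over the open Bruhat cell; Gindikin--Karpelevich enters only in the pairing constant. With $W_{f^\circ}(1)=1-\alpha q^{-1}$ and $W_{f^{\circ\vee}}(1)=1-\alpha^{-1}q^{-1}$ in the normalization of \cite{Che25}, this gives
$$I_v^\natural=\frac{(1-\alpha q^{-1})(1-\alpha^{-1}q^{-1})}{1+q^{-1}}=\frac{\zeta_v(2)}{L_v(\pi_v,\Ad,1)}.$$

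Two cautions about normalizations:
\begin{itemize}
\item $\mathfrak d_\Q=1$ does not by itself make the character on $N_U(\Q_v)$ unramified at $v\mid D_E$. Its conductor there depends on how $\psi_E$ is built, and typically involves $\mathfrak d_{E/\Q}$.
\item The value $1-\alpha q^{-1}$ is normalization-sensitive. Ramified $U_3$ has two non-conjugate special maximal compact subgroups, and $W_{f^\circ}(1)$ depends on how $C$ sits relative to the conductor of $\psi$. For example, in odd residue characteristic, with $C$ the stabilizer of the self-dual lattice $\O_{E_v}^3$ and $\psi$ of conductor exactly $\O_{E_v}$, the Jacquet integral comes out as $(1+\alpha)(1-\alpha q^{-1})$. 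This vanishes at the reducibility point $\alpha=-1$.
\end{itemize}
So you must fix $C$, $\psi$ and $dn$ exactly as in \cite{Che25} before the local factor comes out to be $1$.
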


Chen’s computations should apply to arbitrary ramified quadratic extensions of $p$-adic fields, at least if $p \neq 2$, but we have not attempted to generalize his result. However, one can prove that $u_{E/F}$ belongs to the rationality field of $\pi_f$ by reasoning as in the proof of \cite[Theorem 3.4]{Che25}. To this end, let us observe that $u_{E/F}$ only depends on $\pi_f$ since its expression involves the spherical vectors of its local factors $\pi_v$ at ramified places $v$. We thus write $u_{E/F}(\pi_f) = u_{E/F}$ in order to emphasize this dependance. Let $\mathrm{Aut}(\C)$ be the set of automorphisms of $\C$. The following lemma states that $u_{E/F}$ is equivariant with respect to the action of $\mathrm{Aut}(\C)$:
\begin{lemma}
\label{rationality_uE}
For every $\vs \in \mathrm{Aut}(\C)$, one has that:
$$
\vs(u_{E/F}(\pi_f)) = u_{E/F}({}^\vs \pi_f)
$$
In particular, $u_{E/F}(\pi_f) \in \Q(\pi_f) := \C^{S(\pi_f)}$ with $S(\pi_f) = \{ \vs \in \mathrm{Aut}(\C), {}^\vs \pi_f = \pi_f \}$.
\end{lemma}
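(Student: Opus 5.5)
We will prove the equivariance place by place, since $u_{E/F}(\pi_f)=\prod_{v\in S_{E/F}} u_v(\pi_v)$ with
$$
u_v(\pi_v)=\frac{\prod_{i=2}^{n}L_v(\chi_{E/F}^i,i)}{L_v(\pi_v,\Ad,1)}\cdot I^\natural_v(\phi_v,\phi_v')^{-1},
$$
and it suffices to show $\vs(u_v(\pi_v))=u_v({}^\vs\pi_v)$ for each ramified $v$. The factor $\prod_i L_v(\chi^i_{E/F},i)$ is rational, hence fixed by $\vs$. For $L_v(\pi_v,\Ad,1)$ we use that $\pi_v$ is $K_v$-spherical, so it is a subquotient of $\mathrm{Ind}_{B}^{U}\chi$ with $\chi$ unramified (as in the proof of the lemma on $W(I_3)\neq 0$). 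The adjoint $L$-factor is then an explicit rational function of $\chi(\varpi_L)$ and $q^{-1}$ with rational coefficients. Since ${}^\vs\pi_v$ corresponds to ${}^\vs\chi$ twisted by the appropriate power of $|\cdot|^{1/2}$ (the Satake parameter transforms as $\vs$ of the normalized parameter, up to the usual half-integral shift), we get $\vs(L_v(\pi_v,\Ad,1))=L_v({}^\vs\pi_v,\Ad,1)$. Here we use that adjoint factors are insensitive to central twists, so the $\sqrt{q}$-ambiguity cancels.

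The main step, and the expected obstacle, is the equivariance of $I^\natural_v(\phi_v,\phi_v')$. We follow \cite[Theorem 3.4]{Che25}. First, realize the equivariant pairing $\langle\cdot,\cdot\rangle_v$ rationally: take $\pi_v$ and $\pi_v^\vee$ on their $\Q(\pi_v)$-rational spherical models and the pairing normalized by $\langle\phi_v,\phi_v'\rangle_v=1$ on the spherical vectors. Then ${}^\vs\pi_v\otimes {}^\vs\pi_v^\vee$ carries the conjugate pairing $\vs\circ\langle\cdot,\cdot\rangle_v$, and $\vs$ maps spherical vectors to spherical vectors. Next, use that the stable integral over $N_v$ of the matrix coefficient $n\mapsto\langle\pi_v(n)\phi_v,\phi_v'\rangle_v\psi_v(n)^{-1}$ is, by \cite[\S2]{LM15}, equal to the integral over a sufficiently large compact open subgroup $N_{v,k}\subset N_v$. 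This is a finite sum of values of a locally constant function with respect to a Haar measure having rational volumes. Applying $\vs$ termwise, and noting $\vs\circ\psi_v=\psi_v^{a}$ for some $a\in\Z_p^\times$, we get $\vs(I_v)=I_v^{\psi_v^a}$ for ${}^\vs\pi_v$. To return to $\psi_v$, we conjugate by a torus element $t_a$ as in the map $t_a:\W(\pi,\psi)\to\W(\pi,\psi^a)$. Since $\vs$ fixes $\Q$ and $|a|_v=1$, this element lies in $K_v$ for unit $a$ (or we adjust by an element of $K_v$). It therefore fixes the spherical vectors and changes nothing. Because $I^\natural_v$ is independent of the scalar normalization of $\langle\cdot,\cdot\rangle_v$, this gives $\vs(I^\natural_v(\phi_v,\phi_v'))=I^\natural_v({}^\vs\phi_v,{}^\vs\phi_v')$. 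Nonvanishing of $I^\natural_v$ follows from the nonvanishing of $W(I_3)$ established in the preceding lemma.

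Multiplying over $v\in S_{E/F}$ gives $\vs(u_{E/F}(\pi_f))=u_{E/F}({}^\vs\pi_f)$. For $\vs\in S(\pi_f)$ this reads $\vs(u_{E/F})=u_{E/F}$, so $u_{E/F}\in\C^{S(\pi_f)}=\Q(\pi_f)$.

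The delicate points are the stabilization of the integral to a compact piece that is compatible with $\vs$, and the half-integral twist in the Satake/$\vs$ compatibility. For the latter we would mimic Chen's treatment, checking that only the adjoint, and hence twist-invariant, quantities enter.
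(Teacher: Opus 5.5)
Your proposal is correct and follows essentially the paper's argument. You reduce to each $v\in S_{E/F}$, note that the Dirichlet factors are rational, and use $\vs(L_v(\pi_v,\Ad,1))=L_v({}^\vs\pi_v,\Ad,1)$. You then obtain the equivariance of $I^\natural_v$ by transporting the pairing via $\vs$, writing $\vs\circ\psi_v=\psi_v(a_\vs\,\cdot)$ with $a_\vs\in\O_{F_v}^\times$, and observing that $\diag(a_\vs,1,a_\vs^{-1})\in K_v$ fixes the spherical vectors.

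The differences are minor. The paper simply quotes formula (3.11) of \cite{Che25} for the step you justify by stabilizing the integral on a compact open subgroup of $N_v$, and it asserts the adjoint $L$-factor equivariance without proof. Your worry about a half-integral twist is moot here, because $\delta_B^{1/2}$ of the unitary group is $\Q$-valued on the diagonal torus.
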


\begin{proof} The proof of this result is implicitly contained in \cite{Che25}. We recall it here. The constant $u_{E/F}$ is a product of local factors. To prove the lemma, it suffices to prove the $\mathrm{Aut}(\C)$-equivariant formula for each local factor. Thus, let $v \in S_{E/F}$ and let $w \mid v$. We write $L/K$ for $E_w/F_v$. To simplify the notation, we will now adopt local notation, and omit the subscript $v$. Accordingly, we will write $\pi = \pi_v$, $\psi = \psi_v$ for the unramified additive character of $K = F_v$, 
$\langle \cdot ,\cdot \rangle$ for $\langle \cdot ,\cdot \rangle_v :\pi_v \x \pi_v^\vee \to \C $ and $I(\cdot,\cdot)$ for $I_v(\cdot,\cdot)$.

Let $\vs \in \mathrm{Aut}(\C)$. We recall that the local action of $\vs$ on $\pi = \pi_v$ is defined as follows. Let $V$ be the representation space of $\pi$. Then ${}^\vs \pi$ is the representation acting on ${}^\vs V:= V \otimes_{\C,\vs^{-1}} \C$ via $\pi$ on the first factor. Let $a_{\vs} \in \O_K^\x$ be such that $\vs(\psi(x)) = \psi(a_{\vs}x)$ for all $x \in K$. Then, one has that:
$$
\vs(\psi_N(n)) = \psi_N\left(\left(\begin{smallmatrix} a_{\vs} & &\\  & 1& \\ &&a_{\vs}^{-1} \end{smallmatrix}\right) n \left(\begin{smallmatrix} a_{\vs}^{-1} & &\\  & 1& \\ &&a_{\vs} \end{smallmatrix}\right)\right)
$$
for $n \in N$. The representation ${}^\vs \pi$ is generic and the map $t_{\vs} : W \mapsto {}^\vs W$ defined by:
$$
{}^\vs W : g \mapsto \s(W(\left(\begin{smallmatrix} a_{\vs}^{-1} & &\\  & 1& \\ &&a_{\vs} \end{smallmatrix}\right)g))
$$
is a $U(K)$-equivariant isomorphism $\W(\pi,\psi) \toeq \W({}^\vs \pi,\psi)$ which is $\s$-semi-linear:
$$
t_{\vs}( \l \cdot W) = \vs(\l) \cdot t_{\vs}(W), \quad \l \in \C.
$$
We also define the $U(K)$-equivariant isomorphism $t_\vs : \pi \toeq {}^\vs \pi$, which sends $\phi \in \pi$ to the unique form ${}^\vs \phi \in {}^\vs \pi$ such that $W_{{}^\vs \phi} = {}^\vs W_\phi$. Since $\diag(a_{\vs},1, a_{\vs}^{-1}) \in U(K) \cap \GL(\O_L)$, we  see that the spherical form $\phi_{{}^\vs \pi}^\circ$ of ${}^\vs\pi$ (with respect to $\psi$) is:
$$
\phi_{{}^\vs\pi}^\circ = {}^\vs\phi_{\pi}^\circ
$$
where $\phi_{\pi}^\circ$ is the spherical vector of $\pi$ (with respect to $\psi$). Let $\langle \cdot,\cdot \rangle_{\vs} : {}^\vs \pi \x {}^\vs \pi^\vee \to \C$  be the $U(K)$-equivariant pairing defined by:
$$
\langle {}^\vs \phi, {}^\vs \phi' \rangle_{\vs} = \vs(\langle \phi, \phi' \rangle)
$$
Thus, we have that $\vs(\langle \phi_{\pi}^\circ, \phi_{\pi^\vee}^\circ \rangle) = \langle \phi_{{}^\vs\pi}^\circ, \phi_{{}^\vs \pi^\vee}^\circ \rangle_{\vs}$. We now denote by ${}^\vs I(\cdot,\cdot)$ the local Whittaker functionnal constructed using $\langle \cdot,\cdot \rangle_{\vs}$. Then, one has \cite[Formula (3.11)]{Che25} that:
$$
\vs(I(\phi, \phi')) = {}^\vs I( {}^\vs \pi(\diag(a_{\vs},1, a_{\vs}^{-1})({}^\vs \phi), {}^\vs \pi^\vee(\diag(a_{\vs},1, a_{\vs}^{-1}))({}^\vs \phi'))
$$
for any $\phi \in \pi$ and $\phi' \in \pi^\vee$. Consequently, we see that $\vs(I(\phi_{\pi}^\circ, \phi_{\pi^\vee}^\circ)) = {}^\vs I(\phi_{{}^\vs\pi}^\circ, \phi_{{}^\vs \pi^\vee}^\circ)$. Finally, since $\vs(L(\pi,\Ad,1)) = L({}^\vs \pi,\Ad,1)$, we get:
$$
\vs \left( I^\natural(\phi_{\pi}^\circ, \phi_{\pi^\vee}^\circ) \cdot L(\pi,\Ad,1) \right) = I^\natural(\phi_{{}^\vs\pi}^\circ, \phi_{{}^\vs \pi^\vee}^\circ) \cdot L({}^\vs \pi,\Ad,1)
$$
and the lemma follows.
\end{proof}

\begin{remark} When $\pi$ is a cohomological cuspidal automorphic representation, the rationality field $\Q(\pi_f)$ of $\pi_f$ is a number field, and the above lemma thus proves that $u_{E/F}(\pi_f)$ is algebraic.
\end{remark}

\subsection{Cohomological representations and automorphic periods}
\label{unitary_3}

From now we assume that $n=3$ and that both $E$ and $F$ are totally real number fields. Let $U = U_{E/F}$ denote the quasi-split group associated with $E/F$. Since $E/F$ is split at archimedean places, one has that $U(F_\inf) \simeq \GL(F_\inf)$.

\subsubsection{Adelic variety and cuspidal cohomology}
\label{unitary_cusp_coho}

Let $K_f$ be an open compact subgroup of $U(\A_{F,f})$ and let $K_\inf = \prod_{v \in S_\inf(F)} K_3$. The adelic variety of level $K_f$ associated with $U$ is defined by:
$$
Y_U(K_f) = U(F) \bs U(\A_F)/K_{f}K_\inf
$$
Let $A$ denote the rings $\O$, $\K$ or $\C$. Let $\mu \in X^+(T_F)$. The Betti cohomology groups $H^\bullet(Y_U(K_f),\L_{\mu}(A))$, the compactly supported Betti cohomology groups $H_c^\bullet(Y_U(K_f),\L_{\mu}(A))$ and the inner cohomology groups $ H_{!}^\bullet(Y_U(K_f),\L_{\mu}(A))$ of weight $\mu$ with coefficients in $A$ are defined as in \S\ref{cohomology_groups}. We recall that when $A=\O$, these groups are defined to be the $\O$-torsion free part of the corresponding cohomology groups with coefficients in $\O$. The cuspidal cohomology groups are also defined as the relative Lie algebra cohomology groups (see \cite[Chapter I]{BW00}):
$$
H_{cusp}^\bullet( Y_U(K_f), \L_{\mu}(\C)) := H^\bullet(\g,K_\inf ;\mathcal{A}_{cusp}(U(F) \bs U(\A_F)/K_f)\otimes L_{\mu}(\C)),
$$
where $\mathcal{A}_{cusp}(U(\Q)\bs U(\A)/K_f)$ is the space of $K_f$-fixed cusp forms on $G_E(\A)$. We recall from \S\ref{cohomology_groups} that there exists an injection (from a result of Borel which is true for any connected reductive group):
$$
H_{cusp}^q( Y_U(K_f), L_\mu(\C)) \inj H_{!}^q( Y_U(K_f), L_\mu(\C))
$$
It follows from \cite[Théorème 3.19]{Clozel90} that:
$$
H^\bullet_{cusp}(Y(K_f), \L_{\mu}(\K)) \otimes_\K \C = H^\bullet_{cusp}(Y(K_f), \L_{\mu}(\C))
$$
We then define the cuspidal cohomology groups with coefficients in $\O$ to be:
$$
H^\bullet_{cusp}(Y(K_f), \L_{\mu}(\O)) = H^\bullet_{cusp}(Y(K_f), \L_{\mu}(\C)) \cap H^\bullet_c(Y(K_f), \L_{\mu}(\O))
$$
In particular $H^\bullet_{cusp}(Y(K_f), \L_{\mu}(\O))$ is torsion free.

\subsubsection{Hecke algebra} 
\label{full_unitary_hecke}
Let $K_f = \prod_v K_{f,v}$ be some open compact subgroup of $U(\A_{F,f})$ such that $K_{f,v}$ is a subgroup of $K_v = U(F_v) \cap \GLn(\O_{E,v})$ for all $v$. Let $S_{K_f}$ be the set of finite places of $F$ such that $K_{f,v} \neq K_v$ and let $S = S_{K_f} \cup S_p(F) \cup S_{E/F}$. The spherical Hecke algebra of level $K_f$ outside of $p$ for $U$ is defined as the tensor product:
$$
\H^S(\O) = \bigotimes_{v \notin S} \H(K_{v};\O)
$$
where $\H(K_{v};\O)$ is the local spherical Hecke algebra of $U(F_v)$ with respect to the hyperspecial subgroup $K_v$. Then, the $\O$-algebra $\H^S(\O)$ acts on the inner cohomology groups $H^\bullet_!(Y_U(K_f), \L_{\mu}(\O))$ by Hecke-correspondences, exactly as described in \S\ref{hecke_corr} for $\GLn$. Let:
$$
h^S := \mathrm{Im}[\H^S(\O) \to \End_\O H^\bullet_!(Y_U(K_f), \L_{\mu}(\O))]
$$
be the cohomological Hecke algebra.

\subsubsection{Cohomological automorphic representations}

Let $K_3 := \R_{>0} \cdot \SO \subset \GL(\R)$, and $K_\inf= \prod_{v\mid \inf} K_3 \subset U(F_\inf)$. Let $\g_{3}$ be the Lie algebra of $\GL(\R)$, and let $\g = \oplus_{v\mid \inf}  \g_{3}$ be the Lie algebra of $U(F_\inf)$. Let $\mu \in X^+(T_F)$ be an algebraic weigth for $U_E$. We say that a cuspidal automorphic representation $\pi$ of $U$ is cohomological of weight $\mu$ if the following relative Lie algebra cohomology group (see \cite[Chapter I]{BW00}) $H^q(\g,K_\inf ; \pi_\inf \otimes L_\mu(\C))$ is non trivial for some $q \geq 0$. In that case, it is non-zero if and only if $q \in [b,t]$, where $b=2t$ and $t= 3d$ and we have (see \ref{eichler-shimura_maps} for details):
\begin{equation}
\label{g-K_dim}
\dim_\C H^q(\g,K_\inf ; \pi_\inf \otimes L_\mu(\C)) = \binom{t-b}{t-q}
\end{equation}
We denote by $\mathrm{Coh}(U,\mu)$ the set of such representations. For each open compact subgroup $K_f$ of $U(\A_{F,f})$, we also define $\mathrm{Coh}(U,\mu,K_f)$ to be the subset of $\mathrm{Coh}(U,\mu)$ consisting of cohomological representations $\pi$ such that $\pi_f$ has non-zero $K_f$-fixed vectors. Since $U(F_\inf) \simeq \GL(F_\inf)$ all the results about the archimedean part of a cohomological cuspidal automorphic representation of $\GL(\A_F)$ described in \S\ref{GL3_notations} apply to the archimedean part $\pi_\inf$ of a representation $\pi \in \mathrm{Coh}(U,\mu)$. In particular, for any archimedean place $\tau \in S_\inf(F)$, the minimal $\mathrm{SO}(3)$-type of $\pi_\tau$ is $\ell_\tau = 2m_\tau + 3$ and (see  \S\ref{GL3_notations} for notation):
$$
\pi_\tau = \mathrm{Ind}_{P_{2,1}(\R)}^{\GL(\R)} (D_{\ell_\tau} \otimes \e_\tau) \otimes | \cdot |^{v_\tau}
$$
Moerover, suppose that $\pi$ is non-endoscopic (see \ref{non-endoscopic}), so that the strong stable base change of $\pi$ to $\GL(E)$ is cuspidal (see \S \ref{sss_sbc}). Then Clozel's purity lemma \cite[Lemme de pureté 4.9]{Clozel90} (see also \S\ref{GL3_notations}) implies that for all archimedean place $\tau \in  S_\inf(F)$, there exists an integer $m_\tau \geq 0$, such that $\mu_\tau = (m_\tau,0,-m_\tau)$. \\

We have the following Hecke-equivariant decomposition of the cuspidal cohomology:

\begin{equation}
\label{unitary_cusp_coho_decomposition}
H^\bullet_{cusp}(Y_U(K_f), \L_{\mu}(\C)) =  \bigoplus_{\pi \in \mathrm{Coh}(U_{3},\mu,K_f)}  H^\bullet(\g, K_\inf ; \pi_{\inf} \otimes L_{\mu}(\C)) \otimes \pi_f^{K_f}
\end{equation}
Note that each cohomological representation appears with multiplicity $1$ by the multiplicity one theorem for $U$ (see \cite[Theorem 13.3.1]{ARU3}). In particular, from the above discussion, this decomposition implies that the cuspidal cohomology is concentrated in degrees $q \in [b,t]$. For this reason, $b$ and $t$ are respectively called the bottom and top degree of the cuspidal range. The following key lemma will allow us to define the automorphic periods of a non-endoscopic cuspidal representation of $U$. We refer to \ref{non-endoscopic} in \S\ref{endoscopy} of Section \ref{part_SBC} for the definition of non-endoscopic representations.

\begin{lemma}
\label{non-endo_lambda_rank}
Let $\pi \in \mathrm{Coh}(U,\mu,K_f)$ which is non-endoscopic. Let $\l_\pi : h^S \to \O$ be the Hecke eigenstystem associated to $\pi$. Assume that the space of ${K_f}$-fixed vectors in $\pi_f$ is $1$-dimensional. Then:
$$
\dim_\C H^q_{cusp}(Y_U(K_f), \L_{\mu}(\C))[\l_\pi] =  \binom{t-b}{t-q}
$$
In particular, the top ($q=t$) and bottom ($q=b$) cuspidal cohomology groups are Hecke modules of $\l_\pi$-rank $1$.
\end{lemma}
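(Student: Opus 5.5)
The plan is to read off the $\l_\pi$-eigenspace from the Hecke-equivariant decomposition (\ref{unitary_cusp_coho_decomposition}). Taking $\l_\pi$-isotypic parts of both sides gives
$$
H^q_{cusp}(Y_U(K_f), \L_{\mu}(\C))[\l_\pi] = \bigoplus_{\pi'} H^q(\g, K_\inf ; \pi'_{\inf} \otimes L_{\mu}(\C)) \otimes (\pi'_f)^{K_f},
$$
where $\pi'$ runs over the representations in $\mathrm{Coh}(U,\mu,K_f)$ whose Hecke eigensystem on $h^S$ equals $\l_\pi$. These are exactly the $\pi'$ that are nearly equivalent to $\pi$, i.e. $\pi'_v \simeq \pi_v$ for all $v \notin S$.

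The key step is to show that $\pi$ is the only such $\pi'$. Since $\pi$ is non-endoscopic, it lies in a stable $L$-packet of Rogawski's classification \cite{ARU3}. Rogawski's results then give two things. First, a near-equivalence class of non-endoscopic cuspidal representations is a single global packet. Second, such stable packets are singletons locally at every place, including the archimedean ones: the cohomological $\pi_\tau$ is the induced representation described above, and $U(F_\tau)\simeq \GL(F_\tau)$ since $\tau$ is split. Combining these, $\pi' \simeq \pi$. Multiplicity one for $U$ (\cite[Theorem 13.3.1]{ARU3}) then ensures that $\pi$ contributes exactly one copy.

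An equivalent way to see the key step is through the strong stable base change. $\mathrm{SBC}(\pi')$ and $\mathrm{SBC}(\pi)$ agree at almost all places, so by strong multiplicity one for $\GL(E)$ they are equal. Rogawski's description of the fibres of $\mathrm{SBC}$ over cuspidal (non-endoscopic) representations then gives $\pi'=\pi$.

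I expect this step to be the main obstacle. One has to be careful about which places are excluded from $S$: the ramified places $S_{E/F}$ and the places of level. The argument also has to use the global packet structure from \cite{ARU3} rather than only local multiplicity one.

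Once this is established, the space above equals $H^q(\g,K_\inf;\pi_\inf\otimes L_\mu(\C))\otimes \pi_f^{K_f}$. By hypothesis $\dim \pi_f^{K_f}=1$. By the Künneth computation of \S\ref{chen_generators} applied to $\pi_\inf$ (legitimate since $U(F_\inf)\simeq\GL(F_\inf)$ and Clozel's \cite[Lemme 3.14]{Clozel90} gives $\dim H^{q_\tau}=1$ for $q_\tau=2,3$), the $(\g,K_\inf)$-cohomology has dimension $\binom{t-b}{t-q}$, as recorded in (\ref{g-K_dim}). For $q=b$ and $q=t$ this dimension is $1$, which gives the claimed $\l_\pi$-rank-one statement.
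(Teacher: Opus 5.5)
Your proposal is the paper's proof. You isolate the $\l_\pi$-part of (\ref{unitary_cusp_coho_decomposition}) and show that the $L$-packet of $\pi$ is $\{\pi\}$: at the archimedean places because $U(F_\tau)\simeq\GL(F_\tau)$, and at finite places by non-endoscopy via \cite[Theorem 13.1.1]{ARU3}. You then get $\pi'=\pi$ from Rogawski's strong multiplicity one for $L$-packets \cite[Theorem 13.3.5]{ARU3} (your $\mathrm{SBC}$ variant is only a repackaging of this), and conclude with $\dim\pi_f^{K_f}=1$ and (\ref{g-K_dim}).

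One caution applies equally to the paper: global non-endoscopy does not by itself make the local packets at non-split finite places singletons, since a stable global packet may have, for example, a two-element tempered local packet at an inert place. What the argument really needs is that at most one member of each such local packet has $K_{f,v}$-fixed vectors. This holds when $K_{f,v}=K_v$ is the maximal compact subgroup and $\pi_v$ is spherical, as in the setting of \ref{unitary_adjoint_L_value}.
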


\begin{proof}
Write $\Pi = \otimes_v \Pi_v$ for the $L$-packet of $\pi$. If $v$ is an archimedean place of $F$, then $U_v$ is split and thus $\# \Pi_v = 1$. Moreover, since $\pi$ is non-endoscopic, it follows from \cite[Theorem 13.1.1.(1)]{ARU3} that $\# \Pi_v = 1$ when $v$ is a finite place. Hence $\Pi$ is the singleton $\{ \pi \}$.

Let $\s \in \mathrm{Coh}(U,\mu,K_f)$ and let $\l_\s : h^S \to \O$ be the associated Hecke eigensystem. Let $\Si = \otimes_v \Si_v$ denote the $L$-packet of $\s$. Assume that:
$$
H^q(\g, K_\inf ; \s_{\inf} \otimes L_{\mu}(\C)) \otimes \s_f^{K_f} \subset H^q_{cusp}(Y_U(K_f), \L_{\mu}(\C))[\l_\pi] 
$$
i.e that $\l_\s(T) = \l_\pi(T)$ for all $T \in h^S$. This implies that $\Si_v = \Pi_v$ for all $v \notin S \cup S_\inf(F)$. Thus, by strong multiplicity one theorem for $L$-packets on $U$ \cite[Theorem 13.3.5]{ARU3}, $\s$ belongs to the $L$-packet of $\pi$, i.e. $\s = \pi$. We have just proved that:
$$
\dim_\C H^q_{cusp}(Y_U(K_f), \L_{\mu}(\C))[\l_\pi] = H^q(\g, K_\inf ; \pi_{\inf} \otimes L_{\mu}(\C)) \otimes \pi_f^{K_f}
$$
The lemma then follows from (\ref{g-K_dim}).
\end{proof}

\begin{remark} With the same hypothesis, one can similarly prove that:
$$
\dim_\C H^q_{!}(Y_U(K_f), \L_{\mu}(\C))[\l_\pi] =  \binom{t-b}{t-q}
$$
Indeed, only discrete automorphic representations of $U$ occurs in the inner cohomology and the Rogawski's $L$-packets strong multiplicity one theorem for $U$ is valid for discrete automorphic representations.
\end{remark}

\subsubsection{Eichler-Shimura maps}
\label{ES_maps}

Let $K_f$ be some open compact subgroup of $U(\A_{F,f})$ and let $\pi \in \mathrm{Coh}(U,\mu,K_f)$. The top $q=t$ degree and bottom $q=b$ degree groups of the $(\g,K_\inf)$-cohomology are $1$-dimensional $\C$-vector spaces. Following Chen, \cite[Formula (2.9)]{Che22}, we consider the following explicit generators (i.e. non-zero elements) of these cohomology groups:
\begin{itemize}
\item $[\pi_\inf]_b = \otimes_{v \in  S_\inf} [\pi_v]_2 \in  H^b(\g, K_\inf ; \pi_{\inf} \otimes L_\mu(\C))$
\item $[\pi_\inf]_t = \otimes_{v \in  S_\inf} [\pi_v]_3 \in H^t(\g, K_\inf ; \pi_{\inf} \otimes L_\mu(\C))$
\end{itemize}
where $[\pi_v]_2$ and $[\pi_v]_3$ are the Chen's generators of the local cohomology groups, whose definition is detailled in \S \ref{chen_generators}. As explained there, such a choice of generators can be used to define normalized Eichler-Shimura maps as follows. For $q \in \{b,t\}$, let $\d^q$ be the map defined as the composition of the following three maps:
$$
\begin{aligned}
\d^q : \pi_f^{K_f} & \toeq \pi_f^{K_f} \otimes H^q(\g,K_\inf ; \pi_\inf \otimes L_\mu(\C)) \\
&\toeq H^q(\g, K_\inf ; \pi^{K_f} \otimes L_\mu(\C)) \\
&\to H^q_{{cusp}}(Y_U(K_f), L_\mu(\C))
\end{aligned}
$$
Here, the first map is given by $\ph_f \mapsto \ph_f \otimes [\pi_\inf]_q$ and third one comes from the definition of the cuspidal cohomology. Let $S_{\mu}(K_f)$ be the space of automorphic cusp forms on $U$ of weight ${\mu}$ and level $K_f$, defined to be:
$$
S_{{\mu}}(K_f) := \bigoplus_{\pi} \pi_f^{K_f},
$$
where $\pi$ runs through $\mathrm{Coh}(U,{\mu},K_f)$. The $q$-th Eichler-Shimura map with values in the cuspidal cohomology of degree $q$ is then defined as the direct sum of the above maps:
$$
\d^q : S_\mu(K_f) \to H^q_{cusp}(Y_U(K_f), L_\mu(\C))
$$
It follows from the construction of $\d^q$ and from (\ref{unitary_cusp_coho_decomposition}) that it is an isomorphism.

\subsubsection{Automorphic periods}
\label{unitary_automorphic_periods}
Let $\pi \in \mathrm{Coh}(U,\mu)$ be a cohomological cuspidal automorphic representation of $U$, which is globally generic and non-endoscopic. We now define the automorphic periods attached to $\pi$. Roughly speaking, the automorphic periods of $\pi$ are defined by comparing, through the map $\d^q$, the $\O$-integral structure on $\pi_f$ given by its Whittaker model, to the $\O$-integral structure of the inner cohomology.

More precisely, suppose that $\pi$ is spherical at all ramified places of $F$ and let $\n$ be the mirahoric level of $\pi$ (see \ref{unitary_mirahoric}). Let $K_f = K_1(\n)$ be the mirahoric subgroup of level $\n$ and let $\phi_\pi^\circ$ denote the essential vector of $\pi$. Recall that the space of $K_f$-fixed vectors in $\pi_f$ is one-dimensional, and contains $\phi_\pi^\circ$. Let $q \in \{b,t\}$. Since $\pi$ is cohomological and non-endoscopic, it follows from \cite[Proposition 2.4]{GR13} and from the remark bellow \ref{non-endo_lambda_rank}, that $H^q_{!}(Y_U(K_f), \L_\mu(\O))[\pi_f]$ is an $\O$-module of rank 1. Let $\xi^q$ be an $\O$-basis of $H^q_{!}(Y_U(K_f), \L_\mu(\O))[\pi_f]$. The automorphic periods of $\pi$ are thus defined to be the complex scalars $\Om_q(\pi) \in \C^\x$, such that:
$$
\d^q(\phi_\pi^\circ) = \Om_q(\pi) \cdot \xi^q
$$
in $H^q_{!}(Y_U(K_f), \L_\mu(\C))[\pi_f]$. Since they depend on the choice of the $\O$-basis $\xi^q$, the periods $\Om_q(\pi)$ are defined up to some element in $\O^\x$. Note that these periods are only defined for generic cohomological cuspidal representations which are non-endoscopic. When $\pi$ is endoscopic, the $\l_\pi$-rank of the top and bottom cuspidal cohomology groups may be strictly bigger than $1$, as the $L$-packet of $\pi$ may not be a singleton.

\subsection{A cohomological interpretation of the Lapid-Mao formula}
\label{unitary_4}

\subsubsection{A Poincaré pairing}
\label{unitary_poincare_pairing}

Let $A$ denote $\O$ or $\K$. Let $\m$ be a maximal ideal of $h^S$. Let $K_f$ be some open compact subgroup of $U(\A_{F,f})$.  As in \cite[\S3.3.1]{TR_BC}, we can construct a pairing between the top and bottom degree groups of the inner cohomology, using the Poincaré duality:
$$
[ \cdot, \cdot] : H^b_{!}(Y_U(K_f),\L_\mu(A))_\m \x H^t_{!}(Y_U(K_f),\L_\mu(A)^\vee)_\m \to A
$$
This pairing is equivariant for the action of $\H^S(A)$ (with the notation of \S\ref{full_unitary_hecke}), in the sense of \cite[Lemma 3.3]{TR_BC}. If $A=\K$, the pairing is perfect. If $A =\O$, the pairing is non-degenerate but may not be perfect. However, as in \cite[\S 4.2.2]{BR17}, one can show that $[ \cdot, \cdot]$ is perfect if $p$ does not belong to the following finite set of primes:
$$
S_\partial := \left\{ p, \, H^b(\partial Y_U(k_f), \L_{\mu}(\O))_\m \mbox{ has torsion} \right\}
$$
It might be possible to show that $S_\partial$ is empty, by assuming that the Galois representation associated with $\pi$ is residually absolutely irreducible.

\subsubsection{A cohomological interpretation of the Lapid-Mao formula}
\label{Lapid_Mao_coho}
Recall that we have the following expression for the relative Lie algebra cohomology groups
(see \cite[II, Proposition 3.2]{BW00}):
$$
H^q(\g, K_\inf ; \pi_\inf \otimes L_\mu(\C)) \simeq \bigotimes_{\tau \in  S_\inf(F)} \left(\bigwedge^q \p_{3,\C}^*\otimes \pi_\tau \otimes L_{\mu_\tau}(\C) \right)^{K_3}
$$
Following \cite[\S5]{Che22}, we define a bilinear pairing on these cohomology groups. First, for each $\tau \in  S_\inf(F)$, we define the local bilinear pairing:
$$
B_\tau : \left( \bigwedge^2 \p_{3,\C}^*\otimes \W(\pi_\tau,\psi_\tau) \otimes L_{\mu_\tau}(\C) \right)^{K_3} \times \left( \bigwedge^3 \p_{3,\C}^*\otimes \W(\pi_\tau^\vee,\psi_\tau^{-1}) \otimes L_{\mu_\tau^\vee}(\C) \right)^{K_3} \to \C
$$
as the tensor product of the following three pairings:
\begin{itemize}
\item $s_\tau : \bigwedge^2 \p_{3,\C}^* \x \bigwedge^3  \p_{3,\C}^* \to \C$ defined by :
$$
A^* \wedge B^* \wedge X^* \wedge Y^* \wedge Z^* =  s(A^* \wedge B^*, X^* \wedge Y^* \wedge Z^*) \times \bigwedge_{i=1}^5 X_i^*
$$
where $(X_i)_{i=1}^5$ is an ordered basis of $\p_{3,\C}$;
\item $[ \cdot, \cdot ]_\tau: \W(\pi_\tau,\psi_\tau) \x \W(\pi_\tau^\vee,\psi_\tau^{-1}) \to \C$ defined in (\ref{whittaker_pairing});
\item $\langle \cdot, \cdot \rangle_{\mu_\tau}: L_{\mu_\tau}(\C) \x L_{\mu_\tau^\vee}(\C) \to \C$ defined by \ref{pairing_coefficients}.
\end{itemize}

Then, we define a bilinear pairing:
$$
B : H^b(\g,K_\inf, \W(\pi_\inf,\psi_\inf) \otimes L_\mu(\C))\times H^t(\g,K_\inf, \W(\pi_\inf^\vee,\psi_\inf^{-1}) \otimes L_{\mu^\vee}(\C)) \to \C
$$
as the tensor product $B(\cdot,\cdot) = \bigotimes_{\tau \in  S_\inf(F)} B_\tau(\cdot,\cdot)$. \\

Let us write the Chen's generators of \S\ref{ES_maps} as $[\pi_\inf]_b = \sum_{i\in I} \w_i \otimes \ph_i \otimes P_i$ and $[\pi_\inf^\vee]_t = \sum_{j\in J} \eta_j \otimes \ph_j' \otimes P_j'$. Then, similarly to \cite[\S3.3.1]{TR_BC}, we get that:
$$
[\d^b(\ph_\pi^\circ),\d^t(\ph_{\pi^\vee}^\circ)] = 4^d \cdot \frac{h(K_f) \cdot \vol(K_f, dg)^{-1}}{\vol(Z_U(F) \bs Z_U(\A_F))} \cdot \sum_{i\in I} \sum_{j\in J} s(\w_i,\eta_j) \cdot \langle P_i, P_j' \rangle_\mu \cdot \langle \ph_\pi^\circ \otimes \ph_i,  \ph_{\pi^\vee}^\circ \otimes \ph_j' \big \rangle
$$
where $h(K_f) := \vol(Z_U(F) \bs Z_U(\A_{F,f}) / Z_U(\A_{F,f}) \cap K_f)$. In our case $K_f = K_1(\n)$ is the mirahoric subgroup of level $\n$, the mirahoric level of $\pi$. We thus write $h_U(\n)$ for $h(K_{f})$. \\

Assume that $p \nmid  2N_{F/\Q}(\n)h_U(\n)D_F$. Note that with our choices of Haar measures, the volume of $\vol(Z_U(F) \bs Z_U(\A_F))$ is $L(\chi_{E/F},1)$. Consequently:
$$
\frac{L^{imp}(\pi,\Ad,1) \cdot L(\chi_{E/F},1)}{\vol(Z_U(F) \bs Z_U(\A_F))} \sim L^{imp}(\pi,\Ad,1)
$$
where the imprimitive adjoint $L$-function $L^{imp}(\pi,\Ad,1)$ of $\pi$ is defined in \S\ref{imp_unitary_adjoint_L_funs}. Thus, it follows from \ref{full_lapid_mao} that:
$$
\begin{aligned}
[\d^b(\ph_\pi^\circ),\d^t(\ph_{\pi^\vee}^\circ)] &\sim u_{E/F}  \cdot {L^{imp}(\pi,\Ad,1)} \cdot \sum_{i\in I} \sum_{j\in J} s(\w_i,\eta_j) \cdot \langle P_i, P_j' \rangle_\mu \cdot [W_i,W_j']_\inf
\end{aligned}
$$
where $u_{E/F}$ is the constant of \ref{full_lapid_mao}. Note that the factor $\prod_{\wp \in S_\pi} ( 1 - N\wp^{-3})^{-1}$ appearing in the constant $C$ of \ref{full_lapid_mao} is canceled out by the factor $\zeta_{F,\n}(3)$ appearing in the volume of $K_f$ with respect to the unormalized Tamagawa measure. Finally, we obtain:
$$
[\d^b(\ph_\pi^\circ),\d^t(\ph_{\pi^\vee}^\circ)] \sim u_{E/F} \cdot {L^{imp}(\pi,\Ad,1)} \cdot B([\pi_\inf]_b,[\pi_\inf^\vee]_t)
$$

\subsubsection{Archimedean computations}
\label{unitary_arch_comp}

The archimedean term in the above equation is computed in \cite[Lemmas 5.13 \& 5.14]{Che22} and is equal to:
$$
B([\pi_\inf]_b,[\pi_\inf^\vee]_t) = (-1)^{a_1} \cdot 2^{a_2} \cdot \Gamma(\pi \x \pi^\vee,1)
$$
where $a_1 \in \Z$ and $a_2 \in \Z$ are some explicit integers.
Note that here we are using the Haar measure defined by the Gauge form $\w_\inf = \bigwedge_{i,j} dg_{i,j} / \det g^3$ on $U(F_\inf) = \GL(F_\inf)$ (see \S\ref{unitary_facts}), rather than the one used by Chen. This is why no $\pi^d$ factor appears, in contrast with \S\ref{archimedean_SBC}. Finally, we get the following formula:
\begin{equation}
\label{complete_unitary_L_value}
[\d^b(\ph_\pi^\circ),\d^t(\ph_{\pi^\vee}^\circ)] \sim u_{E/F} \cdot {\Lambda^{imp}(\pi,\Ad,1)}.
\end{equation}

\subsection{The adjoint $L$-value formula for $U_{E/F}$} 

\subsubsection{Congruence modules}
\label{congruence_modules}
Let $\TT$ be local $\O$-algebra which is finite, flat and reduced. Let $M$ be a $\TT$-module which is finite and flat over $\O$, and let $\l : \TT \to \O$ be an augmentation, i.e. a surjective map of $\O$-algebras. Let $M[\l] \subset M$ denote the submodule of elements canceled out by $\Ker(\l)$. The $\l$-rank of $M$ is $\mathrm{rank}_\l(M):= \dim_\K(M[\l] \otimes_\O \K)$. Consider the following composition of maps:
$$
\i_\l(M) : M[\l] \to M \to \mathrm{Hom}_\O(M^*,\O) \to \mathrm{Hom}_\O(M^*[\l],\O)
$$
where $M^*=\mathrm{Hom}_\O(M,\O)$ is the dual module of $M$, endowed with its natural $\TT$-module structure. The \textit{congruence module} of $\l$ on $M$, denoted $C_\l(M)$, is defined to be the cokernel of $\i_\l(M)$. Its Fitting ideal $\eta_\lambda(M) := \mathrm{Fitt}_\O(C_\lambda(M))$ is called the \textit{congruence number} of $\lambda$ on $M$. When $M = \TT$, $C_\lambda(M)$ and $\eta_\lambda(M)$ are denoted respectively by $C_\lambda$ and $\eta_\lambda$, and are simply called the {congruence module} and {congruence number} of $\lambda$. In that case $C_\lambda \simeq \O/\eta_\l$ is in fact a ring. More generally, for a $\TT$-module $M$ of $\l$-rank $1$, one has that $C_\lambda(M) \simeq \O/\eta_\l(M)$ and that $\eta_\l \subset \eta_\l(M)$. \\

We have the following useful lemma:

\begin{lemma}
\label{pairing_lemma}
Let $M$ and $N$ be two $\TT$-modules, finite flat over $\O$, and let:
$$
[\cdot,\cdot ]: M \x N \to\O
$$
be a perfect pairing which is $\TT$-equivariant. Then, if $\l: \TT \to \O$ is an augmentation of $\TT$, we have that $C_\l(M) \simeq C_\l(N)$. Moreover, assume that $M$ and $N$ have $\l$-rank $1$. Then, if $m_0$ and $n_0$ are respective $\O$-bases of $M[\l]$ and $N[\l]$, one has:
$$
\eta_\l(M) = [m_0,n_0]
$$
\end{lemma}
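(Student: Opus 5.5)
The plan is to reduce everything to linear algebra over $\O$ using the definition of $C_\l(M)$ as the cokernel of $\i_\l(M)\colon M[\l]\to \Hom_\O(M^*[\l],\O)$. First, since the pairing $[\cdot,\cdot]\colon M\x N\to\O$ is perfect and $\TT$-equivariant, it induces an isomorphism of $\TT$-modules $N\toeq M^*=\Hom_\O(M,\O)$, $n\mapsto [\cdot,n]$. Here equivariance means $[tm,n]=[m,tn]$, so the map intertwines the $\TT$-action on $N$ with the natural $\TT$-action on $M^*$. Symmetrically, $M\toeq N^*$. Under these identifications $N[\l]\simeq M^*[\l]$ and $M[\l]\simeq N^*[\l]$, because taking $\Ker(\l)$-torsion commutes with $\TT$-isomorphisms.

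Second, I check that these identifications turn $\i_\l(M)$ into the natural map $M[\l]\to \Hom_\O(N[\l],\O)$, $m\mapsto [m,\cdot]|_{N[\l]}$, and turn $\i_\l(N)$ into $N[\l]\to\Hom_\O(M[\l],\O)$, $n\mapsto [\cdot,n]|_{M[\l]}$. Both maps come from the restricted pairing $M[\l]\x N[\l]\to\O$. Its two adjoint maps are $\O$-dual to each other. The modules $M[\l]$ and $N[\l]$ are saturated in $M$ and $N$, hence free of finite rank, and they have the same rank: tensoring with $\K$, the perfect pairing identifies $N_\K[\l]$ with the dual of $M_\K/\Ker(\l)M_\K$, and since $\TT_\K$ is reduced, hence semisimple, the $\l$-eigenspace and $\l$-coinvariants have equal dimension. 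So we have a square matrix $A$ representing one adjoint map, with ${}^tA$ representing the other. The cokernels of $A$ and ${}^tA$ are isomorphic as $\O$-modules: by Smith normal form over the DVR $\O$ both equal $\bigoplus \O/(d_i)$. This gives $C_\l(M)\simeq C_\l(N)$.

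Third, in the $\l$-rank $1$ case, $A$ is the $1\x 1$ matrix $[m_0,n_0]$ in the bases $m_0$ and the dual basis of $n_0$. Hence $C_\l(M)\simeq \O/[m_0,n_0]\O$, and its Fitting ideal is $\eta_\l(M)=([m_0,n_0])$.

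The only delicate point I expect is the compatibility in the second step: verifying that composing $M\to M^{**}\to\Hom(M^*[\l],\O)$ with $N\simeq M^*$ really yields restriction of the pairing. This is a direct unwinding of definitions. Nondegeneracy of the restricted pairing over $\K$ follows from the semisimplicity argument above, which also guarantees that $C_\l$ is torsion.
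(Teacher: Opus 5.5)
Your proof is correct. It is the standard duality argument: the perfect pairing identifies $N\simeq M^*$ and $M\simeq N^*$ as $\mathbb{T}$-modules, which turns $\iota_\lambda(M)$ and $\iota_\lambda(N)$ into the two mutually transpose adjoint maps of the restricted pairing $M[\lambda]\times N[\lambda]\to\mathcal{O}$. The paper gives no proof of its own and simply refers to \cite[Lemmas 3.5 \& 3.7]{BKM21} and \cite[Proposition 2.3]{TU22}, where this same duality argument is used.
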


See \cite[Lemmas 3.5 \& 3.7]{BKM21} or \cite[Proposition 2.3]{TU22} for a proof.

\subsubsection{The main theorem}

Let $E/F$ be a quadratic extension of totally real number fields and let $U_{E/F}$ denotes the quasi-split unitary group in $3$-variables associated with $E/F$. Let $\pi$ be a cohomological cuspidal automorphic representation of $U_{E/F}(\A_F)$ and assume that $\pi$ is non-endoscopic (see \ref{non-endoscopic}) and globally generic. In this section we prove an adjoint $L$-value formula relating the (cohomological) congruence number of $\pi$ to the quotient of the special value $L(\pi,\Ad,1)$ by the product of the automorphic periods of $\pi$. \\

Assume that $\pi$ is spherical at places of $F$ which are non-split in $E$. Let $\n \subset \O_F$ be the mirahoric level of $\pi$ and let $K_f = K_1(\n) \subset U_{E/F}(\A_{F,f})$ be the mirahoric subgroup of level $\n$ (see \S\ref{global_unitary_newforms}). Let $\mu \in X^+(T_3)$ be the cohomological weight of $\pi$. Let $Y_U(K_f)$ be the adelic variety of level $K_f$ associated with $U_{E/F}$ and let $H^t_!(Y_U(K_f),\L_{\mu}(\O))$ be its inner cohomology group of (top) degree $t = 3 \cdot \dim(F)$ and weight $\mu$. Let $S$ be a finite set of finite places of $F$ containing the set $S_\pi$ of places where $\pi$ is ramified (i.e. dividing $\n$), the set $S_p$ of places dividing $p$, and the set $S_{E/F}$ of places which are ramified in $E$. Let $h^S$ be the spherical Hecke algebra outside of $S$ acting faithfully on the cohomology. Let $H^t_\m$ denote the localization of $H^t_!(Y_U(K_f),\L_{\mu}(\O))$ at the maximal ideal $\m$ of $h^S$ corresponding to $\pi$, and let $\TT_U := h^S_\m /(\O-tors)$ be the torsion-free quotient of the localization of $h^S$ at $\m$. Let $\eta_\pi(H^t_\m)$ be the congruence number of $\pi$ on the $\TT_U$-module $H^t_\m$. In \S\ref{unitary_automorphic_periods}, we define two automorphic periods $\Om_b(\pi)$ and $\Om_t(\pi)$ in $\C^\x /\O^\x$ attached to the representation $\pi$. The main result of this section is the following theorem:

\begin{theorem}
\label{unitary_adjoint_L_value}
Assume that the different $\mathfrak{d}_F$ of $F$ splits in $E$. Let $\pi$ be a cohomological cuspidal automorphic representation of $U_{E/F}(\A_F)$ which is non-endoscopic and globally generic. Assume that $\pi$ is  spherical at places of $F$ which are non-split in $E$. Assume that $\mu$ is $p$-small, that $p \notin S_\partial$ and that $p \nmid  2N_{F/\Q}(\n)h_U(\n)D_F$. Then:
$$
\eta_\pi(H^t_\m) \sim u_{E/F} \cdot \frac{\Lambda^{imp}(\pi, \Ad,1)}{\Om_b(\pi) \cdot \Om_t(\pi^{\vee})}
$$
where $u_{E/F} \in \Q(\pi_f)^\x$ is a constant conjectured to be $1$. For $F=\Q$, one has that $u_{E/\Q} =1$.
\end{theorem}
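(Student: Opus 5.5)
The argument combines the congruence-module \ref{pairing_lemma} with the cohomological Lapid--Mao formula (\ref{complete_unitary_L_value}). First, I check the hypotheses of \ref{pairing_lemma}. Take $M = H^b_!(Y_U(K_f),\L_\mu(\O))_\m$ and $N = H^t_!(Y_U(K_f),\L_\mu(\O)^\vee)_\m$. Since $\mu$ is $p$-small, $\L_\mu(\O)^\vee \simeq \L_{\mu^\vee}(\O)$ via the perfect pairing $\langle\cdot,\cdot\rangle_\mu$. Since $p\notin S_\partial$, the Poincaré pairing $[\cdot,\cdot]$ of \S\ref{unitary_poincare_pairing} is perfect over $\O$ and $\TT_U$-equivariant (in the sense of \cite[Lemma 3.3]{TR_BC}). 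There is a twist here: Poincaré duality naturally matches the $\l_\pi$-part of $M$ with the $\l_{\pi^\vee}$-part of $N$, through the Hecke anti-involution $T\mapsto T^\vee$. So I would identify $H^t$ with coefficients in $\L_{\mu^\vee}$, localized at $\m_{\pi^\vee}$, with a $\TT_U$-module via this twist. Then $\l_{\pi^\vee}\circ{}^\vee=\l_\pi$, and one uses that $\Om_t(\pi^\vee)$ is defined through the module $H^t[\pi^\vee_f]$.

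Next, by \ref{non-endo_lambda_rank} and the remark after it, both $M$ and $N$ have $\l$-rank $1$. Let $\xi^b$ and $\xi^t$ be the $\O$-bases used in \S\ref{unitary_automorphic_periods} to define $\Om_b(\pi)$ and $\Om_t(\pi^\vee)$. \ref{pairing_lemma} then gives $\eta_\pi(M)=\eta_\pi(N)$ and $\eta_\pi(M) = [\xi^b,\xi^t]$. Because $\d^b(\phi^\circ_\pi) = \Om_b(\pi)\xi^b$ and $\d^t(\phi^\circ_{\pi^\vee}) = \Om_t(\pi^\vee)\xi^t$, we get
$$
\eta_\pi(M) \sim \frac{[\d^b(\phi_\pi^\circ),\d^t(\phi_{\pi^\vee}^\circ)]}{\Om_b(\pi)\,\Om_t(\pi^\vee)} \sim u_{E/F}\cdot \frac{\Lambda^{imp}(\pi,\Ad,1)}{\Om_b(\pi)\,\Om_t(\pi^\vee)},
$$
where the last step is (\ref{complete_unitary_L_value}). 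This uses $p\nmid 2N_{F/\Q}(\n)h_U(\n)D_F$, so that $4^d$, $h_U(\n)$, $D_F^{n(n-1)/4}$, the $2^k$ and the $2^{a_2}$, the volume factors, and the local factors at $S_\pi$ are all $p$-units.

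Finally, $\eta_\pi(M)$ must be identified with $\eta_\pi(H^t_\m)$, the congruence number on the top degree with coefficients $\L_\mu$. I would obtain this from $C_\l(M)\simeq C_\l(N)$ together with the symmetric statement obtained by swapping the roles of $\pi$ and $\pi^\vee$ (pairing $H^b(\L_{\mu^\vee})$ with $H^t(\L_\mu)$). The constant $u_{E/F}$ lies in $\Q(\pi_f)^\times$ by \ref{rationality_uE}, and it equals $1$ when $F=\Q$ by \ref{triviality_uE}.

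I expect the main obstacle to be this bookkeeping between $\pi$ and $\pi^\vee$ and the Hecke-equivariance twist. The concern is that the same augmentation really governs both sides, so that the congruence module being computed is that of $\pi$ on $H^t_\m$ rather than that of $\pi^\vee$. A second, more technical point is making sure every constant in the comparison of $[\d^b(\phi^\circ_\pi),\d^t(\phi^\circ_{\pi^\vee})]$ with the Petersson product is a $p$-unit. In particular the archimedean constant has to match Chen's normalizations under the chosen gauge form.
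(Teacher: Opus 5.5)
Your first two paragraphs are the paper's proof. The ingredients match: perfectness of the Poincaré pairing for $p\notin S_\partial$, with $T$ acting through $T^\vee$ on the top-degree factor; $\l_\pi$-rank one from the remark after \ref{non-endo_lambda_rank}; \ref{pairing_lemma} applied to the bases $\xi^b,\xi^t$ defining $\Om_b(\pi)$ and $\Om_t(\pi^\vee)$; then (\ref{complete_unitary_L_value}), \ref{rationality_uE} and \ref{triviality_uE}. One small correction: the Euler factors $(1-N\wp^{-3})^{-1}$ for $\wp\in S_\pi$ are not $p$-units in general. The paper cancels them against $\vol(K_1(\n))$. Since you quote (\ref{complete_unitary_L_value}) directly, this costs you nothing.

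The step that does not work is your final identification of $\eta_\pi(M)$ with the congruence number on $H^t_!(\L_\mu)$. The two dualities you invoke give
\[
\eta_{\l_\pi}\bigl(H^b_!(\L_\mu)_{\m_\pi}\bigr)=\eta_{\l_{\pi^\vee}}\bigl(H^t_!(\L_{\mu^\vee})_{\m_{\pi^\vee}}\bigr)
\quad\text{and}\quad
\eta_{\l_{\pi^\vee}}\bigl(H^b_!(\L_{\mu^\vee})_{\m_{\pi^\vee}}\bigr)=\eta_{\l_\pi}\bigl(H^t_!(\L_\mu)_{\m_\pi}\bigr).
\]
These concern four different modules. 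They do not chain to $\eta_{\l_\pi}(H^b_!(\L_\mu)_{\m_\pi})=\eta_{\l_\pi}(H^t_!(\L_\mu)_{\m_\pi})$. To close the loop you would need an extra input relating, in a fixed degree, the $\m_\pi$-localization with coefficients $\L_\mu$ to the $\m_{\pi^\vee}$-localization with coefficients $\L_{\mu^\vee}$, compatibly with $T\mapsto T^\vee$. Nothing in the paper provides this. It is automatic only when $\pi\simeq\pi^\vee$ (so $\mu=\mu^\vee$), which is the case used in \ref{divisibilite_unitaire}.

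The paper never takes this step. In its proof, $H^t$ denotes $H^t_!(Y_U(K_f),\L_\mu(\O)^\vee)_\m$ with the twisted Hecke action, which is exactly your $N$. So $\eta_\pi(H^t)$ is, by definition, the congruence number of the second factor of the pairing, and \ref{pairing_lemma} gives $\eta_\l(N)=\eta_\l(M)=[m_0,n_0]$ directly. Drop your last identification step and read $\eta_\pi(H^t_\m)$ in that sense; your second paragraph then already proves the theorem.
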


\begin{proof}[Proof of \ref{unitary_adjoint_L_value}]
The proof is similar to the proof of \ref{adjoint_L_value} but simpler. We quickly recall the notation. Let $\pi$ be a cuspidal automorphic representation of $U(\A_F)$ which is cohomological (of weight $\mu$) and globally generic. Assume that $\pi$ is only ramified at split places and let $\n \subset \O_F$ be its mirahoric level. Let $\phi_\pi^\circ \in \phi_f$ and $\phi_{\pi^\vee}^\circ$ be the essential forms of $\pi$ and $\pi^\vee$. Assume that $p \nmid 2 N_{F/\Q}(\n)h_U(\n) D_F$ .Then it follows from the previous computations (see formula \ref{complete_unitary_L_value}) that:
$$
[\d^b(\ph_\pi^\circ),\d^t(\ph_{\pi^\vee}^\circ)] \sim u_{E/F} \cdot {\Lambda^{imp}(\pi,\Ad,1)} 
$$
where $u_{E/F}$ is defined in \ref{full_lapid_mao}. We have $u_{E/\Q} = 1$ when $F =\Q$ (\ref{triviality_uE}) and $u_{E/\Q} \in \Q(\pi)^\x$ in general (\ref{rationality_uE}). In the above equality, $\d^q$ (for $q \in \{b,t\}$) are the Eichler-Shimura maps defined in \S\ref{ES_maps}, and:
$$
[ \cdot, \cdot] : H^b_{!}(Y_U(K_f),\L_{\mu}(\O))_\m \x H^t_{!}(Y_U(K_f),\L_{\mu}(\O)^\vee)_\m \to \O
$$
is the Poincaré pairing defined in \S\ref{unitary_poincare_pairing}. This pairing is perfect when $p \notin S_\partial$. Moreover, it is equivariant for the action of the torsion-free localized Hecke algebra $\TT_U$. Note that the action on the right factor is twisted so that $T \in \TT_U$ acts by $T^\vee$. Let $\l_\pi : \TT_U \to \O$ be the Hecke eigensystem associated with $\pi$, and let $\eta_\pi(H^t)$ be the congruence number of $\l_\pi$ on $H^t = H^t_{!}(Y_U(K_f),\L_{\mu}(\O)^\vee)_\m$. By the remark below \ref{non-endo_lambda_rank}, $H^t$ is a Hecke module of $\l_\pi$-rank~$1$. Thus we can apply \ref{pairing_lemma} to $[\cdot,\cdot]$. By definition of automorphic periods (see \S\ref{unitary_automorphic_periods}), we get that:
$$
\eta_\pi(H^t) \sim \left[\frac{\d^b(\ph_\pi^\circ)}{\Om_b(\pi)},\frac{\d^t(\ph_{\pi^\vee}^\circ)}{\Om_t(\pi^\vee)}\right]
$$
This proves the theorem.
\end{proof}

\section{A divisibility of automorphic periods for the real stable base change}
\label{part_SBC}

After recalling the relevant properties of the stable base change for $\GL$, we prove \ref{thmB} (see \ref{SBC_divisibility}). We then deduce the divisibility of \ref{thmA} (see \ref{divisibilite_unitaire}) between the periods of a cuspidal automorphic representation of $U_E$ and the periods of its stable base change to $\GL(E)$. In paragraph \S\ref{non_self_dual}, we present the results for non self-dual representations. In the introduction of this paper, automorphic representations of $U_E$ are denoted by $\pi_U$. In order to simplify the notation, such representations will be denoted by $\pi$ in this section. \\

\subsection{Stable base change}
\label{sss_sbc}

In this paragraph, we temporarily adopt general notation. We consider the case of an arbitrary quadratic extension $E/F$ of number fields and suppose that $n \geq 2$ is any integer. Write $E=F[\tau]$, for $\tau=\sqrt{\d}$, $\d \in F^\x$. Let $\psi_F := \psi_\Q \circ \mathrm{Tr}_{\A_F/\A} : F \bs \A_F \to \C^\x$ and $\psi_E : E \bs \A_E \to \C^\x$ defined by:
$$
\psi_E(x) = \psi_F(\mathrm{Tr}_{\A_E/\A_F}(\tau x)), \quad x \in \A_E
$$
Since $\s(\tau) = -\tau$,  we have that $\psi_E$ is trivial on $\A_F$. We write $\psi_E = \bigotimes_w \psi_{E,w}$ for its factorization as a tensor product of local additive characters.

\subsubsection{$\Ld$-groups and $\Ld$-embeddings.}

The $L$-group of $U_{E/F}$ is given by:
$$
{}^\Ld U_{E/F} = \GLn(\C) \rtimes \Gal(E/F) 
$$
where the non-trivial element $\s$ in $\Gal(E/F)$ acts on $ \GLn(\C)$ by $g \mapsto J_n^\top g^{-1}J_n^{-1}$; where:
$$
J_n= \mathrm{antidiag}((-1)^{n-1},\cdots, -1,1)
$$

We denote by $G_E$ the $F$-algebraic group $\mathrm{Res}_{E/F}(\mathrm{GL}_{n/E})$, whose $A$-points are given, for any $F$-algebra $A$, by $G_{E}(A) = \mathrm{GL}_n(E\otimes_F A)$. The $L$-group of $G_E$ is:
$$
{}^L G_{E} =( \GLn(\C) \x \GLn(\C)) \rtimes \Gal(E/F) 
$$
where $\s$ acts on $ \GLn(\C) \x \GLn(\C)$ via $(g_1,g_2) \mapsto (g_2,g_1)$. \\

We consider the morphism of $L$-groups $\mathrm{SBC}:  \,  {}^L U_{E/F} \to {}^L G_E$ defined by:
$$
g \rtimes 1 \mapsto (g,{}^t
 g^{-1}) \rtimes 1 \quad \mbox{ and } \quad 
I_n \rtimes \s \mapsto  (\Phi,\Phi^{-1}) \rtimes \s
$$

\subsubsection{Local stable base change.} 
For a place $v$ of $F$, we note $\L_{F_v}$ for the Weil-Deligne group of $F_v$. Suppose first that $v$ is split in $E$, so that $E_v = F_v \x F_v$. Then the local stable base change of $\pi_v$ is defined to be:
$$
\mathrm{SBC}(\pi_v) = \pi_v \otimes \pi_v^\vee
$$
Now if $v$ is non-split so that $E_v = E \otimes_F F_v$ is a field, 
and let $\phi_{\pi_v}: \L_{F_v} \to {}^L U_{E/F}$ be the $L$-parameter associated to $\pi_v$ by the local Langlands correspondence for $U_{E/F}(F_v)$ (see \cite[Theorem 2.5.1]{Mok}). Using the $L$-embedding $\mathrm{SBC}$ we get the $L$-parameter of $G_E(F_v)$ given by:
$$
\mathrm{SBC} \circ \phi_\pi: \L_{F_v} \to {}^L G_E
$$
Then the stable base change $\mathrm{SBC}(\pi_v)$ of $\pi_v$ is defined to be the unique irreducible admissible representation $\Pi$ of $G_E(F_v) =\GLn(E_v)$ corresponding to $\mathrm{SBC} \circ \phi_\pi$  through the local Langlands correspondence for $G_E(F_v)$. When $U_{E/F}(F_v)$ and $\pi_v$ are unramified, the local stable base change has a simpler expression in terms of Satake parameters that we briefly recall here. We refer to {\cite{Minguez07}} for details. Let $K_v$ be the (hyperspecial) maximal compact subgroup $U_{E/F}(F_v) \cap \GLn(\O_{E,v})$ of $U_{E/F}(F_v)$, and assume that $\pi_v$ is $K_v$-spherical. The $K_v$-spherical representations are classified by $m$-tuples of unramified characters of $L^\x$ up to Weyl group action. Let $(\chi_1,\dots,\chi_m)$ be the $m$-tuple corresponding to $\pi_v$. Then, the Satake parameter of $\pi_v$ is the $\GLn(\C)$-conjugacy class of:
$$
\diag(\chi_1(\varpi_L)^{1/2},\dots,\chi_m(\varpi_L)^{1/2},1,\chi_m(\varpi_L)^{-1/2},\dots,\chi_1(\varpi_L)^{-1/2}) \rtimes \s \in {}^L U_{E/F}
$$
Then $\mathrm{SBC}(\pi_v)$ is the spherical representation of $\GLn(E_v)$ whose Satake parameter is given by the $\GLn(\C)$-conjugacy class of:
$$
\diag(\chi_1(\varpi_L),\dots,\chi_m(\varpi_L),1,\chi_m(\varpi_L)^{-1},\dots,\chi_1(\varpi_L)^{-1}) \in \GLn(\C)
$$
In particular, one can see that the map SBC is injective on the set of $K_v$-spherical representations of $U_{E/F}(F_v)$ (to the set of spherical representations of $\GLn(E_v)$). By duality, it induces a surjective map between the spherical Hecke algebras of $\GLn(E_v)$ and $U_{E/F}(F_v)$ (with coefficients in $\C$):
$$
\H(\GLn(E_v),\C) \surj \H(U_{E/F}(F_v),\C)
$$

\subsubsection{Global stable base change}
\label{endoscopy}

Let $\pi$ be a cuspidal automorphic representation of $U_{E/F}(\A_F)$. We say that an automorphic representation $\Pi$ of $\GLn(E)$ is a weak base change of $\pi$ if for almost all places $v$ of $F$, we have that:
$$
\Pi_v \simeq \mathrm{SBC}(\pi_v)
$$

The following theorem, due to Mok (see \cite[Corrolary 4.3.8]{Mok}), ensures the existence and the unicity of a weak base change:

\begin{theorem}[Weak base change]
Let $\pi$ be a cuspidal automorphic representation of $U_{E/F}$. Then it admits a unique weak base change to $\GLn(E)$.
\end{theorem}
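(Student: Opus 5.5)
The statement splits into existence and uniqueness, and I will treat them separately: uniqueness is formal, while existence is the deep input.

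\emph{Uniqueness.} I would first reduce to the isobaric setting. Mok's construction produces $\Pi$ as an isobaric sum $\Pi = \Pi_1 \boxplus \cdots \boxplus \Pi_r$ of cuspidal representations of $\mathrm{GL}_{n_i}(\A_E)$. These are the automorphic representations one should compare, since it is only within this class that the outside-$S$ local components are a complete invariant. Suppose $\Pi$ and $\Pi'$ are two such weak base changes of $\pi$. Then $\Pi_v \simeq \mathrm{SBC}(\pi_v) \simeq \Pi'_v$ for all $v$ outside a finite set $S$. The strong multiplicity one theorem of Jacquet--Shalika for isobaric automorphic representations of $\mathrm{GL}_n(\A_E)$ then forces $\Pi \simeq \Pi'$. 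I would also record the easy check that at every split place $v \notin S$, the local parameter $\pi_v \otimes \pi_v^\vee$ is exactly the component of $\Pi$ at the two places of $E$ above $v$. This makes the comparison at all but finitely many places of $E$ meaningful.

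\emph{Existence.} This is the hard part, and I would not reprove it but follow Mok's strategy. The first ingredient is the stabilization of the twisted trace formula for $\mathrm{Res}_{E/F}\mathrm{GL}_n \rtimes \theta$, where $\theta$ is the conjugate-duality automorphism $g \mapsto \Phi\, {}^t\sigma(g)^{-1}\Phi^{-1}$. Its stable twisted endoscopic data include $U_{E/F}$ embedded through the $L$-morphism $\mathrm{SBC}$ defined above. The second ingredient is the stabilization of the ordinary trace formula for $U_{E/F}$. Both rest on the fundamental lemma (Ngô) and on the weighted fundamental lemma (Chaudouard--Laumon) together with Waldspurger's transfer results.

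Comparing the two trace formulas, one runs Arthur's induction on $n$. One proves the local transfer of characters (the local endoscopic character identities, which give the local Langlands correspondence cited from \cite[Theorem 2.5.1]{Mok}). One then shows that the $\pi$-isotypic part of the discrete spectrum of $U_{E/F}$ contributes to the $\theta$-twisted discrete spectrum of $G_E$ through a global parameter $\psi$. Since $\pi$ is cuspidal, this contribution is realized by an isobaric, conjugate self-dual automorphic representation $\Pi_\psi$ of $\mathrm{GL}_n(\A_E)$. At unramified places the matching of Satake parameters is exactly the explicit formula recalled above, so $(\Pi_\psi)_v \simeq \mathrm{SBC}(\pi_v)$ for almost all $v$, and $\Pi := \Pi_\psi$ is the weak base change.

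\emph{Main obstacle.} The crux is the global comparison: separating the $\pi$-contribution from other discrete and continuous terms. I would handle this via the standard linear-independence-of-Hecke-eigencharacters argument, after fixing the unramified data outside a large $S$, together with the inductive hypothesis applied to proper Levi subgroups. In the paper I would simply invoke \cite[Corollary 4.3.8]{Mok}, noting that it relies on these trace-formula comparisons.
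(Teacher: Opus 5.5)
Your proposal is correct and matches the paper: the paper gives no argument of its own and simply invokes \cite[Corollary 4.3.8]{Mok}, and your trace-formula sketch of existence is a fair summary of what that citation rests on. Your uniqueness argument via Jacquet--Shalika strong multiplicity one is also right, and restricting to isobaric representations is genuinely needed (the paper leaves it implicit), since for automorphic representations in Langlands' broad sense, agreement at almost all places does not force isomorphism.
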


We will now restrict ourselves to the case $n=3$ we are interested in, and we describe the representations $\pi$ whose weak base change is cuspidal. For this, let $H$ be the group $U(2) \times U(1)$ and let :
$$
\xi_H: {}^L H \to {}^L U_{E/F}
$$
be the $L$-embedding defined in \cite[Section 4.8, 1.(a)]{ARU3}. By the principle of functoriality, this $L$-embedding should determine a transfer map from the $L$-packets of $H$ to the $L$-packets of $U_{E/F}$, whose existence has in fact been established by Rogawski. More precisely, Rogawski has proven \cite[Theorem 13.1.1]{ARU3} that for each finite place $v$ of $F$, there exists a map $\xi_H : \Psi(H_v) \to \Psi(U_v)$ from the set $\Psi(H_v)$ of $L$-packets on $H_v = H(F_v)$ to the set $\Psi(U_v)$ of $L$-packets on $U(F_v)$. We introduce the following definition:

\begin{definition}[Non-endoscopic automorphic representations of $U_{E/F}$]
\label{non-endoscopic}
We say that a discrete $L$-packet $\Psi = \otimes_v \Psi_v$ on $U_{E/F}$ is \textit{non-endoscopic} if there is no discrete $L$-packet $\psi =  \otimes_v \psi_v$ on $H$ such that $\Psi_v = \xi_H(\psi_v)$ for almost all $v$. We say that an automorphic representations of $U_{E/F}(\A_E)$ is \textit{non-endoscopic} if it belongs to a non-endoscopic $L$-packet.
\end{definition}

We then have the following more precise result \cite[Theorem 13.3.3]{ARU3}:

\begin{theorem}[Strong base change $(n=3)$]
Let $\pi$ be a cuspidal automorphic representation of $U_{E/F}$, and let $\Pi$ be its weak base change to $\GL(\A_E)$. Then $\Pi$ is cuspidal if and only if $\pi$ is non-endoscopic. Moreover, in this case $\Pi$ is a strong base change, that is for all place $v$:
$$
\Pi_v \simeq \mathrm{SBC}(\pi_v).
$$
\end{theorem}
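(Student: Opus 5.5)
The plan is not to reprove this from scratch. It is Rogawski's theorem, and in the paper it is imported from \cite[Theorem 13.3.3]{ARU3}. What follows is the route I would take to reconstruct it: compare the twisted trace formula for $G_E = \mathrm{Res}_{E/F}\mathrm{GL}_3$ (twisted by $\theta : g \mapsto J\,{}^t\s(g)^{-1}J^{-1}$) with the stabilized trace formula for $U_{E/F}$, and then use Rogawski's description of the discrete spectrum of $U_{E/F}$ in terms of $L$-packets.

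\textbf{Step 1 (packets on the unitary side).} Start from the weak base change $\Pi$ given by the preceding theorem of Mok. Using Rogawski's classification, the $L$-packet $\Psi$ of $\pi$ is of one of two kinds:
\begin{itemize}
\item stable, in which case its stable trace matches a $\theta$-stable discrete (cuspidal) term on $G_E$;
\item endoscopic, in which case $\Psi = \xi_H(\psi)$ for a packet $\psi$ on $H = U(2)\times U(1)$ (or is one-dimensional).
\end{itemize}

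\textbf{Step 2 (endoscopic implies not cuspidal).} Suppose $\pi$ is endoscopic. Composing $\xi_H$ with $\mathrm{SBC}$ shows that the Satake parameters of $\Pi_v$, at almost all $v$, are those of the isobaric sum $\mathrm{BC}(\psi_2) \boxplus \mathrm{BC}(\psi_1)$, where $\psi_2$ is on $U(2)$ and $\psi_1$ is a character on $U(1)$. By strong multiplicity one for isobaric representations (Jacquet--Shalika), $\Pi$ is this isobaric sum, so $\Pi$ is not cuspidal.

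\textbf{Step 3 (not cuspidal implies endoscopic).} Suppose $\Pi$ is not cuspidal. Then $\Pi$ is an isobaric sum of cuspidal representations of $\mathrm{GL}_2$ and $\mathrm{GL}_1$ (or of three $\mathrm{GL}_1$'s). Because $\Pi$ is $\theta$-stable, each summand is conjugate self-dual of the appropriate parity, since $\theta$ permutes the summands. Such summands descend to $U(2)$ and $U(1)$, and this produces a packet $\psi$ on $H$ with $\Psi_v = \xi_H(\psi_v)$ for almost all $v$. Hence $\pi$ is endoscopic.

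\textbf{Step 4 (strong base change at every place).} For non-endoscopic $\pi$, I would prove $\Pi_v \simeq \mathrm{SBC}(\pi_v)$ for all $v$ as follows.
\begin{itemize}
\item Fix the unramified data outside a finite set $S$.
\item Insert matching test functions $f \leftrightarrow f^U$ (fundamental lemma plus transfer) into the identity between the $\theta$-twisted discrete spectrum of $G_E$ and the stable discrete spectrum of $U$.
\item Isolate the $\Pi$- and $\Psi$-contributions, using linear independence of characters at the places outside $S$.
\item This gives, for each $v \in S$, the local character identity
\[
\mathrm{Tr}\,\Pi_v(f_v\theta) \;=\; c\sum_{\pi_v' \in \Psi_v} \mathrm{Tr}\,\pi_v'(f_v^U).
\]
\item This identity is exactly the characterization of the local stable base change, so $\Pi_v$ is the local lift of the packet $\Psi_v$. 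Rogawski's theorem that $\Psi_v$ is a singleton in the non-endoscopic case (used in \ref{non-endo_lambda_rank}) then pins down $\Pi_v = \mathrm{SBC}(\pi_v)$.
\item At split places the identification is immediate, since there $\mathrm{SBC}(\pi_v) = \pi_v \otimes \pi_v^\vee$.
\end{itemize}

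\textbf{Main obstacle.} The hard part is the local character identity at non-split ramified places. It needs the twisted fundamental lemma and transfer, the stabilization of both trace formulas, and control of the signs of the constants $c$, i.e.\ the twisted orbital integral comparison carried out in \cite{ARU3}. For this input I would rely directly on Rogawski's (and Mok's) results rather than redo the comparison.
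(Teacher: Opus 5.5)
Your overall strategy is the paper's. The paper does not prove this statement; it quotes it from \cite[Theorem 13.3.3]{ARU3}, and you likewise defer the twisted trace formula comparison to Rogawski and Mok. Read as an actual argument, however, your reconstruction has a real gap in Step 3.

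$\theta$-stability only tells you that $\theta$ permutes the isobaric summands. In the $1+1+1$ case it may swap two of them, $\Pi\simeq\chi_1\boxplus\chi_1^\theta\boxplus\chi_3$ with $\chi_1\neq\chi_1^\theta$. Then $\chi_1$ and $\chi_1^\theta$ are not conjugate self-dual and do not descend to $U(1)$. This case genuinely needs separate treatment:
\begin{enumerate}
\item It occurs for endoscopic $\pi$. For the non-tempered cuspidal members of Rogawski's $A$-packets one has $\chi_1=\chi|\cdot|^{1/2}$ with $\chi$ conjugate self-dual. The pair must then be regrouped as the base change of a one-dimensional representation of $U(2)$, up to the auxiliary character twist built into $\xi_H$, which you also drop in Step 2.
\item When $\chi_1$ is unitary, the parameter factors through the Levi of the Borel, and one must show it never arises from the discrete spectrum.
\end{enumerate}
In the same way, $\theta$-stability does not determine the signs (conjugate-orthogonal versus conjugate-symplectic) of the $\mathrm{GL}_2$-summand and of the character. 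Those signs are exactly what decides whether the summands descend to $U(2)\times U(1)$. Both facts come out of the comparison of discrete spectra, equivalently Mok's description of the parameters of discrete automorphic representations of $U_{E/F}$. So Step 3 is not a formal consequence of $\theta$-stability; it needs the same global input as Step 4.

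Step 4 has two further problems. First, the global identity you obtain is a product over $v\in S$. Splitting it into separate local identities requires the local base change character identities to be established beforehand; Rogawski does this by globalizing local packets, and this is again part of what you import. Second, your final appeal to $\Psi_v$ being a singleton is unnecessary and also false in this generality.
\begin{itemize}
\item It is unnecessary because $\mathrm{SBC}(\pi_v)$ is defined through the $L$-parameter of $\pi_v$, which is common to all members of $\Psi_v$. Showing that $\Pi_v$ is the lift of $\Psi_v$ is therefore already the claim.
\item It is false because non-endoscopy is a global condition. At a non-split place where the restriction of the global parameter to the local Weil group decomposes as $2+1$, the local packet $\Psi_v$ is an endoscopic packet with two elements, even though $\pi$ is stable.
\end{itemize}
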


\begin{remark}
The existence of a strong stable base change can be obtained under less restrictive assumptions (i.e. without assuming that $\pi$ is non-endoscopic), for a general $n$, by assuming only that the stable base change of $\pi$ is generic, i.e. that $\pi$ is a cuspidal representations of Ramanujan type.
\end{remark}

\subsubsection{Galois representations}
\label{SBC_galois}

To complete the picture, we now describe the stable base change on the side of Galois representations, following \cite[\S 1.1]{BLGGT14}. Let ${}^L U_p$ denote ${}^L U(\bar{\Q}_p) = \GLn(\bar{\Q}_p) \rtimes \Gal(E/F)$. A representation $\rho_U$ of $\G_F$ in ${}^L U_p$ is a group morphism $\rho_U : \G_F \to {}^L U_p$ such that the following diagram commutes:
$$
\begin{tikzcd}
 \Gal(\overline{F}/F) \arrow[r, "\rho_U"] \arrow[rd] & \GLn(\bar{\Q}_p) \rtimes \Gal(E/F) \arrow[d, "\mathrm{pr}_2"] \\
 &  \Gal(E/F) \\
\end{tikzcd}
$$

Let $\rho : \G_E \to \GLn(\bar{\Q}_p)$ be a Galois representation. For $\vs \in \G_F \bs \G_E$, we define the representation $\rho^{\vs} : \G_E \to \GLn(\bar{\Q}_p)$ by $\rho^{\vs} = \rho(\vs g \vs^{-1})$. We say that $\rho$ is conjugate self-dual if $\rho^{\vs} \simeq \rho^\vee$  i.e. if there exists an element $\ga_\vs \in \GLn(\bar{\Q}_p)$ such that:
\begin{equation}
\label{def_csd}
\rho(\vs g \vs^{-1}) = \ga_\vs {}^t \rho(g)^{-1}  \ga_\vs^{-1}, \quad \forall g \in  \G_E
\end{equation}
Of course the fact of being conjugate self-dual does not depend on the choice of $\vs \in \G_F \bs \G_E$, as one can take $\ga_{g \vs} = \rho(g)  \ga_\vs$ to see that $\rho^{g \vs} \simeq \rho^\vee$, for any $g \in \G_E$. \\

Let $\rho : \G_E \to \GLn(\bar{\Q}_p)$ be an irreducible Galois representation which is conjugate self-dual. Let us show that $\rho$ induces a Galois representation $\rho_U : \G_F \to {}^L U_p$. Let $\ga_0 \in \G_F \bs \G_E$ be the element such (\ref{def_csd}) holds. Then, we have:
$$
\rho(\vs^2) \rho(g) \rho(\vs)^{-2} = \rho(\vs^2 g\vs^{-2}) = \ga_\vs {}^t \rho(\vs g\vs^{-1})^{-1} \ga_\vs^{-1} = \ga_\vs {}^t \ga_\vs ^{-1}   \rho(g) (\ga_\vs {}^t \ga_\vs ^{-1})^{-1}
$$
Since $\rho$ is irreducible, we deduce by Schur's lemma that $\rho(\vs^2)$ is a scalar multiple of $\ga_\vs {}^t \ga_\vs ^{-1} $. By substituting $g = \vs^2$ in the above equation we see that the scalar multiple is in fact a sign:
$$
\rho(\vs^2) = \pm \ga_\vs {}^t \ga_\vs ^{-1}
$$
One can check that this sign does not depend on $\vs$. We denote it by $\mathrm{sgn}(\rho)$. Assume that $\mathrm{sgn}(\rho) = (-1)^{n-1}$. We then define $\rho_U : \G_F \to {}^L U_p$ by putting $\rho_U(g) = (\rho(g),1)$ if $g \in \G_E$ and $\rho_U(\vs) = (\ga_\vs \Phi^{-1},\s)$ if $\vs \in \G_F \bs \G_E$. Then one check that $\rho_U$ is a group morphism, and by contruction it is a representation of $\G_F$ in ${}^L U_p$.

Conversely, if $\rho_U : \G_F \to {}^L U_p$ is a representation of $\G_F$ then we obtain a representation of $\rho : \G_E \to \GLn(\bar{\Q}_p)$ by restriction : $\rho = \mathrm{pr}_1 \circ \rho_U|_{\G_E}$. Let $\vs \in \G_F \bs \G_E$ and put $\rho_U(\vs) = (\ga,\s)$. Then one check that:
$$
\rho(\vs g \vs^{-1}) = (\ga\Phi) {}^t \rho(g)^{-1}  (\ga\Phi)^{-1}
$$
Thus $\rho$ is conjugate self-dual. Moreover one check that:
$$
\rho(\vs^2) = \ga \Phi {}^t \ga^{-1} \Phi^{-1} = (-1)^{n-1} \ga \Phi \cdot {}^t (\ga \Phi)^{-1}
$$
Thus $\mathrm{sgn}(\rho) = (-1)^{n-1}$. One can sees from the above computations that the two operations are the two operations are inverse of each other. Thus there is a bijection:
$$
\left \{
    \begin{array}{c}
    	\\
        \text{Irreducible representations} \\ 
         \rho_U : \G_F \to {}^L U(\bar{\Q}_p) \\
         \\
    \end{array}
\right \} \longleftrightarrow
\left \{
    \begin{array}{c}
    	\text{Conjugate self-dual} \\
        \text{irreducible representations} \\
         \rho : \G_E \to \GLn(\bar{\Q}_p) \\
         \text{with } \mathrm{sgn}(\rho) = (-1)^{n-1}
    \end{array}
\right \}
$$

Let $\pi$ a cohomological cuspidal automorphic representation which is non-endoscopic and let $\Pi$ be its stable base change, which is a conjugate self-dual cohomological cuspidal automorphic representation of $\GLn(\A_E)$. In light of the above discussion, we define the Galois representation of $\pi$ to be the Galois representation $\rho_\Pi : \G_E \to \GLn(\K)$ attached to $\Pi$. In particular we will say that the Galois representation has residually absolutely irreducible image if $\rho_\Pi$ has residually absolutely irreducible (as a representation of $\G_E$).

\subsection{Stable base change for Hecke algebras}
\label{hecke_SBC}

In this paragraph, we explain how the stable base change is described from the perspective of Hecke algebras. More precisely we construct a morphism $\theta: \TT_{E} \to \TT_{U}$ between the localized cohomological Hecke algebras, which corresponds to the base change transfer. Under some conditions, we prove that this morphism is surjective.  \\

We begin by introducing some notation. We denote by $S_{F,s}$ the set of places of $F$ which are split in $E$. Let $S_{E,s}$ be the set of places of $E$ above the split places of $F$;  this is a set of density 1. For each $v \in S_{F,s}$, we have fixed a splitting $F_v = E_w \times E_{w^{\s}}$. For every $v \in S_{F,s}$, we choose the place $w \in S_{E,s}$ corresponding to the first factor and denote by $R_E$ the set of such places, so that $S_{E,s} = R_E \sqcup R_E^\s$. Finally, let $S_p(F)$ (resp.  $S_p(E)$) be the set of places of $F$ (resp. of $E$) above $p$. \\

Let $\pi$ be a non-endoscopic cuspidal automorphic representation of $U_{E/F}(\A_F)$ and let $\Pi$ be its strong stable base change, which is a cuspidal automorphic representation of $\GLn(\A_E)$. Let $k_f$ be the mirahoric level of $\pi$ and let $\H_{U} = \H_{U}(k_f;\O)$ be the abstract spherical Hecke algebras with coefficients in $\O$, defined in \S\ref{full_unitary_hecke}. Similarly, let $K_f = \prod_w K_{f,w}$ be the mirahoric level of $\Pi$, and let $S_{\Pi}$ be the set of finite places of $E$ such that $K_{f,w} \neq \GLn(\O_{E,w})$ and let $S_E = S_\Pi \cup S_p(E)$. Let $\H_{E,s} = \H_{E,s}(K_f;\O)$ be the \textit{split} spherical Hecke algebra of level $K_f$ outside of $p$ for $G_E$, defined by:
$$
\H_{E,s}(K_f;\O) = \bigotimes_{w \in S_{E,s} - S_E} \H(K_{w};\O)
$$
where $\H(K_{w};\O) = \O[T_{w,1},\dots, T_{w,n-1},T_{w,n}^\pm]$ (see \S\ref{hecke_corr}). Let:
$$
\theta : \H_{E,s} \to \H_{U}
$$
be the morphism sending $T_{w,i}$ to $T_{v,i}$ and $ T_{w^\s,i}$ to $(T_{v,i})^\vee$ for all $v \in S_{F,s}$ where $\pi$ is unramified and $w\in R_E$ such that $w \mid v$.  \\

From now we assume that $\pi$, and thus $\Pi$, are cohomological. Let $\mu_U \in X^+(T_F)$ and $\mu_E = (\mu_U,\mu_U^\vee) \in X^+(T_E)$ be the cohomological weights of $\pi$ and $\Pi$.  Let  $h_{U}$ and $h_{E,s}$ be the cohomological Hecke algebras acting faithfully on the cuspidal cohomology of $Y_U(k_f)$ and $Y_E(K_f)$ with coefficients in $\L_{\mu_F}(\O)$ and $\L_{\mu_E}(\O)$, defined in \S\ref{full_unitary_hecke} and \S\ref{hecke_corr}. The morphism $\theta$ induces a morphism between these cohomological Hecke algebras. To see this, we consider the following two decompositions of the cuspidal cohomology:
$$
H^\bullet_{cusp}(Y_E(K_f), \L_{\mu_E}(\C)) = \bigoplus_{\Si \in \mathrm{Coh}(G_E,\mu_E,K_f)} H^\bullet(\g, K_\inf  ; \Si_{\inf} \otimes L_{\mu_E}(\C)) \otimes \Si_f^{K_f}
$$
and (each $\pi$ appears with multiplicity one, see for exemple \cite[Theorem 13.3.1]{ARU3} for $n=3$):
$$
H^\bullet_{cusp}(Y_U(k_f), \L_{\mu_U}(\C)) =  \bigoplus_{\rho \in \mathrm{Coh}(U_{E/F},\mu_U,k_f)}  H^\bullet(\g_n, K_n ; \rho_{\inf} \otimes L_{\mu_U}(\C)) \otimes \rho_f^{k_f}
$$
These two decompositions are equivariant with respect to the action of $\H_{E,s}$ and $\H_U$, acting on the left-hand side by Hecke correspondences and on the right-hand side via Hecke operators. Thus, the Hecke action on the cuspidal cohomology groups is completely determined by the Hecke action on finite parts of cuspidal automorphic representations. Now, let $\rho \in \mathrm{Coh}(U_{E/F},n,k_f)$ and let $\Si = \mathrm{SBC}(\rho)$. If $v \in S_{F,s} - S_\pi$ and $w \in R_E$, $w \mid v$, we have $\Si_w = \rho_v$ and $\Si_{w^\s} = \rho_v^\vee$, and:
\begin{itemize}
\item $\l_\Si(T_{w,i}) = \l_\rho(T_{v,i})= \l_\rho(\theta(T_{w,i}))$;
\item $\l_\Si(T_{w^\s,i}) = \l_{\rho^\vee}(T_{v,i})= \l_\rho(T_{v,i}^\vee) =\l_\rho(\theta(T_{w^\s,i})).$
\end{itemize}
where $\l_\Si : \H_E(K_f,\O) \to \O$ and $\l_\rho : \H_F(k_f,\O) \to \O$ are the Hecke eigensystems associated with $\Si$ and $\rho$. Thus, for all $T \in\H_{E,s}(K_f;\O)$, we have:
$$
\l_\Si(T) = \l_\rho(\theta(T))
$$
Let $T,T' \in \H_{E,s}(K_f;\O)$ be two Hecke operators defining the same correspondence on $H^\bullet_{cusp}(Y_E(K_f), \L_{\mu_E}(\O))$. Then $\l_\Pi(T)= \l_\Pi(T')$ for all $\Si \in \mathrm{Coh}(G_E,\mu_E,K_f)$, and so $\l_\rho(\theta(T)) = \l_\rho(\theta(T'))$ for all $\rho \in \mathrm{Coh}(U_{E/F},\mu_U,k_f)$. Hence, $\theta(T)$ and $\theta(T')$ define the same correspondence on $H^\bullet_{cusp}(Y_U(k_f), \L_{\mu_U}(\C))$. Since $h_{U}(k_f;\O)$ acts faithfully on $H^\bullet_{cusp}(Y_U(k_f), \L_{\mu_U}(\C))$, they have the same image in $h_{U}(k_f;\O)$. This shows that $\theta$ induces a morphism on the (torsion-free) cohomological Hecke algebras, which we also denote $\theta$:
$$
\theta : h_{E,s} \to h_{U}
$$

Let $\l_\pi: h_{U} \to \O$ be the Hecke eigensystem associated with $\pi$. We denote by $\overline{\lambda_\pi}$ its reduction modulo $\varpi$ and by $\m_\pi$ the kernel of $\overline{\lambda_\pi}$. It is a maximal ideal of $h_{U}$, and we denote by $\TT_{U}$ the localization of $h_U$ at $\m_\pi$. Similarly, let $\l_{\Pi}: h_{E,s} \to \O$ be the eigensystem associated with $\Pi =\mathrm{SBC}(\pi)$. Thus, we have that $\lambda_{\Pi} = \lambda_\pi \circ \theta$. Let $\m_{\Pi,s} = \Ker \overline{\lambda_{\Pi}}$, and let $\TT_{E,s}$ be the localization of $h_{E,s}$ at $\m_{\Pi,s}$. Since $t \in \m_{\Pi,s} \Leftrightarrow \theta(t) \in \m_\pi$, $\theta$ induces, by localization, a morphism of $\O$-algebras, which we still denote $\theta$:
$$
\theta : \TT_{E,s} \to \TT_{U}
$$

Assume that the Galois representation attached to $\pi$ in \S\ref{SBC_galois} is residually absolutely irreducible. In that case, one can show, as for the Hecke algebra $\TT_\Q$ of $\GLn(\Q)$ in \cite[Lemma 4.2]{TR_BC}, that $\TT_U$ is generated by the Hecke operators at prime ideals of $F$ which are split in $E$. Thus, the morphism $\theta : \TT_{E,s} \to \TT_{U}$ is surjective by construction. \\

Let $\H_{E} = \H_{E}(K_f;\O)$ be the \textit{full} (outside of $p$) abstract spherical Hecke algebras of level $K_f$, with coefficients in $\O$, defined in \S\ref{hecke_corr}, and let $h_{E}$ be the {full} (outside of $p$) cohomological Hecke algebras acting faithfully on the cuspidal cohomology of $Y_E(K_f)$ with coefficients in $\L_{\mu_E}(\O)$. Let $h_\Pi : h_{E} \to \O$ be the Hecke eigensystem associated with $\Pi$ et $\m_\Pi \subset h_{E}$ the corresponding maximal ideal. The inclusion $h_{E,s} \subset h_{E}$ induces an inclusion $\TT_{E,s} \subset \TT_E$ by localizing at $\m_{\Pi,s}$ and $\m_{\Pi}$. We recall from \S\ref{galois_reps} that $\Pi$ is associated with a Galois representation $\rho_{\Pi} : \Gal(\overline{E}/E) \to \GLn(\O)$. We then have the following lemma:
\begin{lemma}
\label{SBC_surjectivity}
Assume that the Galois representation $\rho_{\Pi}$ associated with $\Pi$ is residually absolutely irreducible. Then:
$$
\TT_{E,s} = \TT_E
$$
\end{lemma}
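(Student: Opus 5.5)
The plan is to show that the Hecke operators at places of $E$ above non-split places of $F$, together with those above split places, are all generated, after localization at $\m_\Pi$, by the split ones. The tool is Galois pseudo-representations and Chebotarev density. This is the same strategy as \cite[Lemma 4.2]{TR_BC} for $\TT_\Q$.

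The first step is to construct a Galois representation with coefficients in $\TT_E$. Since $\TT_E$ is reduced and torsion-free, it embeds into $\prod_{\Si} \O$, the product over the cohomological cuspidal $\Si \in \mathrm{Coh}(G_E,\mu_E,K_f)$ whose eigensystem reduces to $\overline{\l_\Pi}$. Each such $\Si$ carries $\rho_\Si : \Gal(\overline{E}/E) \to \GL(\O)$ by \S\ref{galois_reps}. Taking the product, we obtain a continuous pseudo-representation (determinant in the sense of Chenevier) $D : \Gal(\overline{E}/E) \to \prod_\Si \O$. Its characteristic polynomial at $\Frob_w$, for $w \notin S_E$, has coefficients given by the images of $q_w^{i(i-1)/2} T_{w,i}$ in $\TT_E$.

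Because $\overline{\rho_\Pi}$ is absolutely irreducible, Nyssen--Rouquier (or Chenevier) gives a genuine representation $\rho_{\m} : \Gal(\overline{E}/E) \to \GL(\TT_E)$. Here I need that all traces $\Tr \rho_\m(g)$ and coefficients of characteristic polynomials lie in $\TT_E$. This holds because the traces at Frobenii lie in $\TT_E$, Frobenii are dense, and $\TT_E$ is closed in $\prod_\Si \O$.

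Next, let $A \subset \TT_E$ be the closed $\O$-subalgebra generated by $\Tr \rho_\m(\Frob_w)$ and the other characteristic-polynomial coefficients for $w \in S_{E,s} \setminus S_E$. The places of $E$ above split places of $F$ have Dirichlet density one. Hence, by Chebotarev, the Frobenii $\Frob_w$ with $w \in S_{E,s} \setminus S_E$ are dense in $\Gal(\overline{E}/E)$. By continuity, all characteristic-polynomial coefficients of $\rho_\m(g)$, for arbitrary $g$, lie in $A$. In particular they do so for $g = \Frob_w$ with $w$ inert over $F$. These coefficients recover $q_w^{i(i-1)/2} T_{w,i}$ for $i=1,2,3$, since the characteristic polynomial of $\rho_\m(\Frob_w)$ is $\sum_i (-1)^i q_w^{i(i-1)/2} T_{w,i} X^{3-i}$. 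Since $p \nmid q_w$ (as $w \notin S_p(E)$), $q_w$ is a unit in $\O$, and so $T_{w,i} \in A$. Also $S_w^{-1} \in A$, because $S_w$ is a unit in $\TT_E$ and $A$ is a complete local subring, hence closed under inverses of units.

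Finally, $A$ is contained in the image of $\TT_{E,s}$, because $\TT_{E,s}$ is generated by the $T_{w,i}$ with $w \in S_{E,s} \setminus S_E$. Thus $\TT_E \subset \TT_{E,s}$, and the reverse inclusion is the given one. The only delicate point is justifying that the images of the split-place operators generate a closed subalgebra, i.e. that "generated" is taken topologically. This is harmless since all the algebras involved are finite over $\O$. The other care point is checking that localizing $h_{E,s}$ at $\m_{\Pi,s}$ and $h_E$ at $\m_\Pi$ picks out the same set of $\Si$. This again follows from residual irreducibility plus Chebotarev: a $\Si$ whose split-place eigensystem is congruent to $\overline{\l_\Pi}$ has $\overline{\rho_\Si}$ with the same trace on a dense set of Frobenii. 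Hence $\overline{\rho_\Si}^{ss} \simeq \overline{\rho_\Pi}$ by Brauer--Nesbitt, so its full eigensystem is congruent to $\overline{\l_\Pi}$ as well.
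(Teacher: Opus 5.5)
Your proposal is correct and follows essentially the same route as the paper: a Galois representation (or determinant) with coefficients in $\TT_E$, built via Carayol's argument in the paper, followed by Chebotarev density of the split places, closedness of $\TT_{E,s}$, and continuity to place every $T_{w,i}$ in $\TT_{E,s}$. Your additional check that $\m_\Pi$ is the only maximal ideal of $h_E$ lying over $\m_{\Pi,s}$ (via Brauer--Nesbitt) justifies the inclusion $\TT_{E,s}\subset\TT_E$, which the paper only asserts, so it is a welcome addition.
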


\begin{proof}
The proof uses a similar argument that Carayol in \cite{Carayol}. More precisely, similarly to \cite[Section 2.2]{Carayol}, we can construct a Galois representation $\rho: \Gal(\overline{E}/E) \to \GLn(\TT)$ with coefficient in $\TT = \TT_E$ such that if $w \notin S_\Pi \sqcup S_p(E)$ is a finite place of $E$, we have:
$$
\det(X I_n - \rho(\Frob_{w})) = \sum_{i=0}^n (-1)^i q_w^{i(i-1)/2} T_{w,i} X^{n-i}
$$
Alternatively, one can deduce the existence of this representation from \cite[Corollary 5.4.4]{Scholze15}. Then for all $w \in S_{E,s} - (S_\Pi \sqcup S_p(E))$, the polynomial $\det(X I_n - \rho(\Frob_{w}))$ has coefficients in $\TT_s := \TT_{E,s}$. But this set of prime ideals is of density $1$, so by the Chebotarev's density theorem, $\Gal(\overline{E}/E)$ is topologicaly generated by $\{ \Frob_{w}, \,w \in S_{E,s} - (S_\Pi \sqcup S_p(E)) \}$. Since $\TT_s$ is closed in $\TT$, we have by continuity that $\det(X I_n - \rho(g))$ has coefficients in $\TT_s$, for all $g \in \Gal(\overline{E}/E)$. In particular $T_{w,i} \in \TT_s$, for all $w \notin S_\Pi \sqcup S_p(E)$ and $0 \leq i \leq n$. Since these operators generate $\TT$, this proves the lemma.
\end{proof}

To conclude, assuming that the Galois representation associated with $\pi$ is residually absolutely irreducible, we obtain a surjective morphism of $\O$-algebras:
$$
\theta  : \TT_E \to \TT_U
$$

\subsection{$L$-functions and stable base change}
\label{L-functions}
Let $\pi$ a cuspidal automorphic representation of $U_{E/F}$ which is non-endoscopic and let $\Pi$ be its stable base change, which is a cuspidal automorphic representation of $\GLn(\A_E)$. We decompose $\Pi = \otimes_v \Pi_v$ as a tensor product of local representations $\Pi_v$ of $\GLn(E_v)$, where $v$ is a place of $F$ and $E_v = E \otimes_F F_v$. For simplicity we assume that $n$ is odd (the even case is similar, with the role of the two Asai $L$-functions reversed). We first recall the definition of the primitive adjoint $L$-functions of $\pi$ and Asai $L$-functions of $\Pi$, and their relations.

\subsubsection{Primitive $L$-functions and stable base change}
\label{primitive_L_functions}

Let $\As^+ : {}^L G_{E} \to \mathrm{GL}(\C^n \otimes \C^n)$ be the $n^2$-dimensional representation of ${}^L G_{E}$ defined by:
$$
\As^+(g_1,g_2)(x\otimes y) = g_1 x\otimes g_2 y  \quad \mbox{ and } \quad 
\As^+(\s)(x \otimes y) =  y \otimes x
$$
where $(g_1,g_2)$ denotes  $(g_1,g_2) \rtimes 1 \in {}^L G_{E}$ and $\s$ denotes $(1,1) \rtimes \s \in {}^L G_{E}$. For each place $v$ of $F$, the local factor $\Pi_v$ determines, via the local Langlands correspondence for $\GLn(E_v)$, a Weil-Deligne representation $\phi_{\Pi_v} : \L_{F_v} \to {}^L G_{E}$. The Asai local $L$-factor $L(\Pi_v,\As^+,s)$ is defined to be the local $L$-factor associated with the $n^2$-dimensional Weil-Deligne representation $\As^+ \circ \phi_{\Pi_v}$. Then the Asai $L$-function $L(\Pi,\As,s)$ is defined as the Euler product, over the finite places $v$ of $F$, of the Asai local $L$-factors $L(\Pi_v,\As,s)$. It follows from the works of Flicker \cite[Theorem p.297]{Flicker88}, Flicker-Zinoviev \cite[Theorem]{FZ95}, and Matringe \cite[Theorem 5.3]{Mat09}, that $L(\Pi_v,\As,s)$ has an analytic continuation as a meromorphic function on some right half plane containing $s=1$. Moreover, it has a simple pole at $s=1$ since $\Pi$ is a stable base change, and thus $\GLn(\A_F)$-distinguished (see \ref{th_mok} below). We will sometimes simply write $L(\Pi,\As,s)$ for $L(\Pi,\As^+,s)$.

Let $\As^- :=  \As^+ \otimes \chi_{E/F}$. Similarly, the twisted Asai $L$-function $L(\Pi,\As^-,s)$ of $\Pi$ is defined as the Euler product of the local $L$-factors $L(\Pi_v,\As^-,s)$ attached, for each finite place $v$ of $F$, to the $n^2$-dimensional Weil-Deligne representation $\As^- \circ \phi_{\Pi_v}$. Similarly, one can prove that $L(\Pi,\As^-,s)$ has an analytic continuation as a meromorphic function to some right half plane containing $1$. Moreover, $L(\Pi,\As^-,s)$ is holomorphic and nonzero at $s=1$ as $\Pi$ is a stable base change. \\

Let $\Ad$ be the adjoint action of ${}^L U_{E/F}$ on $\mathfrak{sl}_n(\C)$ (the space of $n \x n$ traceless matrices with coefficients in $\C$) defined by:
\begin{itemize}
\item  $\Ad(g)\cdot X = gXg^{-1}$, for $g \in \GLn(\C)$
\item $\Ad(\s)\cdot X = -J_n{}^tX J_n^{-1}$.
\end{itemize} 
We also denote by $\Ad \otimes \chi_{E/F}$ the adjoint action twisted by the quadratic character $\chi_{E/F}$ attached to $E/F$, through which $\s$ acts via $ X \mapsto J_n{}^tX J_n^{-1}$. The adjoint $L$-function $L(\pi,\Ad,s)$ of $\pi$ is defined as the Euler product, over the finite places $v$ of $F$, of the local $L$-factors $L(\pi_v,\Ad,s)$, which is defined to be the $L$-factor associated with $\Ad \circ \rho_v$, where $\rho_v$ is the Weil-Deligne representation corresponding to $\pi_v$ through the local Langlands correspondence for $U_{E/F}$. The twisted adjoint $L$-function $L(\pi,\Ad \otimes \chi_{E/F},s)$ of $\pi$ is defined similarly. \\

The adjoint $L$-functions of $\pi$ and the Asai $L$-functions of its stable base change $\Pi$ are related by the following two relations:
\begin{equation}
\label{residue_formula_+}
 L(\Pi,\As^+,s) = \zeta_F(s) \cdot L(\pi,\Ad\otimes \chi_{E/F} ,s)
\end{equation}
\begin{equation}
\label{residue_formula_-}
L(\Pi,\As^-,s) = L(\chi_{E/F},s) \cdot L(\pi,\Ad ,s)
\end{equation}
where $\zeta_F$ is the Dedekind zeta function of $F$ and $L(\chi_{E/F},\cdot)$ is the Dirichlet $L$-function of $\chi_{E/F}$. Finally, the adjoint $L$-function of $\Pi$ (see \S\ref{adjoint_L_formula}) admits the following decomposition:
$$
L(\Pi,\Ad,s) = L(\pi,\Ad,s) \cdot L(\pi,\Ad \otimes \chi_{E/F},s) 
$$
which is also valid for the completed $\Lambda$-functions, which are obtained by adding the $\Gamma$-factors at archimedean places. \\

We now define the \textit{imprimitive} $L$-functions, which differ from the geniuine $L$-functions introduced above by a finite number of factors, and whose factors at the ramified places are simpler.

\subsubsection{Imprimitive Asai $L$-function} We first define the local $L$-factors. Let $v$ be a finite place of $F$ and let $q_v$ be the cardinality of the residue field of $F_v$. Assume first that $v$ splits into $w$ and $w'$ in $E$. The local \textit{imprimitive} Asai $L$-factor of $\Pi$ at $v$ is defined to be the local imprimitive Rankin-Selberg $L$-factor of $\Pi_w$ and $\Pi_{w'}$ at $v$:
$$
L^{imp}(\Pi_v,\As,s) = L^{imp}(\Pi_w \x \Pi_{w'},s)
$$ 
where $\Pi_w$ and $\Pi_{w'}$ are both viewed as representations of $\GLn(F_v)$. The imprimitive Rankin-Selberg is defined in \cite[\S3.1.1]{TR_BC}. Assume now that $v$ is non-split in $E$ and let $w$ be the place of $E$ upon $v$, and let $q_w$ be the cardinality of the residue field of $E_w$. Let us write the standard local $L$-factors associated with $\Pi_w$ as (see \cite{ZFSA}): 
\begin{equation}
\label{standard_L_factor}
L(\Pi_w,s) = \prod_{i=1}^{r} (1 - \a_i q_w^{-s})^{-1}
\end{equation}
with $\a_i \in \C^*$. One has that $r \leq n$ and that $\Pi$ is ramified if and only if $r<n$ (see \cite[Section 3]{Jacquet79}). Then, the local \textit{imprimitive} Asai $L$-factor of $\Pi$ at $v$ is defined by:
$$
L^{imp}(\Pi_v,\As,s) =  \prod_{i=1}^r(1-\a_i q_v^{-s})^{-1} \prod_{j<k}^r(1-\a_j\a_k q_v^{-2s})^{-1} 
$$
when $v$ is inert, and that:
$$
L^{imp}(\Pi_v,\As,s) =  \prod_{i=1}^r(1-\a_i^2 q_v^{-s})^{-1} \prod_{j<k}^r(1-\a_j\a_k q_v^{-s})^{-1} 
$$
when $v$ is ramified. In particular, $L^{imp}(\Pi_v,\As,s)$ is nowhere vanishing. \\

Of course, when $\Pi$ is unramified at $v$, the imprimitive local $L$-factors at $v$ is equal to the Asai local $L$-factor $L(\Pi_v,\As,s)$. In general, we have the following lemma, which follows from  \cite[Corollaries 3.3 \& 4.3]{Jo22} and \cite[Theorem 5.3]{Mat09}:
\begin{lemma}
There exists a polynomial $P_v \in \C[X]$ satisfying $P(0) = 1$ such that:
$$
L^{imp}(\Pi_v,\As,s) = P_v(q_v^{-s})L(\Pi_v,\As,s)
$$
Moreover, $P=1$ when $\Pi$ is unramified at $v$.
\end{lemma}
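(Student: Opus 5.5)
The plan is to argue place by place and reduce to known results on the local Asai $L$-function, treating split and non-split places separately.

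\emph{Split places.} If $v$ splits in $E$ as $w,w'$, then by definition $L^{imp}(\Pi_v,\As,s)=L^{imp}(\Pi_w\x\Pi_{w'},s)$. Moreover, the Asai parameter $\As^+\circ\phi_{\Pi_v}$ is the tensor product $\phi_{\Pi_w}\otimes\phi_{\Pi_{w'}}$, so $L(\Pi_v,\As,s)=L(\Pi_w\x\Pi_{w'},s)$. For a pair of generic representations of $\GLn(F_v)$, the imprimitive Rankin--Selberg factor of \cite[\S3.1.1]{TR_BC} is the factor obtained from the Rankin--Selberg integral of the essential vectors. Jo's results \cite[Corollaries 3.3 \& 4.3]{Jo22} express this as the genuine Rankin--Selberg factor times a polynomial $P_v(q_v^{-s})$ with $P_v(0)=1$, and $P_v$ is nontrivial only when one of the two representations is ramified. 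I will quote this directly.

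\emph{Non-split places.} Let $w$ be the place of $E$ above a non-split $v$. I will compare the explicit product with Matringe's description of $L(\Pi_w,\As,s)$ \cite[Theorem 5.3]{Mat09}. For a generic $\Pi_w$, write $\Pi_w$ through its Langlands classification as a product of essentially square-integrable representations $\Delta_1,\dots,\Delta_t$. Matringe shows that $L(\Pi_w,\As,s)$ is the product of the factors $L(\Delta_i,\As,s)$ and $L(\Delta_j\x\Delta_k^\s,s)$ for $j<k$. Its inverse is a polynomial in $q_v^{-s}$ whose roots come from the Weil--Deligne parameter $\As^+\circ\phi_{\Pi_w}$.

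The parameters $\a_1,\dots,\a_r$ in (\ref{standard_L_factor}) are exactly the Frobenius eigenvalues on the inertia- and monodromy-invariants of the unramified part of $\phi_{\Pi_w}$. The factor $\prod_i(1-\a_iq_v^{-s})\prod_{j<k}(1-\a_j\a_kq_v^{-2s})$ (inert case), respectively its ramified analogue, is then the characteristic polynomial of Frobenius acting on a certain subspace. In the inert case this is a Frobenius-stable quotient of the unramified part of the Asai representation built from the unramified lines; in the ramified case it is the analogous object. The Asai $L$-factor itself is the full characteristic polynomial of Frobenius on the inertia- and monodromy-invariants of $\As^+\circ\phi_{\Pi_w}$. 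The divisibility
$$L^{imp}(\Pi_v,\As,s)^{-1}\ \Big|\ L(\Pi_v,\As,s)^{-1}$$
inside $\C[q_v^{-s}]$ would then follow, and setting $P_v=L(\Pi_v,\As,s)^{-1}L^{imp}(\Pi_v,\As,s)$ would give a polynomial with $P_v(0)=1$.

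Alternatively, Jo's corollaries already treat the Asai case: they compute the Asai integral of the essential vector and identify it with $L^{imp}$ times a polynomial. I would invoke them directly and only check that the normalizations agree.

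\emph{Unramified case.} When $\Pi_v$ is unramified we have $r=n$, and the imprimitive formula is literally the Satake computation of the Asai factor. Hence $P_v=1$.

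\emph{Main obstacle.} The hardest step is matching conventions. I need to confirm that the $\a_i$ of the standard $L$-factor produce exactly the eigenvalues appearing in Jo's or Matringe's formula in the ramified and inert cases, including the $q_v^{-2s}$ versus $q_v^{-s}$ distinction. I will check this on principal series with ramified constituents first, and then on the Steinberg-type segments.
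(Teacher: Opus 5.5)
Your proposal is correct. In the split case, and in your ``alternative'' at non-split places, it is the paper's own argument. The paper gives no proof beyond citing Jo's Corollaries 3.3 and 4.3, which relate $L^{imp}$ via essential vectors to the integral-theoretic Rankin--Selberg/Asai factor. It combines these with Matringe's Theorem 5.3, which identifies that factor with the one attached to $\mathrm{As}^+\circ\phi_{\Pi_v}$; that is how $L(\Pi_v,\mathrm{As},s)$ is defined in the paper.

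Your primary route at non-split places is genuinely different and more self-contained. Because $L(\Pi_v,\mathrm{As},s)$ is defined from the Weil--Deligne parameter, the divisibility is pure linear algebra and needs neither zeta integrals nor Matringe's segment formula. To make your sketch precise, put $W=(\ker N)^{I_{E_w}}$, on which $\mathrm{Frob}_w$ has eigenvalues $\alpha_1,\dots,\alpha_r$. Then use the Frobenius-stable \emph{subspace} (rather than a quotient) of $(\ker N_{\mathrm{As}})^{I_{F_v}}$ given by $W\otimes W$ in the inert case, resp.\ $\mathrm{Sym}^2W=(W\otimes W)^{I_{F_v}}$ in the ramified case. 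The characteristic polynomial of Frobenius on it divides the full one.
\begin{itemize}
\item Inert case: normalize $\phi_{\Pi_v}(\mathrm{Frob}_v)=(1,A)\rtimes\sigma$ with $A=\phi_{\Pi_w}(\mathrm{Frob}_w)$. Then Frobenius acts by $x\otimes y\mapsto y\otimes Ax$, whose square is $A\otimes A$. This gives exactly the factors $(1-\alpha_iq_v^{-s})$ and $(1-\alpha_j\alpha_kq_v^{-2s})$.
\item Ramified case: normalize $\phi_{\Pi_v}(\tau)=(1,B)\rtimes\sigma$ for $\tau\in I_{F_v}\setminus I_{E_w}$. This forces $B=\phi_{\Pi_w}(\tau^2)$, which is trivial on $W$, so $\tau$ acts on $W\otimes W$ as the plain swap. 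A Frobenius chosen in $W_{E_w}$ then acts by $A\otimes A$ on $\mathrm{Sym}^2W$, giving $(1-\alpha_i^2q_v^{-s})$ and $(1-\alpha_j\alpha_kq_v^{-s})$.
\end{itemize}
This settles the convention check you flagged as the main obstacle, including the $q_v^{-2s}$ versus $q_v^{-s}$ distinction. The split case is the same argument with $W\otimes W'$.

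What the citation route buys in exchange is the interpretation of $L^{imp}(\Pi_v,\mathrm{As},s)$ as the zeta integral (or $\mathcal{P}_v^+$-value) of the essential vector. That interpretation is what the paper actually uses in its computation of Flicker's formula.
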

In particular, since $L(\Pi_v,\As,s)$ has no pole at $s=1$ (see \cite[Proposition p.305]{Flicker88}), the above lemma implies that $L^{imp}(\Pi_v, \As,s)$ has no pole at $s=1$ either. \\

The global \textit{imprimitive} Asai $L$-function of $\Pi$ is defined to be the product of the local Asai $L$-factors over the finite places of $F$:
$$
L^{imp}(\Pi, \As,s) = \prod_{v} L^{imp}(\Pi_v,\As,s)
$$
By the foregoing, $L^{imp}(\Pi, \As,s)$ differs from $L(\Pi,\As,s)$ only by a finite number of local factors (at places where $\Pi$ is ramified) which have no pole nor vanish at $s=1$. Consequently is a meromorphic function on some right half plane containing $s=1$, and has a simple pole at $s=1$ (we recall that $\Pi$ is a stable base change). Similarly, one can also define the \textit{imprimitive} twisted Asai $L$-function $L^{imp}(\Pi,\As^-,s)$ by replacing the primitive local factors $L(\Pi_v,\As^-,s)$ by the imprimitive local factors $L^{imp}(\Pi_v,\As^-,s)$ at the places $v$ where $\Pi$ is ramified. It is meromorphic on some right half plane containing $s=1$, and is holomorphic and nonzero at $s=1$, since $\Pi$ is a stable base change.  \\

\subsubsection{Imprimitive unitary ajdoint $L$-functions}
\label{imp_unitary_adjoint_L_funs}

The \textit{imprimitive} adjoint $L$-function $L^{imp}(\pi,\Ad,s)$ of $\pi$ and the twisted adjoint $L$-function $L^{imp}(\pi,\Ad \otimes \chi_{E/F},s)$ of $\pi$ are defined by the following formulas:
$$
L^{imp}(\Pi, \As^-,s) = L(\chi_{E/F},s) \cdot L^{imp}(\pi,\Ad,s)
$$
$$
L^{imp}(\Pi, \As^{+},s) = \zeta_{F}(s) \cdot L^{imp}(\pi,\Ad \otimes \chi_{E/F},s)
$$
Then $L^{imp}(\pi,\Ad,s)$ and $L^{imp}(\pi,\Ad \otimes \chi_{E/F},s)$ are meromorphic functions on some right half plane containing $s=1$, and are holomorphic and nonzero at $s=1$. Moreover, they differ from the primitive adjoint $L$-functions $L(\pi,\Ad,s)$ and $L(\pi,\Ad \otimes \chi_{E/F},s)$ only by a finite number of local factors at the places where $\pi$ is ramified.

Finally, assume that $\pi$ is only ramified at places of $F$ which are split in $E$. Then, as for primitive $L$-functions, the imprimitive adjoint $L$-function of $\Pi$ (see \S\ref{adjoint_L_formula}) admits the following decomposition:
\begin{equation}
\label{decomposition_ad_imp}
L^{imp}(\Pi,\Ad,s) = L^{imp}(\pi,\Ad,s) \cdot L^{imp}(\pi,\Ad \otimes \chi_{E/F},s)
\end{equation}
which is also valid for the completed imprimitive $\Lambda^{imp}$-functions. \\

\subsection{The Flicker-Rallis period and the Flicker-Rallis conjecture} 
\label{sss_flicker_rallis_conj}
In this paragraph, we remain in the general framework of an integer $n \geq 1$ and an arbitrary quadratic extension $E/F$ of number fields. We denote by $\chi_{E/F}$ the character of $\A_F^\x/F^\x$ associated with $E/F$ by class field theory.

Let $\Pi$ be a cuspidal automorphic  representation of $\GLn(\A_E)$. We say that $\Pi$ is $1$-\textbf{distinguished} (or simply \textbf{distinguished}) with respect to $\GLn(\A_F)$, if its central character is trivial on $\A_F^\x$ and if there exists a form $\phi \in \Pi$ such that the so-called \textit{Flicker-Rallis period} defined by:
\begin{equation}
\label{flicker-rallis_period}
\P^+(\phi):= \int_{Z_{n}(\mathbb{A}_F) \GLn(F) \backslash \GLn(\mathbb{A}_F)} \phi(g)  d g
\end{equation}
is non-zero. Similarly we say that $\Pi$ is $\chi_{E/F}$-\textbf{distinguished} with respect to $\GLn(\A_F)$, if its central character is equal to $\chi_{E/F}$ on $\A_F^\x$ and if there exists a form $\phi \in \Pi$ such that the twisted Flicker-Rallis period:
\begin{equation}
\label{twisted_flicker-rallis_period}
\P^-(\phi):= \int_{Z_{n}(\mathbb{A}_F) \GLn(F) \backslash \GLn(\mathbb{A}_F)} \phi(g) \chi_{E/F}(\det(g)) d g
\end{equation}
is non-zero. Flicker and Rallis conjectured (see \cite[Conjecture]{Flicker91}) that a cuspidal automorphic representation $\Pi$ of $\GLn(\A_E)$ is distinguished with respect to $\GLn(\A_F)$ if and only if it is a stable base change (when $n$ is odd) or a twisted base change (when $n$ is even) from the unitary group $U_{E/F}$. This conjecture has been proven by Rogawski \cite{ARU3} in the case $n=3$ and by Mok \cite{Mok} for general $n$.

\begin{theorem}
\label{th_mok}
Let $\Pi$ be a cuspidal automorphic representation of $\GLn(\A_E)$ and let $\w_\Pi$ denote its central character. Then the following conditions are equivalent:
\begin{enumerate}[(i)]
\item $\Pi$ is conjugate self-dual and the restriction of $\w_\Pi$ to $\A_F^\x$ is equal to $\chi_{E/F}^{(-1)^{n+1}}$ ; 
\item $\Pi$ is $\chi_{E/F}^{(-1)^{n+1}}$-distinguished with respect to $\GLn(\A_F)$;
\item $L(\Pi,\As^{(-1)^{n+1}},s)$ admits a pole at $s=1$ ;
\item $\Pi$ is a stable base change from $U_{E/F}$.
\end{enumerate}
where we set $\chi_{E/F}^+ = 1$ and $\chi_{E/F}^- = \chi_{E/F}$.
\end{theorem}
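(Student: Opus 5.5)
The plan is to prove the cycle of equivalences by combining the Rankin--Selberg theory of the Asai $L$-function with the endoscopic classification of Mok. Throughout, $\e := (-1)^{n+1}$ and $\As^{\e}$ is the corresponding Asai representation. The logical order is: first (ii)$\Leftrightarrow$(iii), then (iii)$\Rightarrow$(i), then (i)$\Rightarrow$(iii), and finally (iii)$\Leftrightarrow$(iv). Only the last step, and the choice of sign in the third, rely on results beyond the integral representation.

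\textbf{(ii)$\Leftrightarrow$(iii).} I would use Flicker's integral representation \cite{Flicker88}. For $\phi \in \Pi$, integrate $\phi|_{\GLn(\A_F)}$ against a mirabolic Eisenstein series $E(g,\Phi,s)$ on $\GLn(\A_F)$, twisted by $\chi_{E/F}^{\e}\circ\det$. Unfolding gives the Asai zeta integral
$$
Z(\phi,\Phi,s)=\int_{N_n(\A_F)\bs \GLn(\A_F)} W_\phi(g)\,\Phi(e_n g)\,|\det g|^s\,\chi_{E/F}^{\e}(\det g)\,dg .
$$
Up to local factors that can be made nonzero at $s=1$ by choosing local data (using \cite{Mat09} at ramified places), this integral equals $L(\Pi,\As^{\e},s)$. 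The Eisenstein series has a simple pole at $s=1$ with constant residue, proportional to $\hat\Phi(0)$. Taking residues therefore identifies $\mathrm{Res}_{s=1}$ of the global integral with $\hat\Phi(0)\,\P^{\e}(\phi)$. Hence $L(\Pi,\As^{\e},s)$ has a pole at $s=1$ exactly when $\P^{\e}\not\equiv 0$. The central character condition comes for free: if it fails, the integral over $Z_n(\A_F)$ vanishes identically, so neither (ii) nor the pole can occur.

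\textbf{(iii)$\Rightarrow$(i).} I would use the factorization
$$
L^S(\Pi\times\Pi^\s,s)=L^S(\Pi,\As^+,s)\,L^S(\Pi,\As^-,s).
$$
The factor $L^S(\Pi,\As^{-\e},s)$ does not vanish at $s=1$ (Shahidi). So a pole of $L^S(\Pi,\As^{\e},s)$ forces a pole of $L^S(\Pi\times\Pi^\s,s)$, and by Jacquet--Shalika this gives $\Pi^\s\simeq\Pi^\vee$. The restriction $\w_\Pi|_{\A_F^\x}=\chi_{E/F}^{\e}$ then follows from (ii), which we have just shown is equivalent to (iii).

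\textbf{(i)$\Rightarrow$(iii).} Conjugate self-duality gives a simple pole of $L(\Pi\times\Pi^\s,s)$ at $s=1$, so exactly one of the two Asai $L$-functions has a pole. The task is to show it is $\As^{\e}$, using the central character condition. This sign determination is the main obstacle: the central character alone does not obviously decide it. I would first try the Galois-side argument of \S\ref{SBC_galois}: for cohomological $\Pi$ one could compare $\mathrm{sgn}(\rho_\Pi)$ with the parity of the Asai pole. In general, however, I would instead invoke Mok's endoscopic classification \cite{Mok}. Its stable trace formula comparison shows that a conjugate self-dual cuspidal $\Pi$ descends via exactly one of the two $L$-embeddings ${}^LU_{E/F}\to{}^LG_E$ (standard or twisted base change), and that the embedding is detected by which Asai $L$-function has the pole.

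\textbf{(iii)$\Leftrightarrow$(iv).} The same classification gives this directly. When $n$ is odd, the stable base change $\mathrm{SBC}$ of \S\ref{sss_sbc} is the embedding attached to $\As^+$, and its image consists exactly of the $\Pi$ whose $L(\Pi,\As^+,s)$ has a pole. In the case $n=3$ I would instead quote Rogawski \cite{ARU3}, who obtains this directly via the twisted trace formula.
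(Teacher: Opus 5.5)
Your plan has the same shape as the paper's, which gives no argument of its own: it attributes (ii)$\Leftrightarrow$(iii) to Flicker's residue formula and the rest to Rogawski for $n=3$ and Mok in general. Your (ii)$\Leftrightarrow$(iii) and (iii)$\Leftrightarrow$(iv) are fine at that level.

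\textbf{The gap is in (i)$\Rightarrow$(iii): the hypothesis on $\w_\Pi|_{\A_F^\x}$ is never used.} Mok's classification says that a conjugate self-dual $\Pi$ descends along exactly one of the two embeddings, and that the Asai pole detects which one. That is just (iii)$\Leftrightarrow$(iv). To conclude, you would also need to know that the standard and twisted base changes have $\w_\Pi|_{\A_F^\x}=1$ and $\chi_{E/F}^{n}$ respectively, and you do not supply this.

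The sign is in fact easy, using your own remark from (ii)$\Leftrightarrow$(iii) applied to the \emph{opposite} sign. The integrand of $\P^{\pm}$ is $Z_n(\A_F)$-invariant only if $\w_\Pi|_{\A_F^\x}\cdot(\chi_{E/F}^{\pm})^{n}=1$. Likewise, the mirabolic Eisenstein series in Flicker's integral carries the inverse of this character, so it has a pole at $s=1$ only if that character is trivial. Hence a pole of $L(\Pi,\As^{\pm},s)$ at $s=1$ forces $\w_\Pi|_{\A_F^\x}=(\chi_{E/F}^{\pm})^{n}$. Now take $n$ odd and $\w_\Pi|_{\A_F^\x}=1$:
\begin{itemize}
\item the constraint above excludes a pole of $L(\Pi,\As^-,s)$;
\item by conjugate self-duality, $L^S(\Pi\x\Pi^\s,s)=L^S(\Pi,\As^+,s)\,L^S(\Pi,\As^-,s)$ has a pole at $s=1$;
\item so at least one factor has a pole (no non-vanishing result is needed in this direction), and it must be $\As^+$.
\end{itemize}
Thus for $n$ odd, (i)$\Leftrightarrow$(ii)$\Leftrightarrow$(iii) needs only Flicker's integral, Jacquet--Shalika and Shahidi. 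The trace formula input (Rogawski/Mok) is needed only for (iii)$\Leftrightarrow$(iv).

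\textbf{The same computation shows your step in (iii)$\Rightarrow$(i) holds only for $n$ odd.} You claim that $\w_\Pi|_{\A_F^\x}=\chi_{E/F}^{\e}$ follows from (ii). What actually follows is $\w_\Pi|_{\A_F^\x}=(\chi_{E/F}^{\e})^{n}$, which is trivial for $n$ even. In fact, for $n$ even every conjugate self-dual cuspidal $\Pi$ has $\w_\Pi|_{\A_F^\x}=1$: one of the two Asai functions has a pole, and either sign forces this. So for even $n$, condition (i) as stated is never satisfied, while (iii) can be.

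For example, take $n=2$ and let $\Pi$ be the base change to $E$ of a non-dihedral cuspidal $\pi_0$ of $\mathrm{GL}_2(\A_F)$ with trivial central character. Then $L^S(\Pi,\As^-,s)=L^S(\pi_0,\mathrm{Sym}^2\otimes\chi_{E/F},s)\,\zeta_F^S(s)$, so $\Pi$ satisfies (iii), and by Mok also (iv). Yet $\w_\Pi|_{\A_F^\x}=1$.

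So your argument, and the statement itself, should be restricted to $n$ odd. This includes the central-character clause in the paper's definition of $\chi_{E/F}$-distinguished. The restriction covers the only case the paper uses, $n=3$. For even $n$ the central character does not detect the sign, and (i) would have to be replaced by ``conjugate self-dual of sign $(-1)^{n-1}$''.
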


Likewise, there is a similar theorem for the twisted base change, replacing $(-1)^{n+1}$ by $(-1)^{n}$.

\subsection{The Flicker's formula}

The equivalence $(ii) \Leftrightarrow (iii)$ in \ref{th_mok} is proven by expressing the Flicker-Rallis period in terms of the residue of the Asai $L$-function. This relation, which is called the Flicker's formula, can be obtained from the integral representation of the Asai $L$-function, which was established by Asai \cite{Asai77} in the case of classical modular forms and generalized to automorphic representations of $\GLn$ by Flicker \cite{Flicker88}. In both cases, this relation is used to establish the meromorphic extension of the Asai $L$-function and its functional equation. We refer to  \cite[\S3.2]{Zh14} for a presentation and proof of the Flicker's formula in the general case.

Since we will later need to compute explicitly the Flicker-Rallis period at some explicit form $\phi$ in $\Pi$, we now give a refinement of the Flicker-Rallis formula given in \cite[Proposition 3.2]{Zh14} by computing all the local factors, including the local factors at primes where the additive character $\psi_E$ is ramified (i.e. at primes ramified in $E$) and where $\Pi$ is ramified. These ramified local factors are computed for essential vector $\phi_f = \phi_\Pi^\circ$ of $\Pi$. In order to simplify the presentation, we stick to the special case where $n$ is odd and $E$ is a quadratic field (i.e. $F=\Q$) even if a similar formula should hold for a general quadratic extension $E/F$. The only difference is that in the general case, $\psi_E$ is not only ramified above places of $F$ which are ramified in $E$, but also above the different of $F$. Moreover, a similar formula should also be true when $n$ is even by replacing $\P^+$ by $\P^-$, $L(\Pi, \As^+,s)$ by $L(\Pi, \As^-,s)$, etc... \\

\subsubsection{Settings}
\label{settings_SBC}

Let $E$ be a real quadratic field and write $E = \Q[\sqrt{d}]$, where $d>0$ is a square-free integer. The discriminant $D_E$ of $E$ is equal to $d$ if $d \equiv 1 \mod 4$ and equal to $4d$ if $d \equiv 2,3 \mod 4$. We recall that we have fixed the additive character $\psi_E =  \bigotimes_w \psi_{w} : E \bs \A_E \to \C^\x$ to be $\psi_E(x) = \psi_\Q(\mathrm{Tr}_{\A_E/\A}(\sqrt{d}x))$, and that $\psi_E$ is trivial on $\A$. Let $\pi$ be a non-endoscopic cohomological cuspidal automorphic representation of $U_E(\A)$ which is self-dual and $\Pi$ its stable base change to $\GLn(\A_E)$. The cohomological weight of $\Pi$ is denoted $\mu_E = (\mu,\mu^\vee) \in X^+(T_E)$. Let $\n \subset \O_E$ be the mirahoric level of $\Pi$. For each prime $q$, let $c_q = v_{\mathfrak{q}}(\n)$ where $\mathfrak{q} \mid q$. Note that $c_q$ is well defined even when $q$ is split in $E$, as $\Pi$ is conjugate self-dual and thus $v_\mathfrak{q}(\n) = v_{\s(\mathfrak{q})}(\n)$ for $\mathfrak{q}\mid q$. The measure $dg_q$ on $\GLn(\Q_q)$ is normalized so that the volume of $K_1(q^{c_q}) \subset \GLn(\Z_q)$ is $1$. The Haar measure $dg_q$ on $\mathrm{GL}_{n-1}(\Q_q)$ is normalized so that $\mathrm{vol}(\mathrm{GL}_{n-1}(\Z_q), dg_q) = 1$. The Haar measure $dg_\inf$ on $\GLn(\R)$ is normalized as specified in \S\ref{general_haar_measures}. The global Haar measure $dg$ on $\GLn(\A_\Q)$ is $dg = \prod_v dg_v$. The measure $dn = \prod dn_v$ on $N_n(\A_Q)$ is the Tamagawa measure. In particular $\vol(N_n(\Z_q),dn_q) = 1$ for all prime $q$. \\

\subsubsection{The completed Flicker's formula}
 
Let $\W(\Pi, \psi_E) =\otimes_v \W(\Pi_v, \psi_{v})$ be the Whittaker model of $\Pi$, 
where $\Pi = \otimes_v \Pi_v$ is decomposed as a tensor product of local representations $\Pi_v$ of $\GLn(E_v)$, where $v$ is a place of $\Q$ and $E_v = E \otimes_\Q \Q_v$. For such a place $v$ of $F$, we define the linear form $\P_v^+: \W(\Pi_v, \psi_{E,v}) \to \C$ by: 

\begin{equation}
\label{lf_whittaker}
\P^+_{v}\left(W_{v}\right)=\int_{N_{n-1}\left(F_{v}\right) \backslash \mathrm{GL}_{n-1}\left(F_{v}\right)} W_{v}\left(\begin{array}{ll}
 h & \\
& 1
\end{array}\right) d g .
\end{equation}
This integral is convergent since $\Pi_v$ is a generic irreducible admissible representation of $\GLn(E_v)$ which is distinguished with respect to $\GLn(F_v)$ \cite[Remark 2]{AM17}. Moreover, the linear form $\P_v^+$ is known to be $\GLn(F_v)$-invariant (see \cite[Proposition 5.3]{AM17} and below). We have the following proposition:

\begin{prop}
\label{full_flicker-rallis_formula}
Assume that $\Pi$ is a conjugate self-dual cuspidal automorphic representation of $\GLn(\A_E)$ which is self-dual and let $\phi_f = \phi_\Pi^\circ \in \Pi_f$ be the essential vector of $\Pi$. Let $\phi = \phi_f \otimes \phi_\inf$, for some decomposable archimedean form $\phi_\inf \in \Pi_\inf$. Then:

$$
\P^{+}(\phi)= c_{n,\inf} \cdot n \cdot (D_E)^{-n(n-1)/2} \cdot N_\pi^n \cdot \mathrm{Res}_{s=1} L^{imp}(\Pi,\As,s) \times {\P_\inf^+(W_{\phi,\inf})}
$$
where $N_\pi \in \Z_{\geq 0}$ is some integer such that $\mathrm{supp}(N_\pi) = \mathrm{supp}(N_{E/\Q}(\n))$,
$$
c = \left\{
    \begin{array}{ll}
       1 & \mbox{if } d \equiv 1 \mod 4 \\
       2^{n(n-1)/2} & \mbox{if } d \equiv 2,3 \mod 4
    \end{array}
\right.
$$
and $c_{n,\inf} \in \R^\x$ is some computable constant. In particular $c_{3,\inf} = (4\pi)^{-1}$.
\end{prop}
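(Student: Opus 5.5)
Start from the global unfolding identity for the Asai integral (Flicker's integral representation, as in \cite[\S3.2]{Zh14}). For a mirabolic Eisenstein series $E(g,\Phi,s)$ on $\GLn(\A)$ built from a Schwartz function $\Phi\in\mathcal{S}(\A^n)$, the integral $I(\phi,\Phi,s)=\int_{Z_n(\A)\GLn(\Q)\bs\GLn(\A)}\phi(g)E(g,\Phi,s)\,dg$ unfolds to an Euler product of local zeta integrals
$$
Z_v(W_v,\Phi_v,s)=\int_{N_n(\Q_v)\bs\GLn(\Q_v)}W_v(g)\Phi_v(e_ng)|\det g|_v^s\,dg .
$$
The Eisenstein series has a simple pole at $s=1$ with constant residue $\frac{\hat\Phi(0)}{n}\cdot\vol(\A^1/\Q^\x)$ (up to the normalizations of \S\ref{general_haar_measures}). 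Taking residues therefore gives $\P^+(\phi)\cdot\hat\Phi(0)\cdot(\text{const})=\mathrm{Res}_{s=1}\prod_v Z_v$. This is where the factor $n$ comes from. At each place I would use the standard identity $\mathrm{Res}_{s=1}Z_v$ $\leftrightarrow$ $\hat\Phi_v(0)\,\P_v^+(W_v)$ (Flicker, \cite[Proposition 5.3]{AM17}), together with the factorization of $\P_v^+$ through the mirabolic. This reduces the problem to computing, at every finite place, the ratio $Z_v(W_v,\Phi_v,s)/L^{imp}(\Pi_v,\As,s)$ for well-chosen $\Phi_v$ and $W_v=W_{\phi,v}$, evaluated at $s=1$.

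Next I split the finite places by type. At unramified places, where $\psi_v$ is unramified and $\Pi_v$ is spherical, take $\Phi_v=\mathbf 1_{\Z_v^n}$; the unramified computation gives $Z_v=L(\Pi_v,\As,s)$. At places $q$ where $\Pi$ is ramified, take $\Phi_q$ to be the characteristic function of $q^{c_q}\Z_q^{n-1}\times(1+q^{c_q}\Z_q)$, which is adapted to the mirahoric $K_1(q^{c_q})$. Using the Iwasawa decomposition, $Z_q$ collapses to $\vol(K_1(q^{c_q}))$ times a sum over the torus $\varpi^f$ of $W(\mathrm{diag}(\varpi^f,1))\,\d_{B}^{-1}|\det|^s$. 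I then insert \ref{explicit_essential_values} (Miyauchi--Matringe). For $q$ split this gives the Cauchy-identity sum $\sum_f s_f(\a)s_f(\beta)X^{|f|}$, which equals the imprimitive Rankin--Selberg factor, i.e. $L^{imp}(\Pi_q,\As,s)$. For $q$ inert or ramified it gives $\sum_f s_f(\a)X^{|f|}$, and Littlewood's identity $\sum_f s_f(\a)=\prod_i(1-\a_i)^{-1}\prod_{j<k}(1-\a_j\a_k)^{-1}$ reproduces exactly the imprimitive inert and ramified factors defined earlier. Meanwhile $\hat\Phi_q(0)$ contributes a power of $q$, and these powers assemble into $N_\pi^n$ with $\mathrm{supp}(N_\pi)=\mathrm{supp}(N_{E/\Q}(\n))$.

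At places $q\mid D_E$, where $\psi_E$ is ramified, the essential vector is $t_{d_q}$ applied to the $\psi^\circ$-essential vector. I change variables $g\mapsto\mathrm{diag}(d_q^{n-1},\dots,1)g$ in $Z_q$. The Jacobian $|d_q|^{\,\cdot}$ and the shifted Schwartz function together produce $D_E^{-n(n-1)/2}$. At $q=2$, when $d\equiv2,3\bmod 4$, the discrepancy between $\sqrt d$ and a generator of the inverse different contributes the extra power giving $c=2^{n(n-1)/2}$. Combining everything, the finite residues multiply to $\mathrm{Res}_{s=1}L^{imp}(\Pi,\As,s)$. The archimedean factor is left as $\P^+_\infty(W_{\phi,\infty})$, multiplied by the constant $c_{n,\infty}$ coming from $\hat\Phi_\infty(0)$ for the Gaussian, from the $\vol(\A^1/\Q^\x)$ normalization, and from the measure of \S\ref{general_haar_measures}. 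For $n=3$ I expect a direct Gaussian integral to yield $(4\pi)^{-1}$.

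The main obstacle is the ramified nonsplit case (inert, or ramified in $E$, together with ramification of $\Pi$). There the Whittaker function lives on $\GLn(E_q)$ but is integrated over $\GLn(\Q_q)$, so the passage from the torus sum to Littlewood's identity requires checking that $W(\mathrm{diag}(\varpi^f,1))$ restricted to $\Q_q$-uniformizers produces $s_f(\a)$ with the correct $q_v$ versus $q_w$ powers. The other delicate point is tracking the exact power of $2$ and $D_E$ in the Haar measure and Jacobian bookkeeping. I would first verify the statement for $n=2$ against Asai's classical formula to calibrate these constants.
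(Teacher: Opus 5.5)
Your outline is sound. At the ramified places, however, it takes a genuinely different route from the paper.

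\textbf{How the paper handles the ramified places.} The paper keeps $S=S_\pi\sqcup\{\infty\}$ outside the Euler product and takes the residue only of $\prod_{v\notin S}\Psi_v$, which gives $L^S$. At each $v\in S$ it uses the value identity $\Psi_v(1,\Phi_v,W_{\phi,v})=c_{n,v}\,\widehat{\Phi}_v(0)\,\mathcal{P}_v^+(W_{\phi,v})$. For $v\in S_\pi$ the constant $c_{n,v}$ is obtained by comparing results of Jo and Anandavardhanan--Matringe. The paper then quotes $\mathcal{P}_v^+(W^\circ_{\Pi_v})=L^{imp}(\Pi_v,\As,1)$ from \cite[Lemma 3.2]{TR_BC} in the split case, and from \cite[Theorem 6.1]{AM17} and \cite[Theorem 4.5]{Jo22} otherwise.

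\textbf{How your route differs.} You compute $Z_q(s)$ itself. With $\Phi_q=\mathbf{1}_{q^c\Z_q^{n-1}\times(1+q^c\Z_q)}$, the support condition forces $a_n\in\Z_q^\times$ and $\mathrm{diag}(1,\dots,1,a_n)k\in K_1(q^c)$. With the paper's normalisation $\mathrm{vol}(K_1(q^c))=1$, $Z_q(s)$ is therefore exactly the torus sum. Miyauchi--Matringe plus a Schur identity then give $Z_q(s)=L^{imp}(\Pi_q,\As,s)$ as functions of $s$. Summing over $f$ with only $n-1$ parts is the full sum, because at a ramified place there are $r\le n-1$ parameters.

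\textbf{What each route buys.} Yours is self-contained and uniform in the splitting type. It avoids the comparison constants $c_{n,v}$ at finite places (in effect you reprove the cited test-vector results) and lets you take the residue of the whole product. The paper's route is shorter given the literature, and it isolates $\mathcal{P}_v^+$, the functional that reappears at $\infty$ in the cohomological computation. In your route the value identity is needed only at $v=\infty$. There it concerns the \emph{value} $Z_\infty(1)$, not a residue: $Z_\infty$ is holomorphic at $s=1$, so a local residue would be $0$. Also, $c_{n,\infty}$ is a measure/polar-coordinates constant independent of $\Phi_\infty$. The paper takes $c_{3,\infty}=(4\pi)^{-1}$ from \cite{AGBI} and \cite[Remark 5.2]{Che22}, not from a Gaussian computation.

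\textbf{First correction: places ramified in $E$.} This applies whether or not $\Pi_q$ is ramified. Write $\varpi_q=u\varpi_w^2$; then $\mathrm{diag}(u^{f_1},\dots,u^{f_{n-1}},1)\in K_1$, so the torus values are $W(\mathrm{diag}(\varpi_w^{2f},1))$. Since $q_w=q$, one has $\delta_{B_n(E_w)}^{1/2}(\varpi_w^{2f})=\delta_{B_n(\Q_q)}(\varpi_q^{f})$. The sum is therefore $\sum_f s_{2f}(\alpha)q^{-s|f|}$, not $\sum_f s_f(\alpha)q^{-s|f|}$. Littlewood's identity would give the inert-type product $\prod(1-\alpha_iq^{-s})^{-1}\prod_{j<k}(1-\alpha_j\alpha_kq^{-2s})^{-1}$. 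What you need instead is Macdonald's even-partition identity $\sum_\lambda s_{2\lambda}(x)=\prod_i(1-x_i^2)^{-1}\prod_{i<j}(1-x_ix_j)^{-1}$, as in \ref{spherical_ramified_factor}. This yields the ramified imprimitive factor. At inert places your Littlewood computation is correct, since $q_w=q^2$ gives $\delta^{1/2}_{B_n(E_w)}=\delta_{B_n(\Q_q)}$ on $\varpi_q^f$.

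\textbf{Second correction: the $D_E$-bookkeeping.} $\Phi_q$ does not move under $g\mapsto t_dg$, because $e_nt_d=e_n$. What the substitution produces on $N_n(\Q_q)\backslash \GLn(\Q_q)$ is $\delta_{B_n}(t_d)\,|\det t_d|^{-s}$. You must keep the modulus factor if you want the exact power of $D_E$; the paper's proof records only $|\det t_d|^{-s}$. All such factors are powers of primes dividing $D_E$, so they are $p$-adic units under $p\nmid D_E$, and nothing downstream is affected.
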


The rest of this subsection is devoted to the demonstration of this proposition. Let $S = S_\pi \sqcup \{ \inf \}$ be the finite set of places of $\Q$ consisting in the archimedean place and the places where $\pi$ is ramified. Let $N_\pi = \prod_{q \in S_\pi} q^{c_q}$ where $c_q = v_\mathfrak{q}(\n)$ for $\mathfrak{q}\mid q$. Note that $c_q$ is well defined even when $q$ is split in $E$, as $\Pi$ is conjugate self-dual and thus $v_\mathfrak{q}(\n) = v_{\s(\mathfrak{q})}(\n)$ for $\mathfrak{q}\mid q$. Let $\Phi \in \mathcal{S}(\A^n)$ be a Schwartz–Bruhat function which is a tensor product $\Phi = \bigotimes_v \Phi_v$ of local Schwartz–Bruhat functions $\Phi_v \in \mathcal{S}(\Q_v^n)$ with:
\begin{itemize}
\item $\Phi_q$ is the characteristic function $\mathbf{1}_{\Z_q^n}$ of $\Z_q^n$ if $q \notin S$;
\item $\Phi_q =\Phi_{q,c}$ is the characteristic function of $q^c\Z_q \x \dots \x q^c \Z_q \x (1 + q^c\Z_q)$ in $\Q_q^n$ if $q \in S_\pi$, where $c =v_q(N_\pi)$;
\item $\Phi_\inf$ is such that $\widehat{\Phi_\inf}(0) =1$.

\end{itemize}
Consequently, for any rational prime $q$ we have:
$$
\widehat{\Phi_q}(0) = \left\{
    \begin{array}{ll}
       \vol(\Z_q^n) = 1 & \mbox{if } q \notin S_\pi \\
         q^{-nc} \cdot \vol(\Z_q)^n =  q^{-nc} & \mbox{if } q \in S_\pi
    \end{array}
\right. 
$$
for our choice of additive Haar measure on $\Q_v$. Thus $\widehat{\Phi}(0) = N_\pi^{-n}$. Flicker \cite[Section 4]{J-S81-I} have shown, using the Rankin-Selberg method, that (see \cite[Proposition 3.2]{Zh14} for details):

\begin{equation}
\label{Flicker_residue}
\P^+(\phi)  =  \frac{n}{\mathrm{vol}(\Q^\x \bs \A_\Q^1)\cdot  \widehat{\Phi}(0)} \cdot \mathrm{Res}_{s=1} \left( \prod_{v \notin S}\Psi_v(s,\Phi_v,W_{\phi,v}) \right) \cdot  \prod_{v \in S } \Psi_v(1,\Phi_v,W_{\phi,v})
\end{equation}
where $\mathrm{vol}(\Q^\x \bs \A_\Q^1) = 1$ for our choice of Haar measures (see \cite[Theorem 4.11.3]{Leahy}) and the local zeta integrals are defined by:
$$
\Psi_v(s,\Phi_v,W_{\phi,v}) = \int_{N_n(\Q_v) \bs \GLn(\Q_v)} \Phi_v((0,0,\dots,1)g) W_{\phi,v}(g) |\det(g)|_v^s dg
$$
for each place $v$ of $\Q$. It thus remains to compute the local factors $\Psi_v(s,\Phi_v,W_{\phi,v})$ in (\ref{Flicker_residue}). \\

\subsubsection{Spherical places not dividing $D_E$} Let $v$ be a finite place of $\Q$ such that $\Pi_v$ is unramified (i.e. $v\notin S$). Suppose first that $v$ does not divide the discriminant $D_E$ of $E$, so that the local component $\psi_{E,v}$ of our fixed additive character $\psi_E$ is unramified, i.e. $\psi_{E,v}(\O_{w}) = 1$ and $\psi_{E,v}(\varpi_{w}^{-1}) \neq 1$, for $w \mid v$. Then, $W_{\phi,v} = W_{\Pi_v}^\circ$ is the spherical vector of $\Pi_v$ with respect to $\psi_{E,v}$. If $v$ is split in $w'$ and $w''$ in $E$, then we have $W_{\phi,v} = W_{\Pi_{w'}}^\circ \otimes W_{\Pi_{w''}}^\circ$. If $v$ is not split and $w$ is the place of $E$ above $v$, then we just have $W_{\phi,v}= W_{\Pi_w}^\circ$. Since $v$ is not ramified in $E$, we know from \cite[Proposition p.305]{Flicker88} that:
$$
\Psi_v(s,\Phi_v,W_{\phi,v}) = L(\Pi_v, \As,s)
$$

\subsubsection{Spherical places dividing $D_E$.} Suppose now that $v$ divides $D_E$ (i.e. that $v$ is ramified in $E$), but still that $\Pi_v$ is unramified. Let $w$ be the place of $E$ upon $v$. Let $\varpi_w$ (resp. $\varpi_v$) be an uniformizer of $E_w$ (resp. of $\Q_v$). If $v$ is not the even place, then $\psi_w$ has conductor $\varpi_w^{-2}$. Thus, we can write $\psi_w = \varpi_v^{-1} \cdot \psi^\circ_w$, where $\psi^\circ_w$ is an unramified additive character of $E_w$, and $(a \cdot \psi^\circ_w)(x) := \psi^\circ_w(a x)$ for $a \in E_w$. Note that $\psi^\circ_w$ is also trivial on $\Q_v$ (since $\varpi_v \in \Q_v$). We have the following lemma which computes the local zeta integral when $E_w/\Q_v$ is ramified:
\begin{lemma}
\label{spherical_ramified_factor} Assume that $E_w/\Q_v$ is ramified and let $\psi_w^\circ$ be a non-trivial unramified additive character of $E_w$ which is trivial on $\Q_v$. Then:
$$
\Psi_v(s,\Phi_v,W_{\Pi_w}^\circ) = L(\Pi_v, \As,s)
$$
where $W_{\Pi_w}^\circ \in \W(\Pi_w, \psi_w^\circ)$ is the spherical Whittaker function of the Whittaker model of $\Pi_w$ with respect to $\psi_w^\circ$.
\end{lemma}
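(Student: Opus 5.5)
We have to compute the local Flicker zeta integral at a place $v$ ramified in $E$, $E_w/\Q_v$ ramified quadratic, with $\Pi_w$ unramified, $W=W^\circ_{\Pi_w}$ spherical with respect to an unramified $\psi^\circ_w$ trivial on $\Q_v$, and $\Phi_v=\mathbf{1}_{\Z_v^n}$ since $v\notin S$. The plan is the standard unramified computation of Flicker, adapted to a ramified extension where the residue fields of $E_w$ and $\Q_v$ coincide and $\varpi_v=\varpi_w^2\cdot(\text{unit})$.

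\textbf{Reduction to the torus.} Use the Iwasawa decomposition $\GLn(\Q_v)=N_n(\Q_v)A_n(\Q_v)\GLn(\Z_v)$. The integrand is right $\GLn(\Z_v)$-invariant, because $W$ is $\GLn(\O_w)$-invariant, $\Phi_v$ is $\GLn(\Z_v)$-invariant and $|\det|$ is too. With $\vol(\GLn(\Z_v))=1$ we get
$$\Psi_v(s,\Phi_v,W)=\sum_{a}\Phi_v(e_n a)\,W(a)\,|\det a|_v^s\,\d_{B_n}^{-1}(a),$$
with $a=\mathrm{diag}(\varpi_v^{f_1},\dots,\varpi_v^{f_n})$. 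The condition $\Phi_v(e_na)\neq0$ forces $f_n\ge 0$.

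\textbf{Evaluating $W$ on the torus.} Since $\psi^\circ_w$ is unramified, the Shintani/Casselman--Shalika formula (Theorem \ref{explicit_essential_values} with $c=0$, plus central character) gives
$$W(\mathrm{diag}(\varpi_w^{g_1},\dots,\varpi_w^{g_n}))=\d_{B_n,E_w}^{1/2}\,s_g(\a)$$
for dominant $g$, and $0$ otherwise. Here $\a$ are the Satake parameters of $\Pi_w$. Writing $\varpi_v=u\varpi_w^2$ with $u\in\O_w^\x$, the unit is absorbed by the torus part of $\GLn(\O_w)$. Hence $W(a)=\d_{B_n,E_w}^{1/2}(a)\,s_{2f}(\a)$.

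Next compare modulus characters, using $|\varpi_v|_{E_w}=|\varpi_v|_v^2=q^{-2}$ (same residue field $q$). This gives $\d_{B_n,E_w}^{1/2}(a)=\d_{B_n,\Q_v}(a)$, which cancels the Iwasawa factor. We are left with
$$\Psi_v=\sum_{f_1\ge\dots\ge f_n\ge0} s_{2f}(\a)\,q^{-s|f|}.$$

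\textbf{Matching with the Asai factor.} At a ramified place, the Asai parameter of the unramified $\Pi_w$ is obtained by restricting $\phi_{\Pi_v}$ to the Weil group of $\Q_v$. Frobenius of $\Q_v$ maps to Frobenius of $E_w$, and inertia acts through $\s$, which swaps the tensor factors. The inertia invariants of $\C^n\otimes\C^n$ are therefore $\mathrm{Sym}^2$, so
$$L(\Pi_v,\As,s)=\prod_{i\le j}(1-\a_i\a_jq^{-s})^{-1}.$$
This agrees with the imprimitive formula given in the ramified case, since $\prod_i(1-\a_i^2X)^{-1}\prod_{j<k}(1-\a_j\a_kX)^{-1}$ is exactly the symmetric-square factor.

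The identity is then Littlewood's classical formula
$$\sum_{\lambda\ \text{even}} s_\lambda(\a)X^{|\lambda|/2}=\prod_{i\le j}(1-\a_i\a_jX)^{-1},$$
where the sum runs over partitions with all parts even. This is exactly the sum obtained above, with $\lambda=2f$ and $X=q^{-s}$.

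\textbf{Main obstacle.} The delicate step is the bookkeeping of normalizations:
\begin{itemize}
\item checking that the unit $u$ and the triviality of $\psi^\circ_w$ on $\Q_v$ do not introduce extra factors;
\item verifying the modulus character cancellation precisely, including the Haar measure normalizations fixed in \S\ref{settings_SBC};
\item handling the case $v=2$, where the conductor of $\psi_w$ differs; it does not matter here, since the lemma is stated directly for $\psi^\circ_w$.
\end{itemize}
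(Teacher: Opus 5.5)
Your proposal is correct and follows the paper's proof essentially step by step. You reduce via Iwasawa to a sum over torus elements with $f_n\ge 0$, evaluate Shintani's formula at $\varpi_w^{2f}$ (absorbing the unit into $\GLn(\O_w)$), cancel $\d_{B_n(E_w)}^{1/2}(\varpi_w^{2f})=\d_{B_n(\Q_v)}(\varpi_v^{f})$ using the equal residue fields, and conclude with Littlewood's identity for $\sum_f s_{2f}$. Two points go beyond the paper: your inertia-swap argument justifies the identification of $L(\Pi_v,\As,s)$ with the symmetric-square factor, which the paper simply asserts, and your $\d_{B_n}^{-1}$ coming from the Iwasawa measure is the correct exponent (the paper's intermediate displays write $\d_{B_n(K)}$ where $\d_{B_n(K)}^{-1}$ is meant).
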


\begin{proof}[Proof of \ref{spherical_ramified_factor}] We give a proof which is similar to the proof of Jacquet-Shalika (when $v$ is split) and Flicker (when $v$ is inert), using Shintani's formula for the diagonal values of the spherical Whittaker functions. To simplify notation, we denote by $L/K$ the extension of local fields $E_w/F_v$, and by $W$ the spherical Whittaker function $W_{\phi,v}:= W_{\Pi_w}^\circ$. Let $\varpi_K$ (resp. $\varpi_L$) be an uniformizer of $K$ (resp. $L$), and let $\d_{B_n(K)}$ (resp. $\d_{B_n(L)}$) be the modulus character of $B_n(K)$ (resp. ${B_n(L)}$). If $\l = (\l_1,\dots,\l_n) \in \Z_n$, $\varpi_K^\l \in Z_n(K)$ is defined to be $\mathrm{diag}(\varpi_K^{\l_1},\dots,\varpi_K^{\l_n})$, and $\varpi_L^\l \in Z_n(L)$ is defined similarly. Moreover if $\l$ is such that $\l_1 \geq \cdots \geq \l_n$, $s_\l(X_1,\dots,X_n)$ denotes the Schur polynomial associated to $\l$. Finally, we define $\a = (\a_i)_{i=1}^n$ to be the $n$-tuple of complex numbers $\a_i \in \C^\x$ such that the standard $L$-function of $\Pi_w$ writes:
$$
L(\Pi_w,s) = \prod_{i=1}^n (1-\a_i q^{-s})^{-1}
$$
where $q = \# (\O_L/(\varpi_L))$. Then Shintani's formula \cite[Theorem]{Shintani76} is:
$$
W_{\Pi_w}^\circ(\varpi^\l) = \left\{
    \begin{array}{ll}
        \d_{B_n(L)}^{1/2}(\varpi^\l) s_\l(\a) & \mbox{if } \l_1 \geq \cdots \geq \l_n \\
        0 & \mbox{otherwise.}
    \end{array}
\right.
$$

Recall that the Haar measures on $\GLn(K)$ and $N_n(K)$ are normalized so that $\vol(\GLn(\O_K)) = \vol(N_n(\O_K)) = 1$ (since $\Pi$ is unramified at $w$). Similarly to the proof of \cite[Proposition p.305]{Flicker88}, we have that:
$$
\begin{aligned}
\Psi_v(W, \Phi_v,s)  &=  \sum_{\l \in \Z^n} W(\varpi_K^\l) \d_{B_n(K)}^{-1}(\varpi_K^\l) \Phi_v(0,\dots,0,\varpi_K^{\l_n}) |\varpi_K^\l|_K^s \\
& =  \sum_{\l_n \geq 0} W(\varpi_L^{2\l}) \d_{B_n(K)}(\varpi_K^\l) q^{-s\Tr\l} \\
& =  \sum_{\l} \d_{B_n(L)}^{1/2}(\varpi_L^{2\l}) s_{2\l}(\a) \d_{B_n(K)}(\varpi_K^\l) q^{-s\Tr\l} \\
\end{aligned}
$$
where the last sum is indexed by the $n$-uples $\l \in \Z^n$ such that $\l_1 \geq \dots \geq \l_n \geq 0$. One then checks:
$$
\d_{B_n(L)}^{1/2}(\varpi_L^{2\l}) = q^{-\sum_{i=1}^n (n+1-2i)\l_i} = \d_{B_n(K)}(\varpi_K^{\l})
$$
Thus, using \cite[Formula 5.(a) p.77]{Macdonald}:
$$
\Psi_v(W, \Phi_v,s)  =  \sum_{\l} s_{2\l}(q^{-s/2}\a) = \prod_{i=1}^n(1-q^{-s}\a_i^2)^{-1} \prod_{j<k}^n(1-q^{-s}\a_j\a_k)^{-1} = L(\Pi_v,\As,s)
$$
\end{proof}

By definition of $\phi_f$, we have that:
$$
W_{\phi,w}(g) = W_{\Pi_w}^\circ(\mathrm{diag}(\varpi_v^{-(n-1)},\dots,\varpi_v^{-1},1)g) \in \W(\Pi_w,\psi_w)
$$
 where $W_{\Pi_w}^\circ$ is the spherical vector in $\W(\Pi_w,\psi_w^\circ)$. Thus, making change of variable, we have:
$$
\Psi_v(s,\Phi_v,W_{\phi,v})  = |\varpi_{v}|_v^{n(n-1)s/2}\Psi(s,\Phi_v,W_{\phi,v})  =  |\varpi_{v}|_v^{n(n-1)s/2} L(\Pi_v, \As,s)
$$

If $v$ is the even place, then $\psi_w$ has conductor $\varpi_w^{-4}$ or $\varpi_w^{-2}$ depending whether $d \equiv 2 \mod 4$ or $d \equiv 3 \mod 4$. Set $k=2$ in the first case and $k=1$ in the second. Then, one checks that:
$$
\Psi_v(s,\Phi_v,W_{\phi,v}) = (2^{-k})^{n(n-1)s/2} L(\Pi_v, \As,s)
$$

\subsubsection{Archimedean and ramification places} Let $v \in S$. There exists a constant $c_{n,v} \in \R$, depending on our choice of Haar measures, such that (see the formula just below (3.19) in \cite{Zh14}):
$$
\Psi_v(1,\Phi_v,W_{\phi,v})= c_{n,v} \cdot  \widehat{\Phi}_v(0) \cdot \P_v^+(W_{\phi,v})
$$
for all $W_{\phi,v} \in \W(\Pi_v,\psi_v)$ and $\Phi_v \in \mathcal{S}(\Q_v^n)$. This constant is equal to $1$ for  the choice of Haar measures made in \cite[\S 2.1]{Zh14}. The constant $c_{n,\inf}$ is computed in \cite[\S 14.11, Corollary]{AGBI}. When $n=3$, one has that $c_{3,\inf} = (4\pi)^{-1}$ (see \cite[Remark 5.2]{Che22}). Let $q \in S_\pi$ and let $c>0$ be the mirahoric conductor of $\Pi_q$. Then, by comparing \cite[Theorem 3.2]{Jo22} and \cite[Theorem 6.3]{AM17} when $v$ is non split (resp. \cite[Theorem 3.2]{Jo22} and \cite[Theorem 3.7]{Jo22} when $v$ is split) one sees that:
$$
c_{n,v} = \left\{
    \begin{array}{ll}
        \widehat{\Phi_{q,c}}(0)^{-1} & \mbox{if } q  \mbox{ does not divide } D_E \\
        \widehat{\Phi_{q,c}}(0)^{-1} q^{-n(n-1)/2} & \mbox{if } q  \mbox{ divides } D_E  \mbox{ and } q\neq 2 \\
        \widehat{\Phi_{q,c}}(0)^{-1} 2^{-n(n-1)/2} & \mbox{if } q  \mbox{ divides } D_E, \, q = 2 \mbox{ and } q  \mbox{ does not divide } d\\
        \widehat{\Phi_{q,c}}(0)^{-1} 4^{-n(n-1)/2} & \mbox{if } q  \mbox{ divides } D_E, \, q = 2 \mbox{ and } q  \mbox{ divides } d  
    \end{array}
\right. 
$$
where $\Phi_{q,c}$ is the characteristic function of $q^c\Z_q \x \dots \x q^c \Z_q \x (1 + q^c\Z_q)$ in $\Q_q^n$. Comparing of the residue of $\Psi$ and $I$ at $s=1$ and gathering the above formulas, we get:
$$
\P^{+}(\phi)=  c \cdot c_{n,\inf} \cdot n \cdot (D_E)^{-n(n-1)/2} \cdot N_\pi^n\cdot \mathrm{Res}_{s=1} L^{S}(\Pi,\As,s)\times \prod_{v \in S} \P_v^+(W_v).
$$
where $c = 1$ if $d \equiv 1 \mod 4$ and $c = 2^{n(n-1)/2}$ if $d \equiv 2,3 \mod 4$. \\

\subsubsection{Local factor at ramification places.} We now compute the local factors $\P_v^+(W_{\phi,v})$, for $v \in S_\pi$, i.e. when $\Pi_v$ is ramified. We recall that $\phi_f$ is the essential vector of $\Pi_f$, i.e if $\phi_f = \otimes_w \phi_w$, with $\phi_w = \phi_{\Pi_w}^\circ$ the essential vector of $\Pi_w$. When $v$ is not ramified in $E$, then $\psi_{E,v}$ is unramified, and:
$$
\P_v^+(W_{\phi,v}) = \P_v^+(W_{\Pi_v}^\circ)
$$
When $v$ is ramified and $w$ is the place of $E$ above $v$, we can write as above that $\psi_w = d_v \cdot \psi_w^\circ$ for some $d_v \in \Q_v^\x$ and some unramified additive character $\psi_w^\circ$ of $E_w$ which is trivial on $\Q_v$. Then, making a change of variable, we also get:
$$
\P_v^+(W_{\phi,v}) = \P_v^+(W_{\Pi_v}^\circ)
$$
where $W_{\phi,v} \in \W(\Pi_v,\psi_v)$ and $W_{\Pi_v}^\circ$ is the essential vector in $\W(\Pi_v,\psi_v^\circ)$. Consequently, we now compute the factor $\P_v^+(W_{\Pi_v}^\circ)$.  When $v$ splits in $w$ and $w'$, since $\Pi^\s = \Pi^\vee$, we have that $\Pi_{w'}= \Pi_{w}^\vee$, and we know from \cite[Lemma 3.2]{TR_BC} that:
$$
\P_v^+(W_{\Pi_v}^\circ) = \P_v^+(W_{\Pi_w}^\circ \otimes W_{\Pi_{w'}}^\circ) = \langle W^\circ_{\Pi_w}, W^\circ_{\Pi^\vee_w} \rangle_v = L^{{imp}}(\Pi_w \x \Pi^\vee_w,1) = L^{{imp}}(\Pi_v,\As,1)
$$

When there is only one place $w$ of $E$ upon $v$, then we know (see \cite[Theorem 6.1]{AM17}, see also \cite[Theorem 4.5]{Jo22}) that:
$$
\P_v^+(W_{\Pi_v}^\circ) = \P_v^+(W_{\Pi_w}^\circ) = L^{{imp}}(\Pi_v,\As,1). 
$$

\subsection{A cohomological interpretation of the Flicker-Rallis formula}
\label{proof_SBC}

We retain the notation introduced in \S\ref{settings_SBC}, and henceforth assume $n=3$.

\subsubsection{Some linear forms}
\label{linear_forms_SBC}
In order to give a cohomological interpretation of the Flicker-Rallis period, we define in this subsection some linear forms on the different cohomological objects under consideration. \\

\textit{A linear form on the cuspidal cohomology.} We first define the linear form, of main interest, on the cuspidal cohomolgy. Let:
$$
Y_\Q(K_f) = \GL(\Q) \bs \GL(\A)/K_{f,\Q}K_3 
$$
be the adelic variety of level $K_{f,\Q} = K_f \cap  \GL(\A_f)$ associated with $G_\Q$. That's a $5$-dimensionnal subvariety of $Y_E(K_f)$. Let $\Ld$ be the $\O$-linear form obtained by composing the following maps:
$$
\begin{aligned}
\Ld: H^5_{cusp}(Y_E(K_f),\L_{\mu_E}(\O)) & \inj H^5_{c}(Y_E(K_f),\L_{\mu_E}(\O)) \\
&  \to H^5_{c}(Y_\Q(K_f),\L_{\mu_E}(\O)|_{\Q}) \\
&\to H^5_{c}(Y_\Q(K_f),\L_{\mu_\tau}(\O)_{\Q} \otimes \L_{\mu_\tau}(\O)_{\Q}^\vee) \\
&\to H^5_{c}(Y_\Q(K_f),\O) \\
&\to \O
\end{aligned}
$$
The fourth map is obtained by functoriality from the $\GL(\O)$-equivariant map onto the trivial representation:
$$
L_{\mu_\tau}(\O) \otimes L_{\mu_\tau}(\O)^\vee \to \O, \quad P\otimes Q^\vee \mapsto \langle P,Q^\vee\rangle_{\mu_\tau}
$$
Here we need to assume that the weight $\mu_\tau = (n,n,v)$ is $p$-small (i.e. that $n < p$), so that the above projection preserves integrality. Note that $\Ld$ is invariant by the action of the Galois involution $\s$ on the cuspidal cohomology described in paragraph~\ref{s_involution}.  \\

\textit{A linear form on the $(\g,K_\inf)$-cohomology.} We now define a linear form:
$$
\mathfrak{L}: H^5(\g,K_\inf ;  \W(\Pi_\inf,\psi_\inf) \otimes L_{\mu_E}(\C)) \to \C
$$
on the $(\g,K_\inf)$-cohomology. Recall that we have the following expression of the relative Lie algebra cohomology groups (see \cite[II, Proposition 3.2]{BW00}):
$$
H^5(\g,K_\inf ;  \W(\Pi_\inf,\psi_\inf) \otimes L_{\mu_E}(\C))= \left(  \W(\Pi_\inf,\psi_\inf) \otimes L_{\mu_E}(\C) \otimes \bigwedge^5 \p_{\C}^* \right)^{K_\inf}
$$
We then construct the linear form as a tensor product on the right-hand side:

\begin{itemize}
\item We consider the linear form $\mathfrak{L}_{\mu_E}: L_{\mu_E}(\C) \to \C$ defined by $\mathfrak{L}_{\mu_E}(P_\tau \otimes P_{\s\tau}) = \langle P_\tau, P_{\s\tau} \rangle_{\mu_\tau}$ where $\langle \cdot,\cdot \rangle_{\mu_\tau}$ has been defined in (\ref{pairing_coefficients}) (recall that $\mu_{\tau}^\vee=\mu_{\s\tau}$)
\item We consider the linear form $\ell: \bigwedge^5 \p_{\C}^* \to \C$ constructed by functoriality of the exterior product from the map:
$$
X_\tau + Y_{\s\tau} \in \p_{\C}^*  \mapsto X_\tau + Y_{\s\tau} \in \p_{3,\C}^*
$$ 
where $\p_{\C}^* = \p_{\tau,\C}^* \oplus \p_{\s\tau,\C}^* = \p_{3,\C}^* \oplus \p_{3,\C}^*$ and identifying $\bigwedge^5\p_{3,\C}^*$ with $\C$.
\item $\P_\inf^+:  \W(\Pi_\inf,\psi_\inf) \to \C$ is the linear form defined in~(\ref{lf_whittaker}) by: 
$$
\P_\inf^+\left(W_{\inf}\right)=\int_{N_{2}\left(\R\right) \backslash \mathrm{GL}_2\left(\R\right)} W_{\inf}\left(\begin{array}{ll}
h & \\
& 1
\end{array}\right) d h .
$$
where $\inf$ designates the archimedean place of $\Q$.
\end{itemize}

Then $\mathfrak{L}$ is just defined by tensorisation of these three linear forms, i.e. it's defined on pure tensors by:
$$
\mathfrak{L}(W_\inf \otimes P \otimes \w) = \P_\inf^+(W_\inf) \times  \mathfrak{L}_{\mu_E}(P) \times \ell(\w)
$$
It is easy to check that $\mathfrak{L}$ is invariant by the action of $\s$ on the $(\g,K_\inf)$-cohomology.

\subsubsection{A cohomological interpretation of Flicker's formula}
\label{coho_SBC}
In this paragraph only, $\Pi$ is \textit{any} representation in $\mathrm{Coh}(G_E,\mu_E,K_f)$. Let $\w_{\Pi} = \w_{\Pi_f} \otimes \w_{\Pi_\inf}$ its central character. Suppose that $\w_{\Pi_f}$ is trivial on $\A_f^\x$. We give a cohomological interpretation of the Flicker-Rallis period and its link with the residue of the Asai $L$-function, using the Eichler-Shimura isomorphisms defined in paragraph~\ref{eichler-shimura_maps}. Let $\d$ denote the following comparison isomorphism:
$$
\delta: \Pi_f^{K_f} \otimes H^5(\g,K_\inf ;  \W(\Pi_\inf,\psi_\inf) \otimes L_{\mu_E}(\C)) \toeq H^5_{cusp}(Y_E(K_f),\L_{\mu_E}(\C))[\Pi_f]
$$

Let $\mathfrak{X} = \sum_{i\in I} \w_i \otimes W_i \otimes P_i$ be some element in $H^5(\g,K_\inf ;  \W(\Pi_\inf,\psi_\inf) \otimes L_{\mu_E}(\C))$ and let $\phi_f$ a $K_f$-fixed vector in $\Pi_f$. Then, similarly to \cite[paragraph 3.3.3]{BR17}, one has that:
$$
\Ld(\d(\phi_f \otimes \mathfrak{X})) = 2 h(K_{f,\Q})(1 + \w_{\Pi_\inf}(-1,-1))\sum_{i \in I} \ell(\w_i) \times  \mathfrak{L}_{\mu_E}(P_i) \times \P(\phi_i)
$$
where $\phi_i \in \Pi$ is the form $\phi_f \otimes \phi_{\inf,i}$, for $\phi_{\inf,i} \in \Pi_\inf$ corresponding to the Whittaker function $W_i$. Moreover:
$$
h(K_{f,\Q}) := \mathrm{vol}(Z(\Q) \bs Z(\A_f) / Z(\A_f) \cap K_{f,\Q}) = \mathrm{vol}(\Q^\x \bs \A_f^\x / U(N))
$$
where $N = \mathfrak{n} \cap \Z$ and  $U(N) := \prod_{q} U_q(N)$ with $U_q(N) := 1 + q^{v_q(N)} \Z_q$ for any rational prime $q$. Then $h(K_{f,\Q})$ is equal to $h_\Q(N)$ the (wide) ray class number of level $N$, which is equals to $\ph(N)/2$ if $N\neq 2$ and $\ph(2) = 1$ if $N=2$. We see from the above formula that the linear form $\Ld$ vanishes on representations $\Pi$ such that $\w_\Pi(-1,-1) = -1$. Of course if $\Pi$ is a stable base change from $U_E$, we have that $\w_{\Pi_\inf}(-1,-1) = 1$. This phenomenon is similar to what happens in the $\mathrm{GL}_2$ case, where the Hida's linear form vanishes on the $\nu$-part of the cuspidal cohomology of the Hilbert surface, when the character $\nu$ of the Weyl group $\mathbf{K}_\inf/K_\inf$ is unbalanced (see \cite[Definition 4.24]{TU22} and \cite[Section 4]{Hi99} for details, there the Weyl character is denoted by $\e$). When $n$ is odd, the action of the Weyl group $\mathbf{K}_\inf/K_\inf$ is determined by the central character of $\Pi$ (see \S \ref{eichler-shimura_maps}), hence,  in this situation, the condition on the Weyl character becomes a condition on $\w_{\Pi_\inf}$.

Assume that $\w_{\Pi_\inf}(-1,-1) = 1$. Suppose that $\phi_f$ and each $W_i$ are pure tensors. The above formula, combined with the Flicker-Rallis formula \cite[Proposition 3.2]{Zh14}, gives:
\begin{equation}
\label{linear_coho_delta}
\Ld(\d(W_f \otimes [\Pi_\inf])) =  \frac{12 h_\Q(N) \cdot \mathrm{Res}_{s=1} L^{S}(s,\Pi,\As^{+})}{\mathrm{vol}(\Q^\times \bs \A^1) \cdot\widehat{\Phi}^S(0)} \times \prod_{v \in S} \P_v^+(W_v) \times \mathfrak{L}([\Pi_\inf]) 
\end{equation}
where $S$ is some sufficiently large finite set of non-archimedean places. Using \ref{th_mok} and \ref{full_flicker-rallis_formula}, a direct consequence of the above formula is the following proposition:
\begin{prop}
\label{lf_vanishing_SBC}
Let $\Pi \in \mathrm{Coh}(G_E,\mu_E,K_f)$ such that $\w_{\Pi_f}$ is trivial on $\A_f^\x$.
\begin{itemize}
\item If $\Pi$ is not a stable base change from $U_E$, then $\Ld$ vanishes on $H^5_{cusp}(Y_E(K_f),\L_{\mu_E}(\C))[\Pi]$
\item If $\Pi$ is the stable base change of some self-dual cuspidal automorphic representation $\pi$ of $U_E(\A)$ then for the essential vector $\phi_f = \phi_\Pi^\circ \in \Pi_f$:
\begin{equation}
\label{linear_coho}
\Ld(\d_\s^\pm(\phi_f)) = c \cdot \frac{12  h_\Q(N) N_\pi^3 D_E^{-3}}{\pi} \times  L^{imp}(\pi,\Ad\otimes \chi_E,1) \times \mathfrak{L}([\Pi_\inf]_\s^\pm) 
\end{equation}
where the constants $c$ and $N_\pi$ are explicited in \ref{full_flicker-rallis_formula}. 
\end{itemize}
\end{prop}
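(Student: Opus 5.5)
The plan is to read both assertions off formula (\ref{linear_coho_delta}). It expresses $\Ld\circ\d$ on $\Pi_f^{K_f}\otimes H^5(\g,K_\inf;\W(\Pi_\inf,\psi_\inf)\otimes L_{\mu_E}(\C))$ as a product of three factors: the residue at $s=1$ of the partial Asai $L$-function, the local linear forms $\P_v^+$ at the finitely many bad places, and the archimedean form $\mathfrak{L}$.

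\emph{First assertion.} Let $\Pi$ be a representation with $\w_{\Pi_f}$ trivial on $\A_f^\x$ which is not a stable base change. If $\w_{\Pi_\inf}(-1,-1)=-1$, the factor $(1+\w_{\Pi_\inf}(-1,-1))$ in the preceding identity already kills $\Ld$. Otherwise the central character of $\Pi$ is trivial on $\A^\x$. Before using Flicker's formula I would go back to the unfolded expression: $\Ld(\d(\phi_f\otimes\mathfrak{X}))$ is a finite combination $\sum_i \ell(\w_i)\,\mathfrak{L}_{\mu_E}(P_i)\,\P^+(\phi_i)$ of Flicker--Rallis periods of forms in $\Pi$. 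By \ref{th_mok}, (ii)$\Leftrightarrow$(iv) with $n=3$, $\Pi$ is not distinguished, so every $\P^+(\phi_i)=0$. Since $\d$ is surjective onto $H^5_{cusp}(Y_E(K_f),\L_{\mu_E}(\C))[\Pi_f]$ and $\Ld$ is $\C$-linear after extending scalars, $\Ld$ vanishes on the whole $\Pi$-isotypic part. This route avoids any discussion of poles.

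\emph{Second assertion.} Let $\Pi=\mathrm{SBC}(\pi)$ with $\pi$ self-dual. Then $\w_{\Pi_\inf}(-1,-1)=1$. I will write
\[
\d_\s^\pm(\phi_f)=\d(\phi_f\otimes[\Pi_\inf]_{\{\tau\}})\pm\s\,\d(\phi_f\otimes[\Pi_\inf]_{\{\tau\}}) .
\]
Because $\Pi^\s=\Pi$ and $\s$ preserves the essential vector (the level $K_1(\n)$ is $\s$-stable), this equals $\d(\phi_f\otimes[\Pi_\inf]_\s^\pm)$ with $[\Pi_\inf]_\s^\pm:=[\Pi_\inf]_{\{\tau\}}\pm\s[\Pi_\inf]_{\{\tau\}}$.

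Next I apply the unfolded identity with $\mathfrak{X}=[\Pi_\inf]_\s^\pm$ and replace each $\P^+(\phi_i)$ using \ref{full_flicker-rallis_formula} for $n=3$:
\[
\P^+(\phi_i)=c\,(4\pi)^{-1}\cdot 3\cdot D_E^{-3}N_\pi^3\,\mathrm{Res}_{s=1}L^{imp}(\Pi,\As,s)\,\P_\inf^+(W_i).
\]
Regrouping the archimedean pieces gives exactly $\mathfrak{L}([\Pi_\inf]_\s^\pm)$. Collecting the constants, $2h_\Q(N)\cdot2\cdot3\cdot(4\pi)^{-1}=3h_\Q(N)/\pi$, while the displayed target has $12h_\Q(N)/\pi$. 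The discrepancy must be absorbed by a normalisation: the measure on $Z_3(\A)\GL(\Q)\bs\GL(\A)$ versus $Y_\Q(K_f)$, or the definition of $h(K_{f,\Q})$. I would track this carefully. Checking that these constant factors produce exactly $12$ is the main bookkeeping obstacle, and it is at the level of units only for $p\nmid 6$.

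\emph{Residue.} By (\ref{residue_formula_+}) and the imprimitive definitions, $L^{imp}(\Pi,\As^+,s)=\zeta(s)\,L^{imp}(\pi,\Ad\otimes\chi_E,s)$ with $F=\Q$. Since $\mathrm{Res}_{s=1}\zeta(s)=1$ and $L^{imp}(\pi,\Ad\otimes\chi_E,s)$ is holomorphic at $1$, the residue equals $L^{imp}(\pi,\Ad\otimes\chi_E,1)$. This yields (\ref{linear_coho}).

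The remaining point to verify is that $\mathfrak{L}$ is $\s$-invariant and linear, so that the $\s$-twisted piece is handled correctly. This was asserted in \S\ref{linear_forms_SBC}, as was the $\s$-invariance of $\Ld$.
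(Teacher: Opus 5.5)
Your argument follows the paper's route. The paper gets both bullets as a direct consequence of (\ref{linear_coho_delta}) (the unfolded identity combined with Flicker's formula), \ref{th_mok} and \ref{full_flicker-rallis_formula}. Your identification of $\d_\s^\pm(\phi_f)$ with $\d(\phi_f\otimes[\Pi_\inf]_\s^\pm)$ and your residue computation via (\ref{residue_formula_+}) are what the paper uses implicitly.

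For the first bullet you apply (ii) of \ref{th_mok} to the unfolded sum of Flicker--Rallis periods. The paper instead uses the vanishing of the residue in (\ref{linear_coho_delta}). Your variant is slightly more robust, because it never needs the local factors $\P_v^+(W_v)$, $v\in S$, or the partial Asai $L$-function at $s=1$ for a $\Pi$ that is not a base change. Your claim that $\w_{\Pi_\inf}(-1,-1)=1$ forces $\w_\Pi$ to be trivial on $\A^\x$ deserves one line of justification. It holds because $\w_\Pi$ is trivial on $\Q^\x$ and on $\A_f^\x$, hence $\w_{\Pi_\inf}$ is trivial on $\Q_{>0}$, hence on $\R_{>0}$ by continuity.

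On the constant, do not posit a hidden normalisation; none in the paper accounts for it. The inputs as the paper states them are $2h_\Q(N)(1+\w_{\Pi_\inf}(-1,-1))=4h_\Q(N)$, the factor $n=3$, and $c_{3,\inf}=(4\pi)^{-1}$ in \ref{full_flicker-rallis_formula}. These give $3h_\Q(N)/\pi$, exactly as you found. The $12$ in (\ref{linear_coho}) is the $12h_\Q(N)$ of (\ref{linear_coho_delta}), which is computed with the archimedean constant equal to $1$. Multiplying it by $\pi^{-1}$ rather than $(4\pi)^{-1}$ produces the stated value, so the displayed constant appears to be off by a factor of $4$.

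Since $p$ is odd, $4$ is a unit. \ref{explicit_linear} and everything after it use (\ref{linear_coho}) only up to $\sim$, so you can state the formula with $3h_\Q(N)/\pi$ and note the unit discrepancy. Also, this proposition needs only the linearity of $\mathfrak{L}$, not its $\s$-invariance, because the right-hand side is written in terms of $\mathfrak{L}([\Pi_\inf]_\s^\pm)$. The invariance is used only in \S\ref{archimedean_SBC}.
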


\subsubsection{Archimedean computations}
\label{archimedean_SBC}

Let $\pi$ and $\Pi = \mathrm{SBC}(\pi)$ be as in \S\ref{settings_SBC} and we recall that $n=3$. We now compute the archimedean factor $\mathfrak{L}([\Pi_\inf]^\pm_\s)$ in (\ref{linear_coho}). We recall that $[\Pi_\inf]^\pm_\s = [\Pi_\inf]_{\{\tau\}} \pm \s([\Pi_\inf]_{\{\tau\}})$ is the explicit generator defined in \S\ref{chen_generators} and used to construct the Eichler-Shimura maps $\d^\pm_\s$. Let $\Pi_\inf = \Pi_\tau \otimes \Pi_{\s\tau}$ be the archimedean part of $\Pi$ and let $\ell$ be the minimal $\SO$-type of $\Pi_\tau$ (and $\Pi_{\s\tau}$). Let:
$$
B : H^2(\g_3,K_3, \W(\pi_\inf,\psi_\R) \otimes L_\mu(\C)) \times H^3(\g_3,K_3, \W(\pi_\inf^\vee,\psi_\R^{-1}) \otimes L_{\mu^\vee}(\C)) \to \C
$$
be the pairing defined in \S\ref{Lapid_Mao_coho}, for $F=\Q$. Since $\Pi_{\s\tau} = \Pi_\tau^\vee$, one checks that:
$$
\mathfrak{L}([\Pi_\inf]^+_\s) = 2 \mathfrak{L}([\Pi_\inf]_{\{\tau\}}) =  2 B([\Pi_\tau]_2,[\Pi_{\tau}^\vee]_3) = - 2^{\ell+6} \pi \times L(\Pi_\inf, \As^+,1)
$$
where the first equality follows from the fact that $\mathfrak{L}$ is $\s$-invariant, and the last equality is the computation in \cite[Lemmas 5.13 \& 5.14]{Che22}. Finally, the above formula combined with (\ref{linear_coho}), gives the following proposition:
\begin{prop}
\label{explicit_linear}
Let $\phi_f \in \Pi_f$ be the essential vector of $\Pi$. Assume that $p \nmid 6N_{E/\Q}(\mathfrak{n}) h_E(\mathfrak{n})D_E$. Then:

$$
\Ld(\d^+_\s(\phi_f)) \sim \Lambda^{imp}(\pi,\Ad \otimes \chi_{E},1)
$$
\end{prop}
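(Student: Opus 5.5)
The starting point is formula (\ref{linear_coho}) of \ref{lf_vanishing_SBC}, applied to $\Pi=\mathrm{SBC}(\pi)$ and the essential vector $\phi_f=\phi^\circ_\Pi$. It expresses $\Ld(\d^+_\s(\phi_f))$ as
$$
c\cdot 12\, h_\Q(N)\, N_\pi^3\, D_E^{-3}\,\pi^{-1}\cdot L^{imp}(\pi,\Ad\otimes\chi_E,1)\cdot \mathfrak{L}([\Pi_\inf]^+_\s).
$$
The argument then has two parts: substitute the archimedean computation of \S\ref{archimedean_SBC}, and check that every remaining rational constant is a $p$-adic unit under the hypothesis $p\nmid 6N_{E/\Q}(\n)h_E(\n)D_E$.

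For the archimedean factor, I will use that $\mathfrak{L}$ is $\s$-invariant. By definition $[\Pi_\inf]^+_\s=[\Pi_\inf]_{\{\tau\}}+\s([\Pi_\inf]_{\{\tau\}})$, so $\mathfrak{L}([\Pi_\inf]^+_\s)=2\mathfrak{L}([\Pi_\inf]_{\{\tau\}})$. Since $\Pi_{\s\tau}\simeq\Pi_\tau^\vee$, the factor $\mathfrak{L}([\Pi_\inf]_{\{\tau\}})$ is precisely the pairing $B([\Pi_\tau]_2,[\Pi_\tau^\vee]_3)$ of \S\ref{Lapid_Mao_coho}. Chen's Lemmas 5.13 and 5.14 evaluate this as $-2^{\ell+5}\pi\cdot L(\Pi_\inf,\As^+,1)$. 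The factor $\pi$ cancels the $\pi^{-1}$ coming from $c_{3,\inf}=(4\pi)^{-1}$.

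Next I identify the archimedean Asai factor. Because the archimedean places split, $L(\Pi_\inf,\As^+,s)=L(\Pi_\tau\times\Pi_\tau^\vee,s)$. This equals $\zeta_\R(s)\cdot L(\pi_\inf,\Ad\otimes\chi_E,s)$, which is the archimedean form of (\ref{residue_formula_+}). On the finite side, the residue of $L^{imp}(\Pi,\As^+,s)$ at $s=1$ equals $\mathrm{Res}_{s=1}\zeta_\Q(s)\cdot L^{imp}(\pi,\Ad\otimes\chi_E,1)=L^{imp}(\pi,\Ad\otimes\chi_E,1)$. Multiplying the finite and archimedean contributions gives the completed value $\Lambda^{imp}(\pi,\Ad\otimes\chi_E,1)$, up to the leftover value $\Gamma_\R(1)=\pi^{-1/2}\cdot\Gamma(1/2)=1$. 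I will double-check that this leftover really is trivial and that no stray power of $\pi$ survives.

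The remaining work, and the main obstacle, is bookkeeping: every leftover factor must be shown to be a $p$-adic unit. The factors are:
\begin{itemize}
\item the powers of $2$ (among them $2^{\ell+6}$) together with the factors $12$ and $c$, which are units because $p$ is odd and $p\nmid 6$;
\item $D_E^{-3}$, a unit because $p\nmid D_E$;
\item $N_\pi^3$, whose support equals that of $N_{E/\Q}(\n)$ by \ref{full_flicker-rallis_formula};
\item $h_\Q(N)$, which divides $h_E(\n)$ up to powers of $2$, since $\varphi(N)/2$ divides the order of the ray class group via the norm map.
\end{itemize}
The point that needs the most care is the claim that the ray class number $h_\Q(N)$ is controlled by $h_E(\n)$, or else directly by $N_{E/\Q}(\n)$. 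If the divisibility via the norm map proves awkward to establish, I will fall back on the observation that every prime dividing $\varphi(N)$ either divides $N$, hence $N_{E/\Q}(\n)$, or is at most some $q-1$ with $q\mid N$. If that fallback also fails to cover all cases, I would enlarge the hypothesis on $p$ to exclude such primes.

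With all of this in place, the two sides agree up to an element of $\O^\times$, which is the claimed equivalence $\Ld(\d^+_\s(\phi_f))\sim\Lambda^{imp}(\pi,\Ad\otimes\chi_E,1)$.
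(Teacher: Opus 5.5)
Your proposal is correct and follows the paper's argument: you combine (\ref{linear_coho}) with the archimedean evaluation $\mathfrak{L}([\Pi_\inf]^+_\s)=2B([\Pi_\tau]_2,[\Pi_\tau^\vee]_3)=-2^{\ell+6}\pi\, L(\Pi_\inf,\As^+,1)$ from Chen's lemmas, after which the factor $\pi$ cancels and $\Gamma_\R(1)=1$. Your bookkeeping of the constants is more explicit than the paper's, and the point you worry about needs no fallback: since $\n^\s=\n$, the norm map $\mathrm{Cl}_E(\n)\to \mathrm{Cl}_\Q(N)$ is well defined and has cokernel of order at most $2$, so $h_\Q(N)\mid 2h_E(\n)$ and $h_\Q(N)$ is a $p$-adic unit under the stated hypothesis.
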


\subsection{The main divisibility}

\subsubsection{Congruence modules with an involution}
\label{congruence_modules_with_an_involution}

We retain the notation from paragraph \S\ref{congruence_modules}. Suppose now that $\TT$ is endowed with an involution $\i : \TT \to\TT$ of $\O$-algebra. If $T\in \TT$ we will write $T^\i$ for $\i(T)$. Let $M$ be a $\TT$-module as above. We will say that $M$ is endowed with a semi-linear involution if there exists a non-trivial $\O$-linear involution, also denoted $\i$, which is semi-linear with respect to the $\TT$-action, i.e. such that $\forall T\in \TT$:
$$
\i \circ T = T^\i \circ \i 
$$
in $\End_\O(M)$. Let $\l: \TT \to \O$ be an augmentation, i.e. a surjective $\O$-algebra morphism. Suppose that $\l$ is invariant with respect to the $\i$-action on $\TT$, i.e. that $\l(T^\i) = \l(T)$, for all $T\in \TT$. In this case, the congruence module $C_\l(M)$ (see \S\ref{congruence_modules} for its definition) inherits an  $\O$-linear involutive action of $\i$. Let $C_\l(M)[\pm]$ denote the eigenspace associated to the eigenvalue $\pm 1$ of $\i$. Then, let $\eta_\l(M)[\pm]$ be the Fitting ideal of the $\O$-module $C_\l(M)[\pm]$. Since $C_\l(M) = C_\l(M)[+] \oplus C_\l(M)[-]$, we have that:
$$
\eta_\l(M) = \eta_\l(M)[+]\x \eta_\l(M)[-]
$$
Suppose now that the $\l$-rank of $M$ is $2$ and that the action of $\i$ on $M[\lambda]$ is not trivial, so that it is non-trivial on $C_\l(M)$ etiher. In this case, one checks that:
$$
C_\l(M)[\pm] = \O/\eta_\l(M)[\pm] \quad \mbox{ and } \quad \eta_\l(M)[\pm] \,\, | \,\,\eta_\l.
$$

\subsubsection{Relative congruence modules}
\label{sss_relative}
 We now switch to the relative setting and consider some surjective $\O$-algebra morphism $\theta: \TT' \to \TT$. Let $\l: \TT \to \O$ a Hecke eigensystem, and let $\l':= \l \circ \theta$ be its transfer to $\TT'$. Let $M$ be a $\TT'$-module which is finite flat over $\O$. We define $M_\TT \subset M$ to be the submodule of elements canceled out by $\Ker(\theta)$. Then, $M_\TT$ is endowed with a $\TT$-module structure. One checks that the restriction induces a map $M^*[\l'] \to M_\TT^*[\l]$, where $M^*=\mathrm{Hom}_\O(M,\O)$ is the dual module of $M$, endowed with its natural $\TT$-module structure. The \textit{relative congruence module} of $\l$ on $M$, denoted $C_\l^\#(M)$ is defined to be the cokernel of the map: 
$$
\Hom_\O(M_\TT^*[\l],\O) \to \Hom_\O(M^*[\l'],\O)
$$
Its Fitting ideal $\eta_\l^\#(M) := \mathrm{Fitt}_\O(C_\l^\#(M))$, is called the \textit{relative congruence number} of $\l$ on $M$. An equivalent definition is given in \cite[\S4.6.1]{TR_BC}. \\

Assume moreover that $\TT'$ is given with an involution denoted $\i$, and that $\l'$ is $\i$-invariant. Let $M$ be a $\TT'$-module of rank $2$ such that $M$ is given an action of $\i$ which is semi-linear. Then the action of $\i$ on $M$ induces an involutive action on the three following congruences modules:
$$
C_{\l'}(M), \quad C_\l(M_\TT) \quad \mbox{ and } \quad C_{\l}^{\#}(M)
$$
We then assume that the action of $\i$ is non-trivial on $M[\l']$, so that it is not trivial on the three congruence modules either. Then we denote by $\eta_{\l'}(M)[\pm]$, $\eta_{\l}(M_\TT)[\pm]$ and $\eta_{\l}^{\#}(M)[\pm]$ the Fitting ideals of respectively $C_\l(M)[\pm]$, $C_{\l}(M_\TT)[\pm]$ and $C_{\l}^{\#}(M)[\pm]$. We have the following lemma:
\begin{lemma}
\label{cn_decomposition}
 Let $M$ be a $\TT'$-module, finite flat over $\O$, given with a semi-linear involution $\i$. Then:
$$
\eta_{\l'}(M)[\pm] = \eta_{\l}(M_\TT)[\pm] \cdot \eta_{\l}^{\#}(M)[\pm]
$$
\end{lemma}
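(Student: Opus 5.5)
Since $\i$ acts $\O$-linearly on all three congruence modules and $p$ is odd, every $\O[\i]$-module $C$ splits as $C = C[+]\oplus C[-]$. Taking $\pm$-parts is therefore exact on $\O[\i]$-modules, and Fitting ideals are multiplicative on short exact sequences of finite $\O$-modules whose terms are cyclic, or more generally of finite length. So the lemma follows once we have an $\i$-equivariant short exact sequence
$$
0 \to C_\l^\#(M) \to C_{\l'}(M) \to C_\l(M_\TT) \to 0,
$$
or one with the two outer terms interchanged. Taking $\pm$-parts of that sequence and then Fitting ideals gives $\eta_{\l'}(M)[\pm] = \eta_\l(M_\TT)[\pm]\cdot \eta_\l^\#(M)[\pm]$.

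\textbf{Step 1: identify the submodules and set up the factorisation.} First I identify $M[\l']$ with $M_\TT[\l]$. An element killed by $\Ker(\l')$ is in particular killed by $\Ker(\theta)\subset \Ker(\l')$, so it lies in $M_\TT$. It is then killed by $\Ker(\l)=\theta(\Ker \l')$, using the surjectivity of $\theta$. The map $\i_{\l'}(M)\colon M[\l']\to \Hom_\O(M^*[\l'],\O)$ then factors as
$$
M_\TT[\l] \xrightarrow{\;\i_\l(M_\TT)\;} \Hom_\O(M_\TT^*[\l],\O) \xrightarrow{\;r\;} \Hom_\O(M^*[\l'],\O),
$$
where $r$ is dual to the restriction map $M^*[\l']\to M_\TT^*[\l]$ from \S\ref{sss_relative}. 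This factorisation holds because both sides are given by evaluation $m\mapsto (f\mapsto f(m))$, and $f|_{M_\TT}(m)=f(m)$ for $m\in M_\TT$.

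\textbf{Step 2: the exact sequence.} For any composable maps $A\xrightarrow{a}B\xrightarrow{b}C$ there is an exact sequence
$$
\Ker(b)\cap \mathrm{Im}(a)\to \ldots \to \mathrm{coker}(a)\to \mathrm{coker}(ba)\to \mathrm{coker}(b)\to 0 .
$$
It becomes short exact, $0\to\mathrm{coker}\, a\to \mathrm{coker}\, ba\to \mathrm{coker}\, b\to 0$, as soon as $b$ is injective. Here $\mathrm{coker}(ba)=C_{\l'}(M)$, $\mathrm{coker}(a)=C_\l(M_\TT)$ and $\mathrm{coker}(b)=C^\#_\l(M)$. So I need $r$ to be injective, i.e. the restriction $M^*[\l']\to M_\TT^*[\l]$ to have $\O$-torsion cokernel. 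This is the step I expect to be the main obstacle.

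I would argue it by ranks after tensoring with $\K$. Since $\TT'$ is reduced and finite flat, $M\otimes\K$ is semisimple and decomposes into eigen-components. The $\l'$-component of $M^*$ is dual to the $\l'$-component of $M$, and that component lies inside $M_\TT\otimes\K$. Hence restriction is an isomorphism on $\l'$-parts over $\K$: both sides have rank equal to the $\l$-rank, here $2$. So the restriction map has finite cokernel, and dualising into $\O$ gives an injective $r$. The cokernels involved are torsion; $C_{\l'}(M)$ is finite because $M[\l']\otimes \K\to \Hom(M^*[\l'],\K)$ is an isomorphism, by the same semisimplicity.

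\textbf{Step 3: equivariance.} All maps are built from evaluation and restriction. The involution $\i$ preserves $\Ker(\theta)$; I assume this as part of the setting, since $\i$ is compatible with $\theta$ (as for $\s$ versus the action on $U$). Therefore $\i$ preserves $M_\TT$, and the semi-linearity together with the $\i$-invariance of $\l'$ shows that $\i$ preserves $M[\l']$ and $M^*[\l']$. So the sequence is $\i$-equivariant. Splitting into $\pm$-eigenspaces, which is exact because $2\in\O^\x$, and using the multiplicativity of $\mathrm{Fitt}_\O$ on short exact sequences of finite torsion $\O$-modules gives the claimed identity.
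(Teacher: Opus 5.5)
Your proof is correct. The paper states this lemma without proof: it only points to \cite{TR_BC} for an equivalent definition of $C^\#_\l$ and to \cite{thesis}. So there is no argument in the paper to compare with, and yours supplies what is needed. The key steps all check out:
\begin{itemize}
\item the identification $M[\l']=M_\TT[\l]$;
\item the factorisation $\i_{\l'}(M)=r\circ \i_\l(M_\TT)$;
\item injectivity of $r$, because restriction $M^*[\l']\to M_\TT^*[\l]$ becomes an isomorphism after $\otimes\K$. This uses semisimplicity of $M\otimes\K$ over the reduced algebra $\TT'\otimes\K$, and that hypothesis is genuinely needed;
\item the resulting $\i$-equivariant sequence $0\to C_\l(M_\TT)\to C_{\l'}(M)\to C^\#_\l(M)\to 0$;
\item exactness of $\pm$-parts, since $2\in\O^\times$;
\item $\mathrm{Fitt}_\O(C)=\wp^{\mathrm{length}(C)}$ for finite-length modules over the DVR $\O$.
\end{itemize}

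Two small remarks. First, in your six-term sequence the term mapping to $\mathrm{coker}(a)$ is $\Ker(b)$, whose image there is $\Ker(b)/(\Ker(b)\cap\mathrm{Im}(a))$, not $\Ker(b)\cap\mathrm{Im}(a)$. This is harmless once $b$ is injective.

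Second, the stability $\i(\Ker\theta)=\Ker\theta$ that you assume is indeed implicit in the paper, since it is needed even to let $\i$ act on $C_\l(M_\TT)$. It holds in the application: on the split generators one has $\theta_{SBC}(T^\s)=\theta_{SBC}(T)^\vee$, these generate $\TT_E$, and $\m_\pi$ is $\vee$-stable because $\pi$ is self-dual.
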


Moreover we have the following lemma: 

\begin{lemma}
\label{lf_lemma}
Let $M$ be $\TT'$-module, finite flat over $\O$, given with a semi-linear involution $\i$. Assume that the $\l$-rank of $M$ is $2$. Let $L \in M^*_\TT[\pm]$ be a linear form which is $\i$-$\pm$-invariant (i.e $L(\i^{-1}(m)) = \pm L(m)$ for all $m \in M$) and which is canceled out by $\Ker(\theta)$. Then for every $\d$ in $M[\lambda'][\pm]$ we have:

$$
L(\d) \in \eta^{\#}_{\lambda'}(M^*)[\pm]
$$
\end{lemma}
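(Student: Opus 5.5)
The plan is to reduce the statement to the very definition of the relative congruence module, applied to the dual module, and then pass to $\pm$-eigenparts using that $\i$ acts compatibly on everything. Write $N := M^*$, which is again a $\TT'$-module finite flat over $\O$ carrying the contragredient semi-linear involution $\i^*(f) = f\circ\i^{-1}$. Since $L$ is killed by $\Ker(\theta)$, it lies in $N_\TT = (M^*)_\TT$. Since $L$ is moreover $\lambda$-compatible through $\TT$, I would first check that one may assume $L \in N_\TT$ only, without needing $L \in N[\l]$: the pairing $L(\d)$ for $\d\in M[\l']$ only depends on the image of $L$ in $\Hom_\O(M[\l'],\O)$, i.e. on the restriction map $N_\TT \to \Hom_\O(M[\l'],\O) = \Hom_\O(N^*[\l'],\O)$, using $M = N^*$ (finite flat, so reflexive).

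The key step is then to identify $C^\#_\l(N)$ with the cokernel of a map whose image contains the values $\d \mapsto L(\d)$ of forms coming from $N_\TT$, while the functionals $\d \mapsto f(\d)$ for $f\in N$ arbitrary give all of $\Hom_\O(M[\l'],\O)$ up to the saturation. Concretely, fix an $\O$-basis $\d_\pm$ of the rank one module $M[\l'][\pm]$ (rank $2$ with $\i$ nontrivial, as in \S\ref{congruence_modules_with_an_involution}). Evaluation at $\d_\pm$ gives a surjection $N[\pm] \to \O$ (since $M[\l']$ is saturated in $M$ and $p$ odd lets us split into eigenparts), and it maps $N_\TT[\pm]$ onto an ideal $I_\pm\subset\O$. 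I will show $\O/I_\pm \simeq C^\#_{\l}(N)[\pm]$: the map $\Hom_\O(N_\TT^*[\l],\O)\to\Hom_\O(N^*[\l'],\O)$ in the definition is, after dualizing and using $N^*[\l'] = M[\l']$ and $N_\TT^*[\l] = (M/\text{(Ker}\,\theta\text{-torsion)})$-type quotient, exactly the restriction of functionals from $N_\TT$ to $M[\l']$; taking $\pm$-parts (legitimate because $2$ is invertible in $\O$ and all maps are $\i$-equivariant) gives that its cokernel on the $\pm$-part is $\O/I_\pm$, so $I_\pm = \eta^\#_{\l'}(M^*)[\pm]$. Then $L(\d) \in I_\pm$ for $\d$ a multiple of $\d_\pm$, which is the claim.

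The main obstacle is the bookkeeping in this identification: making sure the dualities $N^{**} = N$, $(N_\TT)^*$ versus $N^*/(\text{sat})$, and the $\l$ versus $\l'$ parts match so that the cokernel really is $\O/I_\pm$ and not something differing by a saturation index. I would handle it as in the proof of \cite[Lemma 2.9]{TU22}: use that $N_\TT$ is saturated in $N$ (it is a kernel), so $N \to N_\TT^{*}{}^*$-type restriction maps have torsion-free cokernel, and check the $\i$-equivariance of each arrow, the sign convention $L\circ\i^{-1}=\pm L$ guaranteeing $L$ lands in $N_\TT[\pm]$ and pairs trivially with $M[\l'][\mp]$.
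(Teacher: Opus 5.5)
Your argument is correct and is essentially the paper's own. The paper simply defers to the proof of \cite[Proposition 2.9]{TU22} adapted to $\pm$-eigenparts, and that is your route: write $C^{\#}_{\lambda}(M^*)$ as $\Hom_{\O}(M[\lambda'],\O)$ modulo the image of $\Hom_{\O}(((M^*)_{\TT})^*[\lambda],\O)$, then split into $\pm$-parts using $2\in\O^\times$, $\iota$-equivariance, and the fact that each $M[\lambda'][\pm]$ is an $\O$-line.

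Only the trivial inclusion is actually needed: evaluation at $L$, viewed in $\Hom_{\O}(((M^*)_{\TT})^*[\lambda],\O)$, maps to $L|_{M[\lambda']}$. So the saturation step you flag as the main obstacle only matters for the finer equality $I_\pm=\eta^{\#}_{\lambda'}(M^*)[\pm]$. For that equality, the relevant fact is that $((M^*)_{\TT})^*[\lambda]$ is saturated in $((M^*)_{\TT})^*$, not that $N_{\TT}$ is saturated in $N$.
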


The proof is easily adapted to the case of congruence modules with an involution from the proof of \cite[Proposition 2.9]{TU22} (see \cite[Lemma 3.8]{thesis}). \\

\subsubsection{The main theorem}

Let $E$ be a real quadratic field and let $U_E$ be the quasi-split unitary group in $3$ variables associated to $E$. Let $\pi$ be a non-endoscopic cohomological cuspidal automorphic representation of $U_E(\A)$ of cohomological weight $\mu_U \in X^+(T_3)$ which is self-dual. Let $\Pi = \mathrm{SBC}(\pi)$ be its strong stable base change to $\GL(\A_E)$ (see \ref{sss_sbc} for details). Then $\Pi$ is a cuspidal automorphic representation of $\GL(\A_E)$ which is conjugate self-dual, and since $\pi$ is self-dual, $\Pi$ is also self-conjugate. Thus $\Pi$ is associated with two middle-degree $\s$-periods $\Om_5(\Pi,\s,\pm)$, as explained in \S\ref{s_periods}. Let $\mu_E = (\mu_U,\mu_U^\vee)$ be the cohomological weight of $\Pi$. Let $\mathfrak{n} =\mathfrak{n}(\Pi)$ be the mirahoric level of $\Pi$, and let $K_f$ denote the mirahoric subgroup $K_f = K_1(\mathfrak{n})$ of level $\mathfrak{n}$. \\

Let $\l_\Pi : \TT_E \to \O$ be the Hecke-eigensystem associated with $\Pi$. Let $M$ denote the cuspidal cohomology of $G_E$ localized at the maximal ideal $\m_\Pi$ of $h(K_f;\O)$ corresponding to $\Pi$: 
$$
M = H_{cusp}^5(Y_E(K_f), \L_{\mu_E}(\O))_{\m_\Pi}
$$
It is a $\TT_E$-module. Since $\Pi$ is self-conjugate, $M$ is equipped with a semi-linear action of the Galois involution $\s$. Let $\eta_{\l_\Pi}^\#(M^*)$ be the relative congruence numbers of $\Pi$ on the $\TT_E$-module $M^* := \Hom_\O(M,\O)$, for the stable base change map $\theta_{{SBC}}: \TT_E \to \TT_U$ described in paragraph~\ref{hecke_SBC}.

The main result of this section is the following theorem, establishing a divisibility between this congruence number and the imprimitive completed twisted adjoint $L$-function of $\pi$ (see paragraph~\ref{imp_unitary_adjoint_L_funs} for a precise definition), normalized by the base change period associated with $\Pi$:

\begin{theorem}
\label{SBC_divisibility}
Let $\pi$ be a non-endoscopic cohomological cuspidal automorphic representation of $U_E(\A)$ which self-dual, and denote by $\Pi = \mathrm{SBC}(\pi)$ its strong stable base change to $\GL(\A_E)$. Suppose that the Galois representation associated with $\Pi$ is residually absolutely irreducible. Suppose that the cohomological weight $\mu$ of $\pi$ is $p$-small and that $p \nmid 6N_{E/\Q}(\mathfrak{n}) h_E(\mathfrak{n})D_E$. Then, we have the following divisibility:
$$
\eta_{\l_\Pi}^\#(M^*)[+] \quad | \quad \frac{\Lambda^{imp}(\pi,\Ad \otimes \chi_{E},1)}{\Om_5(\Pi,\s,+)}
$$
where $\eta_{\l_\Pi}^\#(M^*)[+]$ is the $+$-part for the action of $\s$.
\end{theorem}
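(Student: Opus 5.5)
We apply \ref{lf_lemma} to the linear form $\Ld$ of \S\ref{linear_forms_SBC}, the surjection $\theta=\theta_{SBC}:\TT_E\to\TT_U$ of \S\ref{hecke_SBC}, and the module $M^*$ (or $M$ with the roles dualised, as in \cite[Proposition 2.9]{TU22}). First we set up the algebraic data. By \ref{SBC_surjectivity} and the residual irreducibility hypothesis, $\TT_{E,s}=\TT_E$. Hence $\theta:\TT_E\to\TT_U$ is a surjective map of $\O$-algebras with $\l_\Pi=\l_\pi\circ\theta$. The involution $\i=\s$ on $\TT_E$, given by $T_{w,i}\mapsto T_{w^\s,i}$, fixes $\l_\Pi$ because $\Pi^\s\simeq\Pi$. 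It acts semi-linearly on $M$ (\S\ref{s_involution}), and $M$ has $\l_\Pi$-rank $2$ with $\s$ acting non-trivially on $M[\l_\Pi]$ (\S\ref{s_periods}).

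The next step is to check the hypotheses on $\Ld$. Localizing at $\m_\Pi$ and restricting to $M$ gives $\Ld\in M^*$, and $\Ld$ is integral because $\mu$ is $p$-small. Since $\Ld$ is $\s$-invariant, it lies in $M^*[+]$. We must also show that $\Ker(\theta)$ kills $\Ld$. The space $M\otimes\C$ decomposes into the isotypic pieces $H^5_{cusp}[\Si]$ for $\Si\in\mathrm{Coh}(G_E,\mu_E,K_f)$ with $\m_\Si=\m_\Pi$. By \ref{lf_vanishing_SBC}, $\Ld$ vanishes on the pieces where $\Si$ is not a stable base change. This applies after noting that the central character condition holds, or that $\Ld$ is zero otherwise; \ref{th_mok} gives the needed equivalence. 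On the pieces where $\Si=\mathrm{SBC}(\rho)$, $\Ker\theta$ acts by zero, because $\l_\Si=\l_\rho\circ\theta$. Therefore $\Ld\circ T=0$ for every $T\in\Ker\theta$, that is, $\Ld\in (M^*)_{\TT_U}$. It remains to ensure that the relevant $\Si$ are base changes of self-dual $\rho$ (or that non-self-dual ones do not meet the $+$-part). I expect this to follow from self-conjugacy being detected by the $\s$-invariance of the eigensystem.

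Now apply \ref{lf_lemma} to $\d=\xi_\s^+$, an $\O$-basis of $M[\l_\Pi][+]=H^5(\O)[+]$. This gives $\Ld(\xi_\s^+)\in\eta^\#_{\l_\Pi}(M^*)[+]$. By the definition of the $\s$-periods, $\xi_\s^+=\d^+_\s(\phi_\Pi^\circ)/\Om_5(\Pi,\s,+)$. By \ref{explicit_linear}, $\Ld(\d^+_\s(\phi_\Pi^\circ))\sim\Lambda^{imp}(\pi,\Ad\otimes\chi_E,1)$ under $p\nmid 6N_{E/\Q}(\n)h_E(\n)D_E$; this condition makes the constants $12$, $h_\Q(N)$, $N_\pi^3$, $D_E^{-3}$, $c$ and the archimedean $2^{\ell+6}\pi/\pi$ into $p$-units. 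Combining these gives
$$\frac{\Lambda^{imp}(\pi,\Ad\otimes\chi_E,1)}{\Om_5(\Pi,\s,+)}\in\eta^\#_{\l_\Pi}(M^*)[+],$$
which is the claimed divisibility.

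I expect the main obstacle to be the vanishing step: verifying precisely that $\Ld$ annihilates $\Ker\theta$ integrally and not only over $\C$. This needs care with the torsion-free quotients, and with the comparison between the localized Hecke algebra acting on $M$ and the algebra on which $\theta$ is defined. It also requires checking that the duality conventions, $M^*$ versus $M$, match those in \ref{lf_lemma}. I would first try reducing everything to the statement over $\K$, using that $M$ is torsion-free and that $\Ld$ is $\O$-valued.
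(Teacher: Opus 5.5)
Your proposal is correct and follows the paper's proof: the paper applies \ref{lf_lemma} to the $\s$-invariant form $\Ld$ on $M$, which is killed by $\Ker(\theta_{SBC})$ by \ref{lf_vanishing_SBC}, evaluates it on the $\O$-basis $\d^+_\s(\phi_\Pi^\circ)/\Om_5(\Pi,\s,+)$ of $M[\l_\Pi][+]$, and concludes with \ref{explicit_linear}. The extra issues you flag do not need resolving: $\Ker(\theta)$ already kills every base-change constituent whether or not the underlying $\rho$ is self-dual, and the vanishing of $\Ld\circ T$ over $\C$ gives it on $M$ because $M$ is $\O$-torsion-free.
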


\begin{proof}[Proof of \ref{SBC_divisibility}]We now prove \ref{SBC_divisibility}, using the congruence number formalism introduced in the last paragraphs. We consider the $\TT_E$-module $M = H^5_{cusp}(Y_E(K_f),\L_{\mu_E}(\O))_{\m_\Pi}$, and the linear form $\Ld: M \to \O$ obtained by restricting to $M$ the linear form $\Ld$ defined in paragraph~\ref{linear_forms_SBC}. We begin by situation (I). Since $\Pi$ is self-conjugate, we have that $t \in \m_\Pi \iff t^\s \in \m_\Pi$, and the action of $\s$ on $h_{G_E}(K_f,\O)$ induces an action on $\TT_{G_E}= h_{G_E}(K_f,\O)_{\m_\Pi}$ and $\TT_E$. Then, $M$ also inherits from an action of $\s$ and this action is semi-linear, in the sense of \S\ref{congruence_modules_with_an_involution}. We have seen that $\Ld$ is naturally $\s$-invariant. \ref{lf_vanishing_SBC} implies that $\Ld$ is cancelled out by $\Ker(\theta_{SBC})$ where $\theta_{{SBC}}: \TT_E \to \TT_U$ is the stable base change transfer defined in \S\ref{hecke_SBC}. Then \ref{lf_lemma} implies:
$$
\eta_{\l_\Pi}^\#(M^*)[+] \quad | \quad \Ld \left(\frac{\d^+_\s({\phi_f})}{\Om_5(\Pi,\s,+)}\right) \sim \frac{\Lambda^{imp}(\pi,\Ad \otimes \chi_{E},1)}{\Om_5(\Pi,\s,+)}
$$
because by definition of the $\s$-periods, the cohomology class $\d^+_\s(W_{\phi_f})/\Om_5(\Pi,\s,+)$ is an $\O$-base of $M_{\l_\Pi}[+]$, and the last equality follows from \ref{explicit_linear}.  \\
\end{proof}

\subsection{A divisibility of automorphic periods for the stable base change}

We keep the same notation as in the last paragraph. The cuspidal cohomology of $U_E$ is concentrated in degrees $q=2,3$. As explained in \S\ref{unitary_automorphic_periods} of last section, one can attach two automorphic periods $\Om_2(\pi)$ and $\Om_3(\pi)$ to $\pi$, which are non-zero complex numbers defined up to an element in $\O^\x$. Let $N$ be the mirahoric level of $\pi$ and let $k_f = K_1(N) \subset U_{E}(\A_{f})$ be the mirahoric subgroup of level $N$ for $U_E$ (see \S\ref{global_unitary_newforms}). Let $H^3_\m$ denote the (torsion-free part of the) localized top-degree inner cohomology group of level $k_f$ for $U_E$ (defined in \S\ref{unitary_cusp_coho}):
$$
H^3_\m := H_{!}^3(Y_U(k_f), \L_{\mu}(\O))_{\m_\pi},
$$
endowed with its structure of $\TT_U$-module. We consider the following assumption: \\

\begin{assumption}[$H = \TT_U$]
The module $H^3_\m$ is free of rank $1$ over $\TT_U$. \\
\end{assumption}

Under this assumtion, we prove a divisibility between the automorphic periods of $\pi$ and the automorphic periods of its stable base change $\Pi$:

\begin{theorem} 
\label{divisibilite_unitaire}
Let $\pi$ be a self-dual non-endoscopic cohomological cuspidal automorphic representation of $U_E(\A)$ which is globally generic. Assume that $\pi$ is only ramified at split primes. Let $\Pi = \mathrm{SBC}(\pi)$ be its strong stable base change to $\GL(\A_E)$. Assume that the Galois representation associated with $\Pi$ is residually absolutely irreducible. Assume that the cohomological weight $\mu$ of $\pi$ is $p$-small, that $p \nmid 6N_{E/\Q}(\mathfrak{n}) h_E(\mathfrak{n})D_E$ and that $p\notin S_\partial$. Then, under the assumption $(H = \TT_U)$, we have the following divisibility:
$$
\nu_\pi \cdot \Om_2(\pi) \cdot \Om_3(\pi) \;  \mid \; \Om_5(\Pi,\s,-)
$$
where $\nu_\pi := \eta_\pi \cdot \eta_\pi(M_{\TT_\Q})[+]^{-1} \in \O$.
\end{theorem}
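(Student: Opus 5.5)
Our plan is to combine the three main ingredients: \ref{SBC_divisibility} (the twisted adjoint divisibility for $\Pi$), \ref{adjoint_L_value} (the middle-degree adjoint formula for $\Pi$), and \ref{unitary_adjoint_L_value} with $F=\Q$, so that $u_{E/\Q}=1$ by \ref{triviality_uE}. The decomposition (\ref{decomposition_ad_imp}) of completed imprimitive $L$-values links them. Since $\pi$ is self-dual, $\Pi^\vee\simeq\Pi^\s\simeq\Pi$, so $\Om_5(\Pi^\vee,\s,\mp)\sim\Om_5(\Pi,\s,\mp)$ and $\Om_3(\pi^\vee)\sim\Om_3(\pi)$. \ref{adjoint_L_value} with the sign $+$ then gives
$$
\eta_{\l_\Pi}(M)[+]\cdot\Om_5(\Pi,\s,+)\cdot\Om_5(\Pi,\s,-)\sim\Lambda^{imp}(\pi,\Ad,1)\cdot\Lambda^{imp}(\pi,\Ad\otimes\chi_E,1).
$$
\ref{unitary_adjoint_L_value} gives $\Lambda^{imp}(\pi,\Ad,1)\sim\eta_\pi(H^3_\m)\,\Om_2(\pi)\Om_3(\pi)$. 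Under $(H=\TT_U)$, $\eta_\pi(H^3_\m)=\eta_\pi$, the Hecke congruence number. Substituting both, we obtain
$$
\Om_5(\Pi,\s,-)\sim\frac{\eta_\pi\,\Om_2(\pi)\Om_3(\pi)}{\eta_{\l_\Pi}(M)[+]}\cdot\frac{\Lambda^{imp}(\pi,\Ad\otimes\chi_E,1)}{\Om_5(\Pi,\s,+)}.
$$

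The next step is congruence-module algebra. We apply \ref{cn_decomposition} to the surjection $\theta:\TT_E\to\TT_U$ (surjective by \ref{SBC_surjectivity} and the residual irreducibility hypothesis) and to the $\s$-module $M$. This gives $\eta_{\l_\Pi}(M)[+]=\eta_{\l_\pi}(M_{\TT_U})[+]\cdot\eta^\#_{\l_\Pi}(M)[+]$. We also need to identify $\eta^\#_{\l_\Pi}(M)[+]$ with $\eta^\#_{\l_\Pi}(M^*)[+]$, the quantity appearing in \ref{SBC_divisibility}. For this we use a self-duality of $M$: Poincaré duality pairs degree $5$ with degree $5$, and the coefficient pairing with $\mu_E^\vee\simeq\mu_E$ (by self-conjugacy) makes it Hecke- and $\s$-equivariant, up to the twist $T\mapsto T^\vee$. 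Perfectness over $\O$ is where $p\notin S_\partial$ or a boundary-torsion argument enters. Then \ref{pairing_lemma} yields $C^\#(M)\simeq C^\#(M^*)$ compatibly with the $\pm$-parts. I expect this to be the main obstacle, together with checking that the sign conventions match: the $+$ part on $M$ must correspond to the $+$ part on $M^*$, rather than to the $-$ part as the $\pm/\mp$ pairing in \ref{adjoint_L_value} suggests.

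Granting this, we write $\Lambda^{imp}(\pi,\Ad\otimes\chi_E,1)/\Om_5(\Pi,\s,+)=a\cdot\eta^\#_{\l_\Pi}(M^*)[+]$ with $a\in\O$, which is the content of \ref{SBC_divisibility}. Substituting, the relative congruence number cancels and we get
$$
\Om_5(\Pi,\s,-)\sim a\cdot\frac{\eta_\pi}{\eta_{\l_\pi}(M_{\TT_U})[+]}\,\Om_2(\pi)\Om_3(\pi)=a\cdot\nu_\pi\,\Om_2(\pi)\Om_3(\pi).
$$
This is exactly the claimed divisibility. It remains to check that $\nu_\pi\in\O$, i.e. that $\eta_{\l_\pi}(M_{\TT_U})[+]\mid\eta_\pi$. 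Since $M_{\TT_U}$ is a $\TT_U$-module of $\l_\pi$-rank $2$ with a nontrivial involution, this follows from the general fact recalled in \S\ref{congruence_modules_with_an_involution}.
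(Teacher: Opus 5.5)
Your architecture is the paper's: \ref{adjoint_L_value} with the sign $+$, \ref{unitary_adjoint_L_value} for $F=\Q$ together with $(H=\TT_U)$, \ref{SBC_divisibility}, the factorization (\ref{decomposition_ad_imp}) and \ref{cn_decomposition}. Your final algebra is also right. The gap is the step you flag yourself.

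You apply \ref{cn_decomposition} to $M$, which produces $\eta^\#_{\l_\Pi}(M)[+]$. But \ref{SBC_divisibility} controls $\eta^\#_{\l_\Pi}(M^*)[+]$, and the Poincaré-duality identification you propose does not bridge the two. On the $10$-dimensional $Y_E(K_f)$, the involution $\s$ interchanges the two $5$-dimensional summands of $\p_\C=\p_{\tau,\C}\oplus\p_{\s\tau,\C}$. It therefore acts by $(-1)^5=-1$ on $\bigwedge^{10}\p_\C^*$, i.e.\ it reverses orientation, while the coefficient pairing is $\s$-invariant. So the Poincaré pairing on $H^5$ satisfies $[\s x,\s y]=-[x,y]$; this is exactly where the $\pm/\mp$ in \ref{adjoint_L_value} comes from.

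A self-duality $M^*\simeq M$ (using $\Pi^\vee\simeq\Pi$) therefore matches $M^*[+]$ with $M[-]$. It yields $\eta^\#_{\l_\Pi}(M^*)[+]\sim\eta^\#_{\l_\Pi}(M)[-]$, not $\eta^\#_{\l_\Pi}(M)[+]$. Nothing forces the two $\s$-parts of $\eta^\#_{\l_\Pi}(M)$ to coincide, since $M[\Ker\theta]$ and $M^*[\Ker\theta]$ (the dual of the $\Ker\theta$-coinvariants of $M$) are in general different lattices. Separately, $p\notin S_\partial$ concerns the boundary of $Y_U$, not of $Y_E$, so it would not give perfectness over $\O$ here.

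The paper sidesteps this by applying \ref{cn_decomposition} directly to the $\TT_E$-module $M^*$, with the involution $f\mapsto f\circ\s$:
$$\eta_{\l_\Pi}(M^*)[+]=\eta_{\l_\pi}(M^*[\Ker\theta])[+]\cdot\eta^\#_{\l_\Pi}(M^*)[+].$$
It then uses $\eta_{\l_\Pi}(M^*)[+]=\eta_{\l_\Pi}(M)[+]$, which is \ref{pairing_lemma} for the evaluation pairing $M\times M^*\to\O$. That pairing is perfect because $M$ is $\O$-free, it is $\TT_E$-equivariant, and it is tautologically $\s$-invariant, so $+$ matches $+$ with no orientation sign. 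The middle factor divides $\eta_\pi$ by the general fact of \S\ref{congruence_modules_with_an_involution}. With this replacement your chain of substitutions goes through unchanged.

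The defect term that then appears is $\eta_\pi\cdot\eta_{\l_\pi}(M^*[\Ker\theta])[+]^{-1}$. This is the module the paper's proof actually uses (its $\eta_\pi(M^*_T)[\pm]$), so $\nu_\pi$ should be read that way. Identifying it with the congruence number of $M[\Ker\theta]$, as the notation $M_{\TT_\Q}$ in the statement suggests, would require precisely the identity you were missing.
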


\begin{proof} First, from \ref{cn_decomposition}, we have a factorization of congruence numbers:
$$
\eta_{\Pi}(M^*)[\pm] \sim \eta_{\pi}(M^*_T)[\pm] \cdot \eta_{\Pi}^{\#}(M^*)[\pm] 
$$
where $M = H_{cusp}^5(Y_E(K_f), \L_{\mu_E}(\O))_{\m_\Pi}$. One has that $\eta_{\Pi}(M^*)[\pm] = \eta_{\Pi}(M)[\pm]$ and that $\eta_{\pi}(M_T^*)[\pm]$ divides the Hecke congruence number $\eta_{\pi}$ of $\pi$. Thus we have the following relation of congruence numbers:
\begin{equation}
\label{CN_equality}
\eta_{\Pi}(M)[+] \, \sim \, \nu_\pi^{-1} \cdot \eta_{\pi} \cdot \eta_{\Pi}^\#(M^*)[+]
\end{equation}
where $\nu_\pi := \eta_\pi \cdot \eta_\pi(M_{\TT_\Q})[+]^{-1}$ and $M_{\TT_\Q}$ is defined in \S\ref{sss_relative}. Then, assumption $(H = \TT_U)$ implies that $\eta_{\pi}(H^3_\m) \sim \eta_{\pi}$, where $\eta_\pi(H^3_\m)$ is the congruence number of $\pi$ on $H^3_\m$. Thus, \ref{unitary_adjoint_L_value} (for $F=\Q$) implies that, under our hypothesis on $p$, we have that:
$$
\eta_\pi \sim \frac{\Lambda^{imp}(\pi, \Ad,1)}{\Om_2(\pi) \cdot \Om_3(\pi)}
$$
Next, recall from \ref{adjoint_L_value} that we have:
$$
\eta_{\Pi}(M)[+] \sim \frac{\Lambda^{imp}(\Pi,\Ad,1)}{\Om_5(\Pi,\s,+)\cdot\Om_5(\Pi,\s,-)}
$$
From these two adjoint $L$-value formulas, from \ref{SBC_divisibility} and from (\ref{CN_equality}), we get the following divisibility:
$$
\frac{\Lambda^{imp}(\Pi,\Ad,s)}{\Om_5(\Pi,\s,+)\cdot\Om_5(\Pi,\s,-)} \, \mid \, \nu_\pi^{-1} \cdot \frac{\Lambda^{imp}(\pi, \Ad,1)}{\Om_2(\pi) \cdot \Om_3(\pi)} \x \frac{\Lambda^{imp}(\pi,\Ad \otimes \chi_E,s)}{\Om_5(\Pi,\s,+)}
$$
\ref{divisibilite_unitaire} then follows from the following factorization of completed imprimitive $L$-functions (see (\ref{decomposition_ad_imp})):
$$
\Lambda^{imp}(\Pi,\Ad,s) = \Lambda^{imp}(\pi,\Ad,s) \x \Lambda^{imp}(\pi,\Ad \otimes \chi_E,s)
$$

\end{proof}

Let us quickly discuss the plausibility of the assumption $(H = \TT_U)$. Let $\pi_{\GL}$ be a cohomological cuspidal automorphic representation of $\GL(\A_\Q)$ of mirahoric level $N$ and cohomological weight $\mu \in X^+(T_3)$. Assume that the residual Galois representation associated with $\pi_{\GL}$ is absolutely irreducible and has enormous image, that $\pi_{\GL}$ is Fontaine-Laffaille at $p$ and $N$-minimal. Then, the Calegari-Geraghty theory \cite{CG18} shows that the top degree $t=3$ cohomology group for $\GL(\Q)$ of weight $\mu$, localized at $\pi_{\GL}$, is free of rank one over the (localized) Hecke algebra $\TT_{\GL}$ of $\GL(\Q)$. We refer to \cite[\S4.7]{TR_BC} for a detailed discussion. It is plausible that Calegari-Geraghty method also applies to the quasi-split group $U_E$, and thus that $(H = \TT_U)$ holds under similar hypothesis for $\pi$. We have not had the opportunity to investigate this direction further. \\

\subsection{Results for non self-dual representations} 
\label{non_self_dual}

We keep the notation from the previous paragraph but we no longer assume that the representation $\pi$ is self-dual. Consequently its stable base change $\Pi = \mathrm{SBC}(\pi)$ is no longer self-conjugate and the $\s$-periods of $\Pi$ are not even defined. However, as a stable base change, $\Pi$ is still is conjugate self-dual. Assume that $\pi$ is only ramified at primes which are split in $E$. In this case, following \cite[\S2.4.5]{TR_BC} we can attached two middle-degree periods:
$$
\Om_5(\Pi,\e,\pm) \in \C^\x /\O^\x
$$
to the cohomological cuspidal automorphic representation $\Pi$. These automorphic periods are defined within the cuspidal cohomology group of middle degree $q=5$ and will be called the $\e$-periods of $\Pi$. \\

However, unlike what happens for the $\s$-periods, the newform $\phi_f \in \Pi_f$ used to define the $\e$-periods of $\Pi$ is not always a good test vector for the Flicker-Rallis integral period. More precisely, it annihilates the Flicker-Rallis period, unless $\Pi$ satisfies the following condition at ramification places: \\

\begin{quote}
For each prime $w$ where $\Pi$ is ramified, the standard $L$-function $L(\Pi_w,s)$ of $\Pi_w$ has degree $2$. \\
\end{quote}

For instance, this condition is satisfied if the mirahoric level $\n$ of $\Pi$ is squarefree and the representation  $\Pi$ at $\n$ is either in the principal series (induced from two unramified characters and one ramified character) or the partial Steinberg associated with the parabolic of type $(2,1)$.  \\

Nethertheless, under this quite restrictive assumption on $\Pi$, the newform $\phi_f \in \Pi_f$ becomes a good test vector for the Flicker-Rallis period, and we prove the following version of \ref{SBC_divisibility} when $\pi$ is non-necessarily self-dual:

\begin{theorem}
\label{SBC_divisibility_NSD}
Let $\pi$ and $\Pi= \mathrm{SBC}(\pi)$ be as above. Assume that the Galois representation associated with $\Pi$ is residually absolutely irreducible. Suppose that the cohomological weight $\mu$ of $\pi$ is $p$-small and that $p \nmid 6N_{E/\Q}(\mathfrak{n}) h_E(\mathfrak{n})D_E$. Then, there exists a sign $\d \in \{ \pm\} $, such that we have the following divisibility:
$$
\eta_{\l_\Pi}^\#(M^*)[\d] \quad | \quad \frac{\Lambda^{imp}(\pi,\Ad \otimes \chi_{E},1)}{\Om_5(\Pi,\e,\d)}
$$
where $\eta_{\l_\Pi}^\#(M^*)[\d]$ is the $\d$-part for the action of $\e$,
\end{theorem}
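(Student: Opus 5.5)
The plan is to follow the proof of \ref{SBC_divisibility} verbatim, with the Galois involution $\s$ replaced by the involution $\e$ of \cite[\S2.4.5]{TR_BC}, and with the $\s$-Eichler--Shimura maps $\d^\pm_\s$ replaced by the $\e$-Eichler--Shimura maps $\d^\pm_\e$ used there to define $\Om_5(\Pi,\e,\pm)$. We take $M = H^5_{cusp}(Y_E(K_f),\L_{\mu_E}(\O))_{\m_\Pi}$. Since $\Pi$ is conjugate self-dual and $K_f$ is stable under the conjugation-duality map $g\mapsto {}^t\s(g)^{-1}$ (up to the usual twist by $J$), $M$ carries a semi-linear action of $\e$. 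The augmentation $\l_\Pi$ is $\e$-invariant, and $M[\l_\Pi]$ has rank $2$ with $\e$ acting non-trivially. The first step is to check that the linear form $\Ld$ of \S\ref{linear_forms_SBC} is an $\e$-eigenform, with some sign $\d\in\{\pm\}$. Restricted to $Y_\Q(K_{f,\Q})$, the map $g\mapsto{}^t\s(g)^{-1}$ becomes $g\mapsto {}^tg^{-1}$, which preserves the measure up to orientation. On the coefficients, the pairing $\langle\cdot,\cdot\rangle_{\mu_\tau}$ is exchanged with its transpose. Hence $\Ld\circ\e=\d\,\Ld$, with $\d$ determined by the orientation character of $g\mapsto{}^tg^{-1}$ on $\p_3$ together with a sign coming from the coefficients; I would not compute $\d$ explicitly, which is why the statement only asserts the existence of a sign. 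By \ref{lf_vanishing_SBC}, $\Ld$ kills $\Ker(\theta_{SBC})$, and by \ref{SBC_surjectivity} the map $\theta_{SBC}$ is surjective. So \ref{lf_lemma}, applied to the $\d$-part, gives $\Ld(\d^\d_\e(\phi_f)/\Om_5(\Pi,\e,\d))\in\eta^\#_{\l_\Pi}(M^*)[\d]$.

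The second step is to compute $\Ld(\d^\d_\e(\phi_f))$. The cohomological unfolding of \S\ref{coho_SBC} reduces this to $\P^+(\phi)$, where $\phi$ is built from the essential vector $\phi_f$, times the archimedean factor $\mathfrak{L}([\Pi_\inf]^\d_\e)$. For the global part I would use \ref{full_flicker-rallis_formula}. Its proof did not use self-duality except at the ramified split places $v=ww'$, where the local factor is $\P_v^+(W^\circ_{\Pi_w}\otimes W^\circ_{\Pi_{w'}})$ with $\Pi_{w'}\simeq \Pi_w^\vee$; \cite[Lemma 3.2]{TR_BC} then still gives $L^{imp}(\Pi_w\times\Pi_w^\vee,1)$. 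So it suffices to check the non-split places, where by hypothesis $\pi$ is unramified, and the spherical computations of \ref{spherical_ramified_factor} apply. One subtlety: for the $\e$-periods the test vector at $w'$ must be the essential vector of $\Pi_w^\vee$ up to the twist by $\e$. This step is where the degree-$2$ hypothesis on $L(\Pi_w,s)$ enters.

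I expect the local computation at ramified split places to be the main obstacle. The vector produced by $\e$ is the image of the essential vector under $g\mapsto J\,{}^tg^{-1}J$, which is fixed by the \emph{upper} mirahoric rather than $K_1$. The Rankin--Selberg-type integral over $N_2\bs\GL_2$ then pairs $W^\circ_{\Pi_w}$ with a transformed newform of $\Pi_w^\vee$. I would compute this by writing the transformed Whittaker function via the explicit formulas of \ref{explicit_essential_values} for $\Pi_w^\vee$, evaluated on $\diag(\varpi^f,1)$ after conjugation by the long Weyl element. When $r=2$, the Schur polynomials $s_f(\a_1,\a_2,0)$ are supported in the same dominant cone on both sides, so the sum collapses to $L^{imp}$ up to a unit. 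When $r<2$, I expect the support mismatch to make the integral vanish, which is consistent with the remark preceding the statement. Finally, the archimedean factor $\mathfrak{L}([\Pi_\inf]^\d_\e)$ equals $\pm2\,B([\Pi_\tau]_2,[\Pi_\tau^\vee]_3)$ as in \S\ref{archimedean_SBC}, which by \cite[Lemmas 5.13 \& 5.14]{Che22} is a $p$-unit times the $\Gamma$-factor. Assembling these as in \ref{explicit_linear} gives $\Ld(\d^\d_\e(\phi_f))\sim\Lambda^{imp}(\pi,\Ad\otimes\chi_E,1)$, which concludes the proof.
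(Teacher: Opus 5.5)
Your architecture is the paper's. The paper itself only says: rerun the proof of \ref{SBC_divisibility} with $\varepsilon$ in place of $\sigma$, take $\delta$ to be the sign with $\Ld\circ\varepsilon=\delta\,\Ld$, and use the ramification hypothesis to make the newform a good test vector. Your Step~1, the use of \ref{lf_lemma} on the $\delta$-part, and the archimedean factor are all fine.

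The gap is in the one step that is genuinely new compared with the self-dual case: the local Flicker--Rallis factor at a split prime $v=ww'$ where $\Pi$ ramifies.
\begin{itemize}
\item $K_1(\mathfrak{n})$ is not $\varepsilon$-stable. At $v$ the involution sends $K_w\times K_{w'}$ to ${}^tK_{w'}\times {}^tK_w$, up to conjugation by $J$. So the level must be $K_1(\wp^c)$ at $w$ and its $\varepsilon$-transport at $w'$. Your opening claim that $K_f$ is stable contradicts your later description of the test vector.
\item With your formula $g\mapsto J\,{}^tg^{-1}J$, the transported group is $J\,{}^tK_1(\wp^c)J$ (first column $\equiv e_1$). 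This group does not contain $\mathrm{diag}(\mathrm{GL}_2(\mathcal{O}),1)$. So the $N_2\backslash\mathrm{GL}_2$ integral does not collapse to a sum over $\mathrm{diag}(\varpi^f,1)$. Moreover, \ref{explicit_essential_values} says nothing about $W^\circ$ at points translated on the right by the long Weyl element.
\end{itemize}
As written, the dichotomy $r=2$ versus $r<2$ is a guess rather than a computation.

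The missing observation has two parts. First, use $g\mapsto {}^t\sigma(g)^{-1}$. A $J$ on the left is invisible on $G_E(\mathbb{Q})\backslash G_E(\mathbb{A})$, and in $g\mapsto W^\circ(w_0\,{}^tg^{-1})$ the element $w_0$ likewise sits harmlessly on the left. The transported group is then the transposed mirahoric. Second, this group is a \emph{torus} conjugate of $K_1$: ${}^tK_1(\wp^c)=t^{-1}K_1(\wp^c)t$ with $t=\mathrm{diag}(1,1,\varpi^c)$.

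Consequently:
\begin{itemize}
\item the test vector at $w'$ is $\Pi_{w'}(t^{-1})\phi^\circ_{\Pi_w^\vee}$;
\item it is right-invariant under $\mathrm{diag}(\mathrm{GL}_2(\mathcal{O}),1)$;
\item $W(\mathrm{diag}(h,1)t^{-1})=\omega_{\Pi_w^\vee}(\varpi)^{-c}\,W^\circ_{\Pi_w^\vee}(\mathrm{diag}(\varpi^c h,1))$.
\end{itemize}
The Iwasawa decomposition and \ref{explicit_essential_values} then give the local factor
$\omega_{\Pi_w^\vee}(\varpi)^{-c}q^{-c}\sum_{f_1\geq f_2\geq 0}q^{-f_1-f_2}s_f(\alpha)\,s_{(f_1+c,f_2+c,0)}(\beta)$,
where $\beta$ are the parameters of $L(\Pi_w^\vee,s)$.

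If $r=2$, then $s_{(f_1+c,f_2+c)}(\beta_1,\beta_2)=(\beta_1\beta_2)^c s_f(\beta_1,\beta_2)$. Cauchy's identity then gives $(\beta_1\beta_2)^c\,\omega_{\Pi_w^\vee}(\varpi)^{-c}q^{-c}\,L^{imp}(\Pi_w\times\Pi_w^\vee,1)$. The prefactor is a $p$-adic unit because $q\neq p$.

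If $r\leq 1$, every $s_{(f_1+c,f_2+c,0)}(\beta)$ vanishes, because $f_2+c\geq 1$ and there is at most one nonzero variable.

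This is exactly the ramification dichotomy in the statement. With it in place, the rest of your argument goes through.
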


The proof of this theorem is similar to that of \ref{SBC_divisibility}. Consequently, to avoid overloading the presentation, we only state the results here and refer to \cite[Section 6]{thesis} for a complete proof. The situation where $\pi$ is not self-dual is referred to there as Situation (II), and the ramification condition on $\Pi$ is denoted \textbf{(Ram)}. The sign $\d \in \{ \pm\} $ is the sign for which the Flicker-Rallis linear form $L$ of \S\ref{linear_forms_SBC} is $\d$-invariant for the action of the conjugate self-dual involution $\e$ on $M = H^5_{cusp}(Y_E(K_f),\L_{\mu_E}(\C))_{\m_\Pi}$. It is equal to the constant $-c_\inf$ in \cite[Lemma 6.3]{thesis}.

\begin{corollaire}
Let $\pi$ and $\Pi = \mathrm{SBC}(\pi)$ be as above. Assume that the Galois representation associated with $\Pi$ is residually absolutely irreducible. Assume that the cohomological weight $\mu$ of $\pi$ is $p$-small, that $p \nmid 6N_{E/\Q}(\mathfrak{n}) h_E(\mathfrak{n})D_E$ and that $p\notin S_\partial$. Then, under the assumption $(H = \TT_U)$, we have the following divisibility:
$$
\nu_\pi \cdot \Om_2(\pi) \cdot \Om_3(\pi^\vee) \mid  \Om_5(\Pi^\vee,\e,- \d)
$$
where $\nu_\pi := \eta_\pi \cdot \eta_\pi(M_{\TT_\Q})[+]^{-1} \in \O$, and $\d \in \{\pm1\}$ is the sign of \ref{SBC_divisibility_NSD}.
\end{corollaire}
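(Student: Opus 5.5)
The plan mirrors the proof of \ref{divisibilite_unitaire}, with the $\s$-periods replaced by the $\e$-periods of \cite[\S2.4.5]{TR_BC} and the self-conjugacy involution $\s$ replaced by the conjugate self-dual involution $\e$. I would use three inputs and then make the $L$-values cancel.

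\textbf{Inputs.} The first is \ref{SBC_divisibility_NSD}, which gives $\eta_{\l_\Pi}^\#(M^*)[\d] \mid \Lambda^{imp}(\pi,\Ad\otimes\chi_E,1)/\Om_5(\Pi,\e,\d)$.

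The second is the $\e$-analogue of \ref{adjoint_L_value}, proven in \cite[Theorem 3.2]{TR_BC}. It reads $\eta_\Pi(M)[\pm] \sim \Lambda^{imp}(\Pi,\Ad,1)/(\Om_5(\Pi,\e,\pm)\cdot\Om_5(\Pi^\vee,\e,\mp))$. The pairing of $\Pi$ with $\Pi^\vee$ is now genuine, since $\Pi\neq\Pi^\vee$ in general. I would take the sign $+$ there if $\d=+$, and in general the sign $\d$, so that the period $\Om_5(\Pi,\e,\d)$ appearing in the denominator matches the one in \ref{SBC_divisibility_NSD}. The complementary factor is then $\Om_5(\Pi^\vee,\e,-\d)$, which is exactly the period in the statement.

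The third is \ref{unitary_adjoint_L_value} for $F=\Q$, where $u_{E/\Q}=1$ by \ref{triviality_uE}. Combined with $(H=\TT_U)$, which gives $\eta_\pi(H^3_\m)\sim\eta_\pi$, it yields $\eta_\pi \sim \Lambda^{imp}(\pi,\Ad,1)/(\Om_2(\pi)\Om_3(\pi^\vee))$. Since $\pi$ is no longer self-dual, $\pi^\vee$ appears here, and this accounts for $\Om_3(\pi^\vee)$ in the corollary.

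\textbf{Combining.} I would apply \ref{cn_decomposition} to $M^*$ with the involution $\e$, the relative structure coming from $\theta:\TT_E\to\TT_U$ as in \S\ref{hecke_SBC}. The latter is surjective by \ref{SBC_surjectivity} and residual irreducibility. This gives
\begin{equation*}
\eta_\Pi(M)[\d] \sim \eta_\pi(M^*_{\TT})[\d]\cdot\eta_\Pi^\#(M^*)[\d] = \nu_\pi^{-1}\eta_\pi\,\eta_\Pi^\#(M^*)[\d].
\end{equation*}
Here $\nu_\pi$ is defined as in the statement, possibly with the sign adjusted. Substituting the three inputs and cancelling $\Om_5(\Pi,\e,\d)$ from both sides, I would use the factorization (\ref{decomposition_ad_imp}), valid because $\pi$ is ramified only at split primes. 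This gives $\Lambda^{imp}(\Pi,\Ad,1)=\Lambda^{imp}(\pi,\Ad,1)\Lambda^{imp}(\pi,\Ad\otimes\chi_E,1)$, and both $L$-values drop out, leaving $\nu_\pi\,\Om_2(\pi)\Om_3(\pi^\vee)\mid\Om_5(\Pi^\vee,\e,-\d)$.

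\textbf{Main obstacle.} The delicate step is the sign bookkeeping. I need that the $\d$-eigenspace of $\e$ on $M[\l_\Pi]$ is the one carrying both the relative congruence number from \ref{SBC_divisibility_NSD} and the numerator of the $\e$-adjoint formula. I also need the complementary period in that formula to be indexed by $\Pi^\vee$ with sign $-\d$, not by $\Pi$. To settle this I would check how $\e$ intertwines $M$ with the $\Pi^\vee$-part under the Poincaré pairing in \cite[Theorem 3.2]{TR_BC}, namely which eigenspaces pair nontrivially. The remaining steps are formal once the hypotheses on $p$ ($p$-small, $p\nmid 6N_{E/\Q}(\n)h_E(\n)D_E$, $p\notin S_\partial$) are matched to those of the three inputs.
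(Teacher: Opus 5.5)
Your proposal is correct and follows the route the paper intends. The paper gives no separate proof of this corollary; it implicitly repeats the argument of \ref{divisibilite_unitaire} in the $\e$-setting, combining \ref{SBC_divisibility_NSD}, the $\e$-analogue of the adjoint formula from \cite[Theorem 3.2]{TR_BC} at the sign $\d$, \ref{unitary_adjoint_L_value} for $F=\Q$ together with $(H=\TT_U)$, \ref{cn_decomposition}, and (\ref{decomposition_ad_imp}), and then cancelling the $L$-values and $\Om_5(\Pi,\e,\d)$ exactly as you do. Your caveat about the sign is also well taken: the argument proves the divisibility with $\nu_\pi$ defined via the $\d$-part $\eta_\pi(M^*_{\TT_U})[\d]$, which matches the statement as written only when $\d=+$.
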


\bibliographystyle{alpha}
\bibliography{../../Notes/Bibliographie/biblio_report}

\end{document}